\let\nc\newcommand
\let\renc\renewcommand
\theoremstyle{plain}
\newtheorem{thm}{Theorem}
\newtheorem{prop}[thm]{Proposition}
\newtheorem{cor}[thm]{Corollary}
\newtheorem*{thm*}{Theorem}
\newtheorem*{prop*}{Proposition}
\newtheorem*{cor*}{Corollary}
\newtheorem{lem}[thm]{Lemma}
\theoremstyle{definition}
\newtheorem{defn}[thm]{Definition}
\newtheorem{remark}[thm]{Remark}
\numberwithin{thm}{section}
\nc{\bdm}{\begin{displaymath}}
\nc{\edm}{\end{displaymath}}
\nc{\bthm}{\begin{thm}}
	\nc{\ethm}{\end{thm}}
\nc{\blem}{\begin{lem}}
	\nc{\elem}{\end{lem}}
\nc{\bcor}{\begin{cor}}
	\nc{\ecor}{\end{cor}}
\nc{\bprop}{\begin{prop}}
	\nc{\eprop}{\end{prop}}
\nc{\bdef}{\begin{defn}}
	\nc{\eddef}{\end{defn}}
\renewcommand{\subsection}{\@startsection{subsection}{2}{0pt}{-3ex
		plus -1ex minus -0.2ex}{-2mm plus -0pt minus
		-2pt}{\normalfont\bfseries}} \makeatother
\numberwithin{equation}{section}
\newcommand{\Lmod}[1]{#1\text{-}{\mathsf{mod}}}
\newcommand{\idot}{{\:\raisebox{2pt}{\text{\circle*{1.5}}}}}
\DeclareMathOperator{\Ext}{\mathrm{Ext}}
\DeclareMathOperator{\Ker}{\mathrm{Ker}}
\DeclareMathOperator{\End}{\mathrm{End}}
\DeclareMathOperator{\gr}{\mathrm{gr}}
\newcommand{\beq}{\begin{equation}\label}
\newcommand{\eeq}{\end{equation}}
\DeclareMathOperator{\Spec}{\mathrm{Spec}}
\newcommand{\iso}{{\;\stackrel{_\sim}{\to}\;}}
\DeclareMathOperator{\Hom}{\mathrm{Hom}}
\DeclareMathOperator{\GL}{\mathrm{GL}}
\nc{\Z}{\mathbb{Z}}
\newcommand{\N}{\mathbb{N}}
\newcommand{\Q}{\mathbb{Q}}
\newcommand{\C}{\mathbb{C}}
\newcommand{\h}{\mathfrak{h}}
\newcommand{\eu}{\mathrm{eu}}
\nc{\rank}{\textrm{rank} \,}
\nc{\ds}{\dots}
\let\mc\mathcal
\let\mf\mathfrak
\nc{\mbf}{\mathbf}
\nc{\Res}{\mathsf{Res} \, }
\nc{\Ind}{\mathsf{Ind} \, }
\nc{\cont}{\textrm{cont}}
\renewcommand{\mod}{\textrm{mod}}
\nc{\msf}{\mathsf}
\nc{\minusone}{-1}
\nc{\minustwo}{-2}
\nc{\Mod}{\mathrm{Mod} \,}
\nc{\ms}{\mathscr}
\nc{\Frac}{\mathrm{Frac} \,}
\nc{\ra}{\rightarrow}
\nc{\hra}{\hookrightarrow}
\nc{\lab}{\label}
\renc{\O}{\mc{O}}
\nc{\Tan}{\mc{T}}
\nc{\ul}{\underline}
\nc{\s}{\mathfrak{S}}
\nc{\g}{\mf{g}}
\nc{\pa}{\partial}
\nc{\tit}{\textit}
\nc{\Maxspec}{\mathrm{Maxspec} \, }
\nc{\gldim}{\mathrm{gl.dim}}
\nc{\rkm}{\mathrm{rk} \, (\mf{m})}
\nc{\sm}{\mathrm{sm}}
\nc{\PD}{\mathbb{PD}}
\nc{\hilb}{\textrm{Hilb}}
\nc{\T}{\mathrm{T}}
\nc{\X}{\mathbb{X}}
\nc{\F}{\mathbb{F}}
\nc{\id}{\msf{id}}
\nc{\A}{\mathbb{A}}
\nc{\Grat}{\mc{Grat}}
\nc{\Squo}[1]{\A^{(#1)}}
\nc{\twist}{\mathrm{twist}}
\nc{\Cd}{\mc{C}}
\nc{\Span}{\mathrm{Span}}
\nc{\Grass}{\mathrm{Gr}}
\nc{\Supp}{\mathrm{Supp}}
\nc{\Irr}{\mathrm{Irr}}
\renc{\o}{\otimes}
\nc{\fin}{\mathrm{fin}}
\nc{\aff}{\mathrm{aff}}
\nc{\algD}{\mf{D}}
\nc{\hr}{\mf{h}_{\textrm{reg}}}
\nc{\D}{\mathscr{D}}
\nc{\PIdeg}{\mathrm{PI-degree}}
\nc{\ch}{\mathrm{ch}}
\nc{\ev}{\mathsf{ev}}
\nc{\Stab}{\mathrm{Stab}}
\nc{\Der}{\mathrm{Der}}
\nc{\rightsim}{\stackrel{\sim}{\longrightarrow}}
\nc{\HZ}{H_{\mbf{h},\Z}(\Z_m)}
\nc{\sing}{\mathrm{sing}}
\nc{\dd}{\mathscr{D}}
\nc{\vc}{\underline{\mathbf{c}}}
\nc{\ba}{\mathbf{a}}
\nc{\reg}{\mathrm{reg}}
\nc{\Amp}{\mathrm{Amp}}
\nc{\Nef}{\mathrm{Nef}}
\nc{\SL}{\mathrm{SL}}
\nc{\Sp}{\mathrm{Sp}}
\nc{\Sym}{\mathrm{Sym}}
\nc{\Mov}{\mathrm{Mov}}
\nc{\Pic}{\mathrm{Pic}}
\nc{\Cs}{\C^{\times}}
\nc{\Nak}[3]{\mf{M}_{{#1}} ({#2},{#3}) }
\nc{\Naka}[2]{\mf{M}({#1},{#2}) }
\nc{\Mtheta}[1]{\mc{M}_{#1}}
\nc{\bw}{\mathbf{w}}
\nc{\bn}{\mathbf{n}}
\nc{\CB}{\mathrm{CB}}
\nc{\GVect}{\Lambda}
\nc{\pZ}{\overline{Z}}
\nc{\Tang}{\mc{T}}
\nc{\K}{\mathbb{K}}
\nc{\Id}{\mathrm{Id}}
\nc{\be}{\boldsymbol{e}}
\nc{\GKdim}{\mr{GK}\textrm{-}\mr{dim}}
\newcommand{\mr}{\mathrm}
\newcommand{\git}{\ensuremath{/\!\!/\!}}
\nc{\ZHgl}{{Z(H_{\bc})_{\mf{gl}(n)}}}
\nc{\G}{G}
\nc{\eo}{\mathbf{a}}
\nc{\ezero}{\mathbf{a}_0}
\nc{\ei}{\mathbf{a}_i}
\nc{\elminusone}{\mathbf{a}_{\ell-1}}
\nc{\red}[1]{\textcolor{red}{#1}}
\begin{document}
	
	\title{Two invariant subalgebras of rational Cherednik algebras}
	
	\author[G. Bellamy]{Gwyn Bellamy}

    \author[M. Feigin]{Misha Feigin}

    \author[N. Hird]{Niall Hird}

	\address{School of Mathematics and Statistics, University of Glasgow, University Place,
		Glasgow, G12 8QQ.}
	\email{gwyn.bellamy@glasgow.ac.uk}
        \email{misha.feigin@glasgow.ac.uk}
        

	\begin{abstract}
    Originally motivated by connections to integrable systems, two  natural subalgebras of the rational Cherednik algebra have been considered in the literature.  The first 
    is the subalgebra of all degree zero elements and the second 
    is the Dunkl angular momentum subalgebra. 

    In this article, we study the ring-theoretic and homological properties of these algebras. 
    Our approach is to realise them as rings of invariants under the action of certain reductive subgroups of $\SL_2$. This allows us to describe their centres. 
Moreover, we show that they are Auslander--Gorenstein and Cohen--Macaulay and, at $t = 0$, give rise to prime PI-algebras whose PI-degree we compute. 
    
    Since the degree zero subalgebra 
    can be realized as the ring of invariants for the maximal torus $\T \subset \SL_2$ and the action of this torus on the rational Cherednik algebra is Hamiltonian, we also consider its (quantum) Hamiltonian reduction with respect to $\T$. At $t = 1$, the quantum Hamiltonian reduction of the spherical subalgebra is a filtered quantization of the quotient of the minimal nilpotent orbit closure $\overline{\mc{O}}_{\min}$ in $\mf{gl}(n)$ by the reflection group $W$. At $t = 0$, we get a graded Poisson deformation of the symplectic singularity $\overline{\mc{O}}_{\min}/W$.
	\end{abstract}
	
	\maketitle

\section{Introduction}\label{sec1}

One key motivation for the introduction of rational Cherednik algebras $H_{t,c}$ by Etingof and Ginzburg \cite{EG} was their connection with the generalized Calogero--Moser integrable systems. For instance, the algebras can be used to give a uniform proof of the complete integrability of these systems \cite{Heckman}.
In the case of a real reflection group $W$ and parameter $t=1$, the second author together with Hakobyan introduced in \cite{Feiginangular} the \textit{Dunkl angular momentum algebra} $H_{t,c}^{\mathfrak{so}(n)}$ as the subalgebra of a rational Cherednik algebra   generated by the (Dunkl deformed) angular momentum operators $x_i y_j - x_j y_i$ and the reflection group $W$. Part of the motivation for introducing this algebra is the fact that it contains  in its centre essentially the angular generalized Calogero--Moser  Hamiltonian. This makes the angular part of generalized Calogero--Moser systems amenable to study via representation theory. At the same time,  the closely related \textit{degree zero} subalgebra $H_{t,c}^{\mf{gl}(n)}$ was also introduced in \cite{Feiginangular}. This algebra is generated by the elements $x_i y_j$ and the reflection group, and it is related, in turn, to Calogero--Moser systems in the harmonic confinement.

Another interesting context where the algebras $H_{t,c}^{\mf{gl}(n)}$ and $H_{t,c}^{\mf{so}(n)}$ arise is deformed Howe duality. The algebra $H_{t,c}^{\mf{so}(n)}$ plays a role in the Dunkl deformed Howe dual pair $(\mf{so}(n),\mf{sl}(2))$ \cite{CiubotaruMartino} while the algebra $H_{t,c}^{\mf{gl}(n)}$ relates to the Dunkl deformation of $(\mf{gl}(n), \mf{gl}(1))$ duality \cite{FeiginVrabec}.

In this article, we study the ring-theoretic and homological properties of these two subalgebras. Our primary focus is on understanding their properties in the case $t = 0$. Since the algebras are defined in terms of a set of generators, it can be difficult to prove directly any of their abstract (e.g. homological) properties. Our key observation is that both of these algebras can be realised as rings of invariants under certain groups of automorphisms of the rational Cherednik algebra $H_{t,c}$. 

Let $\T$ be the maximal torus of diagonal matrices in $\SL_2$. Then $H_{t,c}^{\mf{gl}(n)} = H_{t,c}^{\T}$. In the case of a real reflection group $W$ the entire group $\SL_2$ acts by automorphisms on the rational Cherednik algebra $H_{t,c}$ \cite{EG}. In this case we show  in Theorem~\ref{sonissl2} that $H_{t,c}^{\mathfrak{so}(n)} = H_{t,c}^{\SL_2}$.


Realizing both algebras as rings of invariants under a connected reductive group $\Gamma \subset \SL_2$ allows us to give a remarkably uniform treatment of these algebras; this we do in Sections~\ref{sec:invariantsub} and   \ref{sec:invariantt=0}. 
For $\Gamma=SL_2$ we assume that $W$ is a real reflection group.

\subsection*{The centre of the algebras}

The rational Cherednik algebra $H_{t,c}$ has a big centre at $t=0$ \cite{EG}. Our original motivation was to describe the centre of the algebras $H_{0,c}^{\mf{gl}(n)}$ and $H_{0,c}^{\mf{so}(n)}$. Let $P = \C[\h \times \h^*]$, where $\h$ is the reflection representation of $W$ and $\h^*$ is its dual. Recall that $H_{0,0}$ is the skew group ring $P \rtimes W$. As above, $\Gamma$ can be either $\T$ or $\SL_2$. Let $Z(A)$ be the centre of an algebra $A$, and let $\gr A$ be the associated graded algebra of a filtred algebra $A$.

\begin{thm*}(\ref{assgrade})
    For any $c$, $\gr Z(H_{0,c}^{\Gamma}) = Z(P^{\Gamma} \rtimes W)$. 
\end{thm*}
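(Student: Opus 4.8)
The plan is to use the order filtration on $H_{0,\bc}$, for which $\gr H_{0,\bc} \cong P \rtimes W$ with $\h \oplus \h^*$ placed in degree one (the PBW theorem of \cite{EG}). The group $\Gamma$ --- either $\T$ or $\SL_2$ --- acts on $H_{0,\bc}$ by filtered automorphisms that fix $\C W$ pointwise and commute with the $W$-action, and it is linearly reductive, so the Reynolds operator $e_\Gamma \colon H_{0,\bc} \to H_{0,\bc}^{\Gamma}$ is filtered and commutes with passage to the associated graded; consequently $\gr H_{0,\bc}^{\Gamma} = (\gr H_{0,\bc})^{\Gamma} = P^{\Gamma} \rtimes W$. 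For any filtered algebra $A$ one has $\gr Z(A) \subseteq Z(\gr A)$, so $\gr Z(H_{0,\bc}^{\Gamma}) \subseteq Z(P^{\Gamma} \rtimes W)$ is automatic, and the content of the theorem is the reverse inclusion: every homogeneous central element of $P^{\Gamma} \rtimes W$ is the symbol of a central element of $H_{0,\bc}^{\Gamma}$.

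For the reverse inclusion I would treat $Z(P^{\Gamma} \rtimes W)$ in two parts, starting with the ``generic'' part $(P^{\Gamma})^{W} = (P^{W})^{\Gamma}$. By the Etingof--Ginzburg description of $Z(H_{0,\bc})$ as the coordinate ring of the Calogero--Moser deformation of $(\h \oplus \h^*)/W$, one has $\gr Z(H_{0,\bc}) = P^{W} = Z(P \rtimes W)$. Since $\Gamma$ acts by automorphisms it preserves $Z(H_{0,\bc})$; moreover $Z(H_{0,\bc})^{\Gamma} \subseteq Z(H_{0,\bc}^{\Gamma})$, and $e_\Gamma$ restricts to a filtered surjection $Z(H_{0,\bc}) \twoheadrightarrow Z(H_{0,\bc})^{\Gamma}$. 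Passing to $\gr$ gives $\gr\big(Z(H_{0,\bc})^{\Gamma}\big) = \big(\gr Z(H_{0,\bc})\big)^{\Gamma} = (P^{W})^{\Gamma} = (P^{\Gamma})^{W}$, so $(P^{\Gamma})^{W} \subseteq \gr Z(H_{0,\bc}^{\Gamma})$.

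To capture the remaining central elements, let $W_{\Gamma} \trianglelefteq W$ be the kernel of the $W$-action on $\Spec P^{\Gamma}$; writing an element of $P^{\Gamma} \rtimes W$ as $\sum_{w} f_w\, w$ with $f_w \in P^{\Gamma}$, centrality forces $f_w = 0$ for $w \notin W_{\Gamma}$ together with the equivariance $f_{v w v^{-1}} = v(f_w)$. A short argument with a generic point of $\h \oplus \h^*$ identifies $W_{\Gamma}$ as $W \cap \{\pm 1\}$, the only exception being $\Gamma = \SL_2$ with $W$ a rank-two dihedral group, where $W_{\Gamma}$ is the rotation subgroup. In the first case $-1 \in Z(W)$ and the element $-1 \in H_{0,\bc}^{\Gamma}$ is central (it commutes with $\C W$ and acts trivially on the degree-zero generators), whence $Z(P^{\Gamma} \rtimes W) = (P^{\Gamma})^{W} \oplus (P^{\Gamma})^{W}(-1)$ lies in the subalgebra $\gr Z(H_{0,\bc}^{\Gamma})$ and the proof is complete. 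In the exceptional case $P^{\SL_2} = \C[L]$ is the polynomial ring on the single angular momentum $L$, and since $w L w^{-1} = \det(w)\, L$ holds exactly in $H_{0,\bc}^{\SL_2}$ there is an algebra map $P^{\SL_2} \rtimes W \to H_{0,\bc}^{\SL_2}$ matching the generators, which one checks is an isomorphism (it induces the identity on associated graded algebras), so the identity is immediate there.

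The main obstacle is this last step --- pinning down $W_{\Gamma}$ and the structure of $Z(P^{\Gamma} \rtimes W)$ --- together with the input $\gr Z(H_{0,\bc}) = P^{W}$ from \cite{EG} in precisely the required form. The reason the Reynolds operator is used for the generic part, rather than lifting generators directly, is that for $\dim\h$ large the Plücker-type relations among the quadratic $\SL_2$-invariants, and the analogous determinantal relations among the $\T$-invariants $x_iy_j$, obstruct a ring-theoretic section of $P^{\Gamma}$ into $H_{0,\bc}^{\Gamma}$. Finally one checks that no drop in filtration degree occurs when taking $\gr$ of any lift constructed above, which is automatic because in each case the relevant symbol is visibly the nonzero element one began with.
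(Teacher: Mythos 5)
Your overall strategy is the same as the paper's: establish $\gr H_{0,\bc}^{\Gamma}=P^{\Gamma}\rtimes W$ via reductivity of $\Gamma$, use $\gr Z(A)\subseteq Z(\gr A)$ for one inclusion, and for the reverse split $Z(P^{\Gamma}\rtimes W)$ into a ``polynomial'' part (lifted from $Z(H_{0,\bc})^{\Gamma}$ using Etingof--Ginzburg and the Reynolds operator) and a ``group'' part. The polynomial part of your argument is correct and mirrors the paper's Lemma~\ref{lem:grcentrezero} and Lemma~\ref{lem:centreinclude}.

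There is, however, a concrete error in the group part. You identify the kernel $W_{\Gamma}$ of $W\to\GL(\Spec P^{\Gamma})$ with $W\cap\{\pm1\}$, and correspondingly write $Z(P^{\Gamma}\rtimes W)=(P^{\Gamma})^{W}\oplus(P^{\Gamma})^{W}\cdot(-1)$. This is only right when $W$ is a real reflection group. For $\Gamma=\T$ the theorem applies to arbitrary irreducible \emph{complex} reflection groups, and there $Z(W)=W\cap\Cs\Id_{\h}\cong\Z/\ell\Z$ can have order $\ell>2$ (e.g.\ $W=G(m,1,n)$ with $m>2$). Every $w\in Z(W)$ acts on $\h\oplus\h^*$ by $(v,\lambda)\mapsto(\omega v,\omega^{-1}\lambda)$ for some root of unity $\omega$, i.e.\ by an element of $\T$, hence acts trivially on $\Spec P^{\T}$; so in fact $W_{\Gamma}=Z(W)$ and $Z(P^{\Gamma}\rtimes W)=(P^{\Gamma\times W})\otimes\C Z(W)$, with $\ell$ summands rather than at most two. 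Your argument as written therefore does not account for the central group elements beyond $\pm1$. The repair is cheap --- the same observation that $Z(W)$ acts as scalars on $\h$ shows conjugation by any $z\in Z(W)$ lies in $\T\subset\Gamma$, so all of $\C Z(W)$ (not just $\C\{\pm1\}$) is central in $H_{0,\bc}^{\Gamma}$ --- but as stated there is a gap. This is precisely what the paper's standing hypothesis (III) and Proposition~\ref{prop:centreskewgroup1} are set up to handle.

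The rank-two dihedral aside is correct as far as it goes, but is outside the scope of the theorem: the paper excludes that case by the standing assumption $\dim\h>2$ for $\Gamma=\SL_2$, treating $n=2$ separately in Section~\ref{sec:rank2centre}.
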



Let us assume that the group $W$ is irreducible. Then the centre $Z(W)$ is a cyclic group $\Z/\ell \Z$. Let $\eo_0, \ds, \eo_{\ell-1}$ be the complete set of primitive idempotents in $\C Z(W)$.

Theorem \ref{assgrade}, together with the simple fact that if $A\subset B$ and $\gr A=\gr B$ then $A=B$, leads to the following 
corollary. 

\begin{cor*}(\ref{centregln})
There is equality of commutative rings
\[
Z(H_{0,c}^{\Gamma}) = Z(H_{0,c})^{\Gamma} \o \C Z(W),
\]
where  $\C Z(W)$ is the group algebra of the centre $Z(W)$ of the group $W$. 
In particular, as a $Z(H_{0,c})^{\Gamma}$-module, 
\[
Z(H_{0,c}^{\Gamma}) = \bigoplus_{i = 0}^{\ell-1} Z(H_{0,c})^{\Gamma} \ei.
\]
\end{cor*}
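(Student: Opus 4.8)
The plan is to deduce Corollary~\ref{centregln} from the graded statement in Theorem~\ref{assgrade}. By that theorem, $\gr Z(H_{0,\bc}^{\Gamma}) = Z(P^{\Gamma}\rtimes W)$, so the first task is to compute the right-hand side concretely. Here I would use the standard description of the centre of a skew group algebra: for a commutative domain $R$ with an action of a finite group $W$, the centre of $R\rtimes W$ consists of elements $\sum_{w} f_w w$ with $f_w \in R$ such that $f_w$ is supported on the centralizer data appropriately; when $R$ is a domain and $W$ acts faithfully, one gets $Z(R\rtimes W) = R^W \otimes \C Z(W)$, where $Z(W)$ is the centre of the group (the central group elements act trivially on $R$ precisely when... — in fact here $Z(W)$ for a real reflection group is contained in $\{\pm 1\}$, acting as a scalar, so one must be slightly careful, but the formula $Z(R\rtimes W)=R^W\otimes\C Z(W)$ holds because a central $z\in W$ acts on $R=P^\Gamma$ and the only way $f_z z$ is central is $f_z\in R^W$ and $z$ central; conversely $R^W$ and $\C Z(W)$ commute with everything). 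Applying this with $R = P^{\Gamma}$ gives $Z(P^{\Gamma}\rtimes W) = (P^{\Gamma})^W \otimes \C Z(W) = (P^W)^{\Gamma}\otimes \C Z(W)$, using that the $\Gamma$- and $W$-actions commute.

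Next I would identify $\gr$ of the right-hand side of the asserted equality. We have $Z(H_{0,\bc}) \supseteq \C[\h\times\h^*]^W = P^W$ with $\gr Z(H_{0,\bc}) = P^W$ (this is the Etingof--Ginzburg result that the centre of the rational Cherednik algebra at $t=0$ is a flat deformation of $P^W$). Since $\Gamma$ acts on $Z(H_{0,\bc})$ compatibly with the filtration, $\gr (Z(H_{0,\bc})^{\Gamma}) = (\gr Z(H_{0,\bc}))^{\Gamma} = (P^W)^{\Gamma}$ — this uses that $\Gamma$ is reductive, so taking $\Gamma$-invariants is exact and commutes with passing to the associated graded. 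Finally $\C Z(W)$ sits in degree zero, so $\gr\big(Z(H_{0,\bc})^{\Gamma}\otimes \C Z(W)\big) = (P^W)^{\Gamma}\otimes \C Z(W)$, which matches $\gr Z(H_{0,\bc}^{\Gamma})$ from the previous paragraph.

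Now comes the containment step. I must exhibit an inclusion $Z(H_{0,\bc})^{\Gamma}\otimes \C Z(W) \subseteq Z(H_{0,\bc}^{\Gamma})$ of subalgebras of $H_{0,\bc}^{\Gamma}$. For $Z(H_{0,\bc})^{\Gamma}$: an element of $Z(H_{0,\bc})$ that is $\Gamma$-invariant lies in $H_{0,\bc}^{\Gamma}$ by definition, and it commutes with all of $H_{0,\bc}$, hence a fortiori with the subalgebra $H_{0,\bc}^{\Gamma}$; so $Z(H_{0,\bc})^{\Gamma}\subseteq Z(H_{0,\bc}^{\Gamma})$. For $\C Z(W)$: the central elements of $W$ are $\Gamma$-invariant (the $\SL_2$- or $\T$-action fixes $W\subset H_{t,\bc}$ pointwise), so they lie in $H_{0,\bc}^{\Gamma}$; and being central in $W$ and acting by conjugation on $\h\oplus\h^*$ through the (abelian-on-the-image) representation, a central $w\in W$ commutes with $\h\oplus\h^*$ up to the fact that it normalizes each generator — more precisely $w$ is central in $H_{0,\bc}$ when $W$ is a real reflection group and $w=\pm1\in W$ acts as a scalar on $\h$; in general $\C Z(W)\subseteq Z(H_{0,\bc})$ already, so $\C Z(W)\subseteq Z(H_{0,\bc})^{\Gamma}$ and the second summand is absorbed into the first. (If $Z(W)$ is not central in $H_{0,\bc}$ one instead checks directly that $Z(W)$ centralizes $H_{0,\bc}^{\Gamma}$, which is where the hypothesis on $\Gamma$ or on $W$ would enter.) Either way we obtain a graded-compatible inclusion of commutative subalgebras $Z(H_{0,\bc})^{\Gamma}\cdot\C Z(W)\hookrightarrow Z(H_{0,\bc}^{\Gamma})$ which, by the preceding two paragraphs, becomes an equality on associated graded; by the elementary principle quoted after Theorem~\ref{assgrade} ($A\subseteq B$ and $\gr A=\gr B$ imply $A=B$), it is an equality. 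The tensor-product decomposition then follows because $Z(H_{0,\bc})^{\Gamma}\cap\C Z(W)$-relations are governed by the corresponding statement on associated graded, where $(P^W)^{\Gamma}\otimes\C Z(W)$ is genuinely a tensor product (the group-algebra factor is linearly disjoint, sitting in a single graded degree with a basis of idempotents), and the $Z(H_{0,\bc})^{\Gamma}$-module decomposition $\bigoplus_{i}Z(H_{0,\bc})^{\Gamma}\eo_i$ is just the decomposition of $\C Z(W)=\bigoplus_i\C\eo_i$ into its one-dimensional blocks. The main obstacle I anticipate is the careful bookkeeping in the containment step: making sure the central group elements really do centralize the invariant subalgebra $H_{0,\bc}^{\Gamma}$ (rather than merely $H_{0,\bc}$), and confirming that the filtration on $Z(H_{0,\bc}^{\Gamma})$ used in Theorem~\ref{assgrade} is the one induced from $H_{0,\bc}$ so that the associated-graded comparison is legitimate.
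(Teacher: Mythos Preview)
Your approach matches the paper's: establish the inclusion $Z(H_{0,\bc})^{\Gamma}\otimes\C Z(W)\subseteq Z(H_{0,\bc}^{\Gamma})$, compute both associated gradeds to be $P^{\Gamma\times W}\otimes\C Z(W)$ (this is the content of Proposition~\ref{prop:centreskewgroup1}, Lemma~\ref{lem:grcentrezero}, and Theorem~\ref{assgrade}), and conclude equality via Lemma~\ref{lem:gradedin}(i).

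One point needs correction. Your claim that ``in general $\C Z(W)\subseteq Z(H_{0,\bc})$ already'' is false: a nontrivial central $w\in W$ acts on $\h$ by a nontrivial scalar $\zeta$, so $wxw^{-1}=\zeta x\neq x$ for $x\in\h^*$, and $w$ is \emph{not} central in $H_{0,\bc}$ (indeed $\gr Z(H_{0,\bc})=P^W$ contains no nontrivial group elements). Your parenthetical alternative is the correct route, and it is exactly what the paper does: assumption~(III) stipulates that the conjugation action of $Z(W)$ on $H_{0,\bc}$ factors through $\Gamma$, so that action is trivial on the invariant subalgebra $H_{0,\bc}^{\Gamma}$, giving $\C Z(W)\subseteq Z(H_{0,\bc}^{\Gamma})$ (Lemma~\ref{lem:centreinclude}). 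With this fix your containment step goes through.

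For the freeness/tensor-product statement at the end, the paper argues slightly more carefully than you do: it checks directly that each $Z(H_{0,\bc})^{\Gamma}\ei$ is free of rank one over $Z(H_{0,\bc})^{\Gamma}$ by observing that $\gr\big(Z(H_{0,\bc})^{\Gamma}\ei\big)=P^{\Gamma\times W}\ei$ is free of rank one over $P^{\Gamma\times W}$. This cleanly gives both the direct sum decomposition and the tensor product form.
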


If we set $\eo = \eo_0$ to be the symmetrising idempotent then it follows that the centre of $\eo H_{0,c}^{\Gamma} \eo$ equals $\eo Z(H_{0,c}^{\Gamma}) \cong Z(H_{0,c})^{\Gamma}$. In fact, each summand $Z(H_{0,c})^{\Gamma} \ei$ of $Z(H_{0,c}^{\Gamma})$ is isomorphic to $Z(H_{0,c})^{\Gamma}$. Being a ring of invariants, the latter is very well-behaved (despite being singular), as illustrated by the following result. 

\begin{thm*}(\ref{ratsing})
	The affine scheme $Y_{c} := \Spec Z(H_{0,c})^{\Gamma}$ is a normal irreducible variety. It is Gorenstein with rational singularities.
\end{thm*}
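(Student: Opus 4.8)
The plan is to analyse the variety $Y_{\bc} = \Spec Z(H_{0,\bc})^{\Gamma}$ by combining two facts: that $Z(H_{0,\bc})$ is already a well-understood Gorenstein variety with symplectic singularities (the Calogero--Moser space at the generic point, singular in general), and that $Y_{\bc}$ is obtained from it by taking $\Gamma$-invariants for a reductive group $\Gamma \in \{\T, \SL_2\}$ acting by Poisson automorphisms. So the strategy reduces the claimed properties of $Y_{\bc}$ to general principles about quotients of nice varieties by reductive groups.

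\medskip

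\emph{Irreducibility and normality.} The scheme $\Spec Z(H_{0,\bc})$ is known to be an irreducible normal variety (this is a standard fact about the centre of the rational Cherednik algebra at $t=0$, going back to Etingof--Ginzburg). Taking invariants of a normal domain under a reductive group action preserves being a normal domain: $Z(H_{0,\bc})^{\Gamma}$ is a subring of an integral domain, hence a domain, so $Y_{\bc}$ is irreducible; and by the standard fact that invariants of an integrally closed domain under a group action form an integrally closed domain, $Z(H_{0,\bc})^{\Gamma}$ is normal. One should also note finite generation: $Z(H_{0,\bc})$ is a finitely generated $\C$-algebra and $\Gamma$ is reductive, so by Hilbert's theorem $Z(H_{0,\bc})^{\Gamma}$ is finitely generated, hence $Y_{\bc}$ is genuinely an affine variety.

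\medskip

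\emph{Rational (and Gorenstein) singularities.} For rational singularities I would invoke the Boutot-type principle: if a variety $X$ has rational singularities (or even just is a quotient of such) and $\Gamma$ is reductive, then $X \git \Gamma$ has rational singularities. Here one needs that $\Spec Z(H_{0,\bc})$ has rational singularities; in fact $\Spec Z(H_{0,\bc})$ has symplectic singularities (Calogero--Moser space), and symplectic singularities are rational (Beauville), and Boutot's theorem then gives that $Y_{\bc}$ has rational singularities. For the Gorenstein property, the cleanest route is to exploit the symplectic/Poisson structure: the $\Gamma$-action on $H_{0,\bc}$, and hence on $Z(H_{0,\bc})$, preserves the Poisson bracket, and for $\Gamma = \T$ or $\SL_2$ the action is in fact \emph{Hamiltonian} — this is exactly the set-up flagged in the abstract. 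One can then identify $Y_{\bc}$ (or a dense open part of it) with a Poisson/Hamiltonian reduction, which carries a symplectic form on its smooth locus; a variety with symplectic singularities is automatically Gorenstein (the canonical class is trivial, witnessed by the top power of the symplectic form). Alternatively, and perhaps more robustly given that $\Gamma$ may not act freely, one can combine "normal + rational singularities + $\Q$-Gorenstein" — where $\Q$-Gorenstein follows because the nowhere-vanishing invariant volume form on $\Spec Z(H_{0,\bc})$ descends to a trivialisation of a power of $\omega_{Y_{\bc}}$ — with the fact that rational $\Q$-Gorenstein implies, after checking the index is $1$, Gorenstein; this last point is where one must be a little careful.

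\medskip

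\emph{Main obstacle.} The irreducibility, normality and rational singularities steps are essentially formal applications of Hilbert, Boutot and Beauville once the input properties of $\Spec Z(H_{0,\bc})$ are cited. The delicate point is the Gorenstein statement: one must produce an actual trivialisation of $\omega_{Y_{\bc}}$, not merely of a power of it, and this requires knowing that the invariant volume form on $\Spec Z(H_{0,\bc})$ is itself $\Gamma$-invariant (no character twist) and descends without introducing a ramification correction along the non-free locus of the $\Gamma$-action. I expect the paper handles this either by appealing to the explicit description of $Z(H_{0,\bc})^{\Gamma}$ as a ring of invariants inside the concretely presented algebra — so that a direct computation of the canonical module is available — or by using that $\Gamma$ is connected (so characters are constrained) together with a codimension count showing the problematic locus has codimension $\ge 2$, where the reflexive sheaf $\omega_{Y_{\bc}}$ is then determined by its restriction to the good locus.
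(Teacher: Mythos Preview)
Your treatment of irreducibility, normality, and rational singularities is correct and matches the paper's proof essentially verbatim: $Z(H_{0,\bc})$ is a normal domain (from the associated graded $\C[\h\times\h^*]^W$), invariants under a reductive group preserve this, and Boutot's theorem transfers rational singularities to the GIT quotient.

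For the Gorenstein property, however, the paper does something simpler than either of your proposed routes. Rather than attempting to descend a volume form or argue via $\Q$-Gorenstein plus an index computation, it exploits the filtration: $Z(H_{0,\bc})^{\Gamma}$ is filtered with associated graded $\C[\h\times\h^*]^{\Gamma\times W}$ (Lemma~\ref{lem:grcentrezero}), and this commutative invariant ring was already shown to be Gorenstein in Proposition~\ref{prop:invariantringGorenstein} using Knop's criterion (stability of the representation, codimension $\ge 2$ of the locus with positive-dimensional stabilizer, and triviality of the relevant characters). One then lifts Gorenstein from the associated graded to the filtered algebra via Bj\"ork's theorem. This bypasses exactly the obstacle you identified: there is no need to track how the symplectic volume form behaves along the non-free locus, because Knop's criterion is checked on the linear representation $\h\oplus\h^*$ of $\Gamma\times W$, where the character and codimension conditions are transparent. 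Your guess that a codimension count and a character argument would be involved was on the right track, but applied to the wrong variety --- the paper runs it on the degeneration $(\h\times\h^*)\git(\Gamma\times W)$, not on $Y_{\bc}$ itself.
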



\subsection*{Ring-theoretic and homological properties}

Since the idempotents $\ei$ are central in $H_{t,c}^{\Gamma}$, there is a decomposition of algebras $H_{t,c}^{\Gamma} = \bigoplus_{i = 0}^{\ell-1} \ei H_{t,c}^{\Gamma} \ei$. If $\overline{W}$ is the quotient $W / Z(W)$ then associated to each idempotent $\ei$ is a twisted group algebra $\C_{f_i} \overline{W}$ for a corresponding 2-cocycle $f_i$, and we show that the associated graded of $\ei H_{t,c}^{\Gamma} \ei$ equals $P^{\Gamma} \rtimes \C_{f_i} \overline{W}$. This implies that each of the rings $\ei H_{t,c}^{\Gamma} \ei$ is prime. Our main result on the general properties of these algebras is summarized below.  

\begin{thm*}(\ref{thm:AuslanderGorensteinH0})
	The algebras $H_{t,c}^{\Gamma}$ and $\ei H_{t,c}^{\Gamma} \ei$ are Auslander--Gorenstein and (GK)  Cohen--Macaulay. These algebras have Gelfand--Kirillov dimension $2 \dim \h - \dim \Gamma$. 
\end{thm*}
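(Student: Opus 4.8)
The strategy is to deduce all of these properties from the corresponding (known) properties of the rational Cherednik algebra $H_{t,\bc}$ itself, transported through the realisation of $H_{t,\bc}^{\Gamma}$ as a ring of invariants $H_{t,\bc}^{\Gamma}$ under the action of the connected reductive group $\Gamma \subset \SL_2$. The key structural input is the associated-graded description established earlier: $\gr H_{t,\bc}^{\Gamma} = P^{\Gamma} \rtimes W$ and $\gr(\ei H_{t,\bc}^{\Gamma}\ei) = P^{\Gamma} \rtimes \C_{f_i}\overline{W}$ with respect to the filtration inherited from $H_{t,\bc}$. Since each of the properties in question (Auslander--Gorenstein, GK-Cohen--Macaulay, exact value of GK-dimension) can be lifted from the associated graded ring to the filtered ring, it suffices to prove them for $P^{\Gamma}\rtimes W$ and $P^{\Gamma}\rtimes\C_{f_i}\overline{W}$, and then re-run the filtered argument.

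\textbf{Step 1: Reduce to the associated graded algebra.} I would invoke the standard lifting results: if $A$ is a filtered algebra whose associated graded $\gr A$ is Noetherian and Auslander--Gorenstein (resp.\ GK-Cohen--Macaulay), then so is $A$, with $\injdim A \le \injdim \gr A$ and $\GKdim A = \GKdim \gr A$; see e.g.\ the treatment via the Rees algebra (Björk, or Levasseur's ``Some properties of noncommutative regular graded rings''). Thus the whole theorem is reduced to establishing the stated properties for $P^{\Gamma}\rtimes W$ and $P^{\Gamma}\rtimes \C_{f_i}\overline{W}$.

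\textbf{Step 2: Handle the skew group ring over the ring of invariants.} Now $P = \C[\h\times\h^*]$ is a polynomial ring, so $\Gamma$ acts on $\Spec P = \h\times\h^*$ linearly and the invariant ring $P^{\Gamma}$ is a finitely generated graded $\C$-algebra. Because $\Gamma$ is reductive and acts on a smooth affine symplectic (indeed, on $V = \h\oplus\h^*$ with $\Gamma \subset \SL(V)$ via $\SL_2$), the invariant ring $P^{\Gamma}$ has at worst Gorenstein rational singularities — this is exactly the content of the cited Theorem (\ref{ratsing}), or follows from Watanabe's theorem since $\Gamma \subset \SL(V)$. A Gorenstein commutative ring which is (Cohen--Macaulay and) of finite injective dimension is Auslander--Gorenstein; and a finitely generated graded Cohen--Macaulay $\C$-algebra is automatically GK-Cohen--Macaulay with $\GKdim = \Krdim = \dim \Spec P^{\Gamma} = \dim V - \dim\Gamma = 2\dim\h - \dim\Gamma$ (the generic $\Gamma$-orbit on $V$ has dimension $\dim\Gamma$ since $\Gamma$ acts faithfully). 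Finally, forming the skew group ring with the finite group $W$ (resp.\ the twisted group algebra $\C_{f_i}\overline{W}$, which is also semisimple and finite over its centre) preserves Auslander--Gorenstein and GK-Cohen--Macaulay and does not change the GK-dimension: $P^{\Gamma}\rtimes W$ is a finitely generated free module over the central subring $(P^{\Gamma})^W = P^{W\times\Gamma}$, so one can either cite the standard permanence of these properties under finite ring extensions that are free as one-sided modules, or argue directly that $P^{\Gamma}\rtimes W$ is Auslander--Gorenstein because $\Ext^i$ over it is computed, via the (projective, since $\C W$ is semisimple) forgetful functor, from $\Ext^i$ over $P^{\Gamma}$.

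\textbf{Step 3: Assemble.} Combining Steps 1 and 2: $\gr H_{t,\bc}^{\Gamma}= P^{\Gamma}\rtimes W$ and $\gr(\ei H_{t,\bc}^{\Gamma}\ei)=P^{\Gamma}\rtimes\C_{f_i}\overline{W}$ are Noetherian, Auslander--Gorenstein, and GK-Cohen--Macaulay of GK-dimension $2\dim\h - \dim\Gamma$, hence so are $H_{t,\bc}^{\Gamma}$ and $\ei H_{t,\bc}^{\Gamma}\ei$, with the same GK-dimension (GK-dimension is preserved exactly under passing from $\gr$ to the filtered algebra). The case-by-case meaning of $2\dim\h - \dim\Gamma$ is: $2\dim\h - 1$ for $\Gamma = \T$ (the degree-zero subalgebra), and $2\dim\h - 3$ for $\Gamma = \SL_2$ (the Dunkl angular momentum subalgebra, using Theorem~\ref{sonissl2}).

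\textbf{Main obstacle.} The genuinely substantive point is the permanence of the Auslander condition (not merely finite injective dimension, but the Auslander \emph{grade} condition on $\Ext$-modules) under the two operations used: passing from $\gr A$ to $A$, and forming the skew group ring $P^{\Gamma}\rtimes W$ from $P^{\Gamma}$. The filtered-to-graded lifting of the Auslander condition is classical but requires $\gr A$ Noetherian and a careful good-filtration argument; the skew-group-ring step requires that $\C W$ be semisimple (true in characteristic $0$) so that induction/restriction along $P^{\Gamma}\hookrightarrow P^{\Gamma}\rtimes W$ are exact and send projectives to projectives, which lets one transfer $\Ext$-computations and the grade inequality. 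I would isolate these two permanence statements as lemmas (citing Björk, Levasseur, and the literature on skew group rings, e.g.\ Montgomery or the Artin--Schelter/regularity literature) rather than reprove them, and then the theorem follows formally.
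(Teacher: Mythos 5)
Your proposal follows essentially the same strategy as the paper: pass to the associated graded $\gr H_{t,\bc}^{\Gamma} = P^{\Gamma}\rtimes W$ (resp.\ $\gr(\ei H_{t,\bc}^{\Gamma}\ei) = P^{\Gamma}\rtimes \C_{f_i}\overline{W}$), establish the Gorenstein/Cohen--Macaulay properties of $P^{\Gamma}$, transfer them across the skew group ring construction, and finally lift from $\gr$ to the filtered algebra (Bj\"ork). The paper cites \cite{YiGlasgow} for the skew-group-ring permanence of Auslander--Gorenstein, \cite[Theorem~3.9]{BjorkAuslander} for the filtered-to-graded lift, and proves a bespoke grade-comparison lemma (Lemma~\ref{lem:gradeAM}) for the Cohen--Macaulay part, where you instead appeal to the forgetful functor along $P^{\Gamma}\hookrightarrow P^{\Gamma}\rtimes W$; these are the same idea.

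One small inaccuracy worth flagging: you offer Watanabe's theorem as an alternative justification that $P^{\Gamma}$ is Gorenstein because $\Gamma\subset\SL(V)$. Watanabe's theorem is a statement about \emph{finite} subgroups of $\SL(V)$; here $\Gamma$ is a positive-dimensional connected reductive group ($\T$ or $\SL_2$), so Watanabe does not apply. The correct input is Knop's criterion for invariant rings of reductive groups, which is precisely what the paper verifies in Proposition~\ref{prop:invariantringGorenstein} (conditions on stability, codimension of the non-free locus, and matching of determinant characters). Also note that Theorem~\ref{ratsing} concerns $Z(H_{\bc})^{\Gamma}$ rather than $P^{\Gamma}$ itself; the relevant statement for $P^{\Gamma}$ is Proposition~\ref{prop:invariantringGorenstein}. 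These are citation-level slips — the substance of your argument is sound and matches the paper's.
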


Our main tool is to compute the associated graded of these algebras, and their centres and lift ring-theoretic and homological properties from the associated graded. Our approach follows closely that of Brown and Changtong \cite{BrownChangtong}.

When $t = 0$, Corollary~\ref{centregln} implies that both $H_{0,c}^{\Gamma}$ and $\eo H_{0,c}^{\Gamma} \eo$ are finite modules over their respective centres. In particular, they are Polynomial Identity (PI) rings. We study the simple modules over the prime PI ring $\eo H_{0,c}^{\Gamma} \eo$. Our main result is summarized as follows. 

\begin{thm*}(\ref{thm:PIsimplemodules})
	For all parameters $c$, 
	\begin{enumerate}
		\item[(i)] $H_{0, c}^{\Gamma}$ and $\eo H_{0, c}^{\Gamma}\eo$ are PI algebras and the PI degree of the prime PI-algebra $\eo H_{0, c}^{\Gamma}\eo$ equals $|\overline{W}|$.
		\item[(ii)] Let $L$ be a simple $\eo H_{0, c}^{\Gamma}\eo$-module whose support is contained in the regular locus of $Y_{c}$. Then $L |_{\overline{W}} \cong \C \overline{W}$. 
		\item[(iii)] The regular locus of $Y_{c}$ is contained in the Azumaya locus of $\eo H_{0, c}^{\Gamma}\eo$. 
	\end{enumerate}
\end{thm*}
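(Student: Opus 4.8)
The three parts are tightly linked, and the engine behind all of them is the identification of the associated graded $\gr(\eo H_{0,\bc}^\Gamma \eo) = P^\Gamma \rtimes \C_{f_0}\overline W = P^\Gamma \rtimes \C\overline W$ (since $\eo$ is the symmetrising idempotent, $f_0$ is trivial), together with $Z(H_{0,\bc}^\Gamma) = Z(H_{0,\bc})^\Gamma \o \C Z(W)$ from Corollary~\ref{centregln} and the good behaviour of $Y_{\bc}$ from Theorem~\ref{ratsing}. The plan is to first establish (i) by a degree/rank count over the generic point, then derive (iii) from the theory of maximal orders / Azumaya loci over Cohen--Macaulay centres, and finally (ii) by restricting a simple module over an Azumaya point to $\overline W$.

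\emph{Part (i).} That $H_{0,\bc}^\Gamma$ and $\eo H_{0,\bc}^\Gamma\eo$ are PI follows from Corollary~\ref{centregln}: they are finite modules over their (Noetherian) centres. For the PI degree of $\eo H_{0,\bc}^\Gamma\eo$, which is prime (as noted after Theorem~\ref{thm:AuslanderGorensteinH0}), I would localise at the generic point of $Y_{\bc} = \Spec Z(H_{0,\bc})^\Gamma$: the PI degree equals $\sqrt{\dim_{\Frac Z}\left(\eo H_{0,\bc}^\Gamma\eo \o_{Z} \Frac Z\right)}$ where $Z = Z(H_{0,\bc})^\Gamma$. Passing to the associated graded is rank-preserving for a finite module over its centre, so this dimension equals the rank of $\gr(\eo H_{0,\bc}^\Gamma\eo) = P^\Gamma\rtimes\C\overline W$ over its centre $Z(P^\Gamma\rtimes\C\overline W)$; since $\overline W$ acts faithfully on $P^\Gamma$ (one must check the centre $Z(W)$ is exactly the kernel of $W \curvearrowright P^\Gamma$, which is where the hypothesis that $\Gamma = \T$ or $\SL_2$ and $W$ irreducible real enters), $P^\Gamma\rtimes\C\overline W$ is a classical crossed product and its rank over its centre is $|\overline W|^2$. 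Hence $\PIdeg = |\overline W|$.

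\emph{Part (iii).} The algebra $\eo H_{0,\bc}^\Gamma\eo$ is prime, affine PI, module-finite over its centre $Z \cong Z(H_{0,\bc})^\Gamma$, and by Theorem~\ref{thm:AuslanderGorensteinH0} it is (GK-)Cohen--Macaulay, while by Theorem~\ref{ratsing} the centre $Z$ is a normal Gorenstein (hence Cohen--Macaulay) domain. Under exactly these hypotheses the standard result (Brown--Goodearl, or the Cohen--Macaulay refinement used by Brown--Changtong) says that the Azumaya locus coincides with the locus where the algebra is a maximal order and is "unramified", and in particular contains the smooth locus of $Z$; more precisely, the non-Azumaya locus has codimension $\geq 2$, and over the regular locus $Y_{\bc}^{\reg}$ local freeness plus smoothness forces Azumaya. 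So I would cite this theorem with $Y_{\bc}^{\reg} \subseteq$ Azumaya locus as the conclusion. The one point requiring care is verifying the precise hypotheses of whichever version of the Azumaya-locus theorem is invoked — that $\eo H_{0,\bc}^\Gamma\eo$ is a finitely generated module over a central Cohen--Macaulay subring over which it is CM — all of which are in hand.

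\emph{Part (ii).} Let $L$ be simple with $\supp L \subseteq Y_{\bc}^{\reg}$. By (iii) the corresponding maximal ideal $\mf m$ of $Z$ lies in the Azumaya locus, so $\eo H_{0,\bc}^\Gamma\eo \o_Z Z/\mf m \cong \Mat_{|\overline W|}(\C)$ and $L$ is its unique simple module, of dimension $|\overline W|$. It remains to identify $L|_{\overline W}$. Since $\C\overline W \hookrightarrow \eo H_{0,\bc}^\Gamma\eo$ (the image of the group algebra under the embedding realising the associated graded), $L$ is an $|\overline W|$-dimensional $\overline W$-module; I would show it is the regular representation either by a character computation — the central character kills the augmentation-type elements so $\dim L = |\overline W|$ forces $L|_{\overline W}\cong\C\overline W$ by comparing with $\sum_{\chi}(\dim\chi)^2 = |\overline W|$ and using that each $\chi$ must occur (e.g. because the Azumaya quotient is the full matrix algebra and $\C\overline W$ already accounts for all of it on the graded level) — or, more robustly, by a deformation/semicontinuity argument: over the graded version $P^\Gamma\rtimes\C\overline W$ the generic simple is $\mathrm{Frac}(P^\Gamma)\o\C\overline W$, whose $\overline W$-restriction is $\C\overline W$, and flatness of $\eo H_{0,\bc}^\Gamma\eo$ over $Z$ on the Azumaya locus propagates this to $L$.

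\emph{Main obstacle.} The genuinely delicate step is pinning down the faithfulness of the $\overline W$-action on $P^\Gamma$ (equivalently, that $Z(W) = \ker(W\curvearrowright P^\Gamma)$), since the whole PI-degree count and the identification $L|_{\overline W}\cong\C\overline W$ rest on $P^\Gamma\rtimes\C\overline W$ being a classical crossed product of rank $|\overline W|^2$ over its centre. For $\Gamma=\SL_2$ one has $P^\Gamma = (\C[\h\oplus\h^*])^{\SL_2}$ and must check no nontrivial element of $\overline W$ acts trivially on it; this is a concrete invariant-theory statement about real reflection groups that I expect is handled (or cited) earlier in the paper when the twisted group algebras $\C_{f_i}\overline W$ are introduced.
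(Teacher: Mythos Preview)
Your treatment of part (i) is essentially correct and close in spirit to the paper's: both reduce to the crossed product $P^{\Gamma}\rtimes\C\overline W$ and use faithfulness of the $\overline W$-action on $P^{\Gamma}$ (which the paper establishes as Lemma~\ref{lem:Wbaropenfree}). The paper packages the reduction differently---via a flat-family argument (Corollary~\ref{cor:PIflatfamily} and Lemma~\ref{lem:centresurject}) showing the PI degree is constant in $\bc$ and then computing at $\bc=0$---but your associated-graded rank computation is a legitimate alternative once one knows $\gr Z(\eo H_{0,\bc}^{\Gamma}\eo)=Z(\gr \eo H_{0,\bc}^{\Gamma}\eo)$ from Theorem~\ref{assgrade}.

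The genuine gap is in part (iii), and it propagates to (ii). The ``standard result'' you invoke---that for a prime affine PI algebra which is Auslander--Gorenstein and Cohen--Macaulay over a normal centre the smooth locus of the centre lies in the Azumaya locus---does not exist in that generality. The Brown--Goodearl theorem and its refinements require \emph{finite global dimension} (or Auslander-\emph{regularity}), which $\eo H_{0,\bc}^{\Gamma}\eo$ need not have; indeed the paper's Remark immediately after Theorem~\ref{thm:PIsimplemodules} flags exactly this issue. Your fallback claim that ``local freeness plus smoothness forces Azumaya'' is also false: local freeness of $A$ over $Z$ at a regular point $\mf m$ does not prevent $A/\mf m A$ from having a nontrivial radical (maximal orders over regular rings can still be ramified). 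So there is no abstract shortcut here.

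The paper's route is the one you are missing: it proves (ii) \emph{first}, using the specific bimodule $\eo H_{\bc}^{\Gamma} e$. By Corollary~\ref{cor:CMmodule} this bimodule is Cohen--Macaulay over the commutative ring $e H_{\bc}^{\Gamma} e\cong Z(H_{\bc})^{\Gamma}$, hence locally free over $Y_{\bc}^{\reg}$; it is $\overline W$-equivariant (with $\overline W$ acting trivially on the base), and by Lemma~\ref{lem:genericsimpleW} its generic fibre is $\C\overline W$, so every fibre over the connected regular locus is $\C\overline W$. The double centralizer property (Theorem~\ref{thm:doublecetralHGamma}) then identifies each fibre with a \emph{simple} $\eo H_{\bc}^{\Gamma}\eo$-module. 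This gives (ii), and (iii) follows immediately because one has exhibited, at every regular point, a simple module of dimension equal to the PI degree. In other words, the logical order is (ii) $\Rightarrow$ (iii), not the reverse, and the work is in the bimodule/double-centralizer argument rather than in any general Azumaya-locus theorem.
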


We do not know if the two loci in Theorem~\ref{thm:PIsimplemodules}(iii) are actually equal. 

We also examine the centre of the degree zero subalgebra for $t\neq 0$. Since the centre of the rational Cherednik algebra is trivial in this case, one would expect that the centre of $H_{t,c}^{\mf{gl}(n)}$ is also small when $t\neq 0$. This is indeed the case (cf. \cite{Feiginangular}, where one central generator was identified in the case of real reflection groups).


\begin{prop*}(\ref{centrefortnot})
For $t \neq 0$, the centre of $H_{t,c}^{\mf{gl}(n)}$ equals $\C[\mr{eu}_{c}] \o \C Z(W)$,
where $\eu_{c} \in H_{t,c}$ is the Euler element.
\end{prop*}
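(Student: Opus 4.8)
The plan is to compute the centre of $H_{t,\bc}^{\mf{gl}(n)} = H_{t,\bc}^{\T}$ for $t \neq 0$ by combining the grading/filtration techniques used elsewhere in the paper with the well-known description of the centre of $H_{t,\bc}$ itself. Recall that for $t\neq 0$ the rational Cherednik algebra $H_{t,\bc}$ has trivial centre $\C$, but it carries the Euler element $\eu_{\bc}$, which spans a copy of the Lie algebra $\mf{sl}(2)$ together with $\sum_i x_i^2$ and $\sum_i y_i^2$ acting by the triangular decomposition; in particular $\eu_{\bc}$ is the semisimple element of this $\mf{sl}(2)$-triple and $\ad(\eu_{\bc})$ induces precisely the $\Z$-grading on $H_{t,\bc}$ whose degree-zero part is $H_{t,\bc}^{\mf{gl}(n)}$. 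Consequently $\eu_{\bc}$ is central in $H_{t,\bc}^{\mf{gl}(n)}$, and so is $\C Z(W)$ (the group algebra of the centre of $W$, which clearly commutes with the $x_iy_j$ and with $W$); this gives the inclusion $\C[\eu_{\bc}] \o \C Z(W) \subseteq Z(H_{t,\bc}^{\mf{gl}(n)})$.

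For the reverse inclusion, I would first reduce to the block $\eo H_{t,\bc}^{\mf{gl}(n)} \eo$ using the central idempotent decomposition $H_{t,\bc}^{\mf{gl}(n)} = \bigoplus_i \ei H_{t,\bc}^{\mf{gl}(n)} \ei$ coming from $\C Z(W)$: it suffices to show that each $Z(\ei H_{t,\bc}^{\mf{gl}(n)} \ei)$ is spanned by the image of $\C[\eu_{\bc}]$. Then I would pass to associated graded with respect to the filtration by order of differential operators (the one for which $\gr H_{t,\bc} = P \rtimes W$). Since taking $\T$-invariants is exact and commutes with this filtration, $\gr\big(\ei H_{t,\bc}^{\mf{gl}(n)} \ei\big) = \ei\big(P^{\T} \rtimes W\big)\ei$, which the paper has already identified with $P^{\T}\rtimes \C_{f_i}\overline{W}$. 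Now $Z(\gr A) \supseteq \gr Z(A)$ always, so it is enough to show that the centre of $P^{\T} \rtimes \C_{f_i}\overline{W}$ is exactly $\gr(\C[\eu_{\bc}]\ei) = \C[\sigma]\ei$, where $\sigma$ is the principal symbol of $\eu_{\bc}$, i.e. the quadratic $\T$-invariant $\sum_i x_i\xi_i \in P^{\T}$. This is a concrete commutative-algebra computation: the centre of $P^{\T}\rtimes \C_{f_i}\overline{W}$ is the subalgebra of $\overline{W}$-invariant (twisted by $f_i$) elements of $P^{\T}$, i.e. $(P^{\T})^{\overline{W},f_i}$, and one checks that for each $i$ this invariant ring is the polynomial ring $\C[\sigma]$ — for $i$ corresponding to the trivial twist this is the classical statement that the $\T$-invariants of $P$ that are also $W$-invariant form $\C[\sum x_i\xi_i]$ (using irreducibility of the reflection representation, so that the only $W$-invariant quadratic form on $\h$, resp. $\h^*$, is the standard one, and mixed $W\times\T$-invariants are generated by $\sum x_i\xi_i$), and for the remaining $i$ the $f_i$-twisted invariants still reduce to a single polynomial generator after the same analysis.

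The main obstacle is the last step — showing $(P^{\T})^{\overline{W},f_i} = \C[\sigma]$ for every idempotent, including the genuinely twisted blocks. For the untwisted block this is essentially classical invariant theory for $\T \times W$ acting on $\h \oplus \h^*$ with $\h$ irreducible, but for the twisted blocks one must understand the invariants of a twisted group algebra action, and here the irreducibility of $W$ on $\h$ (and the fact that $Z(W)$ acts by $\pm 1$, so $\overline{W} = W/Z(W)$) should again force the ring of invariants to be generated in the lowest degree in which any invariant exists, namely degree $2$, giving $\C[\sigma]$. Once this is in hand, $\gr Z(\ei H_{t,\bc}^{\mf{gl}(n)} \ei) \subseteq \C[\sigma]\ei$ forces $\dim_{\C}$ in each graded piece to be at most that of $\C[\eu_{\bc}]\ei$, and combined with the inclusion already established we get $Z(\ei H_{t,\bc}^{\mf{gl}(n)} \ei) = \C[\eu_{\bc}]\ei$. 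Summing over $i$ yields $Z(H_{t,\bc}^{\mf{gl}(n)}) = \C[\eu_{\bc}] \o \C Z(W)$, noting that $\eu_{\bc}$ is genuinely a polynomial generator (it is not algebraic over $\C$, as its symbol $\sigma$ is not), so the sum is as claimed.
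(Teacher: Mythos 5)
The proposal fails at a crucial step. You claim that the centre of $P^{\T} \rtimes \C_{f_i}\overline{W}$ is just $\C[\sigma]$, where $\sigma$ is the symbol of $\eu_{\bc}$. This is false: the centre of this ring is $(P^{\T})^{\overline{W}} = P^{\T \times W}$, the full ring of $\T\times W$-invariant functions on $\h \oplus \h^*$, which has Krull dimension $2\dim\h - 1$ and is much larger than $\C[\sigma]$. (This is exactly what Proposition~\ref{prop:centreskewgroup1} computes, and it is the whole point of the $t=0$ case in Theorem~\ref{assgrade}.) So from the weak containment $\gr Z(A) \subseteq Z(\gr A)$ alone you only get $\gr Z(H_{t,\bc}^{\mf{gl}(n)}) \subseteq P^{\T\times W}\otimes\C Z(W)$, which is not nearly enough to pin the centre down to $\C[\eu_{\bc}]\otimes\C Z(W)$.

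What the paper uses instead, and what you are missing, is that for $t\neq 0$ the associated graded $\gr H_{t,\bc}^{\T}$ is a Poisson algebra (because $H_{t,\bc}^{\T}$ is a quantization), and the symbol of any central element of $H_{t,\bc}^{\T}$ lands not just in the centre of $\gr H_{t,\bc}^{\T}$ but in its \emph{Poisson} centre. This is a genuinely stronger constraint and it is where the hypothesis $t\neq 0$ enters: the commutator $[f,g] = t\{f,g\} + \text{lower order}$, so the vanishing of $[f,-]$ on symbols forces $\{\sigma(f), -\} = 0$. Lemma~\ref{lem:PoissoncentreP0} (using Lemma~\ref{lem:Casimirfiniteextension} to pass from $P^{\T}$ to $P^{\T\times W}$) then shows the Poisson centre of $P^{\T\times W}\otimes\C Z(W)$ is exactly $\C[\eu_0]\otimes\C Z(W)$, after which the induction on degree that you sketch in the final paragraph does go through. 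Without the Poisson-centre step the argument collapses, and the assertion that the ordinary invariant ring $(P^{\T})^{\overline{W}}$ is generated in degree two by $\sigma$ is incorrect for every $W$ with $\dim\h \geq 2$.
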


\subsection*{(Quantum) Hamiltonian reduction}

The action of $\T$ on the generalized Calogero--Moser space $\Spec Z(H_{0,c})$ is Hamiltonian and we have seen that the centre of $H_{0,c}^{\T}$ is essentially given by the ring of invariants $Z(H_{0,c})^{\T}$. In applications to symplectic singularities, it is natural to consider instead the Hamiltonian reduction of $\Spec Z(H_{0,c})$ with respect to $\T$. This is done in Section~\ref{sec:QHRtorus}. For $\zeta \in \C = (\mr{Lie} \, \T)^*$, we define the algebra of quantum Hamiltonian reduction to be the quotient
\[
A_{t,c,\zeta} = A_{t,c,\zeta}(W) = \eo H_{t,c}^{\mf{gl}(n)}\eo / \langle \eo \mr{eu}_{c} - \zeta \rangle,
\]
and the Hamiltonian reduction of $\Spec Z(H_{0,c})$  is
\[
\mu^{-1}(\zeta) \git \, \T = \Spec \left( Z(H_{0,c})^{\T} / \langle \mr{eu}_{c} - \zeta \rangle \right),
\]
where $\mu$ is the moment map. Let $e=\frac{1}{|W|} \sum_{w\in W} w$. As a consequence of the fact that the double centralizer property holds for the $(A_{t,c,\zeta},eA_{t,c,\zeta}e)$-bimodule $A_{t,c,\zeta} e$,  (see Theorem~\ref{thm:doublecetralH0}), we show that:

\begin{cor*}(\ref{cor:centreHamredT})
The centre of $A_{0,c,\zeta}$ is isomorphic to $Z(H_{0,c})^{\T} / \langle \mr{eu}_{c} - \zeta \rangle$.     
\end{cor*}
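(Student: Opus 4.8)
The plan is to deduce the statement from the double centralizer property for the $(A_{t,\bc,\zeta}, eA_{t,\bc,\zeta}e)$-bimodule $A_{t,\bc,\zeta}e$ (Theorem~\ref{thm:doublecetralH0}), exactly as $Z(\eo H_{0,\bc}^{\mf{gl}(n)}\eo) \cong Z(H_{0,\bc})^{\T}$ was deduced earlier. Recall that $e = \frac{1}{|W|}\sum_{w\in W} w$ is the symmetrising idempotent of $W$, while $\eo = \eo_0$ is the symmetrising idempotent of $\C Z(W)$; these are different idempotents, and $eA_{t,\bc,\zeta}e$ should be identified with the spherical subalgebra $e(H_{0,\bc})e$ modulo the reduced Euler relation. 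First I would record the general ring-theoretic fact: if $B$ is an algebra, $f \in B$ an idempotent, and the natural map $B \to \End_{fBf}(Bf)$ is an isomorphism, then restriction along $f(-)f$ gives an isomorphism $Z(B) \iso Z(fBf)$, with inverse $z \mapsto$ the unique element of $B$ acting on $Bf$ as $z$ acts on $Bf$ (here one also uses that $Bf$ is a faithful $B$-module, which holds since $B$ is prime, or simply because $f\neq 0$ in each block). Applying this with $B = A_{0,\bc,\zeta}$ and $f = e$ reduces the claim to identifying $Z(eA_{0,\bc,\zeta}e)$.

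Next I would identify $eA_{0,\bc,\zeta}e$. By definition $A_{0,\bc,\zeta} = \eo H_{0,\bc}^{\mf{gl}(n)}\eo / \langle \eo\,\mr{eu}_{\bc} - \zeta\rangle$, and since $e$ and $\eo$ are central idempotents coming from the (commuting) group algebras of $W$ and $Z(W)$ respectively, conjugating by $e$ commutes with the quotient, so $eA_{0,\bc,\zeta}e = e\eo H_{0,\bc}^{\mf{gl}(n)}\eo e \big/ \langle e\eo\,\mr{eu}_{\bc} - \zeta\rangle$. Now $e\eo H_{0,\bc}^{\mf{gl}(n)}\eo e = e H_{0,\bc}^{\mf{gl}(n)} e = e (H_{0,\bc})^{\T} e = e(H_{0,\bc})e \cap (\text{degree zero part})$; more cleanly, $e(H_{0,\bc})^{\T}e = \big(e(H_{0,\bc})e\big)^{\T}$, the $\T$-invariants of the spherical subalgebra. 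Since $\mr{eu}_{\bc}$ generates (the Lie algebra of) $\T$ in the Hamiltonian sense and the spherical subalgebra $e(H_{0,\bc})e$ is commutative — it is $Z(H_{0,\bc})$ via the Satake isomorphism at $t=0$ — we get $eA_{0,\bc,\zeta}e \cong Z(H_{0,\bc})^{\T}/\langle \mr{eu}_{\bc} - \zeta\rangle$, which is already commutative, hence equal to its own centre. Combining with the first step gives $Z(A_{0,\bc,\zeta}) \cong Z(H_{0,\bc})^{\T}/\langle \mr{eu}_{\bc} - \zeta\rangle$, as claimed. (Strictly, one should note $\mr{eu}_{\bc}$ in this last quotient is the image of the Euler element under Satake; I would make that identification explicit, since the statement writes $\langle \mr{eu}_{\bc} - \zeta\rangle$ on both sides.)

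The main obstacle is ensuring that the double centralizer property of Theorem~\ref{thm:doublecetralH0} is being invoked with the correct idempotent and that the quotient by the Euler relation is compatible with all the reductions: one must check that $\eo\,\mr{eu}_{\bc} - \zeta$ is central and a nonzerodivisor on $\eo H_{0,\bc}^{\mf{gl}(n)}\eo e$ so that $e$ remains faithful after passing to the quotient (equivalently, that the quotient does not kill the block containing $e$), and that the double centralizer property descends to the quotient $A_{0,\bc,\zeta}$ — this is presumably exactly the content of Theorem~\ref{thm:doublecetralH0}, so the real work has been done there. A secondary point to be careful about is that $e(H_{0,\bc})e$ is genuinely the commutative spherical subalgebra and that its $\T$-invariants are $Z(H_{0,\bc})^{\T}$ (this follows from the $t=0$ Satake isomorphism together with $\T$-equivariance), after which the computation is formal.
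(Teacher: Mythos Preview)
Your proposal is correct and follows essentially the same route as the paper: the paper records your first step as Corollary~\ref{cor:Satakequot} (the Satake isomorphism $Z(A_{t,\bc,\zeta})\iso Z(eA_{t,\bc,\zeta}e)$, derived exactly as you indicate from Theorem~\ref{thm:doublecetralH0}), and then, since at $t=0$ the algebra $eH_{0,\bc}^{\T}e\cong Z(H_{0,\bc})^{\T}$ is already commutative, the quotient $eA_{0,\bc,\zeta}e$ is its own centre and equals $Z(H_{0,\bc})^{\T}/\langle \eu_{\bc}-\zeta\rangle$. Your concerns in the last paragraph are all absorbed into the statement of Theorem~\ref{thm:doublecetralH0}, which is formulated directly for $A_{t,\bc,\zeta}$, so no extra compatibility checks are required.
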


Rational Cherednik algebras provide a means to study Poisson deformations of the symplectic quotient singularity $(\h \times \h^*)/W$ because the centre of $H_{0,0}$ can be identified with the ring of functions on this quotient. Let $\mc{O}_{\mr{min}} \subset \mf{gl}(n)$ be the minimal nilpotent orbit with respect to the adjoint action of $\GL(n)$. Corollary \ref{cor:centreHamredT} implies that the centre of $A_{0,0,0}$ can be identified with the ring of functions on the symplectic singularity $\overline{\mc{O}}_{\mr{min}}/W$ and hence $A_{t,c,\zeta}$ can be used to study Poisson deformations of this singularity. Our first result in this direction is the following. 

\begin{thm*}(\ref{thm:filteredquantofA0})
    For $t \neq 0$, the algebra $e A_{t,c,\zeta} e$ is a filtered quantization of $\overline{\mc{O}}_{\mr{min}}/W$. 
\end{thm*}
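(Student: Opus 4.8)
The plan is to combine the double centralizer property (Theorem~\ref{thm:doublecetralH0}) with the identification of centres from Corollary~\ref{cor:centreHamredT}, and then verify the three defining properties of a filtered quantization: the associated graded is commutative, it is isomorphic to $\C[\overline{\mc{O}}_{\mr{min}}/W]$ as a Poisson algebra, and the filtration is good (the deformation is flat). First I would set up the filtration on $A_{t,\bc,\zeta}$. The rational Cherednik algebra $H_{t,\bc}$ carries its standard filtration (with $\h$, $\h^*$ in degree $1$ and $W$ in degree $0$), and since $\eo$ and $\mr{eu}_{\bc}$ are of degree $0$ and $2$ respectively, this induces a filtration on $\eo H_{t,\bc}^{\mf{gl}(n)}\eo$ and hence on the quotient $A_{t,\bc,\zeta}$. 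The $\T$-invariant structure means the relevant associated graded is $\eo(P^{\T}\rtimes W)\eo$ modulo the image of the classical moment map $\mu - 0$, i.e. $\eo\bigl(\C[\mu^{-1}(0)]\rtimes W\bigr)\eo$; here the key input is Theorem~\ref{assgrade} giving $\gr Z(H_{0,\bc}^{\T}) = Z(P^{\T}\rtimes W)$, from which $\gr(\eo H_{0,\bc}^{\mf{gl}(n)}\eo) = \eo(P^{\T}\rtimes W)\eo = (P^{\T})^W = \C[(\h\times\h^*)/\!\!/\T/W]$. Applying $\gr$ to the Hamiltonian reduction relation identifies $\gr(eA_{0,\bc,\zeta}e)$ with $\C[\mu^{-1}(0)\git\T/W] = \C[\overline{\mc{O}}_{\mr{min}}/W]$ as an algebra, using the known isomorphism $\mu^{-1}(0)\git\T \cong \overline{\mc{O}}_{\mr{min}}$ for the torus acting on $T^*\C^n = \h\times\h^*$.

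Next I would handle the $t\neq 0$ point. The relevant statement for a filtered quantization is that $e A_{t,\bc,\zeta} e$ is a filtered algebra whose associated graded is the Poisson algebra $\C[\overline{\mc{O}}_{\mr{min}}/W]$, with the Poisson bracket on $\gr$ matching the one induced from the symplectic form. Flatness in $t$ (and more precisely, that the natural surjection $\gr(eA_{t,\bc,\zeta}e)\twoheadrightarrow \C[\overline{\mc{O}}_{\mr{min}}/W]$ is an isomorphism) is the crucial point: for this I would use the fact that $eH_{t,\bc}e$ is a filtered quantization of $\C[(\h\times\h^*)/W]$ (Etingof--Ginzburg), that quantum Hamiltonian reduction commutes with taking associated graded when the moment map element is a nonzerodivisor, and that $\eo\mr{eu}_{\bc} - \zeta$ has the principal symbol $\mr{eu} \in \C[(\h\times\h^*)^{\T}/W]$, which is a regular element (its zero locus $\mu^{-1}(0)$ is a complete intersection, being the affine cone over $\overline{\mc{O}}_{\mr{min}}$). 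The Poisson structure on the associated graded is inherited from the commutator on $H_{t,\bc}$ scaled by $t^{-1}$, and matches the Kirillov--Kostant bracket on $\overline{\mc{O}}_{\mr{min}}$ restricted to $W$-invariants; this follows from functoriality of Hamiltonian reduction since the $\T$-action on $\h\times\h^*$ is the restriction of the Hamiltonian $\GL(n)$-action whose reduction at $0$ yields precisely $\overline{\mc{O}}_{\mr{min}}$ with its KKS bracket.

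The main obstacle I expect is controlling the flatness of the quantum Hamiltonian reduction, i.e. proving that $\langle \eo\mr{eu}_{\bc} - \zeta\rangle$ is exactly the kernel of the expected surjection and that $eA_{t,\bc,\zeta}e$ has the predicted size, rather than being accidentally smaller. The clean way around this is to first establish the classical statement at $t=0$ (that $\mr{eu}_{\bc} - \zeta$ is a nonzerodivisor in $Z(H_{0,\bc})^{\T}$, so that $Z(H_{0,\bc})^{\T}/\langle\mr{eu}_{\bc}-\zeta\rangle$ is a flat deformation of $\C[\overline{\mc{O}}_{\mr{min}}/W]$ — this uses the rational/Gorenstein singularity results of Theorem~\ref{ratsing} and standard properties of $\overline{\mc{O}}_{\mr{min}}$ as a complete intersection of codimension one in its ambient cone) and then bootstrap to $t\neq 0$ by a PBW/associated-graded argument, noting that the principal symbol of $\eo\mr{eu}_{\bc}-\zeta$ is independent of $t$ and $\zeta$. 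Once flatness is in hand, combining it with Corollary~\ref{cor:centreHamredT} and the double centralizer property (Theorem~\ref{thm:doublecetralH0}), which identifies $eA_{t,\bc,\zeta}e$ with the centre of $A_{t,\bc,\zeta}$ up to the idempotent and lets us transport the filtration cleanly, completes the proof. A final check is that the filtration on $eA_{t,\bc,\zeta}e$ is exhaustive and separated and that $\C[\overline{\mc{O}}_{\mr{min}}/W]$ is non-negatively graded and generated in the appropriate degrees, which is immediate from the corresponding facts for $(\h\times\h^*)/W$ and the fact that $\mr{eu}_{\bc}$ has degree $2$.
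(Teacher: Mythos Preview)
Your proposal contains the right core idea --- that the associated graded of $eA_{t,\bc,\zeta}e$ should be $(P^{\T}/\langle\mr{eu}_0\rangle)^W \cong \C[\overline{\mc{O}}_{\mr{min}}/W]$, with the key technical point being that the symbol $\mr{eu}_0$ of $\eo\mr{eu}_{\bc}-\zeta$ is regular so that taking associated graded commutes with quotienting (this is exactly Lemma~\ref{lem:assgrregularelement} and Lemma~\ref{lem:assgrAW}). But the surrounding scaffolding is misdirected in several places.

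First, you invoke Corollary~\ref{cor:centreHamredT} and the double centralizer property as the organizing principle, but Corollary~\ref{cor:centreHamredT} is a statement about $t=0$; for $t\neq 0$ the centre of $A_{t,\bc,\zeta}$ is just $\C$, so this gives you nothing. The double centralizer property is likewise not needed here: it does \emph{not} identify $eA_{t,\bc,\zeta}e$ with the centre of $A_{t,\bc,\zeta}$ (that is the Satake isomorphism, and only at $t=0$ does it make $eA_{0,\bc,\zeta}e$ commutative). Second, you write $\eo(P^{\T}\rtimes W)\eo = (P^{\T})^W$, but this confuses the idempotent $\eo\in\C Z(W)$ with the trivial idempotent $e\in\C W$: in fact $\eo(P^{\T}\rtimes W)\eo = P^{\T}\rtimes\overline{W}$, which is not commutative. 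It is $e(P^{\T}\rtimes W)e = P^{\T\times W}$ that you want. Third, bootstrapping from $t=0$ via Theorem~\ref{ratsing} is unnecessary: regularity of $\mr{eu}_0$ in $P^{\T}$ (hence in $P^{\T\times W}$) is elementary, and the filtration argument works uniformly in $t$.

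The paper's argument is shorter and avoids all of this: start from the Etingof--Ginzburg fact that $eH_{t,\bc}e$ is a filtered quantization of $P^W$; restrict to $\T$-invariants (the filtration restricts, and $P^{\T\times W}\subset P^W$ is a Poisson subalgebra) to see that $eH_{t,\bc}^{\T}e$ quantizes $P^{\T\times W}$; then quotient by $e\mr{eu}_{\bc}-\zeta$, using Lemma~\ref{lem:assgrAW} to compute the associated graded as $(P^{\T}/\langle\mr{eu}_0\rangle)^W$, and finish with Lemma~\ref{lem:isoHamredminimialnilp}. Your middle paragraph essentially rediscovers this chain, but the framing around it (centres, double centralizer, bootstrapping from $t=0$) should be dropped.
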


The algebra $A_{t,c,\zeta}$ can also be identified with a summand of the global sections of a sheaf of Cherednik algebras on the projective space $\mathbb{P}(\h)$, as studied in \cite{ProjAffine}. Since the latter sheaf of algebras has finite global dimension and localization holds for Weil generic parameters, we deduce that:

\begin{thm*}(\ref{thm:finiteglobaldimangular})
	For Weil generic $(t,c,\zeta)$, $A_{t,c,\zeta}$ has finite global dimension. 
\end{thm*}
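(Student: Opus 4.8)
The plan is to deduce this from the localization theory for sheaves of Cherednik algebras on $\mathbb{P}(\h)$ developed in \cite{ProjAffine}. By the identification recalled above, $A_{t,\bc,\zeta}$ is a direct summand --- cut out by a central idempotent --- of the global sections algebra $B := \Gamma(\mathbb{P}(\h), \mathscr{H})$, where $\mathscr{H} = \mathscr{H}_{t,\bc,\zeta}$ denotes the sheaf of Cherednik algebras on $\mathbb{P}(\h)$ attached to $(t,\bc,\zeta)$. A direct summand of an algebra of finite global dimension again has finite global dimension --- its module category is a direct factor of the ambient one --- so it suffices to prove $\gldim B < \infty$. At the outset I would record, citing \cite{ProjAffine}, two facts: that $\mathscr{H}$ is a Noetherian sheaf of algebras on the smooth projective variety $\mathbb{P}(\h)$ of finite homological dimension $h$ --- locally on $\mathbb{P}(\h)$ it is Morita equivalent, up to the action of a finite stabiliser, to a twisted sheaf of differential operators, so its stalks are regular of bounded dimension --- and that $B$ is therefore a Noetherian algebra.

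The substantive input is the localization theorem: for Weil generic $(t,\bc,\zeta)$ the global sections functor
\[
\Gamma = \Gamma(\mathbb{P}(\h), -) \colon\ \mathscr{H}\text{-mod}_{\mathrm{coh}} \ \longrightarrow\ B\text{-mod}_{\mathrm{f.g.}}
\]
is exact --- equivalently, $\mathbb{P}(\h)$ is ``$\mathscr{H}$-affine'', i.e. $H^{>0}(\mathbb{P}(\h), \mathscr{M}) = 0$ for every coherent $\mathscr{H}$-module $\mathscr{M}$ --- and is an equivalence of abelian categories, with quasi-inverse $\mathscr{H}\otimes_B(-)$. This is the analogue, established in \cite{ProjAffine}, of the Beilinson--Bernstein and Ginzburg--Guay--Opdam--Rouquier localization theorems, and Weil genericity is precisely the hypothesis guaranteeing the required cohomology vanishing on a co-meagre locus of parameters. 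I would invoke this result rather than reprove it.

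Granting it, the conclusion is formal. Since $B$ is Noetherian it suffices to bound $\Ext^i_B(M,N)$ for finitely generated $B$-modules $M, N$. Under the equivalence $M$ corresponds to the coherent $\mathscr{H}$-module $\mathscr{M} := \mathscr{H}\otimes_B M$, and as $\Ext$ is intrinsic to an abelian category, $\Ext^i_B(M,N) \cong \Ext^i_{\mathscr{H}}(\mathscr{M},\mathscr{N})$, computed in the category of (quasi-)coherent $\mathscr{H}$-modules. The local-to-global spectral sequence
\[
H^p\big(\mathbb{P}(\h),\, \mathcal{E}\!xt^q_{\mathscr{H}}(\mathscr{M},\mathscr{N})\big)\ \Longrightarrow\ \Ext^{p+q}_{\mathscr{H}}(\mathscr{M},\mathscr{N})
\]
has $\mathcal{E}\!xt^q_{\mathscr{H}}(\mathscr{M},\mathscr{N}) = 0$ for $q > h$, by the finite homological dimension of $\mathscr{H}$, and $H^p = 0$ for $p > \dim\mathbb{P}(\h)$, by Grothendieck vanishing on the Noetherian space $\mathbb{P}(\h)$. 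Hence $\Ext^i_B(M,N) = 0$ for $i > h + \dim\mathbb{P}(\h)$, so $\gldim B \le h + \dim\mathbb{P}(\h) < \infty$, whence $\gldim A_{t,\bc,\zeta} < \infty$.

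The main obstacle --- the reason the argument rests on \cite{ProjAffine} --- is the localization theorem for Weil generic parameters: establishing the cohomology vanishing $H^{>0}(\mathbb{P}(\h),\mathscr{M}) = 0$ on a co-meagre locus and upgrading it to an equivalence of abelian categories. That is the genuinely geometric step (carried out via the contracting $\C^\times$-action and an Euler-grading argument in the spirit of \cite{EG}), whereas the finite homological dimension of $\mathscr{H}$ is a routine local computation and the realization of $A_{t,\bc,\zeta}$ as a summand of $B$ is already in place. I would also note that the argument only bounds $\gldim A_{t,\bc,\zeta}$ by $h + \dim\mathbb{P}(\h)$; it does not pin down the precise value.
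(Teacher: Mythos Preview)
Your proposal is correct and follows essentially the same route as the paper: identify $A_{t,\bc,\zeta}$ as a direct summand (via a central idempotent) of the global sections algebra of the sheaf of Cherednik algebras on $\mathbb{P}(\h)$, invoke the localization equivalence from \cite{ProjAffine} for Weil generic parameters, and then bound global Ext by the local-to-global spectral sequence using finite homological dimension of the stalks together with Grothendieck vanishing. The only cosmetic differences are that the paper first fixes $t=1$ (harmless, since Weil generic $t$ is nonzero and one rescales) and cites \cite[Lemma~5.4.1]{ProjAffine} explicitly for the identification of global sections with $H_{1,\bc}^{\T}/\langle \eu_{\bc}-\zeta\rangle$, whereas you treat this as already established; your Grothendieck bound $\dim\mathbb{P}(\h)$ is in fact slightly sharper than the paper's $2\dim_{\C}\mathbb{P}(\h)$, but either suffices.
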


At $t = 0$, we can consider the commutative algebras $Z(A_{0,c,\zeta})$ as forming a flat family. More precisely, let $\mf{\tilde c}$ be the parameter space of all possible pairs $(c,\zeta)$. Then there exists a $\C[\mf{\tilde c}]$-algebra $A_0$ such that
$A_{0, c, \zeta} = A_0 \otimes_{\C[\mf{\tilde c}]} \C$, where $\C[\mf{\tilde c}]$ acts on $\C$ by evaluation. More importantly, one has the flatness of the centre, that is 
$Z(A_{0, c, \zeta}) = Z(A_0) \otimes_{\C[\mf{\tilde c}]} \C$.


As one would hope, this gives rise to Poisson deformations of the symplectic singularity $\overline{\mc{O}}_{min}/W$. 


\begin{thm*}(\ref{sympsing})
	$\Spec Z(A_{0}) \to \mf{\tilde c}$ is a graded Poisson deformation of the symplectic singularity $\overline{\mc{O}}_{min}/W$.  
\end{thm*}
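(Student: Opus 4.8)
The plan is to realise $\Spec Z(A_0)\to\mf{c}$ as the universal Hamiltonian reduction of the generalized Calogero--Moser space by $\T$, and then to read off each defining property of a graded Poisson deformation. By Corollary~\ref{cor:centreHamredT}, $Z(A_{0,\bc,\zeta})\cong Z(H_{0,\bc})^{\T}/\langle\mr{eu}_{\bc}-\zeta\rangle$ for all $(\bc,\zeta)\in\mf{c}$; writing $\C[\mf{c}]=\C[\bc][\zeta]$ and letting $Z(H_{0,\bullet})$ denote the centre of the $t=0$ rational Cherednik algebra over the polynomial ring $\C[\bc]$ in the deformation parameters, I would take
\[
Z(A_0)\;:=\;\bigl(Z(H_{0,\bullet})^{\T}[\zeta]\bigr)\big/\langle\mr{eu}_{\bc}-\zeta\rangle
\]
with its evident $\C[\mf{c}]$-algebra structure, so that $\Spec Z(A_0)=\mu^{-1}(\zeta)\git\T$ over $\mf{c}$. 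It then remains to check: (i) flatness over $\C[\mf{c}]$; (ii) a $\C[\mf{c}]$-linear Poisson bracket with $\C[\mf{c}]$ Poisson-central; (iii) a compatible contracting $\C^{\times}$-action homogenising the bracket; and (iv) that the fibre over $0\in\mf{c}$ is $\overline{\mc{O}}_{\min}/W$ with its symplectic-singularity Poisson structure.

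For (i) and (iii) I would note that the substitution $\zeta\mapsto\mr{eu}_{\bc}$ identifies $Z(A_0)$ with $Z(H_{0,\bullet})^{\T}$ as a ring, and that this ring is graded for the Cherednik grading ($\deg\h=\deg\h^{*}=1$, $\deg W=0$, $\deg\bc=\deg\zeta=2$), is flat over $\C[\bc]$ (its associated graded is $P^{W}\o\C[\bc]$, using $\gr H_{0,\bc}=P\rtimes W$ and the $t=0$ Satake isomorphism), and is a domain (the generalized Calogero--Moser space is irreducible and reduced, and $\T$-invariants, being a direct summand, inherit both properties). Since $\mr{eu}_{\bc}$ restricts to a non-constant element of the domain $Z(H_{0,\bc})^{\T}$ for every $\bc$, the element $\mr{eu}_{\bc}-\zeta$ is a non-zero-divisor on each fibre over $\C[\bc]$, so $Z(A_0)$ is torsion-free in the $\zeta$-direction fibrewise; the standard criterion for flatness over a polynomial extension of the base then promotes $\C[\bc]$-flatness to $\C[\mf{c}]$-flatness. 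The grading makes $\Spec Z(A_0)\to\mf{c}$ a $\C^{\times}$-equivariant morphism contracting both total space and base to their respective cone points.

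For (ii) and (iv): the degree $-2$ Poisson bracket on $Z(H_{0,\bc})$ coming from the deformation $H_{t,\bc}$ is defined over $\C[\bc]$ with $\bc$ Poisson-central and is $\T$-equivariant, hence restricts to $Z(H_{0,\bullet})^{\T}$; because $\mr{eu}_{\bc}$ is the moment map for the Hamiltonian $\T$-action on $\Spec Z(H_{0,\bc})$, it Poisson-commutes with every $\T$-invariant, so $\mr{eu}_{\bc}-\zeta$ is Poisson-central, the ideal $\langle\mr{eu}_{\bc}-\zeta\rangle$ is a Poisson ideal, and $Z(A_0)$ inherits a Poisson bracket, homogeneous of weight $-2$, over the Poisson-central base $\C[\mf{c}]$ --- this is precisely the graded Poisson reduction. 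Evaluating at $0\in\mf{c}$ gives, by Corollary~\ref{cor:centreHamredT}, $Z(A_{0,0,0})=P^{W\times\T}/\langle\mr{eu}_{0}\rangle$; since $P^{\T}=\C[x_iy_j]$ is the coordinate ring of the rank $\le 1$ matrices in $\mf{gl}(n)$ and $\mr{eu}_{0}=\sum_i x_iy_i$ is the trace, this zero locus is the reduced orbit closure $\overline{\mc{O}}_{\min}$ (as identified before Theorem~\ref{thm:filteredquantofA0}), and taking $W$-invariants --- which is exact and commutes with the quotient, $\mr{eu}_{0}$ being $W$-invariant --- gives $Z(A_{0,0,0})=\C[\overline{\mc{O}}_{\min}/W]$, carrying the Kirillov--Kostant--Souriau bracket of $\overline{\mc{O}}_{\min}$ pushed down to $W$-invariants. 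As $W$ acts on $\overline{\mc{O}}_{\min}$ by conjugation through $\GL(\h)$, hence by symplectomorphisms of $\mc{O}_{\min}$, Beauville's theorem shows $\overline{\mc{O}}_{\min}/W$ is a symplectic singularity. Combining (i)--(iv) yields the statement.

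The step I expect to be the main obstacle is making the flatness over the full base $\mf{c}$ (rather than merely over $\C[\bc]$) genuinely rigorous: one must know that $\mr{eu}_{\bc}-\zeta$ is a non-zero-divisor on $Z(H_{0,\bc})^{\T}$ for \emph{every} value of $\bc$, not just generically --- this is exactly where the irreducibility and reducedness of the Calogero--Moser space and the non-constancy of $\mr{eu}_{\bc}$ enter --- together with pinning down, if it is not already established elsewhere in the paper, the reducedness of the scheme-theoretic fibre cutting out $\overline{\mc{O}}_{\min}$ inside $\Spec P^{\T}$.
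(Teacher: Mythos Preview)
Your proposal is correct and follows essentially the same strategy as the paper: realise $Z(A_0)$ as $Z(H_{0,\bullet})^{\T}$ modulo $\langle\mr{eu}_{\bc}-\zeta\rangle$, check flatness, identify the Poisson structure, and compute the central fibre. Two small remarks. First, in your flatness step the parenthetical ``its associated graded is $P^{W}\o\C[\bc]$'' should read $P^{W\times\T}\o\C[\bc]$, since you are taking $\T$-invariants of $Z(H_{0,\bullet})$. Second, your two-stage flatness argument (flat over $\C[\bc]$, then promote via the non-zero-divisor criterion in the $\zeta$-direction) is correct but can be shortened: the paper packages this into Lemma~\ref{lem:assgrAW}, which computes the associated graded of the quotient directly as $(P^{\T}/\langle\mr{eu}_0\rangle)^W$, independent of all parameters; constancy of the associated graded over $\C[\mf{c}]$ gives flatness in one step. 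Your concerns about reducedness of the fibre cutting out $\overline{\mc{O}}_{\min}$ are handled in the paper by Lemma~\ref{lem:isoHamredminimialnilp}.
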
 

This result should provide a representation-theoretic way to enumerate the projective $\Q$-factorial terminalizations of the singularity $\overline{\mc{O}}_{min}/W$.

 \subsection*{Outline of the article.}
 
Section~\ref{sec:gradedfilered} contains some basic results on graded and filtered algebras that we will use later. Then Section~\ref{sec:RCAdefn} recalls the definition of rational Cherednik algebras and outlines their basic properties. In Section~\ref{sec:twosubalgebras}, we begin the study of the two subalgebras. We give explicit presentations of these algebras and show that they are rings of invariants. The behaviour of $H_{t,c}^{\mf{so}(n)}$ is exceptional when $n = 2$ and we study this case separately in Section~\ref{sec:rank2centre}, giving an explicit description of the centre of the algebra (which does not depend on the parameters $t,c$). 
In Section~\ref{sec:invariantsub}, we study the properties of the invariant rings $H_{t,c}^{\Gamma}$. The case $t=0$ is considered in detail 
in Section~\ref{sec:invariantt=0}. 
 Finally, in Section~\ref{sec:QHRtorus} we study the quantum Hamiltonian reduction of $H_{t,c}$ with respect to the action of $\T$.

\subsection*{Conventions}

We work throughout over the complex numbers $\C$. Unadorned tensor product will mean $\o_{\C}$. 

\subsection*{Acknowledgements}

The first author was partially supported by a Research Project Grant from the Leverhulme Trust and by the Engineering and Physical Sciences Research Council [grant numbers EP/W013053/1, EP/R034826/1]. The second author was supported by the Engineering and Physical Sciences Research Council [grant number EP/W013053/1].

\section{Filtered and graded algebras}\label{sec:gradedfilered} 

In this section, we recall basic facts about filtered and graded algebras that will be required later. The centre of an algebra $A$ is denoted $Z(A)$. 

\subsection{Graded algebras}\label{sec:gradedalgebras}

For a $\Z$-graded vector space $M$, $M_i$ will denote the degree $i$ part. Let $B$ be a $\C$-algebra. A ($\Z$-)grading on $B$ is a direct sum decomposition $B = \bigoplus_{i \in \Z} B_i$ such that $B_i \cdot B_j \subset B_{i+j}$. The fact that $B$ is $\Z$-graded can be interpreted as an action of $\Cs$ on $B$ by algebra automorphisms such that 
\[
B_i := \{ b \in B \, | \, \lambda \cdot b = \lambda^i b, \ \forall \lambda \in \Cs \}. 
\]
Then $B^{\Cs} = B_0$.

\subsection{Filtered algebras}

Let $A$ be a $\C$-algebra. A filtration $(\mc{F}_i)_{i \in \Z}$ of $A$ is a nested collection of subspaces $\cdots \subset \mc{F}_i \subset \mc{F}_{i+1} \subset \cdots$ such that $\mc{F}_i \mc{F}_j \subset \mc{F}_{i+j}$. The \textit{associated graded algebra} of a filtered algebra $A$ is
\[
\gr A = \gr_{\mc{F}} A = \bigoplus_{i \in \Z} \, (\gr_{\mc{F}} A)_i, \quad \textrm{ with } \quad (\gr_{\mc{F}} A)_i = \mc{F}_i / \mc{F}_{i-1}.
\]
Given $a \in A$ we denoted by $\sigma(a)$ the corresponding element in $\gr A$, $\sigma(a)$ is called symbol of $a$.

The following summarizes basic results on filtered algebras that we need. 

\begin{lem}\label{lem:gradedin}
    Let $A$ be a $\C$-algebra equipped with a filtration $(\mc{F}_i)_{i \in \Z}$ such that $\mc{F}_{-1} = 0$ and $\bigcup_i \mc{F}_i = A$.
    \begin{enumerate}
        \item[(i)] Let $B \subset A$ be a subalgebra filtered by restriction. Then $\gr_{\mc{F}} B$ is a subalgebra of $\gr_{\mc{F}} A$ and $\gr_{\mc{F}} B = \gr_{\mc{F}} A$ implies that $B = A$.
        \item[(ii)] $\gr_{\mc{F}} Z(A) \subset Z(\gr_{\mc{F}} A)$. 
        \item[(iii)] Assume that there exists a reductive group $G$ acting on $A$ such that each $\mc{F}_i$ is $G$-stable. Then $G$ acts on $\gr_{\mc{F}} A$ and $\gr_{\mc{F}}(A^G) = (\gr_{\mc{F}} A)^G$. 
    \end{enumerate} 
\end{lem}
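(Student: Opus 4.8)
\textbf{Proof proposal for Lemma~\ref{lem:gradedin}.}

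The plan is to prove the three parts in order, each reducing to a direct computation with the filtration. For part (i), I would first note that since $B$ carries the restricted filtration $\mc{F}_i \cap B$, the inclusion $\mc{F}_i \cap B \hookrightarrow \mc{F}_i$ induces well-defined maps $(\mc{F}_i \cap B)/(\mc{F}_{i-1} \cap B) \to \mc{F}_i/\mc{F}_{i-1}$, which are injective because $(\mc{F}_{i-1}\cap B) = (\mc{F}_i \cap B) \cap \mc{F}_{i-1}$. Assembling these gives an injective graded algebra homomorphism $\gr_{\mc{F}} B \hookrightarrow \gr_{\mc{F}} A$, exhibiting $\gr_{\mc{F}} B$ as a subalgebra. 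For the second claim, suppose $\gr_{\mc{F}} B = \gr_{\mc{F}} A$; I would show $\mc{F}_i \cap B = \mc{F}_i$ for all $i$ by induction on $i$, the base case $i = -1$ being $0 = 0$. Given $\mc{F}_{i-1} \subset B$, any $a \in \mc{F}_i$ has a symbol in $(\gr_{\mc{F}} A)_i = (\gr_{\mc{F}} B)_i$, so there is $b \in \mc{F}_i \cap B$ with $a - b \in \mc{F}_{i-1} \subset B$, hence $a \in B$. Since $\bigcup_i \mc{F}_i = A$, this gives $B = A$.

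For part (ii), I would take $z \in Z(A)$ lying in $\mc{F}_i$ but not $\mc{F}_{i-1}$, with symbol $\bar z \in (\gr_{\mc{F}} A)_i$, and show $\bar z$ commutes with every homogeneous element of $\gr_{\mc{F}} A$. Such an element is the symbol $\bar a$ of some $a \in \mc{F}_j \setminus \mc{F}_{j-1}$; then $za - az = 0$ in $A$, and since the product in $\gr_{\mc{F}} A$ is defined by $\bar z \cdot \bar a = \overline{za} \pmod{\mc{F}_{i+j-1}}$, we get $\bar z \bar a - \bar a \bar z = \overline{za - az} = 0$. Thus the symbol map sends $Z(A)$ into $Z(\gr_{\mc{F}} A)$, giving the stated inclusion of graded subspaces.

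Part (iii) is where the main subtlety lies, since it is the one place we genuinely use reductivity. The action of $G$ on $\gr_{\mc{F}} A$ is defined because each $\mc{F}_i$ is $G$-stable, so $G$ acts on each quotient $\mc{F}_i/\mc{F}_{i-1}$ compatibly with multiplication. Since $A^G$ is filtered by $\mc{F}_i \cap A^G$, part (i) already gives an inclusion $\gr_{\mc{F}}(A^G) \hookrightarrow (\gr_{\mc{F}} A)^G$; the content is surjectivity. For this I would invoke that $G$ is reductive, so the functor of $G$-invariants is exact on rational $G$-modules: applying it to the short exact sequence $0 \to \mc{F}_{i-1} \to \mc{F}_i \to \mc{F}_i/\mc{F}_{i-1} \to 0$ yields $0 \to (\mc{F}_{i-1})^G \to (\mc{F}_i)^G \to (\mc{F}_i/\mc{F}_{i-1})^G \to 0$, i.e. $(\mc{F}_i \cap A^G)/(\mc{F}_{i-1} \cap A^G) \iso (\mc{F}_i/\mc{F}_{i-1})^G$. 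Summing over $i$ gives $\gr_{\mc{F}}(A^G) = (\gr_{\mc{F}} A)^G$ as graded algebras. The main obstacle is purely a bookkeeping one: one must ensure that each $\mc{F}_i$ is a \emph{rational} (locally finite) $G$-module so that the exactness of invariants applies; in the applications in this paper the filtration pieces are finite-dimensional, so this is automatic, but it should be remarked.
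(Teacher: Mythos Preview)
The paper does not prove this lemma at all: it is stated as a summary of ``basic results on filtered algebras that we need'' and left without proof. Your argument is correct and is exactly the standard one---induction up the filtration for (i), passing to symbols for (ii), and exactness of $(-)^G$ on the short exact sequences $0\to\mc{F}_{i-1}\to\mc{F}_i\to\mc{F}_i/\mc{F}_{i-1}\to 0$ for (iii); your caveat about needing each $\mc{F}_i$ to be a rational $G$-module is a fair point and is satisfied in all the applications here.
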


Recall that an element $u$ of an algebra $U$ is \textit{normal} if $u U = Uu$ is a two-sided ideal in $U$. Also, $u$ is said to be \textit{regular} if $ur = 0$ or $r u = 0$ implies $r = 0$, where $r \in U$. 

\begin{lem}\label{lem:assgrregularelement}
	Let $U$ be a filtered algebra and $u \in U$ a normal element such that its symbol $\sigma(u) \in \gr U$ is regular. Then $\gr (U / \langle u \rangle) = (\gr U) / \langle \sigma (u) \rangle$.  
\end{lem}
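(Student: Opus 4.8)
\textbf{Proof plan for Lemma~\ref{lem:assgrregularelement}.}

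The plan is to exploit the normality of $u$ together with the regularity of $\sigma(u)$ to identify the filtration induced on the quotient with the one whose associated graded is $(\gr U)/\langle \sigma(u)\rangle$. Denote by $\mc{F}_\idot$ the filtration on $U$ and equip $\overline{U} := U/\langle u\rangle$ with the quotient filtration $\overline{\mc{F}}_i := (\mc{F}_i + \langle u\rangle)/\langle u\rangle$. There is always a surjection of graded algebras $\gr U \twoheadrightarrow \gr \overline{U}$, and the two-sided ideal $\langle \sigma(u)\rangle = (\gr U)\sigma(u)$ (which is genuinely two-sided because $u$ is normal, so its symbol is normal in $\gr U$) is clearly contained in its kernel. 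So the content of the lemma is that this inclusion is an equality, i.e.\ that no ``extra'' relations appear in $\gr \overline{U}$.

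First I would fix notation for the symbol: say $\sigma(u) = u + \mc{F}_{d-1}$ lives in degree $d$, where $u \in \mc{F}_d \setminus \mc{F}_{d-1}$. The key step is the following claim: for every $i$,
\[
\langle u\rangle \cap \mc{F}_i = (U u) \cap \mc{F}_i = \mc{F}_{i-d}\, u.
\]
The inclusion $\supseteq$ is immediate. For $\subseteq$, take $v \in U$ with $vu \in \mc{F}_i$ and $v \notin \mc{F}_{i-d}$; pick the minimal $k$ with $v \in \mc{F}_k$, so $k > i - d$ and $\sigma(v) \in (\gr U)_k$ is nonzero. Then $\sigma(v)\sigma(u) \in (\gr U)_{k+d}$; if this product were nonzero it would force $vu \notin \mc{F}_{k+d-1} \supseteq \mc{F}_i$, a contradiction. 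Hence $\sigma(v)\sigma(u) = 0$, and since $\sigma(u)$ is regular in $\gr U$ we get $\sigma(v) = 0$, contradicting minimality of $k$. This proves the claim. (One uses normality to phrase everything symmetrically; $Uu = uU$.)

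Granting the claim, the rest is bookkeeping. The degree-$i$ piece of $\gr \overline{U}$ is
\[
\overline{\mc{F}}_i / \overline{\mc{F}}_{i-1} \;\cong\; \mc{F}_i \big/ \big( \mc{F}_{i-1} + (\langle u\rangle \cap \mc{F}_i)\big) \;=\; \mc{F}_i \big/ \big(\mc{F}_{i-1} + \mc{F}_{i-d}\, u\big),
\]
using the claim in the last step. On the other hand, the degree-$i$ piece of $(\gr U)/\langle \sigma(u)\rangle$ is $(\gr U)_i / \big((\gr U)_{i-d}\,\sigma(u)\big) = (\mc{F}_i/\mc{F}_{i-1}) \big/ \big( \text{image of } \mc{F}_{i-d}u \big)$, and the natural map from the former to the latter is an isomorphism in each degree because $\mc{F}_{i-d}u \subseteq \mc{F}_{i-1}$ would fail exactly when $\sigma(v)\sigma(u)\neq 0$ — i.e.\ the image of $\mc{F}_{i-d}u$ in $\mc{F}_i/\mc{F}_{i-1}$ is precisely $(\gr U)_{i-d}\sigma(u)$. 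Assembling over all $i$ gives the graded algebra isomorphism $\gr\overline{U} \cong (\gr U)/\langle \sigma(u)\rangle$ compatible with the surjection from $\gr U$.

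The main obstacle — really the only nontrivial point — is the claim $\langle u\rangle \cap \mc{F}_i = \mc{F}_{i-d}u$; everything else is formal manipulation of associated graded pieces. This is where regularity of $\sigma(u)$ is used in an essential way: without it, an element $v$ of ``too high'' filtration degree could have $vu$ drop into a low filtration piece, creating relations in $\gr\overline{U}$ not coming from $\sigma(u)$. I would also remark that normality of $u$ ensures $\langle u\rangle = Uu = uU$, so the left-module computation above also computes the two-sided ideal, and that $\sigma(u)$ is then normal (hence $\langle\sigma(u)\rangle = (\gr U)\sigma(u)$ is two-sided) in $\gr U$, which is what makes the target $(\gr U)/\langle\sigma(u)\rangle$ a well-defined quotient algebra.
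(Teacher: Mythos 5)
Your proof is correct and takes essentially the same approach as the paper: both reduce to identifying $\gr\langle u\rangle$ with $\langle\sigma(u)\rangle$ inside $\gr U$, using normality to write the ideal as $Uu$ and regularity of $\sigma(u)$ to rule out any degree drop in products $vu$. The paper phrases this via symbols (showing $\sigma(ur)=\sigma(u)\sigma(r)$ after invoking strictness of the quotient filtration), while you phrase it via the equivalent claim $\langle u\rangle\cap\mc{F}_i=\mc{F}_{i-d}u$ and then check degrees by hand, but the underlying argument is the same.
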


\begin{proof}
Let $I = \langle u \rangle$ and assume $u$ has degree $d$. By definition of the quotient filtration, the exact sequence $0 \to I \to U \to U / I \to 0$ is strictly filtered. This means that the associated graded sequence $0 \to \gr I \to \gr U \to \gr(U / I) \to 0$ is also exact. Therefore, it suffices to show that $\gr I = \langle \sigma(u) \rangle$. Since $u$ is normal, an element of $\gr I$ is of the form $\sigma(u r)$ for some $r \in U$. If $r \in \mc{F}_i U$ then 
 $$
 0 \neq \sigma(u) \sigma(r) = ur \, \mod \, \mc{F}_{i+d-1} \in \gr_{i+d} U. 
 $$
 This implies that $ur \in \mc{F}_{i+d} \setminus \mc{F}_{i+d-1}$ and hence $\sigma(ur) = \sigma(u) \sigma(r)$. In particular, $\sigma(ur) \in \langle \sigma(u) \rangle$. Hence $\gr I \subset \langle \sigma(u) \rangle$. The opposite inclusion is obvious.  
\end{proof}

\section{The rational Cherednik algebra}\label{sec:RCAdefn}

In this section we recall the definition of rational Cherednik algebras associated to complex reflection groups, following \cite{EG}, \cite{DunklOpdam}, \cite{GGOR}. 

Let $\mathfrak{h}$ be a complex vector space. Recall that a \textit{complex reflection} is an invertible linear map $s \colon \h \to \h$ of finite order that fixes a hyperplane $\Ker \alpha_s$ pointwise for some $\alpha_s \in \h^*$. That is, the rank of $(s-1)$ is one.
Let $\alpha_s^\vee\in \h$ be an eigenvector of $s$ with the eigenvalue different from 1. Let $\lambda_s\in \C$ be the non-trivial eigenvalue of $s$ on $\h^*$: $s(\alpha_s)=\lambda_s \alpha_s$.

\begin{defn}
A {\it complex reflection group} $W$ is a finite subgroup of $GL(\h)$ generated by complex reflections. The set of complex reflections in $W$ is denoted $\mc{S}$. 
\end{defn}
 We will also be denoting a complex reflection group as a pair $(W, \mathfrak{h})$. A complex reflection group $(W,\h)$ is said to be \textit{irreducible} if $\h$ is an irreducible representation of $W$. 

We assume throughout that $(W,\h)$ is irreducible. Note that $Z(W) = \Z / {\ell} \Z$ because $Z(W) \subset \Cs \Id_{\h}$, using the fact that $\h$ is irreducible. The order of the centre of an irreducible complex reflection group is the greatest common divisor of the degrees of a set of homogeneous fundamental invariants of $W$ \cite[Theorem~1.2]{CohenReflections}. In particular, for the infinite series $G(m,p,n)$ of irreducible complex reflection groups, $Z(G(m,p,n)) = \Z / (m/p) \Z$. 


\begin{defn}\label{ratcherdef}
Fix a complex reflection group $(W,\mathfrak{h})$, $t \in \C$ and a conjugation invariant function $c \colon \mc{S} \rightarrow \mathbb{C}$. The associated  \textit{rational Cherednik algebra} $H_{t,c}= H_{t,c}(W)$  is the quotient of the skew-group ring $T(\h \oplus \h^*) \rtimes W$ by the relations
\[
x\otimes x' - x' \otimes x, \ y\otimes y' - y' \otimes y, \  
y\otimes x-x\otimes y-\kappa(x,y), \quad \forall \, x, x' \,\in \mathfrak{h}^*, y, y' \, \in \mathfrak{h},
\]
where $T(\h \oplus \h^*)$ is the tensor algebra on $\h \oplus \h^*$ and 
\[
\kappa(x,y)=  t  x(y) - \sum_{s\in \mc{S}}\frac{2 c_s}{\alpha_s(\alpha_s^\vee)}x(\alpha_s^\vee) \alpha_s(y) s \in \C W.
\]
\end{defn}
The space of parameters for the rational Cherednik algebra is $\C \oplus \Hom_W(\mc{S},\C)$. For ease of notation, we write $H_{t,c}(W)$ as $H_{t,c}$ whenever $W$ is clear from the context.

\subsection{The Euler element}
Let $n=\dim \h$, and let us choose a basis $y_1,\ldots, y_n\in \h$, and a dual basis $x_1,\ldots, x_n \in \h^*$. 
The Euler element $\eu_{c} \in H_{t,c}(W)$ is defined to be
\[
\eu_{c} = \sum_{i=1}^n x_i y_i - \sum_{s \in \mc{S}} \frac{2 c_s}{1-\lambda_s}s. 
\]
It does not depend on a basis, and it satisfies
\begin{equation}
\label{eurelations}
[\eu_{c},x] = t x, \quad [\eu_{c},y] = -t y, \quad [\eu_{c},w] = 0, 
\end{equation}
for all $x \in \h^*, y \in \h$ and $w \in W$. 
Indeed, in order to check the first equality we consider
\begin{equation}
\label{euxk}     
[\eu_{c}, x] = \sum_{i=1}^n x_i [y_i, x] - \sum_{s\in\mc{S}}\frac{2 c_s}{1-\lambda_s}[s,x],
\end{equation}
where \begin{equation}
\label{Six}
[y_i, x] 
= t x(y_i) -\sum_{s\in \mc{S}} \frac{2 c_s}{\alpha_s(\alpha_s^\vee)} \alpha_s(y_i) x(\alpha_s^\vee) s.
\end{equation}
Then substitution of \eqref{Six} into \eqref{euxk} gives 
$$
[\eu_{c}, x] = t x - \sum_{s\in \mc{S}} \frac{2 c_s}{\alpha_s(\alpha_s^\vee)} x(\alpha_s^\vee) \alpha_s s
- \sum_{s\in\mc{S}}\frac{2 c_s}{1-\lambda_s}(s(x)-x)s.
$$
Let us choose $\alpha_s$ and $\alpha_s^\vee$ so that $s(x)=x-x(\alpha_s^\vee)\alpha_s$. Then  $\lambda_s = 1-\alpha_s(\alpha_s^\vee)$ and we get that $[\eu_{c},x]=0$. Similarly, 
$$
[\eu_{c}, y] = - t y +  \sum_{s\in \mc{S}} \frac{2 c_s}{\alpha_s(\alpha_s^\vee)} \alpha_s(y) s (\alpha_s^\vee) s 
- \sum_{s\in\mc{S}}\frac{2 c_s}{1-\lambda_s}(s(y)-y)s = -t y
$$
since $s(\alpha_s^\vee)= \lambda_s^{-1} \alpha_s^\vee$. 
The  remaining relation in \eqref{eurelations} can also be checked.

 The algebra $H_{t,c}(W)$ is $\Z$-graded with $\deg y= -1, \deg x = 1$ and $\deg w = 0$ for $x \in \h^*, y \in \h$ and $w \in W$. We see from the above equations that this grading is inner when $t =1$, given by the adjoint action of $\eu_{c}$. As explained in Section~\ref{sec:gradedalgebras}, this can be thought of as equipping with an action of $\Cs$. Since we wish to consider $\Cs$ as a maximal torus in $\SL_2$ (at least when $W$ is a real reflection group) we write $\T := \Cs$ from now on. Then $H_{t,c}(W)_0 = H_{t,c}(W)^{\T}$.

\subsection{Automorphisms}

We have explained that $\T$ acts on $H_{t,c}$ by algebra automorphisms. When $W$ is a real reflection group, there is an action of $\SL_2$ on $H_{t,c}$, realising $\T$ as the maximal torus of diagonal matrices. This action was first defined in \cite[Corollary~5.3]{EG} and we recall it here. Since $W$ is real, we can fix a $W$-equivariant isomorphism $\h \iso \h^*$ given by $v \mapsto v^*$. Then $\SL_2$ acts trivially on $W$ and
\begin{equation}\label{eq:SL2actioneq}
    g \cdot v = a v + b v^*, \quad g \cdot v^* = c v + d v^*, \quad \textrm{for } g = \left(\begin{array}{cc} 
a & b \\
c & d 
\end{array} \right) \in \SL_2, \textrm{ and } v \in \h.  
\end{equation}


\section{Two subalgebras: the presentations}\label{sec:twosubalgebras}

In this section we introduce two subalgebras of a rational Cherednik algebra $H_{t, c}(W)$, and we find their defining relations. We denote these subalgebras as $H_{t, c}^{\mathfrak{g}}(W)$, where $\mathfrak{g}=\mathfrak{gl}(n)$ or $\mathfrak{g}=\mathfrak{so}(n)$. In the latter case $W$ is assumed to be a real reflection group. The algebra $H_{t, c}^{\mathfrak{g}}(W)$ at $t=1$  is a flat deformation of the skew product of a quotient of the universal enveloping algebra $U({\mathfrak{g}})$ with the group $W$, which is established in \cite{Feiginangular} for the case of real reflection groups $W$.

\subsection{The degree zero subalgebra}
\label{Degree0}
Let $(W,\h)$ be an irreducible complex reflection group. Let $x_1, \ds, x_n$ be a basis of $\h^*$ and let $y_1, \ds, y_n \in \h$ be the dual basis.


\begin{defn}
\label{degree-zero-def}
     The 
    \textit{degree zero subalgebra} 
    $H_{t,c}^{\mf{gl}(n)} = H_{t,c}^{\mf{gl}(n)}(W)$ of $H_{t,c}(W)$ is the subalgebra generated by all $E_{ij} = x_i y_j$ and the group $W$. 
\end{defn}

Note that the $E_{ij}$ are a basis of $\h \o \h^*$ in $H_{t,c}$. Thus, $H_{t,c}^{\mf{gl}(n)}$ can equivalently be described as the subalgebra generated by $\h \o \h^*$ and $W$. 

The algebra $H_{t,c}$ admits a filtration $\mc{F}_i$ with $\h,\h^* \subset \mc{F}_1$ and $W \subset \mc{F}_0$. We give all subalgebras of $H_{t,c}$ a filtration by restriction, also denoted $\mc{F}_i$. In particular, $H_{t,c}^{\mf{gl}(n)}$ is a filtered algebra. Let $P = \C[ \h \times \h^* ]$. The terminology of Definition \ref{degree-zero-def} is justified by the following lemma.

\begin{lem}\label{glisevendegree}
    $H_{t,c}^{\mf{gl}(n)} = H_{t,c}^{\T}$ is the subalgebra of all elements of degree zero.
\end{lem}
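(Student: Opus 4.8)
The goal is to identify $H_{t,\bc}^{\mf{gl}(n)}$, defined as the subalgebra generated by the $E_{ij} = x_iy_j$ and $W$, with the degree zero component $H_{t,\bc}^{\T} = H_{t,\bc}(W)_0$. One inclusion is immediate: each generator $E_{ij}$ has degree $1 + (-1) = 0$ in the $\Z$-grading where $\deg x = 1$, $\deg y = -1$, $\deg w = 0$, and $W$ sits in degree zero, so the subalgebra they generate lies inside $H_{t,\bc}(W)_0 = H_{t,\bc}^{\T}$. The real content is the reverse inclusion: every degree zero element of $H_{t,\bc}$ lies in the subalgebra generated by the $E_{ij}$ and $W$.

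\textbf{Key step: the PBW basis.} The tool I would use is the PBW theorem for rational Cherednik algebras, which gives a vector space isomorphism $H_{t,\bc} \cong \C[\h] \o \C W \o \C[\h^*]$, or more concretely a basis consisting of monomials $x^{\boldsymbol{\alpha}} \, w \, y^{\boldsymbol{\beta}}$ where $x^{\boldsymbol{\alpha}} = x_1^{\alpha_1}\cdots x_n^{\alpha_n}$, $y^{\boldsymbol{\beta}} = y_1^{\beta_1}\cdots y_n^{\beta_n}$, and $w \in W$. Under the grading, such a monomial has degree $|\boldsymbol{\alpha}| - |\boldsymbol{\beta}|$. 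Hence $H_{t,\bc}(W)_0$ is spanned by those monomials with $|\boldsymbol{\alpha}| = |\boldsymbol{\beta}| =: m$. The task is then to show that any such monomial $x^{\boldsymbol{\alpha}} w \, y^{\boldsymbol{\beta}}$ with $|\boldsymbol{\alpha}| = |\boldsymbol{\beta}| = m$ can be rewritten as a (noncommutative) polynomial in the $E_{ij}$ and elements of $W$. I would do this by induction on $m$. The case $m = 0$ is just $\C W$. For the inductive step, write $x^{\boldsymbol{\alpha}} w \, y^{\boldsymbol{\beta}} = x_i \cdot (x^{\boldsymbol{\alpha}'} w \, y^{\boldsymbol{\beta}'}) \cdot y_j$ for suitable $i, j$ with $\alpha_i, \beta_j \geq 1$ (using $w y_j = (w\cdot y_j) w$ to move a $y$ to the far right, noting $w \cdot y_j \in \h$ is a combination of the $y_k$). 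Then $x_i \, w' \, y_j = w'' \, x_k \, y_j = w'' E_{kj}$ after commuting $x_i$ past $w'$ — but this only works directly when the middle factor $w' = x^{\boldsymbol{\alpha}'} w\, y^{\boldsymbol{\beta}'}$ is a pure group element; in general one must commute $x_i$ to the right through the whole middle monomial, which introduces lower-order correction terms of degree $-1$ relative to the original, i.e.\ degree $-2$ overall — but those have unequal numbers of $x$'s and $y$'s, so they must actually cancel or recombine; a cleaner route is to instead bracket off $E_{ij} = x_i y_j$ at one end after first normal-ordering, using that $y_j x^{\boldsymbol{\alpha}'}$ differs from $x^{\boldsymbol{\alpha}'} y_j$ by terms in $\C[\h]_{|\boldsymbol{\alpha}'|-1}\o \C W$, each of which when multiplied appropriately is handled by induction. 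The safest bookkeeping is: show the span $V_m$ of degree-zero monomials with $m$ $x$'s satisfies $V_m \subseteq \sum_{k} E_{\bullet\bullet} V_{m-1} + V_{m-1} + \cdots$, so by induction $V_m$ is in the generated subalgebra.

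\textbf{Alternative, cleaner argument.} Rather than the combinatorial induction, I would prefer the following. The $\Cs = \T$-action on $H_{t,\bc}$ is rational and locally finite, and $H_{t,\bc}^{\T} = H_{t,\bc}(W)_0$. The subalgebra $B$ generated by the $E_{ij}$ and $W$ is $\T$-stable and contained in $H_{t,\bc}(W)_0$. Using the associated graded with respect to the PBW filtration $\mc{F}$: by Lemma~\ref{lem:gradedin}(iii) with $G = \T$, $\gr_{\mc{F}}(H_{t,\bc}^{\T}) = (\gr_{\mc{F}} H_{t,\bc})^{\T} = (P \rtimes W)^{\T} = P^{\T}\rtimes W = P_0 \rtimes W$, where $P_0 = \C[\h\times\h^*]_0$ is the degree zero part of the polynomial ring, spanned by monomials of equal $x$- and $y$-degree. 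Now $P_0$ is generated as an algebra (over $\C$) by the quadratics $x_i y_j$ — this is the classical first fundamental theorem for the torus $\Cs$ acting with weights $+1$ on the $x_i$ and $-1$ on the $y_j$ (the invariants of $\GL_1$ acting on $n$ copies of the standard and $n$ of the costandard representation are generated by the pairings $x_iy_j$). Hence $\gr_{\mc{F}} B \supseteq$ the subalgebra of $\gr_{\mc{F}} H_{t,\bc}$ generated by the principal symbols $\sigma(E_{ij}) = x_i y_j$ and $W$, which is exactly $P_0 \rtimes W = \gr_{\mc{F}}(H_{t,\bc}^{\T})$. Since $B \subseteq H_{t,\bc}^{\T}$ and $\gr_{\mc{F}} B = \gr_{\mc{F}} H_{t,\bc}^{\T}$, Lemma~\ref{lem:gradedin}(i) gives $B = H_{t,\bc}^{\T}$, as required.

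\textbf{Main obstacle.} The one nontrivial input is the classical fact that the $\Cs$-invariant polynomial ring $P_0 = \C[\h\times\h^*]^{\T}$ is generated by the bilinear invariants $x_iy_j$. This is a standard result (classical invariant theory / first fundamental theorem for $\GL_1$), and amounts to observing that a monomial $x^{\boldsymbol{\alpha}}y^{\boldsymbol{\beta}}$ with $|\boldsymbol{\alpha}| = |\boldsymbol{\beta}|$ can be factored as a product of $|\boldsymbol{\alpha}|$ terms of the form $x_iy_j$ by pairing up indices arbitrarily. I would state this explicitly and give the one-line pairing argument rather than citing it as a black box. Everything else is bookkeeping with the PBW basis and the two parts of Lemma~\ref{lem:gradedin}.
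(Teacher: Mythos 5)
Your ``alternative, cleaner argument'' is, essentially word for word, the proof the paper gives: both inclusions are handled exactly as you describe, using Lemma~\ref{lem:gradedin}(iii) to pass $\gr$ and $(-)^{\T}$ past each other, the PBW theorem to identify $(\gr H_{t,\bc})^{\T}$ with $P^{\T}\rtimes W$, the elementary fact that $P^{\T}$ is generated by the $x_iy_j$, and then Lemma~\ref{lem:gradedin}(i) to conclude $B = H_{t,\bc}^{\T}$. The earlier PBW-induction sketch in your ``key step'' paragraph is not carried through — you correctly note yourself that commuting $x_i$ past the middle monomial spawns lower-order terms that need careful bookkeeping — but since you then abandon it in favour of the associated-graded argument, the proposal as a whole is correct and matches the paper's route.
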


\begin{proof}
    The inclusion $H_{t,c}^{\mf{gl}(n)} \subset H_{t,c}^{\T}$ is immediate since  $H_{t,c}^{\mf{gl}(n)}$ is generated by elements of degree zero. For the opposite inclusion, we equip both algebras with the subalgebra filtration coming from $H_{t,c}$. Then Lemma~\ref{lem:gradedin}(i) implies that it suffices to show equality of associated graded algebras. The filtration $\mc{F}_i$ on $H_{t,c}$ is $\T$-stable. Therefore Lemma~\ref{lem:gradedin}(iii) says that $\gr H_{t,c}^{\T} =  (\gr H_{t,c})^{\T}$  and it is a consequence of the PBW Theorem for rational Cherednik algebras \cite[Theorem~1.3]{EG} that
    \[
    (\gr H_{t,c})^{\T} = (P \rtimes W)^{\T} = P^{\T} \rtimes W.
    \]
    On the other hand, $\gr H_{t,c}^{\mf{gl}(n)}$ certainly contains the $E_{ij}$ and $W$. Therefore, it suffices to note that the $E_{ij} \in P$ generate $P^{\T}$ and hence $P^{\T} \rtimes W \subset \gr H_{t,c}^{\mf{gl}(n)}$. 
\end{proof}

\begin{cor}\label{cor:grglalg}
    $\gr H_{t,c}^{\mf{gl}(n)} = P^{\T} \rtimes W=(\gr H_{t,c})^{\T}$.
\end{cor}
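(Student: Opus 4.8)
The plan is to observe that this corollary is essentially a byproduct of the proof of Lemma~\ref{glisevendegree}, so only a short assembly of already-established inclusions is required. First I would record what that proof produces: since the $E_{ij} \in P$ lie in $\gr H_{t,\bc}^{\mf{gl}(n)}$ and generate $P^{\T}$, while $W \subset \gr H_{t,\bc}^{\mf{gl}(n)}$ trivially, and the skew-group structure is inherited from that of $\gr H_{t,\bc}$, we obtain the inclusion $P^{\T} \rtimes W \subset \gr H_{t,\bc}^{\mf{gl}(n)}$.

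Next I would establish the reverse inclusion. Since $H_{t,\bc}^{\mf{gl}(n)} \subset H_{t,\bc}^{\T}$ and both carry the subalgebra filtration $\mc{F}_i$ coming from $H_{t,\bc}$, Lemma~\ref{lem:gradedin}(i) gives $\gr H_{t,\bc}^{\mf{gl}(n)} \subset \gr H_{t,\bc}^{\T}$. Because $\mc{F}_i$ is $\T$-stable and $\T$ is reductive, Lemma~\ref{lem:gradedin}(iii) identifies $\gr H_{t,\bc}^{\T} = (\gr H_{t,\bc})^{\T}$, and the PBW theorem \cite[Theorem~1.3]{EG} together with the identity $(P \rtimes W)^{\T} = P^{\T} \rtimes W$ yields $(\gr H_{t,\bc})^{\T} = P^{\T} \rtimes W$. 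Concatenating, we have
\[
P^{\T} \rtimes W \subset \gr H_{t,\bc}^{\mf{gl}(n)} \subset (\gr H_{t,\bc})^{\T} = P^{\T} \rtimes W,
\]
forcing equality throughout, which is exactly the asserted identity.

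There is no genuine obstacle: the only non-formal input is the computation $(P\rtimes W)^{\T}=P^{\T}\rtimes W$, which holds because $\T$ acts trivially on $W$ and respects the internal grading, so that forming invariants commutes with the skew product. All of this has already appeared inside the proof of the preceding lemma, so in practice the write-up of this corollary amounts to pointing back to that argument and noting that the two inclusions it contains sandwich $\gr H_{t,\bc}^{\mf{gl}(n)}$ between two copies of $P^{\T}\rtimes W$.
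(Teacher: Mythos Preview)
Your argument is correct and is exactly what the paper intends: the corollary is stated without proof because the sandwich $P^{\T}\rtimes W \subset \gr H_{t,\bc}^{\mf{gl}(n)} \subset (\gr H_{t,\bc})^{\T} = P^{\T}\rtimes W$ is precisely what was established inside the proof of Lemma~\ref{glisevendegree}. There is nothing to add.
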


Recall that
\begin{equation}
\label{Sjk}
S_{jk}
:=[y_j, x_k] 
= t \delta_{jk} -\sum_{s\in \mc{S}} \frac{2 c_s}{\alpha_s(\alpha_s^\vee)} \alpha_s(y_j) x_k(\alpha_s^\vee) s.
\end{equation}
Let us rescale elements $\alpha_s \in \h^*$, $\alpha_s^\vee\in \h$ so that a reflection $s\colon {\mathfrak h} \to \mathfrak h$ has the form $s(v)= v - \alpha_s(v) \alpha_s^\vee$ for any $v \in {\mathfrak h}$.

The next statement generalises a result from \cite{Feiginangular} to arbitrary $t$ and complex reflection groups. 

\begin{thm}
\label{glnpresentation}
    The algebra $H_{t, c}^{\mathfrak{gl}(n)}$ is isomorphic to the quotient of the skew product of the tensor algebra $T(\h^*\otimes \h)\rtimes {W}$ by the relations  
    \begin{align}
         [E_{ij}, E_{kl}]=
        t \delta_{jk} E_{il} - t \delta_{il}E_{kj}+
      \sum_{s \in \mc{S}} \frac{2 c_s}{\alpha_s(\alpha_s^\vee)} \bigl( \alpha_s(y_l) x_i(\alpha_s^\vee) E_{kj}
       -\alpha_s(y_j) x_k(\alpha_s^\vee) E_{il} \bigr)s 
          \nonumber
       \\
       \label{relG1}
     + \sum_{s \in \mc{S}} \frac{2 c_s}{\alpha_s(\alpha_s^\vee)}  \alpha_s(y_j)\alpha_s(y_l) 
     \sum_{r=1}^n x_r(\alpha_s^\vee)
     \bigl(x_k(\alpha_s^\vee) E_{ir}-
     x_i(\alpha_s^\vee) E_{kr}\bigr)s, 
          \\
          \label{relG2}
        E_{ij} E_{kl} - E_{i l} E_{k j} = 
        t \delta_{jk} E_{il}- t \delta_{lk}E_{ij}+
        \sum_{s \in \mc{S}} \frac{2 c_s}{\alpha_s(\alpha_s^\vee)} x_k(\alpha_s^\vee) \bigl(\alpha_s(y_l) E_{ij}- \alpha_s(y_j)E_{il}\bigr)s.
    \end{align}
A basis of the algebra $H_{t, c}^{\mathfrak{gl}(n)}$ is given by the elements
\begin{equation}
\label{basisgl}
E_{i_1 j_1}^{m_1}\ldots E_{i_k j_k}^{m_k}w,
\end{equation}
where $k\in \Z_{\ge 0}$, $m_s \in \N$, $w\in W$ and
\begin{equation}
\label{basisglrestrictions}    
1\le i_1 \le \ldots \le i_k \le n, \quad 1\le j_1 \le \ldots \le j_k \le n,  \text{ and } 
(i_s,i_{s+1}) \ne (j_s, j_{s+1})
\text{ for } 1\le s \le n-1.
\end{equation}
\end{thm}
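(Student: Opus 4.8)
The plan is to prove the presentation in two stages: first establish the relations hold in $H_{t,\bc}^{\mathfrak{gl}(n)}$, then show the stated elements form a basis of the abstract algebra defined by the presentation, so that dimension-counting forces the surjection from the abstract algebra to be an isomorphism.

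\textbf{Step 1: verifying the relations.} Work inside $H_{t,\bc}(W)$ and compute $[E_{ij},E_{kl}] = [x_iy_j, x_ky_l]$ and $E_{ij}E_{kl} - E_{il}E_{kj}$ directly, using only the defining relations of the rational Cherednik algebra (commutativity of the $x$'s, commutativity of the $y$'s, and $[y_j,x_k] = S_{jk}$ from \eqref{Sjk}), together with the cross-relations $s \cdot x = s(x) \cdot s$ and $s \cdot y = s(y) \cdot s$ for $s \in \mc{S}$. The key subtlety is moving a group element $s$ that appears inside $S_{jk}$ past subsequent $x$'s and $y$'s: this is where the quadratic terms $\sum_r x_r(\alpha_s^\vee)(\cdots)$ in \eqref{relG1} come from, since after commuting $y_j$ past $x_k$ one must then re-express $s \cdot y_l$ and $s \cdot x_i$ in terms of the $E$'s. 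I would expand $[x_iy_j,x_ky_l] = x_i[y_j,x_k]y_l - x_k[y_i,x_l]y_j + x_k x_i[y_j,\cdot]\cdots$ carefully — actually more cleanly, write $[x_iy_j, x_ky_l] = x_i(S_{jk})y_l - x_k(S_{li})y_j$ after using that $x$'s commute with $x$'s and $y$'s with $y$'s, and that $[x_iy_j,x_k] = x_i S_{jk}$ etc. Then substitute \eqref{Sjk}, and for the reflection terms use $s y_l = (s(y_l)) s = (y_l - \alpha_s(y_l)\alpha_s^\vee) s$ and similarly $x_i s = s\, s^{-1}(x_i) $ — being careful with the rescaling convention $s(v) = v - \alpha_s(v)\alpha_s^\vee$ fixed just before the theorem. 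Expanding $\alpha_s^\vee = \sum_r x_r(\alpha_s^\vee) y_r$ and $\alpha_s^\vee = \sum_r \alpha_s(\cdot)\cdots$ in the appropriate slot produces the degree-zero monomials $E_{ir}, E_{kr}$ and hence \eqref{relG1}; relation \eqref{relG2} is the analogous but simpler computation since fewer commutators intervene. This is the most laborious part but is purely mechanical; I would present it as a direct expansion, flagging the convention on $\alpha_s,\alpha_s^\vee$.

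\textbf{Step 2: the basis.} Let $A$ denote the abstract algebra: the quotient of $T(\h^* \otimes \h) \rtimes W$ by \eqref{relG1}--\eqref{relG2}. By Step 1 there is a surjective algebra homomorphism $\phi \colon A \twoheadrightarrow H_{t,\bc}^{\mathfrak{gl}(n)}$ sending the generators to the $E_{ij}$. It suffices to show that the monomials \eqref{basisgl} subject to \eqref{basisglrestrictions} span $A$ and that their images under $\phi$ are linearly independent in $H_{t,\bc}^{\mathfrak{gl}(n)}$. For spanning: use \eqref{relG1} and \eqref{relG2} as rewriting rules (a diamond-lemma / Bergman-type argument) to reorder any product of $E$'s into the normal form with indices weakly increasing in each slot, pushing all $W$-elements to the right; the role of the condition $(i_s,i_{s+1}) \neq (j_s,j_{s+1})$ is that $E_{ij}E_{kl}$ with $(i,k)=(j,l)$, i.e. $E_{ij}E_{ji}$-type adjacencies reducible via \eqref{relG2}, can be eliminated — so one should check \eqref{basisglrestrictions} is exactly the set of irreducible monomials. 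For linear independence of the images: this follows from the PBW theorem for $H_{t,\bc}$ — the images of \eqref{basisgl} are, after passing to $\gr$, the monomials $E_{i_1j_1}^{m_1}\cdots E_{i_kj_k}^{m_k} w$ in $P^{\T} \rtimes W = \gr H_{t,\bc}^{\mathfrak{gl}(n)}$ (Corollary~\ref{cor:grglalg}), and these are linearly independent in $P^{\T}$ precisely because $P^{\T} = \C[E_{ij}]$ is the subring of $\mathfrak{gl}(n)$-type invariants, whose relations (the $2\times 2$ minors, i.e. $E_{ij}E_{kl} = E_{il}E_{kj}$) cut out exactly the monomials excluded by \eqref{basisglrestrictions}.

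\textbf{Main obstacle.} The genuine difficulty is the spanning/confluence argument in Step 2: one must verify that iteratively applying \eqref{relG1}--\eqref{relG2} terminates and yields a unique normal form, i.e. that all ambiguities (overlaps of three generators $E_{ij}E_{kl}E_{pq}$) resolve. Rather than checking confluence by hand, the clean route — and the one I expect to use — is to bound $\dim_{\C} A_{\le m}$ (in the natural filtration) from above by the number of normal-form monomials via the rewriting rules, bound it from below by $\dim_{\C}(H_{t,\bc}^{\mathfrak{gl}(n)})_{\le m}$ via $\phi$, and observe that Corollary~\ref{cor:grglalg} together with the classical first fundamental theorem for $\GL_1$-invariants (the determinantal presentation of $P^{\T}$) shows the two counts agree. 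Hence $\phi$ is an isomorphism and \eqref{basisgl}--\eqref{basisglrestrictions} is a basis. The term-reordering in Step 1 involving reflections passing the quadratic $\alpha_s^\vee$-expansion is the second most error-prone point and deserves care with signs and the eigenvalue identities $\lambda_s = 1 - \alpha_s(\alpha_s^\vee)$, $s(\alpha_s^\vee) = \lambda_s^{-1}\alpha_s^\vee$ recalled earlier.
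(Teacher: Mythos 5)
Your overall strategy matches the paper's: verify the relations by direct computation in $H_{t,\bc}$ using $[E_{ij},E_{kl}] = x_i S_{jk}y_l - x_k S_{li}y_j$ and $E_{ij}E_{kl}-E_{il}E_{kj} = x_i S_{jk}y_l - x_i S_{lk}y_j$, then show the restricted monomials span by rewriting, then establish linear independence. The genuine difference is in Step 3. The paper does \emph{not} pass to the associated graded; instead it argues directly inside $H_{t,\bc}$, showing that each PBW monomial $\prod_i x_i^{a_i}y_i^{b_i}w$ of degree zero has a \emph{unique} top-degree-in-$E$ term in any expansion in restricted monomials, and it constructs that term explicitly by a greedy algorithm (take $E_{11}^{\min(a_1,b_1)}$ first, then branch on $\mathrm{sgn}(a_1-b_1)$, etc.). This gives a triangular change of basis against the PBW basis, hence linear independence. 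Your route---take symbols in $P^{\T}\rtimes W = \gr H_{t,\bc}^{\mathfrak{gl}(n)}$ and invoke the classical fundamental theorems to say the restricted monomials are a basis of $P^{\T}$ modulo the $2\times 2$ minors---is a legitimate alternative and is in fact the route the paper uses in the parallel $\mathfrak{so}(n)$ argument (Theorem~\ref{sobasis}, where symbols $L_{i_1j_1}^{r_1}\cdots w$ are taken in the Dunkl embedding and a reference is cited for their independence). What the paper's greedy construction buys, and your argument does not supply, is the combinatorial verification that the conditions \eqref{basisglrestrictions} actually pick out a monomial basis of $\C[E_{ij}]$ modulo the determinantal ideal. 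That is a real step: the second fundamental theorem tells you the relations are the minors $E_{ij}E_{kl}-E_{il}E_{kj}$, but it does not automatically identify the normal-form monomials. Your phrase ``cut out exactly the monomials excluded by \eqref{basisglrestrictions}'' is an assertion that needs its own argument---precisely the greedy/triangular argument the paper gives. You also gesture at a dimension count to sidestep confluence; note this count again reduces to the same combinatorial claim about $P^{\T}$, so it does not eliminate the work, only repackages it. Finally, a small caution: you cite only the first fundamental theorem, but it is the \emph{second} fundamental theorem (determinantal relations) that is doing the work in the independence step.
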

\begin{proof}
For the elements $E_{ij} = x_i y_j \in H_{t, c}$ we have that 
$$
[E_{ij}, E_{kl}] = x_i S_{jk} y_l - x_k S_{li} y_j,
$$
where $S_{jk}=[y_j, x_k]$ has the form \eqref{Sjk}. This implies relation \eqref{relG1} in $H_{t, c}^{\mathfrak{ gl}_n} \subset H_{t, c}$. Similarly, relation \eqref{relG2} can be checked by making use of the equality 
$$
E_{ij} E_{kl} - E_{il} E_{kj} = x_i S_{jk} y_l - x_i S_{lk} y_j. 
$$
It is also clear that the algebra $H_{t, c}^{\mathfrak{gl}(n)}$ is generated by the elements $E_{ij}$ ($1\le i, j \le n$) and $w \in W$. Note that elements $\eqref{basisgl}$ with the specified restrictions \eqref{basisglrestrictions} span the algebra $H_{t, c}^{\mathfrak{gl}(n)}$. Indeed, by moving elements of the group to the right and by the relation \eqref{relG1} together with an induction on the degree in $E_{ij}$, any element of the algebra can be represented as a linear combination of monomials \eqref{basisgl}, where restriction $1 \le i_1 \le \ldots \le i_k \le n$ holds. Then by employing relations \eqref{relG2} and induction on the degree in $E_{ij}$ one can assume that all the restrictions \eqref{basisglrestrictions} hold. 

We are left to show that elements \eqref{basisgl} from the algebra $H_{t, c}^{\mathfrak {gl}_n}$  with the specified restrictions \eqref{basisglrestrictions} are linearly independent. Observe that any element 
\begin{equation}
    \label{monomialgl}
\prod_{i=1}^n x_i^{a_i} y_i^{b_i}w \in H_{t, c}^{\mathfrak {gl}_n}, \quad a_i, b_i \in \Z_{\ge 0}, \, w \in W   
\end{equation}
from the standard PBW basis of the algebra $H_{t, c}$ can 
be uniquely represented as a linear combination of elements \eqref{basisgl}, \eqref{basisglrestrictions}.
Indeed, there will be a unique term $h$ in the linear combination of degree $\sum_{k=1}^n a_k = \sum_{k=1}^n b_k$ in elements $E_{ij}$. If $a_1$ and $b_1$ are both positive then the first factor of $h$ has to be $E_{11}^{\min (a_1, b_1)}$. If $a_2$ and $b_2$ are both positive then the second factor will be $E_{21}^{\min(a_2, b_1-a_1)}$ or $E_{12}^{\min (a_1-b_1, b_2)}$ or $E_{22}^{\min(a_2, b_2)}$ in the cases $b_1>a_1$ or $a_1> b_1$ or $a_1=b_1$, respectively. This process can be continued to determine $h$ completely. In the cases when some $a_i$ or $b_i$ vanish, the corresponding variable $x_i$ or $y_i$ is not present in the monomial \eqref{monomialgl} and the index $i$ does not appear in the first or second place, respectively, in the factors of $h$, otherwise the process of building the element $h$ is similar to the above. 

This proves that elements \eqref{basisgl}, \eqref{basisglrestrictions} form a basis of the algebra, which also implies that relations \eqref{relG1}, \eqref{relG2} are complete. 
\end{proof}

\subsection{The Dunkl angular momentum subalgebra} 
Now let $W$ be a  finite real reflection group. Let $y_1, \ldots, y_n$ be an orthonormal basis of $\mathfrak h$ for a fixed $W$-invariant inner product $(\cdot, \cdot )$. Let $x_1, \ldots, x_n$ be the dual basis of ${\mathfrak h}^*$. Note that $\mathfrak{h}^*\cong \mathfrak h$ as $W$-modules. Define elements $M_{ij}= x_i y_j - x_j y_i \in H_{t, c}(W)$, where $1\le i, j \le n$. Note that $M_{ij}=-M_{ji}$. 

\begin{defn}(cf. \cite{Feiginangular})
     The 
    \textit{Dunkl angular momentum subalgebra} 
    $H_{t,c}^{\mf{so}(n)} = H_{t,c}^{\mf{so}(n)}(W)$ of $H_{t,c}(W)$ is the subalgebra generated by all $M_{ij}$ and the group $W$. 
\end{defn}

In the real case we have $\alpha_s(\alpha_s^\vee)=2$ and formula (\ref{Sjk}) takes the form
\begin{equation}
\label{Sjkreal}
S_{jk}=[y_j, x_k] 
= t \delta_{jk} -\sum_{s\in \mc{S}}  c_s \alpha_s(y_j) x_k(\alpha_s^\vee) s.
\end{equation}

Consider elements $m_{ij} \in \Lambda^2 \mathfrak h$ given by $m_{ij}= y_i\wedge y_j$. The next statement was established in \cite{Feiginangular} for $t=1$. The case of general $t$ is similar and we give details below.

\begin{thm}\label{sobasis}
    The algebra $H_{t, c}^{\mathfrak{so}(n)}$ is isomorphic to the quotient $B$ of the skew product of the tensor algebra $T(\Lambda^2 \mathfrak{h})\rtimes {W}$ by the relations  
    \begin{align}
    \label{relG1so}
         [m_{ij}, m_{kl}]=
    m_{il} S_{jk} + m_{jk} S_{il} - m_{ik} S_{j l} - m_{jl} S_{ik},
          \\
          \label{relG2so}
        m_{ij} m_{kl}+m_{jk} m_{il}+m_{ki} m_{jl} = m_{ij} S_{kl} + m_{jk} S_{il}+ m_{ki} S_{jl}.
   \end{align}
            The isomorphism is given by the map $\rho\colon B \to H_{t, c}^{\mathfrak{so}(n)}$ such that $\rho(m_{ij})=M_{ij}$ and $\rho(w)=w$ for $w \in W$.
A basis of the algebra $H_{t, c}^{\mathfrak{so}(n)}$ is given by the elements
\begin{equation}
\label{basisso}
M_{i_1 j_1}^{r_1}\ldots M_{i_k j_k}^{r_k}w,
\end{equation}
where $k\in \Z_{\ge 0}$, $r_s \in \N$, $w\in W$ and
\begin{equation}
\label{basissorestrictions}    
i_s < j_s, \quad
1\le i_1 \le \ldots \le i_k \le n, \quad  i_s=i_{s+1} \implies j_s < j_{s+1}, \quad  
i_s < i_{s'} < j_{s} \implies j_{s'}\le j_s.
\end{equation}
\end{thm}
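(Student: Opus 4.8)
The plan is to follow the same three-part strategy that worked for $H_{t,\bc}^{\mf{gl}(n)}$ in Theorem~\ref{glnpresentation}: first verify that $\rho$ is a well-defined algebra homomorphism by checking that the relations \eqref{relG1so} and \eqref{relG2so} hold among the $M_{ij}=x_iy_j-x_jy_i$ inside $H_{t,\bc}$; second, show that the monomials \eqref{basisso} subject to \eqref{basissorestrictions} span $B$, so that $\rho$ is surjective with these spanning elements mapping onto a spanning set of $H_{t,\bc}^{\mf{so}(n)}$; third, prove these images are linearly independent in $H_{t,\bc}$, which forces $\rho$ to be injective and simultaneously establishes that \eqref{basisso}--\eqref{basissorestrictions} is a basis and that \eqref{relG1so}--\eqref{relG2so} are a complete set of relations. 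For the first step, a direct computation using $[M_{ij},M_{kl}]$ expanded via $S_{jk}=[y_j,x_k]$ (in the real form \eqref{Sjkreal}) yields \eqref{relG1so}, while the Plücker-type identity $M_{ij}M_{kl}+M_{jk}M_{il}+M_{ki}M_{jl}$, after pushing all $y$'s to the right past one $x$ using $S$, collapses to the right-hand side of \eqref{relG2so}; these are the analogues of the computations $[E_{ij},E_{kl}]=x_iS_{jk}y_l-x_kS_{li}y_j$ and $E_{ij}E_{kl}-E_{il}E_{kj}=x_iS_{jk}y_l-x_iS_{lk}y_j$ and should be routine, if bookkeeping-heavy. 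Most of this is already done in \cite{Feiginangular} for $t=1$, and the $t$-dependence enters only through the $t\delta_{jk}$ term in $S_{jk}$, so I would simply remark that the general-$t$ case is identical.

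For the spanning step, I would argue exactly as in the $\mf{gl}(n)$ case: move all group elements to the right (using that $W$ normalises $\Lambda^2\h$), then use the commutation relations \eqref{relG1so} together with an induction on the degree in the $m_{ij}$ to reorder the generators so that the first-index sequence is weakly increasing and, within a fixed first index, the second-index sequence is increasing; finally use the quadratic relation \eqref{relG2so} — which expresses $m_{ik}m_{jl}$ with $i<j<k<l$ (or the relevant "crossing" configuration) in terms of "non-crossing" products plus lower-degree terms — to reduce to the non-crossing condition $i_s<i_{s'}<j_s \implies j_{s'}\le j_s$. The combinatorics of which monomials survive is more intricate here than for $\mf{gl}(n)$ because $M_{ij}=-M_{ji}$ and the standard monomials are indexed by non-crossing partial matchings rather than by pairs of weakly increasing sequences, so I would want to be careful that \eqref{basissorestrictions} really does pick out a spanning set; but the reduction algorithm is the same in spirit.

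The linear independence step is the main obstacle, and I would handle it as in the proof of Theorem~\ref{glnpresentation}: take the standard PBW basis monomials $\prod_i x_i^{a_i}y_i^{b_i}w$ of $H_{t,\bc}$ that happen to lie in $H_{t,\bc}^{\mf{so}(n)}$, and show each such monomial can be uniquely rewritten as a linear combination of the standard $\mf{so}(n)$-monomials \eqref{basisso}, by identifying the unique top-degree term and peeling off factors $M_{ij}^{r}$ one at a time according to an explicit greedy recipe on the exponent vectors $(a_i),(b_i)$ — the $M$-analogue of "the first factor must be $E_{11}^{\min(a_1,b_1)}$" etc. The subtlety is that one must first characterise which PBW monomials (equivalently, which $(a_i,b_i,w)$) actually occur in $H_{t,\bc}^{\mf{so}(n)}$, and the antisymmetry $M_{ij}=-M_{ji}$ together with the non-crossing constraint makes the greedy decomposition genuinely more delicate than in the $\mf{gl}(n)$ case; showing the recipe terminates and produces something consistent with \eqref{basissorestrictions} is where the real work lies. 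Once uniqueness of this triangular change of basis is established, linear independence of \eqref{basisso}--\eqref{basissorestrictions} follows from linear independence of the PBW basis of $H_{t,\bc}$, $\rho$ is an isomorphism, and completeness of the relations \eqref{relG1so}--\eqref{relG2so} is automatic.
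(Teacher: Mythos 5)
Your first two steps — verifying the relations \eqref{relG1so} and \eqref{relG2so} by reducing to the $\mf{gl}(n)$ computations and the identity $\alpha_s(y_j)x_i(\alpha_s^\vee)=\alpha_s(y_i)x_j(\alpha_s^\vee)$, and then establishing spanning via the non-crossing reduction using \eqref{relG2so} — match the paper's argument closely, and your honest flag that the $t$-dependence only enters through $S_{jk}$ is correct.

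The linear independence step is where you have a genuine gap, and you have correctly identified it as the crux, but the greedy approach you sketch cannot be made to work. The $\mf{gl}(n)$ argument exploits the fact that $H_{t,\bc}^{\mf{gl}(n)}=H_{t,\bc}^{\T}$ is spanned by \emph{PBW monomials} $\prod x_i^{a_i}y_i^{b_i}w$ (those with $\sum a_i = \sum b_i$), so one can set up a triangular bijection between these and the standard $E$-monomials. In the $\mf{so}(n)$ case the premise collapses: $H_{t,\bc}^{\mf{so}(n)}=H_{t,\bc}^{\SL_2}$ and, since $\SL_2$ mixes $x$'s and $y$'s, essentially no PBW monomial of $H_{t,\bc}$ other than the pure group elements lies in this subalgebra (already $M_{12}=x_1y_2-x_2y_1$ is in it but neither summand is). So there is no collection of "PBW monomials that happen to lie in $H_{t,\bc}^{\mf{so}(n)}$" to run a greedy triangularity argument against, and your reservation that "the real work lies" here is precisely where the plan breaks.

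The paper takes a different route for linear independence. It passes through the Dunkl embedding and considers the \emph{leading symbol} of each standard monomial $M_{i_1j_1}^{r_1}\cdots M_{i_kj_k}^{r_k}w$: this is $L_{i_1j_1}^{r_1}\cdots L_{i_kj_k}^{r_k}w$, where $L_{ij}=x_ip_j-x_jp_i$ are the undeformed angular momentum operators. Linear independence of these leading symbols is then cited from \cite{feigin2022algebra}, and this yields linear independence of the $M$-monomials in $H_{t,\bc}^{\mf{so}(n)}$ by the usual filtration argument, after which injectivity of $\rho$ and completeness of the relations follow just as you anticipate. So the structure of the argument (relations, spanning, independence, isomorphism) is the same, but the independence step requires a genuinely different input than the PBW-triangularity you propose — a symbol calculation in the Dunkl realisation rather than a recursion on exponent vectors.
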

\begin{proof}
Consider the isomorphism $\varphi\colon \mathfrak{h} \to \mathfrak{h}^*$ given by $\varphi(v)(u)=(v, u)$ for any $v, u \in \mathfrak h$. Under this isomorphism $\varphi(y_i)=x_i$ and $\varphi(\alpha_s^\vee)=\frac12 (\alpha_s^\vee, \alpha_s^\vee) \alpha_s$ since $\alpha_s(\alpha_s^\vee)=2$ and by orthogonality of the reflection $s\in \mc{S}$. We also have $\beta(v) = \varphi(v)(\varphi^{-1}(\beta))$ for any $v \in \mathfrak{h}, \beta \in \mathfrak{h}^*$. Therefore
\begin{equation}
\label{checkcancel}
    \alpha_s (y_j) x_i (\alpha_s^\vee) =
    x_j\left(\frac{2\alpha_s^\vee}{(\alpha_s^\vee, \alpha_s^\vee)}\right) \frac{(\alpha_s^\vee, \alpha_s^\vee)}{2} \alpha_s(y_i) 
    = \alpha_s(y_i) x_j(\alpha_s^\vee).
\end{equation}
Note that $M_{ij}=E_{ij}- E_{ji}$, and elements $E_{ij}$ satisfy relations given in Theorem \ref{glnpresentation}. We apply the first relation of Theorem \ref{glnpresentation} to
$$
[M_{ij}, M_{kl}] =[ E_{ij}, E_{kl}] - [E_{ji}, E_{kl}]-[E_{ij}, E_{lk}]+[E_{ji}, E_{lk}],
$$
and we simplify the result by making use of relation \eqref{checkcancel}. After collecting terms at each $M_{ij}$ and cancellations of the terms coming from the second sum in the right-hand side of equality \eqref{relG1} we get 
\begin{equation}
    \label{soncommutators}
 [M_{ij}, M_{kl}]=
    M_{il} S_{jk} + M_{jk} S_{il} - M_{ik} S_{j l} - M_{jl} S_{ik}.
\end{equation}

Consider now
\begin{multline}
\label{crossing}
    M_{ij} M_{kl}+M_{jk}M_{il}+M_{ki}M_{jl} = (E_{ij} E_{kl}-E_{kj}E_{il})-(E_{ij}E_{lk}-E_{ik}E_{lj})-(E_{ji}E_{kl}-E_{ki}E_{jl})
    \\
    +(E_{ji}E_{lk}- 
    E_{jk}E_{li})+(E_{jk}E_{il} - E_{ik}E_{jl})+(E_{kj}E_{li}-E_{ki}E_{lj})
    \\
    = -(x_i S_{jl} y_k - x_i S_{kl}y_j) +(x_j S_{il} y_k - x_j S_{kl} y_i) +(x_k S_{jl} y_i - x_k S_{il} y_j),
\end{multline}
where we used the relation $S_{ij}=S_{ji}$ which follows from \eqref{checkcancel}. By the Jacobi identity,  
$[S_{jl}, y_k] = [S_{kl}, y_j]$. This implies that relation \eqref{crossing} can be rearranged to the form 
\begin{equation}
\label{crossingtrue}
   M_{ij} M_{kl}+M_{jk}M_{il}+M_{ki}M_{jl} = 
M_{ij}S_{kl}+M_{jk}S_{il}+M_{ki}S_{jl}.     
\end{equation}

Consider the map $\rho_0\colon \Lambda^2 \mathfrak h \to \langle M_{ij}| 1\le i < j \le n\rangle$ given by $\rho(m_{ij})=M_{ij}$, where in the codomain we have the linear span. The map $\rho_0$ is $W$-equivariant. It follows from relations \eqref{soncommutators} and \eqref{crossingtrue} that $\rho_0$ extends to a well-defined homomorphism of algebras $\rho\colon B \to H_{t, c}^{\mathfrak{so}(n)}$ by defining $\rho (w)=w$. It is also clear that $\rho$ is an epimorphism. Furthermore, monomials \eqref{basisso}  with restrictions \eqref{basissorestrictions} linearly generate the algebra $H_{t, c}^{\mathfrak{so}(n)}(W)$. This follows from a noncrossing visualisation of these monomials and since application of the relation \eqref{crossingtrue} allows to reduce the number of crossings in the monomials from the algebra $H_{t, c}^{\mathfrak{so}(n)}(W)$ (see \cite{Feiginangular}).
Consider now the highest symbol of the monomials \eqref{basisso} in the Dunkl embedding, it has the form
$L_{i_1 j_1}^{r_1}\ldots L_{i_k j_k}^{r_k}w$, where $L_{ij} = x_i p_j - x_j p_i\in \C[x_1, \ldots, x_n, p_1, \ldots, p_n]$. These highest symbols are linearly independent (see e.g. \cite{feigin2022algebra}), which implies that elements \eqref{basisso} with restrictions \eqref{basissorestrictions} form a basis of the algebra $H_{t, c}^{\mathfrak{so}(n)}(W)$. 

Similarly, by applying relations \eqref{relG1so}, \eqref{relG2so} we deduce that  the linear span of the corresponding monomials $m_{i_1 j_1}^{r_1}\ldots m_{i_k j_k}^{r_k}w \in B$ gives the entire algebra $B$. These monomials are linearly independent since $\rho$ maps them to a basis of $H_{t, c}^{\mathfrak{so}(n)}(W)$, and it follows that
 $\rho$ is an isomorphism.
\end{proof}

As a corollary we get the following statement.

\begin{prop}\label{gradedso}
    There is an isomorhism of algebras
\[
H_{0,0}^{\mathfrak{so}(n)} \cong \mathrm{gr}\,H_{t, c}^{\mathfrak{so}(n)}.
\]
\end{prop}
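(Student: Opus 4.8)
The plan is to identify both $H_{0,0}^{\mf{so}(n)}$ and $\gr H_{t,\bc}^{\mf{so}(n)}$ with the \emph{same} graded subalgebra of $P \rtimes W = \gr H_{t,\bc}$, exploiting the fact that the PBW-type basis furnished by Theorem~\ref{sobasis} does not depend on the parameters $t,\bc$.

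First I would record the easy inclusion. By the PBW theorem $\gr H_{t,\bc} = P \rtimes W$, so Lemma~\ref{lem:gradedin}(i) makes $\gr H_{t,\bc}^{\mf{so}(n)}$ a graded subalgebra of $P \rtimes W$ for the grading in which $\h,\h^*$ sit in degree one and $W$ in degree zero. Each generator $M_{ij} = x_i y_j - x_j y_i$ lies in $\mc{F}_2 \setminus \mc{F}_1$, with symbol $L_{ij} := \sigma(M_{ij}) = x_i y_j - x_j y_i \in P_2$ (the $x$'s and $y$'s now commuting), and $W \subset \mc{F}_0$; since the $M_{ij}$ and $W$ generate $H_{t,\bc}^{\mf{so}(n)}$, the subalgebra $C \subset P \rtimes W$ generated by the $L_{ij}$ and $W$ is contained in $\gr H_{t,\bc}^{\mf{so}(n)}$. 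On the other hand, at $t = 0$, $\bc = 0$ one has $H_{0,0} = P \rtimes W$ and the generator $M_{ij}$ of $H_{0,0}^{\mf{so}(n)}$ \emph{is} the polynomial $L_{ij}$, so $C = H_{0,0}^{\mf{so}(n)}$.

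For the reverse inclusion I would use the full strength of Theorem~\ref{sobasis}. The monomials $\mu = M_{i_1 j_1}^{r_1}\cdots M_{i_k j_k}^{r_k} w$ subject to \eqref{basissorestrictions} form a basis of $H_{t,\bc}^{\mf{so}(n)}$; since $i_s < j_s$ each $L_{i_s j_s} \neq 0$ and $P$ is a domain, the product $L_{i_1 j_1}^{r_1}\cdots L_{i_k j_k}^{r_k} w$ is nonzero in $P \rtimes W$, which shows $\mu \in \mc{F}_{2R} \setminus \mc{F}_{2R-1}$ with $R = \sum_s r_s$ and $\sigma(\mu) = L_{i_1 j_1}^{r_1}\cdots L_{i_k j_k}^{r_k} w$. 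These leading symbols are moreover linearly independent in $P \rtimes W$: this is precisely the linear independence of the symbols $L_{i_1 j_1}^{r_1}\cdots L_{i_k j_k}^{r_k} w$ proved in the course of Theorem~\ref{sobasis}, a statement inside $P \rtimes W$ that is independent of $t$ and $\bc$. A standard PBW-type argument (a basis with linearly independent leading symbols yields a basis of the associated graded) then shows that $\{ L_{i_1 j_1}^{r_1}\cdots L_{i_k j_k}^{r_k} w \}$, over the restrictions \eqref{basissorestrictions}, is a basis of $\gr H_{t,\bc}^{\mf{so}(n)}$; in particular $\gr H_{t,\bc}^{\mf{so}(n)} \subseteq C$. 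Hence $\gr H_{t,\bc}^{\mf{so}(n)} = C = H_{0,0}^{\mf{so}(n)}$ as graded subalgebras of $P \rtimes W$, and $M_{ij} \mapsto \sigma(M_{ij})$, $w \mapsto w$ is the required graded algebra isomorphism.

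The one point needing care — and the only real obstacle — is that for a general filtered algebra $\gr$ of a subalgebra can be strictly larger than the subalgebra generated by the symbols of its generators; ruling this out here is exactly the role of the parameter-free basis of Theorem~\ref{sobasis} together with the linear independence of its leading symbols, so the proposition really is an immediate corollary of that theorem. An essentially equivalent formulation: the defining relations \eqref{relG1so} and \eqref{relG2so} have leading parts $[m_{ij}, m_{kl}] = 0$ and $m_{ij} m_{kl} + m_{jk} m_{il} + m_{ki} m_{jl} = 0$ — the terms built from $S_{jk} \in \C W$ sitting in filtration degree $2$, hence strictly below $4$ — and these are precisely the defining relations of $H_{0,0}^{\mf{so}(n)}$, while the parameter-independence of the basis guarantees that no further relations appear in $\gr H_{t,\bc}^{\mf{so}(n)}$.
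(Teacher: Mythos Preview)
Your proof is correct and follows essentially the same strategy as the paper's: both use the parameter-independent basis of Theorem~\ref{sobasis}, take symbols of the basis monomials $M_{i_1 j_1}^{r_1}\cdots M_{i_k j_k}^{r_k} w$ inside $\gr H_{t,\bc} = H_{0,0}$, and observe that these symbols are exactly the basis of $H_{0,0}^{\mf{so}(n)}$ given by the same theorem at $t=\bc=0$. Your write-up is in fact a bit more explicit than the paper's in justifying why the symbols form a basis of $\gr H_{t,\bc}^{\mf{so}(n)}$ (the linear independence of the leading symbols, and the general ``PBW-type'' principle), but the argument is the same.
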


\begin{proof}
The isomorphism $H_{0,0} \cong \mathrm{gr}\, H_{t, c}$ restricts to an injective morphism
\[
H_{0,0}^{\mathfrak{so}(n)}\hookrightarrow \mathrm{gr}\, H_{t,c}.
\]
It is sufficient to show that the image of this map equals $\mathrm{gr}\,H_{t,c}^{\mathfrak{so}(n)}$. 

Theorem~\ref{sobasis} says that a basis of $H_{t,c}^{\mathfrak{so}(n)}$ is  given by the monomials \eqref{basisso} satisfying restrictions \eqref{basissorestrictions}. Consider a basis element $b= M_{i_1j_1}^{r_1}\cdots M_{i_kj_k}^{r_k}w$ and let $r= 2 \sum_{i=1}^k r_i$. Under the standard filtration $b\in \mc{F}_{r}$ and $b \not\in \mc{F}_{r-1}$. Then the symbol of $b$ in $\mathrm{gr}\, H_{t,c} = H_{0,0}$ is also $M_{i_1j_1}^{r_1}\cdots M_{i_kj_k}^{r_k}w$. Since Theorem \ref{sobasis} again says that all such monomials are a basis of $H_{0,0}^{\mathfrak{so}(n)}$ the equality $H_{0,0}^{\mathfrak{so}(n)} = \gr H_{t,c}^{\mathfrak{so}(n)}$ holds. 



\end{proof}

Let $\mf c$ be the parameter space $\C \oplus \Hom_W(\mc{S},\C)$. Then there exists a $\C[\mf{c}]$-algebra $H^{so(n)}$ which specialises to $H_{t,c}^{so(n)}$. 
An immediate consequence of Proposition~\ref{gradedso} is that:

\begin{cor}
The algebra $H^{so(n)}$ is flat over $\C[\mf{c}]$.
\end{cor}

The next result is key to understanding the algebra $H_{t,c}^{\mf{so}(n)}$ since rings of invariants are often much easier to study than algebras given by generators and relations. 

\begin{thm}\label{sonissl2}
There is equality $H_{t,c}^{\mf{so}(n)} =H_{t,c}^{\SL_2}$ of subalgebras of $H_{t,c}$. 
\end{thm}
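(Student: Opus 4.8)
The plan is to prove the two inclusions $H_{t,\bc}^{\mf{so}(n)} \subset H_{t,\bc}^{\SL_2}$ and $H_{t,\bc}^{\SL_2} \subset H_{t,\bc}^{\mf{so}(n)}$ separately, and for the first of these the cleanest route is via the $\SL_2$-action recalled at the end of Section~\ref{sec:RCAdefn}. Under the $W$-equivariant identification $\h \iso \h^*$, $v \mapsto v^*$, we have $x_i = y_i^*$, and the element $M_{ij} = x_i y_j - x_j y_i$ lies in $\Lambda^2(\h \oplus \h^*)$ sitting inside degree zero. I would check by a direct computation with the formulas $g \cdot v = a v + c v^*$, $g \cdot v^* = b v + d v^*$ that $g \cdot M_{ij} = (ad-bc) M_{ij} = M_{ij}$ for all $g = \left(\begin{smallmatrix} a & b \\ c & d\end{smallmatrix}\right) \in \SL_2$; equivalently, $M_{ij} = y_i^* \wedge y_j$ is a highest-weight-zero combination annihilated by the Lie algebra action of $e, f, h$. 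Since $\SL_2$ acts trivially on $W$, every generator of $H_{t,\bc}^{\mf{so}(n)}$ is $\SL_2$-invariant, giving $H_{t,\bc}^{\mf{so}(n)} \subset H_{t,\bc}^{\SL_2}$.

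For the reverse inclusion I would argue on associated graded algebras, exactly in the style of the proof of Lemma~\ref{glisevendegree}. Both $H_{t,\bc}^{\SL_2}$ and $H_{t,\bc}^{\mf{so}(n)}$ are filtered subalgebras of $H_{t,\bc}$ via restriction of $\mc{F}_\bullet$, so by Lemma~\ref{lem:gradedin}(i) it suffices to show $\gr H_{t,\bc}^{\SL_2} = \gr H_{t,\bc}^{\mf{so}(n)}$ inside $\gr H_{t,\bc} = P \rtimes W$. The filtration $\mc{F}_\bullet$ is $\SL_2$-stable (it is $\T$-stable and, in the real case, preserved by the full $\SL_2$-action which rescales $\h \oplus \h^*$ as a single degree-one space), so Lemma~\ref{lem:gradedin}(iii) gives $\gr H_{t,\bc}^{\SL_2} = (P \rtimes W)^{\SL_2} = P^{\SL_2} \rtimes W$. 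On the other side, Proposition~\ref{gradedso} identifies $\gr H_{t,\bc}^{\mf{so}(n)}$ with $H_{0,0}^{\mf{so}(n)} = (P \rtimes W)^{\mf{so}(n)}$, the subalgebra generated by the classical symbols $L_{ij} = x_i p_j - x_j p_i$ and $W$. So the statement reduces to the purely classical fact that the subalgebra of $P = \C[\h \oplus \h^*]$ generated by the $L_{ij}$ (i.e. by $\Lambda^2(\h \oplus \h^*)$) is exactly $P^{\SL_2}$, and then that adjoining $W$ on both sides preserves the equality because $\SL_2$ acts trivially on $W$ and the $W$-action commutes with the $\SL_2$-action.

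That classical identity $\C[L_{ij}] = \C[\h \oplus \h^*]^{\SL_2}$ is where I expect the real work to be, and it is an instance of the first fundamental theorem of invariant theory for $\SL_2$: writing $\h \oplus \h^* \cong \C^n \otimes \C^2$ with $\SL_2$ acting on the $\C^2$ factor, classical invariant theory says the ring of $\SL_2$-invariants is generated by the $2\times 2$ minors, which up to the identification $x_i \leftrightarrow y_i^*$ are precisely the $L_{ij}$. I would cite this (Weyl, or Procesi) rather than reprove it. A minor point to dispatch carefully: I should confirm that the $\SL_2$-action on $H_{t,\bc}$ really does preserve each $\mc{F}_i$ — this is immediate since $\mc{F}_\bullet$ is the filtration with $\h \oplus \h^*$ in degree one and $\SL_2$ acts linearly on $\h \oplus \h^*$ and trivially on $W$ — and that passing to $W$-skew products is compatible with taking $\SL_2$-invariants, i.e. $(P \rtimes W)^{\SL_2} = P^{\SL_2} \rtimes W$, which holds because $\SL_2$ fixes $W$ pointwise. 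Assembling these pieces yields $\gr H_{t,\bc}^{\SL_2} = \gr H_{t,\bc}^{\mf{so}(n)}$ and hence, by Lemma~\ref{lem:gradedin}(i), the desired equality $H_{t,\bc}^{\mf{so}(n)} = H_{t,\bc}^{\SL_2}$.
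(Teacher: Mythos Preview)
Your proposal is correct and follows essentially the same route as the paper: first the inclusion $H_{t,\bc}^{\mf{so}(n)} \subset H_{t,\bc}^{\SL_2}$ by a direct check that $g \cdot M_{ij} = M_{ij}$ (which, when carried out, uses that $[x_i,y_j]=[x_j,y_i]$ in the real case), then equality of the associated graded via Lemma~\ref{lem:gradedin}(iii), Proposition~\ref{gradedso}, and the first fundamental theorem for $\SL_2$, followed by Lemma~\ref{lem:gradedin}(i). The paper cites Fulton for the invariant-theory step rather than Weyl or Procesi, but otherwise the arguments match.
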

\begin{proof}
First we show that there is an inclusion $H_{t,c}^{\mf{so}(n)}\subset  H_{t,c}^{\SL_2}$. 

As in \eqref{eq:SL2actioneq}, the action of any $g\in \SL_2$ is $g(x_i)=ax_i+b y_i$, $g(y_j)=c x_j+dy_j$, where $ad-bc=1$. The group $\SL_2$ acts trivially on $W$ by definition. It suffices then to show that $g$ fixes any generator $x_iy_j-x_jy_i$. We perform the calculation explicitly
\begin{equation}\label{eq:gSLaction}
    g(x_iy_j-x_jy_i)=adx_iy_j+bcy_ix_j-adx_jy_i-bcy_jx_i.
\end{equation}
Since $W$ is a real reflection group, \eqref{checkcancel} implies that $[x_i,y_j]=[x_j,y_i]$ and thus $y_ix_j-y_jx_i=x_jy_i-x_iy_j$. Therefore, \eqref{eq:gSLaction} can be rearranged to give
\[
adx_iy_j+bcx_jy_i-adx_jy_i-bcx_iy_j=(ad-bc)(x_iy_j-x_jy_i)=x_iy_j-x_jy_i.
\]
We conclude that $H_{t,c}^{\mf{so}(n)}\subset  H_{t,c}^{\SL_2}$. 

Identifying $\h \oplus \h^*$ with the space of $2 \times n$ matrices, it is a classical result in invariant theory \cite[Proposition 9.2]{FultonYOungTableaux} that the subalgebra $A$ of $P$ generated by the determinants $x_i y_j - x_j y_i$ is the invariant ring $P^{\SL_2}$. Since $H_0^{\mf{so}(n)} = A \rtimes W$ and $H_{0}^{\SL_2} = P^{\SL_2} \rtimes W$, we deduce that $H_{0}^{\mf{so}(n)} =H_{0}^{\SL_2}$. 

We have shown that $H_{t,c}^{\mf{so}(n)}\subset  H_{t,c}^{\SL_2}$. If we can show that $\mathrm{gr}\, H_{t,c}^{\mf{so}(n)}=\mathrm{gr}\, H_{t,c}^{\SL_2}$ the result will follow by Lemma~\ref{lem:gradedin}(i). As $\SL_2$ is a reductive group we can apply Lemma~\ref{lem:gradedin}(iii) to get $\mathrm{gr}\, H_{t,c}^{\SL_2}=(\mathrm{gr}\, H_{t,c})^{\SL_2}=H_0^{\SL_2}$. Combining this with Proposition~\ref{gradedso}, which says that $H_{0}^{\mathfrak{so}(n)} = \mathrm{gr}\,H_{t, c}^{\mathfrak{so}(n)}$, implies that $\mathrm{gr}\, H_{t,c}^{\mf{so}(n)}= H_0^{\mf{so}(n)}=H_0^{\SL_2}=\mathrm{gr}\, H_{t,c}^{\SL_2}$.
\end{proof}

\begin{remark}
    If $W$ is not a real reflection group then it does not preserve the linear span of the $M_{ij}$ and the algebra generated by the $M_{ij}$ and $W$ is larger than one would expect; in general this algebra will equal $H_{t,c}^{\mf{gl}(n)}$.
\end{remark}

\begin{remark}
    One can generalise the construction of the degree zero and the Dunkl angular momentum subalgebras as follows. Let $\mf{g} \subset \End_{\C}(\h) \cong \h \o \h^*$ be a reductive Lie subalgebra. Pick $W \subset N_{GL(\h)}(\mf{g})$ a (finite) reflection group. Then one can consider the subalgebra $H_{t,c}^{\mf{g}}$ of $H_{t,c}$ generated by $\mf{g}$ and $W$. It would be interesting to classify all pairs $(\mf{g},W)$ such that $H_{t,c}^{\mf{g}}$ defines a flat family of quotients of $U(\mf{g})\rtimes W$ when $t=1$.  
\end{remark}

\subsection{Subgroups of $\SL_2$} 

One can consider $H_{t,c}^{\Gamma}$ for other reductive subgroups $\Gamma \subset \SL_2$ and ask if there exists $\mf{g} \subset \h \o \h^*$ such that $H_{t,c}^{\Gamma} = H_{t,c}^{\mf{g}}$. We do not consider finite $\Gamma$ here. Other than $\T$ and $\SL_2$, this leaves the normalizer $N$ of $\T$ in $\SL_2$. 

Let $F$ be the automorphism of $H_{t,c}$ defined by $F(x_i) = y_i, F(y_i) = -x_i$ and $F(w) = w$. Then $F^2 = -1 \in \T$ and we can consider the action of the group $N = \langle \T, F \rangle$ on $H_{t,c}$. 

Recall that $M_{ij} := x_i y_j - x_j y_i$ and $H^{\mf{so}(n)}_{t,c}$ is generated by the $M_{ij}$ and $W$. 

\begin{lem}\label{include}
    There is a proper inclusion $H^{\mf{so}(n)}_{t,c} \subset H^{N}_{t,c}$. 
\end{lem}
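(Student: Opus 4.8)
The plan is to separate the claim into the containment and its strictness. The containment is formal: since $N \subset \SL_2$, taking invariants reverses inclusions, so $H_{t,\bc}^{\SL_2} \subseteq H_{t,\bc}^{N}$, and Theorem~\ref{sonissl2} identifies the left-hand side with $H_{t,\bc}^{\mf{so}(n)}$. (A direct check is equally short: $F(w) = w$ for $w \in W$ and $F(M_{ij}) = F(x_i)F(y_j) - F(x_j)F(y_i) = -y_i x_j + y_j x_i = M_{ij}$, the last step using $S_{ij} = S_{ji}$ in the real case, cf. the proof of Theorem~\ref{sobasis}; combined with the $\T$-invariance already implied by $H_{t,\bc}^{\mf{so}(n)} \subseteq H_{t,\bc}^{\mf{gl}(n)} = H_{t,\bc}^{\T}$, this shows the generators are $N$-invariant.)

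For strictness I would pass to associated graded rings. Equip $H_{t,\bc}^{\mf{so}(n)}$ and $H_{t,\bc}^{N}$ with the filtration $\mc{F}_i$ restricted from $H_{t,\bc}$; by Lemma~\ref{lem:gradedin}(i) it suffices to prove $\gr H_{t,\bc}^{\mf{so}(n)} \subsetneq \gr H_{t,\bc}^{N}$. By Proposition~\ref{gradedso} together with the identity $H_{0,0}^{\mf{so}(n)} = P^{\SL_2}\rtimes W$ recorded in the proof of Theorem~\ref{sonissl2}, the left-hand side is $P^{\SL_2}\rtimes W$. Since $\mc{F}_i$ is $\SL_2$-stable, hence $N$-stable, and $N$ is reductive and acts trivially on $W$, Lemma~\ref{lem:gradedin}(iii) and the PBW theorem give $\gr H_{t,\bc}^{N} = (\gr H_{t,\bc})^{N} = (P \rtimes W)^{N} = P^{N}\rtimes W$, where $P = \C[\h\times\h^*]$. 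So the problem reduces to the strict inclusion $P^{\SL_2} \subsetneq P^{N}$.

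The final step is to name a witness. I would take the monomial $x_1^2 y_1^2$: it has $\T$-weight $0$ and $F(x_1^2 y_1^2) = y_1^2 x_1^2 = x_1^2 y_1^2$, so it lies in $P^{N}$. It does not lie in $P^{\SL_2}$: by the first fundamental theorem for $\SL_2$ acting on $2\times n$ matrices (used already in the proof of Theorem~\ref{sonissl2}), $P^{\SL_2}$ is the graded subalgebra generated by the minors $x_iy_j - x_jy_i$, all of which vanish at the point of $\h\times\h^*$ where $x_1 = y_1 = 1$ and all other coordinates are $0$; hence every homogeneous element of $P^{\SL_2}$ of positive degree vanishes there, whereas $x_1^2 y_1^2$ takes the value $1$. (Alternatively, for $n \ge 2$ a Krull-dimension count suffices: $\dim P^{N} = 2\dim\h - \dim N = 2n-1 > 2n-3 = \dim P^{\SL_2}$.)

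I expect no serious obstacle. The only real content is identifying $\gr H_{t,\bc}^{N}$ with a ring of polynomial invariants and then separating that ring from $P^{\SL_2}$; both are routine given the lemmas of Section~\ref{sec:gradedfilered}, and the nontrivial multiplication in $H_{t,\bc}$ never enters the argument.
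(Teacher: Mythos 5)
Your proof is correct and follows essentially the same route as the paper: reduce to associated graded rings via Lemma~\ref{lem:gradedin}, identify $\gr H_{t,\bc}^{N} = P^{N}\rtimes W$ and $\gr H_{t,\bc}^{\mf{so}(n)} = P^{\SL_2}\rtimes W$, and exhibit an $N$-invariant polynomial not lying in the subalgebra generated by the minors. Your witness $x_1^2 y_1^2$ is (up to a factor of $2$) the special case $C_{11,11}$ of the paper's $C_{ij,kl} = x_ix_jy_ky_l + x_kx_ly_iy_j$, and your evaluation at the point $x_1=y_1=1$, all other coordinates zero, plays the same role as the paper's homomorphism $\Delta\colon x_i\mapsto x, y_i\mapsto y$ followed by evaluation, so the two separation arguments are interchangeable.
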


\begin{proof}
    It is easy to check that $H_{t, c}^{\mf{so}(n)} \subset H_{t, c}^{N}$. Since this is an inclusion of filtered algebras, it suffices 
    to show that the inclusion is proper for the associated graded algebras. By Lemma \ref{lem:gradedin}(iii) and Theorem \ref{sonissl2} it is sufficient to establish that $H_{0, 0}^{\mf{so}(n)} \ne H_{0, 0}^{N}$.
    We have 
    $$
    H^{N}_{0,0} = (P \rtimes W)^{N} = P^{N} \rtimes W. 
    $$
    Then it suffices to show that the subalgebra $B$ of $P$ generated by the $M_{ij}$ is a proper subalgebra of $P^{N}$. Consider the elements $C_{ij,kl} := x_i x_j y_k y_l + x_k x_l y_i y_j$. They belong to $P^{N}$. We claim that they do not belong to $B$. For this, we define $\Delta \colon P^{N} \to \C[x,y]^{N} = \C[x^2 y^2]$ by $\Delta(x_i) = x, \Delta(y_i) = y$. Notice that $\Delta(M_{ij})= 0$ but $\Delta(C_{ij,kl}) = 2 x^2 y^2 \neq 0$. The claim follows. 
\end{proof}


Consider now the degree two component of $H_{0,0}^{N} = P^N \rtimes W$. Notice that $P_2^{\T} = \h \o \h^* \supset P_2^N$ since $\T \subset N$. Under the identification $\h \o \h^* = \End(\h)$, the automorphism $F$ corresponds to the map $A \mapsto - A^T$. Therefore, $P_2^N = P_2^{\SL_2} = \mf{so}(n)$ and hence $(\gr H_{t,c}^N)_2 = \mf{so}(n) \o \C W$. This, combined with Lemma~\ref{include}, implies that there is no subalgebra $\mf{g}$ of $\mf{gl}(n)$ such that $H^{N}_{t,c} = H^{\mf{g}}_{t,c}$. 

\subsection{The centralizer of $\mf{so}(n)$ in $W$}

Assume now that $(W,\mathfrak{h})$ is an arbitrary complex reflection group. Each element $m\in W$ acts as a linear transformation of $\mathfrak{h}$, denote this corresponding map $M\in \GL(\mathfrak{h})$. Recall that $W$ has a natural action on the dual space $\mathfrak{h}^*$, given by $(m\cdot y)(x)=y(m^{-1}x)$ and therefore the element $m\in W$ corresponds to $(M^{-1})^{\T}\in \GL(\mathfrak{h}^*)$. 

We can identify the space $\mathfrak{h}\otimes \mathfrak{h}^*$ with $\mathrm{End}(\mathfrak{h})$ via the map $\varphi\colon 
\mathfrak{h}\otimes \mathfrak{h}^* \to \mathrm{End}(\mathfrak{h})$ 
given by $\varphi(
x_i\otimes y_j)= x_iy_j$.  Equip $\mathrm{End}(\mathfrak{h})$ with a $W$-action by letting $w\cdot M=wMw^{-1}$. With this action the   map $\varphi$ becomes an isomorphism of $W$-modules. 
\begin{prop}\label{nomatrices}
Let $m\in W$ commute with all elements  $x_iy_j-x_jy_i$. If $n\geq 3$ then $m$ acts as a scalar multiple of the identity. If $n=2$ then $m$ acts as the sum of a scalar multiple of the identity and a skew-symmetric matrix.
\end{prop}

\begin{proof}
Using the identification $\varphi$ with $\mathrm{End}(\mathfrak{h})$, the pair $x_iy_j-x_jy_i$ corresponds to the matrix with $1$ in the $i^{th}$ row and $j^{th}$ column, $-1$ in the $j^{th}$ row and $i^{th}$ column and $0$ everywhere else. Let us denote this matrix by $P_{ij}$. Then for $m$ to commute with $x_iy_j-x_jy_i$ we require $m\cdot P_{ij}=MP_{ij}M^{-1}=P_{ij}$. Let $m_{ij}$ denote the entry in the $i^{th}$ row and $j^{th}$ column of $M$. Calculating $MP_{ij}M^{-1}=P_{ij}$ gives $m_{ij}=-m_{ji}$, $m_{ii}=m_{jj}$ and other entries of $M$ are $0$. There are two cases to consider.

If $\dim\, \mathfrak{h}\geq 3$ we must have that $m_{k\ell}=0$ for all $k\neq \ell$. This is because the commutation relation for $x_{1}y_2-x_2y_1$ implies that $m_{k\ell}=0$ for all $(k,\ell)\neq (1,2)$ or $(2,1)$. But then the commutation relation for $x_1y_3-x_3y_1$ forces $m_{k\ell}=0$ for all $(k,\ell)\neq (1,3)$ or $(3,1)$. The only non-zero terms of $M$ are the diagonal entries which are all equal.

If $\dim\, \mathfrak{h}=2$ then we need only consider the element $x_1y_2-x_2y_1$ and we have $m_{11}=m_{22}$ and $m_{12}=-m_{21}$. Hence the matrix associated to the action of $m$ has the form
\[
M=\begin{bmatrix}
a & b \\
    -b &  a
\end{bmatrix}
\]
for $a,\,b\in\C$, as required.
\end{proof} 

\begin{cor}\label{dim2}
    Suppose that $W\subset \mr{O}(n)$. Then the centraliser $Z_W(\mf{so}(n))$ of $\mf{so}(n)$ in $W$ equals $Z(W)$ for $n\ge 3$, and $Z_W(\mf{so}(2))=W \cap \mr{SO}(2)$. 
\end{cor}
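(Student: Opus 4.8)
The plan is to deduce Corollary~\ref{dim2} directly from Proposition~\ref{nomatrices} by translating the linear-algebra conclusion back into group-theoretic terms. First I would observe that, since $W \subset \mr{O}(n)$, the inner product gives a $W$-equivariant identification $\h \cong \h^*$ under which $M_{ij} = x_iy_j - x_jy_i$ corresponds to the matrices $P_{ij}$ appearing in the proof of Proposition~\ref{nomatrices}. An element $w \in W$ lies in $Z_W(\mf{so}(n))$ precisely when it commutes in $H_{t,\bc}$ with all the $M_{ij}$, and since $w$ acts by conjugation on $\h \o \h^* = \End(\h)$, this is equivalent to the matrix $M$ of $w$ commuting with every $P_{ij}$ — exactly the hypothesis of Proposition~\ref{nomatrices}.

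For $n \ge 3$, Proposition~\ref{nomatrices} then says $M$ is a scalar matrix; combined with $M \in \mr{O}(n)$ (so the scalar is $\pm 1$) and the fact that an element of an irreducible complex reflection group acting by a scalar lies in $Z(W)$ (noted in the excerpt, using Schur's lemma / irreducibility of $\h$), we get $Z_W(\mf{so}(n)) \subset Z(W)$. The reverse inclusion $Z(W) \subset Z_W(\mf{so}(n))$ is immediate: central elements of $W$ commute with everything in $H_{t,\bc}$, in particular with the $M_{ij}$. Hence $Z_W(\mf{so}(n)) = Z(W)$.

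For $n = 2$, Proposition~\ref{nomatrices} gives that $M$ has the form $\left(\begin{smallmatrix} a & b \\ -b & a \end{smallmatrix}\right)$; imposing $M \in \mr{O}(2)$ forces $a^2 + b^2 = 1$, so $M \in \mr{SO}(2)$. Conversely any rotation commutes with $P_{12}$ (equivalently, $\mr{SO}(2)$ is abelian), so $W \cap \mr{SO}(2) \subset Z_W(\mf{so}(2))$. This yields $Z_W(\mf{so}(2)) = W \cap \mr{SO}(2)$.

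I do not anticipate a serious obstacle here: the corollary is essentially a bookkeeping consequence of Proposition~\ref{nomatrices} once one has matched up the two meanings of ``commuting with the $M_{ij}$'' (inside $H_{t,\bc}$ versus as matrices). The one point that deserves a sentence of care is the passage from ``acts as a scalar'' to ``lies in $Z(W)$'': this uses that $(W,\h)$ is irreducible, so that scalar operators in $\GL(\h)$ that happen to lie in $W$ are automatically central — a fact already recorded in the text preceding Definition~\ref{ratcherdef}. (Strictly, one should note the corollary implicitly assumes $(W,\h)$ irreducible, inherited from the standing hypotheses.)
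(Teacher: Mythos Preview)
Your approach is exactly the intended one: the corollary is an immediate consequence of Proposition~\ref{nomatrices}, and the paper gives no separate proof, so you have correctly reconstructed the argument.

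One small slip in the justification of the reverse inclusion for $n\ge 3$: it is not true that central elements of $W$ commute with everything in $H_{t,\bc}$ (for instance $-\Id$ anticommutes with each $x_i$). The correct reason $Z(W)\subset Z_W(\mf{so}(n))$ is the one you allude to at the end: since $\h$ is irreducible, any $w\in Z(W)$ acts as a scalar on $\h$, and scalar matrices centralize all of $\End(\h)$, in particular $\mf{so}(n)$. With that corrected the argument is complete; the remark that $M\in\mr{O}(n)$ forces the scalar to be $\pm 1$ is true but unnecessary.
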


If $W$ is a real reflection group then we identify $\mf{so}(n) \subset \End(\h) = \h \o \h^*$ with $\wedge^2 \h$ using the $W$-equivariant isomorphism $\h \iso \h^*$. 

\begin{lem}
    Assume that $W$ is an irreducible real reflection group and $\dim \h \ge 3$. Then the kernel of $W \to GL(\wedge^2 \h)$ equals $Z(W)$. 
\end{lem}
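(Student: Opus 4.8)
The plan is to reduce the statement to Corollary~\ref{dim2}. Since $W$ is a real reflection group it preserves a positive-definite inner product on $\h$; choosing an orthonormal basis identifies $W$ with a subgroup of $\mr{O}(n)$, where $n = \dim\h \ge 3$. As recalled just before the lemma, the $W$-equivariant isomorphism $\h \iso \h^*$ turns $\wedge^2\h$ into the subspace $\mf{so}(n)$ of skew-symmetric endomorphisms inside $\h\o\h^* = \End(\h)$, and under this identification the representation $W \to \GL(\wedge^2\h)$ becomes the conjugation action $w \cdot M = wMw^{-1}$ of $W$ on $\mf{so}(n)$. Hence the kernel of $W \to \GL(\wedge^2\h)$ is exactly the set of $w \in W$ whose matrix commutes with every skew-symmetric endomorphism of $\h$, i.e.\ the centralizer $Z_W(\mf{so}(n))$. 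Corollary~\ref{dim2}, applicable since $n \ge 3$, identifies this centralizer with $Z(W)$, which is the assertion.

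For completeness I would also record the underlying linear-algebra argument directly. Given $w$ in the kernel, let $M \in \GL(\h)$ denote its action on $\h$; since $W$ is finite, $M$ is diagonalizable over $\C$, with eigenvalues $\lambda_1, \ds, \lambda_n$. The eigenvalues of $\wedge^2 M$ are the products $\lambda_i\lambda_j$ for $1 \le i < j \le n$, so $\wedge^2 M = \Id$ forces $\lambda_i\lambda_j = 1$ for all $i<j$. Because $n \ge 3$, comparing $\lambda_i\lambda_j$, $\lambda_i\lambda_k$ and $\lambda_j\lambda_k$ for a triple $i<j<k$ gives $\lambda_j = \lambda_k$ and $\lambda_j^2 = 1$; thus all the $\lambda_i$ equal a common $\varepsilon \in \{1,-1\}$ and $M = \varepsilon\,\Id_\h$. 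As $W \hra \GL(\h)$ is faithful, $w$ is the scalar $\varepsilon\,\Id_\h$, hence central. Conversely, by irreducibility $Z(W) \subset \Cs\,\Id_\h$, and since $W$ is real these scalars lie in $\{\pm 1\}$; such a scalar acts on $\wedge^2\h$ by its square and therefore trivially, so $Z(W)$ is contained in the kernel.

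There is essentially no serious obstacle here: the only points needing care are the bookkeeping identification of $\ker(W \to \GL(\wedge^2\h))$ with the centralizer $Z_W(\mf{so}(n))$ (so that Corollary~\ref{dim2} may be invoked), and keeping track of the hypothesis $n \ge 3$. It is precisely this hypothesis that fails when $n = 2$, where $\wedge^2\h$ is the one-dimensional determinant representation and the kernel is the larger subgroup $W \cap \mr{SO}(2)$, consistent with the $n=2$ clause of Corollary~\ref{dim2}.
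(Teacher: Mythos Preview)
Your proposal is correct. Your second paragraph is essentially the paper's own proof: diagonalize the element, use that the eigenvalues on $\wedge^2\h$ are the pairwise products $\lambda_i\lambda_j$, and conclude from $n\ge 3$ that all $\lambda_i$ coincide and equal $\pm 1$, hence the element is a scalar and lies in $Z(W)$.

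Your first paragraph takes a genuinely different and shorter route: you observe that under the $W$-equivariant identification $\wedge^2\h\cong\mf{so}(n)$ the representation on $\wedge^2\h$ becomes the conjugation action on skew-symmetric matrices, so its kernel is exactly $Z_W(\mf{so}(n))$, and then invoke Corollary~\ref{dim2}. The paper does not do this, instead redoing the eigenvalue argument from scratch; your reduction is cleaner and avoids duplication. One small caveat: the inclusion $Z(W)\subset Z_W(\mf{so}(n))$ in Corollary~\ref{dim2} really uses irreducibility (so that central elements act as scalars), but since the lemma assumes $W$ irreducible this is harmless here.
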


\begin{proof}
    Since $Z(W)$ is either trivial, or equals $\{ \pm 1 \}$, the kernel clearly contains $Z(W)$. Let $g \in W$ act trivially on $\wedge^2 \h$. Since $g$ has finite order, it is diagonalizable. Let  $\omega_1, \ds, \omega_n$ be  the eigenvalues of $g$ on $\h$. Recall that each $\omega_i$ is a root of unity. Then in a suitable basis $g$ acts on $\wedge^2 \h$ as a diagonal matrix with diagonal entries $\omega_1 \omega_2, \ds, \omega_{n-1} \omega_n$. If this is the identity matrix then $\omega_i \omega_j = 1$ for $i \neq j$. If $\omega_1 \notin \{ \pm 1 \}$ say then $\omega_2 = \omega_3 = \omega_1^{-1}$ and $\omega_3 = \omega_2^{-1}$ is a contradiction. Thus, $\omega_i \in \{ \pm 1 \}$ for all $i$. If $\omega_i = -1$ for some $i$ then this forces $\omega_i = -1$ for all $i$. Thus, $g = -\Id$ in this case. 
\end{proof}

 \section{The centre of the angular momentum algebra in rank two}\label{sec:rank2centre}

In this section we assume $\dim \h = 2$. Then $W = D_{\ell}$ is the dihedral group of order $2\ell$, for $\ell \ge 3$, and we write $D_{\ell} = \{ 1, \sigma, \ds, \sigma^{\ell-1}, s_1, \ds, s_{\ell} \}$, where $\sigma$ is a rotation and the $s_i$ are the reflections. Note that $s_i \sigma s_i = \sigma^{-1}$. Let $a_0, \ds, a_{\ell-1} \in \C \langle \sigma \rangle$ be the basis of the group algebra consisting of primitive idempotents. 
Explicitly, we have $a_i=\frac{1}{\ell} \sum_{k=0}^{\ell-1} \zeta^{- k i} \sigma^k$, where $\zeta$ is a fixed primitive $\ell$th root of unity.
Then $\sigma a_i = \zeta^i a_i$. 
In $\C D_{\ell}$ we have $s_i a_j = a_{-j} s_i$ where we define $a_{i + \ell} := a_i$ for $i \in \Z$.  We set 
\[
b_i = \left\{ \begin{array}{ll} 
a_i + a_{-i} & 2 i \neq 0 \, \mod \, \ell \\
a_i & 2i = 0 \, \mod \, \ell 
\end{array}
\right. \quad \textrm{for $i = 0,\ds, N := \lfloor \ell/2 \rfloor $}.
\]
We also set $d_i = a_i - a_{-i}$
for $i = 1, \ds, \widetilde N:=\lfloor \frac{\ell-1}{2} \rfloor$.
Note that $N+\widetilde N = \ell-1$.
Moreover, $d_i^2 = b_i$ and $d_i b_i = d_i$. 

\begin{lem}
    The spaces 
    $$
    Z_+ = \{ z \in \C \langle \sigma \rangle \, | \, s_i z = z s_i \, \forall \, i \}, \quad Z_- = \{ z \in \C \langle \sigma \rangle \, | \, s_i z = -z s_i \, \forall \, i \},
    $$
    have basis $b_0, \ds, b_N$ and $d_1,\ds, d_{\widetilde N}$ respectively.  
\end{lem}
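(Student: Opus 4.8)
The plan is to reduce both defining conditions to eigenspace conditions for a single involution of $\C\langle\sigma\rangle$, and then count dimensions. First I would note that since $s_i\sigma s_i=\sigma^{-1}$ for every $i$, conjugation by any reflection acts on $\C\langle\sigma\rangle$ as one and the same algebra involution $\tau$, determined by $\tau(\sigma^k)=\sigma^{-k}$; equivalently, using the relation $s_ia_j=a_{-j}s_i$ recorded above, for $z=\sum_{j=0}^{\ell-1}c_ja_j$ one computes $s_iz=\tau(z)\,s_i$ with $\tau(z)=\sum_{j}c_{-j}a_j$ (indices read mod $\ell$). Cancelling the unit $s_i$, the condition $s_iz=zs_i$ becomes $\tau(z)=z$, i.e. $c_j=c_{-j}$ for all $j$, and $s_iz=-zs_i$ becomes $\tau(z)=-z$, i.e. $c_j=-c_{-j}$; in particular ``for all $i$'' is equivalent to ``for one $i$'', and $Z_+$, $Z_-$ are the $+1$- and $-1$-eigenspaces of $\tau$.

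Next I would analyse the permutation $j\mapsto -j$ of $\{0,1,\dots,\ell-1\}$ that $\tau$ induces on the basis $\{a_j\}$. Its fixed points are exactly the $j$ with $2j\equiv 0\,\mod\,\ell$, namely $j=0$ together with $j=\ell/2$ when $\ell$ is even; the remaining indices form two-element orbits $\{j,\ell-j\}$. Taking $\{0,1,\dots,N\}$ with $N=\lfloor\ell/2\rfloor$ as orbit representatives — a short parity check shows this set meets every orbit exactly once, the fixed ones being $j=0$ and, when $\ell$ is even, $j=N=\ell/2$ — the $\tau$-symmetrisations ($a_j$ in the fixed case, $a_j+a_{-j}$ in the two-orbit case) are exactly $b_0,\dots,b_N$ as defined, and the $\tau$-antisymmetrisations $a_j-a_{-j}$ taken over the two-element orbits are exactly $d_1,\dots,d_{\widetilde N}$, the number of such orbits being $\widetilde N=\lfloor(\ell-1)/2\rfloor$ (again a two-line check in each parity of $\ell$).

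Finally I would conclude by a dimension count. Since $\tau$ permutes a basis of $\C\langle\sigma\rangle$, $\dim Z_+$ equals the number of $\tau$-orbits, which is $N+1$, and $\dim Z_-$ equals the number of two-element orbits, which is $\widetilde N$. The elements $b_0,\dots,b_N$ lie in $Z_+$, are supported on pairwise disjoint subsets of the basis $\{a_j\}$ hence linearly independent, and there are $N+1$ of them; so they form a basis of $Z_+$. The identical argument applied to $d_1,\dots,d_{\widetilde N}$ gives a basis of $Z_-$. The only delicate point — the ``main obstacle'', such as it is — is keeping the index conventions consistent between the even and odd cases for $\ell$ (the ranges of $i$, the distinction between $b_i$ and $a_i$, and all the reductions mod $\ell$); once that bookkeeping is settled the result is immediate.
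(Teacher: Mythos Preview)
Your proof is correct and follows essentially the same approach as the paper: both reduce to the observation that conjugation by any reflection acts on $\C\langle\sigma\rangle$ as the single involution $a_j\mapsto a_{-j}$, so $Z_\pm$ are its $\pm 1$-eigenspaces, and the $b_i$, $d_i$ are precisely the (anti)symmetrisations of the $a_j$. Your treatment is a bit more explicit about the orbit bookkeeping and dimension count, but there is no substantive difference.
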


\begin{proof}
Since $\langle \sigma \rangle$ is a normal subgroup of $D_{\ell}$, the latter acts by conjugation on $\C \langle \sigma \rangle$ and this action factors through the quotient $D_{\ell} / \langle \sigma \rangle = \langle s_1 \rangle$. The $a_i$ are a basis of $\C \langle \sigma \rangle$. Therefore, we get a basis of $Z_+$ and $Z_-$ respectively by applying the averaging operator $(1/2)(1 + s_1)$ (and rescaling if necessary), respectively the operator $(1/2)(1 - s_1)$. The result follows.      
\end{proof}

\begin{prop}\label{dihed}
    The centre of $H_{t,c}^{\mf{so}(2)}(D_{\ell})$ is a direct sum of $N+1$ polynomial rings. More specifically, when $\ell$ is even, 
    \[
    Z(H_{t,c}^{\mf{so}(2)}(D_{\ell})) = b_0 \C[M_{12}^2] \oplus \left( \bigoplus_{i = 1}^{N-1} b_i \C[ M_{12} d_i] \right) \oplus b_N \C[M_{12}^2],
    \]
    and when $\ell$ is odd, 
    \[
    Z(H_{t,c}^{\mf{so}(2)}(D_{\ell})) = b_0 \C[M_{12}^2] \oplus \left( \bigoplus_{i = 1}^{N} b_i \C[ M_{12} d_i] \right).
    \]
\end{prop}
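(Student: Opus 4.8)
The plan is to first pin down the multiplicative structure of $H := H_{t,\bc}^{\mf{so}(2)}(D_\ell)$ and then compute its centre by a direct calculation using the idempotents $a_i$, $b_i$ and the elements $d_i$ introduced above. The starting point is Theorem~\ref{sobasis}, which in rank two asserts that the monomials $M_{12}^r w$, with $r \in \Z_{\ge 0}$ and $w \in D_\ell$, form a $\C$-basis of $H$. Combined with the (anti)commutation relations below, this identifies $H$ with the skew group ring $\C[M_{12}] \rtimes D_\ell$. The relevant action is easy to pin down: since $W \subset \mr{O}(2)$ acts orthogonally on $\h$ and dually on $\h^*$, conjugation by $w \in W$ scales the single element $M_{12}$ spanning $\wedge^2 \h$ by $\det w$. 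Hence $M_{12}$ commutes with $\sigma$ while $s M_{12} = - M_{12} s$ for every reflection $s \in D_\ell$, and in particular $M_{12}^2$ is central. Note that nothing here depends on $t$ or $\bc$, which already explains why the centre will not.

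Next I would compute $Z(H)$. Write $z \in H$ as $z = \sum_{r \ge 0} M_{12}^r c_r$ with $c_r \in \C D_\ell$, and observe that $c\, M_{12} = M_{12}\, \epsilon(c)$ for $c \in \C D_\ell$, where $\epsilon\colon D_\ell \to \{\pm 1\}$ is the sign character (trivial on $\langle \sigma \rangle$, equal to $-1$ on the reflections). Comparing coefficients in $z M_{12} = M_{12} z$ therefore forces $\epsilon(c_r) = c_r$, i.e. each $c_r \in \C\langle\sigma\rangle$. Commutation with $\sigma$ is then automatic, while commutation with $s_1$, using $s_1 M_{12} = - M_{12} s_1$ and $s_1 \sigma s_1 = \sigma^{-1}$, translates into $s_1 c_r s_1 = (-1)^r c_r$. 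By the preceding lemma this says precisely that $c_r \in Z_+$ for $r$ even and $c_r \in Z_-$ for $r$ odd; the converse implications are immediate, so
\[
Z(H) \;=\; \bigoplus_{r \text{ even}} M_{12}^r\, Z_+ \;\oplus\; \bigoplus_{r \text{ odd}} M_{12}^r\, Z_-.
\]

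Finally I would decompose this using the idempotents. The $b_0, \ds, b_N$ are orthogonal idempotents with $\sum_i b_i = 1$, and since they lie in $\C\langle\sigma\rangle$ they are central in $H$; thus $Z(H) = \bigoplus_{i=0}^N b_i Z(H)$. Feeding in the bases $Z_+ = \bigoplus_{i=0}^N \C b_i$ and $Z_- = \bigoplus_{j=1}^{\widetilde{N}} \C d_j$, together with the identities $d_j^2 = b_j$, $b_j d_j = d_j$ and the fact that $d_j$ lies in the block cut out by $b_j$, one reads off each summand: the block of $b_0$ contributes $b_0 \C[M_{12}^2]$ (no $d_0$ exists); for $1 \le i \le \widetilde{N}$ the block of $b_i$ contributes $b_i \C[M_{12} d_i]$, since $(M_{12} d_i)^{2k} = M_{12}^{2k} b_i$ and $(M_{12} d_i)^{2k+1} = M_{12}^{2k+1} d_i$; and when $\ell$ is even the remaining block $i = N$, where $b_N = a_N$ and no $d_N$ occurs, contributes $b_N \C[M_{12}^2]$. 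Each of these subalgebras is an honest polynomial ring because the monomials involved are distinct basis elements in the sense of Theorem~\ref{sobasis}. Collecting the summands according to the parity of $\ell$ (so that $\widetilde N = N - 1$ when $\ell$ is even and $\widetilde N = N$ when $\ell$ is odd) produces the two stated formulas.

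The calculation is essentially bookkeeping, and I expect the only places needing care to be exactly those two: first the observation that reflections anticommute with $M_{12}$ (which is what brings the sign character $\epsilon$ into the first commutation condition), and then the separate treatment of the two parity cases of $\ell$ when matching the $b_i$-blocks to the polynomial rings $\C[M_{12}^2]$ and $\C[M_{12} d_i]$.
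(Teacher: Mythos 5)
Your proof is correct and takes essentially the same approach as the paper's: both reduce to the observation that $M_{12}$ commutes with $\sigma$ and anticommutes with the reflections, then force the $\C D_\ell$-coefficient at each power $M_{12}^r$ to lie in $\C\langle\sigma\rangle$ and in $Z_+$ or $Z_-$ according to the parity of $r$. The only cosmetic difference is that you organise the computation by first exhibiting $H$ as the skew group ring $\C[M_{12}]\rtimes D_\ell$ (twisted by $\det$) and then imposing the two commutation constraints on a general element, whereas the paper reaches the same constraints by grading by the power of $M_{12}$ and invoking Corollary~\ref{dim2} for the step that the group support must lie in $\langle\sigma\rangle$.
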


\begin{proof}
The subalgebra $H_{t,c}^{\mf{so}(2)}(D_{\ell}) \subset H_{t,c}(D_{\ell})$ is generated by the element $M_{12}$ and elements $w \in D_{\ell}$. It has a basis $\{ M_{12}^j w \}$, where $j\in \Z_{\ge 0}$. The subalgebra is graded with $\deg M_{12} = 2$ and $\deg w = 0$. Therefore, its centre is also graded.  
We consider each graded piece in turn. In even degrees, we are looking for elements $M_{12}^{2k} z$, $k\in \Z_{\ge 0}$,  with $z \in \C D_{\ell}$ acting trivially on $M_{12}$ and commuting with $D_{\ell}$. By Corollary~\ref{dim2}, the first condition forces $z \in \C \langle \sigma \rangle$ since $\dim \h = 2$. Then, the second condition in turn means that $z$ belongs to $Z_+$. Hence, $z$ is a linear combination of the $b_i$ (which are all in the centre). In odd degrees, we have elements of the form $M_{12}^{2k+1} z$, again for some $z \in \C D_{\ell}$. For this to commute with $M_{12}$, we must have $[M_{12},z] = 0$ forcing $z \in \C \langle \sigma \rangle$. To commute with $D_{\ell}$ it suffices to check that $[s_1,M_{12}^{2k + 1} z] = 0$. This forces $s_{1} z = - z s_{1}$; that is, $z \in Z_-$. Thus, the element is a linear combination of the elements $M_{12}^{2k+1} d_i$. Since $d_i d_j = \delta_{i,j} b_i$ the result follows.     
\end{proof}

We note that for $t\ne 0$ the centre of the algebra $H_{t, c}^{\mf{so}(n)}$ is described in \cite{Feiginangular}, \cite{feigin2022algebra} for $n\ge 3$. However, it is missed in these references that the case $n=2$ requires different considerations as above.

\section{Invariant subalgebras}\label{sec:invariantsub}

In order to give a uniform treatment of the rings of invariants under both $\T$ and $\SL_2$, we make a number of general assumptions, which we check hold for both of these groups. 

\subsection{Setup}\label{sec:setupinvariant} Recall that the rational Cherednik algebra can be considered as an algebra $H$ over the polynomial ring $R=\C[\mathfrak{c}]$ such that specializing the parameters to some $(t,c) \in \mathfrak{c}$ gives $H_{t,c}$. We will consider automorphisms of $H$ that are $\C[\mathfrak{c}]$-linear and hence induce automorphisms of each $H_{t,c}$. 

Recall, e.g. \cite[\S~6.46]{MilneAlgebraicGroups}, that a connected linear algebraic group $\Gamma$ is said to be \textit{reductive} if its unipotent radical $R_u(\Gamma)$ is trivial. By Weyl's complete reducibility theorem, this is equivalent to either (a) there exists a finite-dimensional faithful semi-simple representation of $\Gamma$; or (b) every finite-dimensional representation of $\Gamma$ is semi-simple; see \cite[Theorem~22.42]{MilneAlgebraicGroups}.

Throughout, we fix $\Gamma$ to be a group of automorphisms of $H$ satisfying 
\begin{enumerate}
    \item[(I)] $\Gamma$ is reductive; and 
    \item[(II)] $\Gamma$ acts trivially on $W$ and $R$ and linearly on $\h \oplus \h^*$, preserving the symplectic form. 
\end{enumerate}
The group $W$ acts  by conjugation as inner automorphisms on $H$. The action of $Z(W)$ is trivial on $W$ and $R$ and linear on $\h \oplus \h^*$. Therefore, we may also impose that
\begin{enumerate}
    \item[(III)] the image of $Z(W) \to \mr{Aut}(H)$ lies in $\Gamma$. 
\end{enumerate}

In view of (II) we may write $H^{\Gamma \times W}=(H^W)^{\Gamma}=(H^{\Gamma})^W$ without ambiguity. The following elementary observation will be used repeatedly. 

\begin{lem}
    The group $\Gamma$ preserves the filtration on $H_{t,c}$ and hence acts as graded automorphisms of $H_{0,0} = P \rtimes W$. 
\end{lem}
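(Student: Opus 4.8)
The plan is to reduce everything to assumption~(II). First note that $\Gamma$ already acts by algebra automorphisms on each specialization $H_{t,\bc}$: since $\Gamma$ fixes $R$ pointwise, it stabilizes the maximal ideal $\mathfrak{m}_{(t,\bc)}\subset R$ and hence the two-sided ideal $\mathfrak{m}_{(t,\bc)}H$, so it descends to $H_{t,\bc}=H/\mathfrak{m}_{(t,\bc)}H$; in particular it acts on $H_{0,0}=P\rtimes W$.

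For the filtration statement I would argue as follows. By~(II), $\Gamma$ maps the subspace $\h\oplus\h^*\subset H_{t,\bc}$ into itself and fixes $\C W$ pointwise. Since $\mc{F}_0=\C W$ and $\mc{F}_1=\C W+(\h\oplus\h^*)\cdot\C W$, this shows $\Gamma$ stabilizes $\mc{F}_0$ and $\mc{F}_1$. Now observe that $\mc{F}_i=(\mc{F}_1)^i$ for every $i\ge 1$: the inclusion $(\mc{F}_1)^i\subseteq\mc{F}_i$ is immediate because $\mc{F}_\bullet$ is an algebra filtration, and the reverse inclusion holds because $H_{t,\bc}$ is generated as an algebra by $\h\oplus\h^*$ and $W$, so any element of $\mc{F}_i$ is a linear combination of products of at most $i$ elements of $\h\oplus\h^*$ times a group element, and such a product lies in $(\mc{F}_1)^i$ after inserting copies of $1\in W\subset\mc{F}_1$ as needed. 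Since $\Gamma$ acts by algebra automorphisms and stabilizes $\mc{F}_1$, it stabilizes each power $(\mc{F}_1)^i$, hence each $\mc{F}_i$.

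Finally, a filtration-preserving action induces an action on $\gr_{\mc{F}}H_{t,\bc}$; at $(t,\bc)=(0,0)$ the filtration on $H_{0,0}=P\rtimes W$ is precisely the one induced by its natural $\Z_{\ge 0}$-grading (with $\h\oplus\h^*$ in degree $1$ and $W$ in degree $0$), so $\gr_{\mc{F}}H_{0,0}$ is canonically $H_{0,0}$ itself and the induced $\Gamma$-action is the original one, now exhibited as grading-preserving. Equivalently, one sees this directly: $\Gamma$ acts on $P\rtimes W$ by algebra automorphisms sending the degree-one generators $\h\oplus\h^*$ into themselves and fixing the degree-zero part $\C W$, hence it preserves each graded component. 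Since the whole argument uses only~(II), reductivity~(I) plays no role here and will be invoked only later; there is no real obstacle, the one point worth stating carefully being the identity $\mc{F}_i=(\mc{F}_1)^i$, which is what propagates stability from $\mc{F}_1$ to the entire filtration.
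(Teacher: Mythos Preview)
Your proof is correct and follows essentially the same approach as the paper's: both use assumption~(II) to see that $\Gamma$ stabilizes $\mc{F}_0=\C W$ and $\mc{F}_1$, and then propagate to all $\mc{F}_i$ via the multiplicative relation $\mc{F}_k=\mc{F}_{k-1}\cdot\mc{F}_1$ (equivalently, your $\mc{F}_i=(\mc{F}_1)^i$). Your write-up is more detailed, spelling out the descent to specializations and the identification of the induced graded action on $H_{0,0}$, but the underlying argument is the same.
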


\begin{proof}
Notice that since $\mc{F}_0 H_{t,c} = \C W$, $\mc{F}_1 H_{t,c} = \C W \oplus \h \oplus \h^*$ and 
\[
\mc{F}_k H_{t,c} = (\mc{F}_{k-1} H_{t,c})(\mc{F}_1 H_{t,c}(W))
\]
the claim follows from (II). 
\end{proof}


\subsection{GIT quotients}
We consider a representation $V$ of the group $\Gamma$. For $\Gamma= \T$ we assume $V = \h \oplus \h^*$, and for $\Gamma=\SL_2$ we assume $V=\h \o \C^2$. Let $\pi\colon V \to V\git \, \Gamma$ be the quotient map.

Recall that a representation $U$ of an algebraic group $\Gamma$ is said to be \textit{stable} if a generic $\Gamma$-orbit is closed in $U$. 

\begin{lem}\label{lem:stablerep}
 The representation $V$ is a stable representation.    \end{lem}

\begin{proof}
For $\Gamma=\T$, a generic orbit in $V$ is defined by the system of equations $x_i y_j = \alpha_{ij}$ for $1 \le i,j \le n$ and $\alpha_{ij} \in \C$ such that $\alpha_{ij} \neq 0$ for some pair $i,j$. In particular, it is closed and hence $V$ is stable. 

For $\Gamma=\SL_2$, we identify $V$ with the space of $2 \times n$ matrices, with $\SL_2$ acting by left multiplication; here $n = \dim \h$. Then $\C[V]^{\Gamma}$ is generated by the column determinants $\mr{det}_{i,j}$ for $1 \le i < j \le n$. Let 
us choose any $p \in V \git \, \Gamma$ other than $0$. Then there exists some $i < j$ such that $\mr{det}_{i,j}(A) \neq 0$ for all $A \in \pi^{-1}(p)$. Since $\SL_2$ acts freely (by left multiplication) on the space of all $2 \times 2$ matrices of fixed non-zero determinant, it implies that $\SL_2$ acts freely on $\pi^{-1}(p)$. Each fiber $\pi^{-1}(p)$ contains a unique closed $\SL_2$-orbit; see e.g. \cite[II~3.3~Satz~3 (b)]{Kraft}. It is the orbit of minimal dimension in the fiber. If $\pi^{-1}(p)$ was not a single $\SL_2$-orbit, then the closed $\SL_2$-orbit in $\pi^{-1}(p)$ would have dimension less than $\dim \SL_2$. In particular, it is not a free orbit. We deduce that $\pi^{-1}(p)$ is a single orbit and hence $V$ is stable. 
\end{proof}

\begin{prop}
    \label{prop:invariantringGorenstein}
        The spaces $V \git\, \Gamma$ and $V \git\, (\Gamma \times W)$ are Gorenstein.  
\end{prop}

The fact that the quotient $V \git\, \T$ is Gorenstein follows from standard results e.g. \cite[Corollary~6.4.3]{CohMac}. However, there is a general result of Knop, ensuring the quotient is Gorenstein \cite{KnopCanonical}, that can be 
applied in both cases which we do in the proof.

\begin{proof}
We consider first $V \git\, \Gamma$. 
The main result of \cite{KnopCanonical} states that $V \git \, \Gamma$ is Gorenstein if the following three conditions hold: 
    \begin{enumerate}
        \item[(a)] $V$ is a stable representation. 
        \item[(b)] $V_s := \{ v \in V \, | \, \dim \Gamma_v > 0 \}$ has codimension at least two in $V$. 
        \item[(c)] $\lambda_V = \lambda_{\mr{ad}}$. 
    \end{enumerate}
    Here, for a representation $U$, $\lambda_U$ is the linear character of $\Gamma$ given by the composed map $\Gamma \to \GL(U) \stackrel{\det}{\longrightarrow} \Cs$ and $\mr{ad}$ denotes the adjoint representation of $\Gamma$. We note that for both $\T$ and $\SL_2$, $\lambda_V = \lambda_{\mr{ad}}$ is the trivial character. Therefore (c) holds. Lemma~\ref{lem:stablerep} says that (a) holds. Thus, we only need to check (b).
    
    For $T$, the space $V_s$ equals $\{ 0 \}$. Therefore, (b) is satisfied. 

    For $\Gamma = \SL_2$, we make the identification of $V$ with the space of $2 \times n$ matrices, as in the proof of Lemma~\ref{lem:stablerep}. It follows from what is written there that $V_s \subset \pi^{-1}(0)$. In fact, we claim $V_s = \pi^{-1}(0)$. Notice that $\pi^{-1}(0)$ is precisely the set of $2 \times n$ matrices of rank at most $1$. Let $A$ have rank one. Then $g A = A$ for $g\in \SL_2$ if and only if $g$ acts trivially on $\mr{Im} \, A$. The subgroup of $\SL_2$ fixing a line pointwise is one-dimensional. Hence $A \in V_s$. The space of $2 \times n$ matrices of rank at most $1$ is $n+1$-dimensional. Therefore, we see that (b) holds if $n \ge 3$. When $n = 1$, $V \git \, \Gamma = \{ 0 \}$ and when $n = 2$, $V \git \, \Gamma = \mathbb{A}^1$ since $\C[V]^{\Gamma} = \C[\mr{det}_{1,2}]$. In both cases, $V \git \, \Gamma$ is Gorenstein.

    Now the argument for $V \git\, (\Gamma \times W)$ is similar. Since $W$ is finite, properties (a) and (b) follow immediately from the fact that they hold for $V$ as a $\Gamma$-module. For (c), $\lambda_V |_W$ and $\lambda_{\mr{ad}} |_W$ are both the trivial character (the former because $W \subset \mr{Sp}(V)$) so once again (c) follows from the fact that it holds for $V$ as a $\Gamma$-module. 
\end{proof}

\begin{lem}\label{lem:Wbaropenfree}
  Assume $\dim \h > 1$ for $\Gamma = \T$ and $\dim \h > 2$ for $\Gamma = \SL_2$. The action of  $\overline{W} = W / Z(W)$ on $V \git \, \Gamma$ is generically free. 
\end{lem}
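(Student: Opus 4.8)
The plan is to show that a generic point $p \in V \git \Gamma$ has trivial stabilizer in $\overline{W}$, by working with explicit representatives in $V = \h \oplus \h^*$ and using the irreducibility of $\h$. First I would recall the structure of the quotient. For $\Gamma = \T$, by Lemma~\ref{lem:stablerep} a generic fiber $\pi^{-1}(p)$ is a single closed $\T$-orbit, namely $\{x_i y_i = \alpha_i\}$ for generic nonzero $(\alpha_i)$; for $\Gamma = \SL_2$ (with $\dim\h > 2$), again by Lemma~\ref{lem:stablerep} a generic fiber is a single closed $\SL_2$-orbit, and the invariants are the $2\times 2$ determinants $\mr{det}_{i,j}$. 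In both cases, $w \in \overline{W}$ fixes $p$ if and only if $w$ maps the fiber $\pi^{-1}(p)$ to itself, i.e. $w \cdot \pi^{-1}(p) = \pi^{-1}(p)$ as subsets of $V$.

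Next I would translate this into a condition on $w$ acting on $V$. Since the fiber over a generic $p$ is a single $\Gamma$-orbit and $W$ commutes with $\Gamma$, the condition $w \cdot \pi^{-1}(p) = \pi^{-1}(p)$ means there is some $\gamma \in \Gamma$ with $\gamma w$ fixing a chosen point $v_0 \in \pi^{-1}(p)$. For generic $v_0$ the stabilizer in $\GL(\h)$ of $v_0 \in \h\oplus\h^*$ under the diagonal-type action is small; concretely, for $\Gamma = \T$ one can choose $v_0 = (\sum x_i, \sum y_i)$ (up to scaling each coordinate), and demanding $\gamma w \cdot v_0 = v_0$ with $\gamma$ diagonal forces the matrix of $w$ to be monomial, then combining the conditions on $\h$ and $\h^*$ (dual actions) forces $w$ to be a diagonal matrix; irreducibility of $\h$ then forces $w \in Z(W)$, hence $w = 1$ in $\overline{W}$. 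For $\Gamma = \SL_2$ one argues similarly: a generic $2\times n$ matrix $v_0$ has full rank $2$, and if $\gamma w$ fixes it for some $\gamma \in \SL_2$ then $\gamma$ is determined by its action on the $2$-dimensional row space, forcing enough constraints that $w$ is again a diagonal (scalar) transformation of $\h$ after accounting for the $W$-equivariant identification $\h \cong \h^*$; irreducibility gives $w \in Z(W)$.

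The remaining step is to package this as a statement about a dense open subset: the set of $p$ for which $\pi^{-1}(p)$ is a single free (for $\SL_2$) or single (for $\T$) orbit is open and dense by Lemma~\ref{lem:stablerep} and its proof, and on this locus the above forces $\mr{Stab}_{\overline{W}}(p) = 1$, so the action of the finite group $\overline{W}$ is generically free. The main obstacle I anticipate is the bookkeeping in the step forcing $w$ to be diagonal (scalar): one must carefully use that $w$ acts on $\h$ and $w$ acts on $\h^*$ by the contragredient, so a single relation $\gamma w v_0 = v_0$ packages two matrix equations, and one must check that generic choices of the $\alpha_i$ (resp. generic $2\times n$ matrix) genuinely eliminate all non-scalar monomial possibilities — this is where the dimension hypotheses $\dim\h > 1$ (resp. $> 2$) are used, since in the excluded low-rank cases $\overline{W}$ can act trivially on the (zero- or one-dimensional) quotient. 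Once $w$ is scalar on $\h$, irreducibility of $\h$ (invoked via $Z(W) \subset \Cs\,\Id_\h$, as noted after the definition of complex reflection group) immediately gives $w \in Z(W)$, i.e. $w = 1$ in $\overline{W}$.
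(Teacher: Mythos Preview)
Your overall strategy matches the paper's: pick a representative $v_0$ in a generic closed $\Gamma$-orbit and show that $w\cdot\pi(v_0)=\pi(v_0)$ forces $w\in Z(W)$ by an eigenvector argument. However, your execution of the $\T$ case has a genuine gap. The torus $\T$ here is the \emph{one}-dimensional $\Cs$ acting on $\h^*$ with weight $+1$ and on $\h$ with weight $-1$; it is not a diagonal torus of $\GL(\h)$. So the condition $\gamma w\cdot(v,\lambda)=(v,\lambda)$ with $\gamma\in\T$ says exactly that $v$ is an eigenvector of $w$ in $\h$ and $\lambda$ is an eigenvector of $w$ in $\h^*$ (with reciprocal eigenvalues). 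This does \emph{not} force the matrix of $w$ to be monomial; a single eigenvector places no such structural constraint. Your specific choice $v_0=(\sum x_i,\sum y_i)$ is also dangerous: this vector may very well be an eigenvector of many $w$ (for instance it is fixed by every permutation matrix). The correct and simpler argument, which is what the paper does, is to choose $v\in\h$ and $\lambda\in\h^*$ that are not eigenvectors of any $w\in W\setminus Z(W)$; this is possible for generic $(v,\lambda)$ since each non-central $w$ has eigenspaces that are proper subspaces and there are only finitely many $w$. Then no $\gamma\in\T$ can satisfy $\gamma w\cdot(v,\lambda)=(v,\lambda)$ unless $w\in Z(W)$, immediately.

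For $\SL_2$ your sketch is in the right direction but too vague to stand as a proof. The paper makes it concrete: for $w\notin Z(W)$ and $\dim\h>2$, pick a $2$-dimensional subspace $U\subset\h$ with $w(U)\not\subset U$ (possible since $w$ is diagonalizable, non-scalar, and $\dim\h\ge 3$), take $v_0=(v_1,v_2^*)$ for a basis $v_1,v_2$ of $U$, and observe that the $\SL_2$-orbit of $v_0$ lies inside $U\oplus U^*$ while $w\cdot v_0$ does not. This is the ``enough constraints'' you allude to, but it needs to be said.
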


\begin{proof}
Let $\Gamma = \T$ and $w \in W \setminus Z(W)$. Then we can choose $v \in \h$ and $\lambda \in \h^*$ such that neither is an eigenvector for $w$. Since $v,\lambda \neq 0$, the $\T$-orbit $\mc{O}_{(v,\lambda)}$ of $(v,\lambda)$ is closed but free. Thus, to show that $w \cdot \pi(v,\lambda) \neq \pi(v,\lambda)$, it suffices to show that $w(\mc{O}_{(v,\lambda)} ) \neq \mc{O}_{(v,\lambda)}$. But this follows from the fact that $(w(v),w(\lambda))$ does not belong to the line spanned by $(v,\lambda)$. 


Now let $\Gamma=\SL_2$. Consider a generic element $p\in V\git \, \Gamma$. The orbit passing through the preimage $\pi^{-1}(p)=(x, y)$, $x, y\in \h\cong \h^*$, contains a point with coordinates $x_1=y_2=0$ hence is uniquely determined by $p$. We can identify this orbit as well as $p$ with a point in the cone over the Grassmanian $Gr(2,n)$, and let $U=\langle x, y \rangle\subset \C^n$ be the two-dimensional plane representing this point. One can assume that $U$ does not contain eigenvectors of any $w \in W\setminus Z(W)$, hence $w(U)\ne U$ as required.

\end{proof}

\begin{remark}
    If $\dim \h = 2$ then $W \cap SO(2)$ acts trivially on $V \git \, \SL_2$ and so Lemma~\ref{lem:Wbaropenfree} is false in that case.  
\end{remark}

\begin{prop}\label{prop:Wbaropeninv}
    Assume $\dim \h > 1$ when $\Gamma = \T$ and $\dim \h > 2$ when $\Gamma = \SL_2$. The group $\overline{W}$ acts freely on an open subset of $V \git \, \Gamma$ with complement of codimension at least two. 
\end{prop}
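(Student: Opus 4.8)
The plan is to estimate the non-free locus directly, one element of $\overline W$ at a time. Since $\overline W$ is finite, the set of points of $V\git\,\Gamma$ with nontrivial $\overline W$-stabiliser is the finite union $\bigcup_{\bar w\neq 1}(V\git\,\Gamma)^{\bar w}$ of closed subsets, so it suffices to show each fixed locus $(V\git\,\Gamma)^{\bar w}$ with $\bar w\neq 1$ has codimension at least two (Lemma~\ref{lem:Wbaropenfree} already gives codimension at least one); the open subset sought is then the complement of this union. The first step is the reduction lemma: $\Gamma$ and $W$ act on $V$ by commuting automorphisms (because, by assumption~(II), $\Gamma$ fixes $W$ pointwise, so $\Gamma$ commutes with the conjugation action of $W$), and $\pi\colon V\to V\git\,\Gamma$ separates closed $\Gamma$-orbits. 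Hence for $v$ lying in a closed $\Gamma$-orbit, $\pi(v)$ is fixed by $\bar w$ if and only if $wv\in\Gamma v$ (using that $\Gamma(wv)=w(\Gamma v)$ is again closed); as every fibre of $\pi$ contains a closed orbit, $(V\git\,\Gamma)^{\bar w}=\pi\bigl(\{\,v\in V:\Gamma v\text{ closed},\ wv\in\Gamma v\,\}\bigr)$. Fixing a lift $w\in W$ of $\bar w\neq 1$, note that $w\notin Z(W)$, so by irreducibility of $\h$ the element $w$ is not a scalar on $\h$; this is the only structural input beyond elementary invariant theory.

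For $\Gamma=\T$ the closed orbits in $V=\h\oplus\h^*$ are $\{0\}$ together with the orbits of pairs $(v,\lambda)$ with $v\neq 0\neq\lambda$, and for such a pair $w\cdot(v,\lambda)\in\T\cdot(v,\lambda)$ forces $v$ into an eigenspace $\h_\mu$ of $w$ and $\lambda$ into the corresponding $\mu^{-1}$-eigenspace $\h^*_{\mu^{-1}}$ of $w$ on $\h^*$, which has the same dimension as $\h_\mu$. Taking closures one obtains
\[
(V\git\,\T)^{\bar w}=\bigcup_{\mu}\pi(\h_\mu\oplus\h^*_{\mu^{-1}}),
\]
a finite union of closed subvarieties. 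Since $\T$ acts with one-dimensional generic orbits on $\h_\mu\oplus\h^*_{\mu^{-1}}$, each summand has dimension $2\dim\h_\mu-1$; as $\dim(V\git\,\T)=2\dim\h-1$ and $\dim\h_\mu\le\dim\h-1$ ($w$ not scalar), the codimension is at least two. The point $0=\pi(0)$ contributes nothing further.

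For $\Gamma=\SL_2$ (so $\dim\h\ge 3$ and $W\subset\mr{O}(\h)$) I identify $V=\h\otimes\C^2$ with linear maps $A\colon\C^2\to\h$; the closed orbits are $\{0\}$ together with the free orbits of the rank-two maps. The key local computation is that for a rank-two $A$ with image the $2$-plane $P\subset\h$ one has $wA\in\SL_2A$ precisely when $w(P)=P$ and $\det(w|_P)=1$. Letting $F_w\subset\Grass(2,\h)$ be the variety of such planes, the locus of rank-two $A$ with $\im A\in F_w$ fibres over $F_w$ with four-dimensional fibres, and dividing out the free three-dimensional $\SL_2$-action shows the corresponding part of $(V\git\,\SL_2)^{\bar w}$ has dimension $\dim F_w+1$. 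It remains to bound $\dim F_w$: $F_w$ is contained in the $w$-fixed locus of $\Grass(2,\h)$, which decomposes into the Grassmannians $\Grass(2,\h_\mu)$ of planes inside a single eigenspace and the products $\mathbb{P}(\h_\mu)\times\mathbb{P}(\h_\nu)$ with $\mu\neq\nu$, and the constraint $\det(w|_P)=1$ restricts this to $\mu=\pm1$ in the first family and $\nu=\mu^{-1}$ in the second. For the first family $\dim\h_{\pm1}\le\dim\h-1$ ($w$ not scalar), so $\dim\Grass(2,\h_{\pm1})\le 2\dim\h-6$; for the second, $w\in\mr{O}(\h)$ forces $\dim\h_\mu=\dim\h_{\mu^{-1}}$, hence $2\dim\h_\mu\le\dim\h$ and $\dim\h_\mu\le\dim\h-2$ (checking $\dim\h=3$ separately), giving $\dim(\mathbb{P}(\h_\mu)\times\mathbb{P}(\h_{\mu^{-1}}))=2\dim\h_\mu-2\le 2\dim\h-6$. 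Thus $\dim F_w\le\dim\Grass(2,\h)-2=2\dim\h-6$, so $\dim(V\git\,\SL_2)^{\bar w}\le 2\dim\h-5=\dim(V\git\,\SL_2)-2$, the point $\pi(0)$ again contributing nothing extra. Assembling the finitely many $(V\git\,\Gamma)^{\bar w}$ completes the proof, the complement of their union being the required open set.

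I expect the main obstacle to be the $\SL_2$ dimension count: putting the closed-orbit condition into the clean form ``$w(P)=P$ and $\det(w|_P)=1$'' and then carefully tracking the determinant constraint, which is exactly what prevents a reflection $w$ from contributing a codimension-one component to the non-free locus when $\dim\h$ equals $3$ or $4$ (in those borderline cases the estimate is an equality). The torus case and the bookkeeping with the finite union are routine by comparison, and one should also record at the outset that $\overline W$ acts on $V\git\,\Gamma$ with the claimed generic freeness already supplied by Lemma~\ref{lem:Wbaropenfree}.
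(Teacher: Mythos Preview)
Your argument is correct and genuinely different from the paper's. You estimate the dimension of each fixed locus $(V\git\,\Gamma)^{\bar w}$ directly, via eigenspace decompositions (for $\T$) and a Grassmannian count with the determinant constraint $\det(w|_P)=1$ (for $\SL_2$). The paper instead argues uniformly and locally: restricting to $U=V\setminus\pi^{-1}(0)$ where every $\Gamma$-orbit is closed and free, it observes that if $w$ fixes $\bar u=\pi(u)$ then $zw(u)=u$ for a unique (semisimple) $z\in\Gamma$, and then shows via the moment map exact sequence
\[
0\to\mf{k}^{\perp}/\mf{k}\to T_{\bar u}(V\git\,\Gamma)\to\mf{k}^*\to 0
\]
that the number of nontrivial eigenvalues of $w$ on $T_{\bar u}(V\git\,\Gamma)$ is always even: the middle term is symplectic with $zw$ acting symplectically, and on $\mf{k}^*$ roots come in $\pm$ pairs. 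Hence the local codimension of the fixed locus is even, and Lemma~\ref{lem:Wbaropenfree} rules out zero. Your approach is more elementary and explicit and avoids any use of the Hamiltonian structure, at the cost of treating $\T$ and $\SL_2$ separately and doing the somewhat delicate $\dim F_w$ bound; the paper's approach is shorter, uniform in $\Gamma$, and would extend to any Hamiltonian action of a connected reductive group satisfying the set-up of Section~\ref{sec:setupinvariant}.
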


\begin{proof}
For $\Gamma = \T$, $\pi^{-1}(0) = \h \times \{ 0 \} \cup \{ 0 \} \times \h^*$ has dimension $\dim \h$ and $\pi^{-1}(\overline{u})$ is a single closed free $\T$-orbit for any $\overline{u} \neq 0$, $\overline{u} \in V \git \, \Gamma$. It suffices to show that $\overline{W}$ acts freely on an open subset of $(V \git \, \Gamma) \setminus \{ 0 \}$ whose complement has codimension at least two (in either $(V \git \, \Gamma) \setminus \{ 0 \}$ or in $V \git \, \Gamma$). Similarly, for $\Gamma = \SL_2$, it is shown in the proof of Lemma~\ref{lem:stablerep}  that $\pi^{-1}(\overline{u})$ is a single closed free $\SL_2$-orbit for any $\overline{u} \neq 0$. Therefore,  it suffices to show that $\overline{W}$ acts freely on an open subset of $(V \git \, \Gamma) \setminus \{ 0 \}$ whose complement has codimension at least two. Let $U = V \setminus \pi^{-1}(0)$. 

For $u \in U$, let $\overline{u} = \pi(u)$. If $w \in W$ fixes $\overline{u}$ then $w(u) \in \Gamma \cdot u$ so there must exist $z \in \Gamma$ such that $zw(u) = u$. The element $z$ is unique because the stabilizer $\Gamma_u = \{ 1 \}$. Moreover, since $w$ has finite order, $z$ must also have finite order. In particular, it is a diagonalizable element of $\Gamma$. We must show that $\dim T_{\overline{u}} (V \git \, \Gamma) - \dim (T_{\overline{u}} (V \git \, \Gamma))^w$ is at least two. If this difference is zero then $w$ acts trivially on $V \git \, \Gamma$ since the variety is irreducible; in this case $w \in Z(W)$ by Lemma~\ref{lem:Wbaropenfree} and can be ignored. Thus, it suffices to show that the difference is always even. Equivalently, the number of non-trivial eigenvalues (counted with multiplicity) of $w$ acting on $T_{\overline{u}} (V \git \, \Gamma)$ is even.

If $\mf{k}$ is the Lie algebra of $\Gamma$ then 
\[
T_{\overline{u}} (V \git \, \Gamma) \cong (T_u V) / (\mf{k} \cdot u) = V / (\mf{k} \cdot u). 
\]
and the moment map for the Hamiltonian action of $\Gamma$ on $V$ is given by $\mu(u)(X) = \omega(X \cdot u, u)$, for $X \in \mf{k}$. Then $(d_u \mu)(v)(X) = 2 \omega(X \cdot u, v)$ which implies that $\Ker (d_u \mu) = \mf{k}^{\perp}$ (this is a general property of moment maps). Identifying $\mf{k}$ with the Lie algebra of $\Gamma \times W$, there is a $\langle wz \rangle$-equivariant short exact sequence 
\[
0 \to \mf{k}^{\perp} / \mf{k} \to V / (\mf{k} \cdot u)  = T_{\overline{u}} (V \git \, \Gamma) \stackrel{d_u \mu}{\longrightarrow} \mf{k}^* \to 0. 
\]
Now $\mf{k}^{\perp} / \mf{k}$ is a symplectic vector space on which $zw$ acts symplectically, so it is certainly the case that $\dim \mf{k}^{\perp} / \mf{k} - \dim (\mf{k}^{\perp} / \mf{k})^{wz}$ is even. The action of $zw$ on $\mf{k}^*$ equals the action of $z$. Since $z$ is semi-simple, we can choose a maximal torus $\T \subset \Gamma$ containing $z$. The fact that every root in the root space decomposition of $\mf{k}$ with respect to $\T$ appears with its negative implies that $\dim \mf{k}^* - \dim (\mf{k}^{*})^{z}$ is even too.

\end{proof}

For the remainder of this article, we assume:
\begin{center}
     $\dim \h > 1$ when $\Gamma = \T$ and $\dim \h > 2$ when $\Gamma = \SL_2$.
\end{center}

\subsection{Arbitrary parameters}

In this section we assume that the parameters $t,c$ are arbitrary scalars. Recall that $Z(W)$ is cyclic (of order $\ell$) since $\h$ is assumed to be irreducible. Let $\ezero, \ds, \elminusone \in \C Z(W)$ be the idempotent basis of this group algebra, where $\eo = \ezero$ is the symmetrizing idempotent. 

In the case of a real reflection group $W$ we have $\ell =1$ unless $W$ has an element acting as $- \text{Id}$ on $\mathfrak h$, in which case $\ell=2$. Then the idempotents are $\eo=\frac12(\text{Id} + (-\text{Id}))$ and $\eo_1 =\frac12(\text{Id} - (-\text{Id}))$.


By assumption (III), the conjugation action of $Z(W)$ on $H_{t,c}^{\Gamma}$ is trivial and hence $\C Z(W)$ is central in $H_{t,c}^{\Gamma}$.

The following is an immediate consequence of this fact. 

\begin{lem}\label{lem:aidecomp}
	$H_{t,c}^{\Gamma} = \bigoplus_{i = 0}^{\ell-1} \ei H_{t,c}^{\Gamma} \ei$ as an algebra and hence $H_{t,c}^{\Gamma} / \langle 1 - \ei \rangle \cong \ei H_{t,c}^{\Gamma} \ei$.
\end{lem}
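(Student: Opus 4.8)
The plan is to exploit the fact, established just above the statement via assumption (III), that $\C Z(W)$ lies in the centre of $H_{t,\bc}^{\Gamma}$. Since $Z(W)$ is cyclic of order $\ell$ (because $\h$ is irreducible), its group algebra $\C Z(W)$ is a commutative semisimple algebra which decomposes as $\C Z(W) = \bigoplus_{i=0}^{\ell-1} \C \ei$, where $\eo,\ds,\elminusone$ are the primitive orthogonal idempotents with $\sum_i \ei = 1$ and $\ei \eo_j = \delta_{ij} \ei$.

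First I would observe that because each $\ei$ is a central idempotent of $H_{t,\bc}^{\Gamma}$, we obtain the two-sided ideal decomposition $H_{t,\bc}^{\Gamma} = \bigoplus_{i=0}^{\ell-1} \ei H_{t,\bc}^{\Gamma}$, and moreover $\ei H_{t,\bc}^{\Gamma} = \ei H_{t,\bc}^{\Gamma} \ei$ since $\ei$ is central (so multiplication on the right by $\ei$ acts as the identity on $\ei H_{t,\bc}^{\Gamma}$). This gives the first claim, namely the direct sum decomposition of $H_{t,\bc}^{\Gamma}$ as an algebra, where each summand $\ei H_{t,\bc}^{\Gamma} \ei$ is a (non-unital) subalgebra that becomes unital with identity element $\ei$.

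For the isomorphism $H_{t,\bc}^{\Gamma} / \langle 1 - \ei \rangle \cong \ei H_{t,\bc}^{\Gamma} \ei$, I would note that $1 - \ei = \sum_{j \neq i} \eo_j$ is itself a central idempotent, so the two-sided ideal it generates is simply $(1-\ei) H_{t,\bc}^{\Gamma} = \bigoplus_{j \neq i} \eo_j H_{t,\bc}^{\Gamma} \eo_j$. Quotienting $H_{t,\bc}^{\Gamma}$ by this ideal therefore kills every summand except the $i$-th, and the natural projection restricts to an algebra isomorphism $\ei H_{t,\bc}^{\Gamma} \ei \iso H_{t,\bc}^{\Gamma}/\langle 1 - \ei\rangle$ (it sends $\ei$ to the identity of the quotient).

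There is no real obstacle here: the entire statement is a formal consequence of the presence of a complete set of orthogonal central idempotents, which was already secured by assumption (III). The only point worth stating carefully is that centrality of $\ei$ is what makes $\ei H_{t,\bc}^{\Gamma}\ei = \ei H_{t,\bc}^{\Gamma}$ a two-sided ideal (rather than merely a corner subalgebra), so that the quotient description is valid; this is immediate from $\C Z(W) \subseteq Z(H_{t,\bc}^{\Gamma})$.
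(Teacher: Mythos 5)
Your proof is correct and is exactly the argument the paper has in mind: the paper declares the lemma to be ``an immediate consequence'' of the centrality of $\C Z(W)$ in $H_{t,\bc}^{\Gamma}$ (itself coming from assumption (III)), and you have simply spelled out the standard Peirce-type decomposition by the complete set of orthogonal central idempotents $\eo_0,\ds,\eo_{\ell-1}$ together with the observation that $\langle 1-\ei\rangle$ kills all summands except the $i$-th. No gap; same route as the paper.
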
 

Recall \cite{BrownCohomology},  \cite[Chapter~2]{Karpilovsky} that a $2$-cocycle on $\overline{W}$ with values in the trivial $\overline{W}$-module $\Cs$ is a map $f \colon \overline{W} \times \overline{W} \to \Cs$ such that 
$$
f(g_2,g_3) f(g_1,g_2 g_3) = f(g_1 g_2, g_3) f(g_1,g_2)
$$
for all $g_1, g_2, g_3 \in \overline{W}$. If $\alpha\colon \overline{W}\to \Cs$ is a 1-cochain then
$d \alpha\colon \overline{W} \times \overline{W} \to \Cs$ given by   
\[
 (d\alpha)(g,h) := \alpha(h) \alpha(gh)^{-1} \alpha(g)
\]
for all $g, h \in \overline{W}$  is a 2-cochain. 
For any $2$-cocycle $f$ of $\overline{W}$, we can form a twisted group algebra $\C_f \overline{W}$ with multiplication $u \cdot w = f(u,w) uw$. If the $2$-cocycles $f,f'$ differ by a $2$-coboundary i.e. $f' = f  (d \alpha)$, then the twisted group algebras are isomorphic: $\C_{f'}  \overline{W} \iso \C_{f} \overline{W}$ is given by $u \mapsto \alpha(u) u$ for $u\in \overline{W}$ and extended linearly to $\C_f \overline{W}$. Thus, up to isomorphism, $\C_f \overline{W}$ only depends on $[f] \in H^2(\overline{W},\Cs)$ (see \cite[Chapter~2, Lemma~1.1]{Karpilovsky}).

Choose coset representatives $w_1, \ds, w_{\ell}$ of $Z(W)$ in $W$. 

\begin{lem}\label{lem:fcocycle}
	Define $f_i \colon \overline{W} \times \overline{W} \to \Cs$ by 
	$$
	w_a w_b \ei = f_i(\bar{w}_a , \bar{w}_b) w_c \ei
	$$
	if $w_a w_b = z_{ab} w_c$ in $W$ with $z_{ab} \in Z(W)$. Then $f_i$ is a $2$-cocycle and $(\C W) \ei \cong \C_{f_i} \overline{W}$. 
\end{lem}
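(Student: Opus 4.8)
The plan is to verify the cocycle identity directly and then exhibit the isomorphism explicitly. First I would note that, because $Z(W)$ is central in $W$ and $\ei$ is a central idempotent in $\C Z(W)$, we have $z \ei = \chi_i(z) \ei$ for a fixed linear character $\chi_i \colon Z(W) \to \Cs$ (the one through which $Z(W)$ acts on the $i$-th block). Consequently, if $w_a w_b = z_{ab} w_c$ in $W$, then $w_a w_b \ei = \chi_i(z_{ab}) w_c \ei$, so the defining relation makes sense and $f_i(\bar w_a, \bar w_b) = \chi_i(z_{ab})$. Here I would record once and for all that $z_{ab}$ depends only on the cosets $\bar w_a, \bar w_b$, which is exactly why $f_i$ descends to a function on $\overline{W} \times \overline{W}$; this uses that $Z(W)$ is central, so the coset product $\bar w_a \bar w_b$ is well defined and $z_{ab} w_c$ is determined up to the chosen representative $w_c = w_{\overline{w_a w_b}}$.

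Next I would check the $2$-cocycle identity $f_i(\bar w_b, \bar w_c) f_i(\bar w_a, \overline{w_b w_c}) = f_i(\overline{w_a w_b}, \bar w_c) f_i(\bar w_a, \bar w_b)$. The clean way is to compute the product $w_a w_b w_c \ei$ in two ways using associativity in $\C W$. Associating as $w_a (w_b w_c)$ gives $\chi_i(z_{bc}) \chi_i(z_{a,\overline{bc}})\, w_{\overline{abc}}\,\ei$, where $z_{a,\overline{bc}}$ is defined by $w_a w_{\overline{w_b w_c}} = z_{a,\overline{bc}} w_{\overline{abc}}$; associating as $(w_a w_b) w_c$ gives $\chi_i(z_{ab})\chi_i(z_{\overline{ab},c})\, w_{\overline{abc}}\, \ei$. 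Equating the two (and cancelling $w_{\overline{abc}} \ei$, which is nonzero) yields precisely the cocycle relation, since $f_i = \chi_i \circ z_{(-,-)}$. I expect this to be the routine core of the argument; the only care needed is bookkeeping of which representative appears at each stage, and the central-ness of $Z(W)$ makes every such choice consistent.

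Finally, for the algebra isomorphism $(\C W)\ei \cong \C_{f_i} \overline{W}$, I would send the basis element $\bar w \in \C_{f_i}\overline{W}$ (for $\bar w$ ranging over $\overline{W}$, with chosen lift $w$ among the $w_1,\dots,w_\ell$) to $w \ei \in (\C W)\ei$. This is a linear bijection: the elements $\{ w_j \ei \}$ are a basis of $(\C W)\ei$ because $\C W = \bigoplus_j (\C W)\ei$ via the orthogonal idempotents $\ei$, and each block $(\C W)\ei$ has dimension $|\overline{W}| = |W|/\ell$ with the $w_j \ei$ spanning it independently (any linear dependence among them would, after multiplying by a suitable $z \in Z(W)$ and summing, force a dependence among group elements of $W$). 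The map respects multiplication essentially by the definition of $f_i$: the product of $\bar w_a$ and $\bar w_b$ in $\C_{f_i}\overline{W}$ is $f_i(\bar w_a,\bar w_b)\,\overline{w_a w_b}$, which maps to $f_i(\bar w_a,\bar w_b)\, w_c \ei = w_a w_b \ei = (w_a \ei)(w_b \ei)$, since $\ei$ is idempotent and central in $\C W$.

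The main obstacle, such as it is, is purely organizational: keeping the choice of coset representatives $w_1,\dots,w_\ell$ fixed throughout and tracking the ``correction'' element $z_{ab} \in Z(W)$ at every multiplication, so that associativity in $\C W$ translates cleanly into the cocycle identity. No genuinely hard input is required beyond the centrality of $Z(W)$ and the block decomposition $\C W = \bigoplus_i (\C W)\ei$.
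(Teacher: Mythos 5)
Your proof is correct and takes essentially the same route as the paper: the cocycle identity is verified by direct manipulation of the central correction elements $z_{ab}$, and your packaging via associativity of $w_a w_b w_c \ei$ in $\C W$ is the same computation the paper performs in $W$ (via $z_{ay} w_u = w_a w_y = z_{ab} z_{bc}^{-1} w_x w_c$), just phrased slightly more transparently. You also spell out the linear bijection $\bar w \mapsto w\ei$ and check it is an algebra map, which the paper leaves implicit; that is a small but genuine gain in completeness, and your dimension count of $(\C W)\ei$ (free rank-$|\overline W|$ $\C Z(W)$-module hit with a primitive idempotent) is the right justification, though you could state it a touch more cleanly than via the ad hoc "multiply by a suitable $z$ and sum" remark.
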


\begin{proof}
	We will check directly that $f_i$ is a $2$-cocycle, but then explain conceptually why this is the case. If $\chi_i \colon Z(W) \to \Cs$ is the character defined by $\ei$ then $f_i(\bar{w}_a , \bar{w}_b) = \chi_i(z_{ab})$. Let $w_a w_b = z_{ab} w_x$ and $w_b w_c = z_{bc} w_y$. Note that 
	$$
	z_{ay} w_u := w_a w_y = w_a(z_{bc}^{-1} w_b w_c) = z_{ab} z_{bc}^{-1} w_x w_c. 
	$$
	Then $f_i(\bar{w}_b,\bar{w}_c) f_i(\bar{w}_a,\bar{w}_b \bar{w}_c) = \chi_i(z_{bc}) \chi_i(z_{ay})$ and 
	$$
	f_i(\bar{w}_a \bar{w}_b, \bar{w}_c) f_i(\bar{w}_a,\bar{w}_b) = \chi_i(z_{ay} z_{bc} z_{ab}^{-1}) \chi_i(z_{ab}) = \chi_i(z_{bc}) \chi_i (z_{ay}). 
	$$
 This implies that $f_i(\bar{w}_b,\bar{w}_c) f_i(\bar{w}_a,\bar{w}_b \bar{w}_c) = f_i(\bar{w}_a \bar{w}_b, \bar{w}_c) f_i(\bar{w}_a,\bar{w}_b)$. 
\end{proof}

As explained in \cite[Theorem~3.12]{BrownCohomology}, there is a canonical bijection between elements of $H^2(\overline{W},Z(W))$ and isomorphism classes of central extensions $1 \to Z(W) \to G \to \overline{W} \to 1$. In particular, $W$ itself corresponds to a class $[h] \in H^2(\overline{W},Z(W))$. The explicit construction given in \cite{BrownCohomology} shows that $f_i = \chi_i \circ h$. It follows directly that $f_i$ is a $2$-cocycle. 

Note that if we choose new coset representatives $w_a' = z_a w_a$, with corresponding $2$-cocycle $h'$, and define $\alpha \colon \overline{W} \to Z(W)$ by $\alpha(\bar{w}_a) = z_a$ then $h' = h (d \alpha)$. Therefore, $[h] = [h']$ in $H^2(\overline{W},Z(W))$. Hence, setting $\alpha_i = \chi_i\circ \alpha \colon \overline{W} \to \Cs$, we deduce that $\C_{f_i'} \overline{W} \cong \C_{f_i} \overline{W}$ is independent of the choice of coset representatives.

\begin{lem}\label{lem:greizero}
The ring $\ei H_{t,c}^{\Gamma} \ei$ contains $\C_{f_i} \overline{W}$ and $\gr (\ei H_{t,c}^{\Gamma} \ei) \cong P^{\Gamma} \rtimes \C_{f_i} \overline{W}$. 
\end{lem}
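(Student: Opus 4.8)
The plan is to obtain everything from the description of $\gr H_{t,\bc}^{\Gamma}$ together with the block decomposition in Lemma~\ref{lem:aidecomp}. First I would record that $\gr H_{t,\bc}^{\Gamma} = P^{\Gamma} \rtimes W$: indeed $\Gamma$ is reductive, preserves the filtration on $H_{t,\bc}$ (whose associated graded is $P \rtimes W$ by the PBW theorem), and acts trivially on $W$, so Lemma~\ref{lem:gradedin}(iii) gives $\gr H_{t,\bc}^{\Gamma} = (P \rtimes W)^{\Gamma} = P^{\Gamma} \rtimes W$, exactly as in Corollary~\ref{cor:grglalg} in the case $\Gamma = \T$.

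Next I would exploit that the idempotents $\ei \in \C Z(W)$ lie in $\mc{F}_0 H_{t,\bc} = \C W$, hence have filtration degree zero, and are central in $H_{t,\bc}^{\Gamma}$ by assumption (III). Because $\ei$ acts as a two-sided identity on the summand $\ei H_{t,\bc}^{\Gamma}\ei$ and $\ei \in \mc{F}_0$, one checks directly that $(\ei H_{t,\bc}^{\Gamma}\ei) \cap \mc{F}_k = \ei(\mc{F}_k H_{t,\bc}^{\Gamma})\ei$, so the restricted filtration on this block agrees with the one coming from the direct-sum decomposition $H_{t,\bc}^{\Gamma} = \bigoplus_i \ei H_{t,\bc}^{\Gamma}\ei$. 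Taking associated graded then commutes with the decomposition, and since the symbol of $\ei$ is $\ei$ itself, sitting in the degree-zero part $\C Z(W)$ of $P^{\Gamma} \rtimes W$, we get $\gr(\ei H_{t,\bc}^{\Gamma}\ei) = \ei(P^{\Gamma}\rtimes W)\ei$.

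Finally I would identify $\ei(P^{\Gamma}\rtimes W)\ei$ with $P^{\Gamma}\rtimes \C_{f_i}\overline{W}$. By assumption (III) we have $Z(W) \subset \Gamma$, so $P^{\Gamma}$ is pointwise fixed by $Z(W)$; hence $\ei$ commutes with $P^{\Gamma}$ as well as with $W$, i.e. $\ei$ is central in $P^{\Gamma}\rtimes W$. The quotient map $P^{\Gamma}\rtimes W \to \ei(P^{\Gamma}\rtimes W)\ei$, $h \mapsto h\ei$, then identifies $\ei(P^{\Gamma}\rtimes W)\ei$ with $(P^{\Gamma}\rtimes W)/\langle 1 - \ei\rangle$, and since $1 - \ei \in \C Z(W)$ acts trivially on $P^{\Gamma}$ this is $P^{\Gamma}\rtimes\big((\C W)\ei\big)$, the action of $(\C W)\ei$ on $P^{\Gamma}$ factoring through $\overline{W}$. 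Lemma~\ref{lem:fcocycle} gives $(\C W)\ei \cong \C_{f_i}\overline{W}$ compatibly with this action, whence $\gr(\ei H_{t,\bc}^{\Gamma}\ei) \cong P^{\Gamma}\rtimes\C_{f_i}\overline{W}$. For the first assertion of the lemma, note that $\C W \subset H_{t,\bc}^{\Gamma}$ by assumption (II), so $(\C W)\ei = \ei(\C W)\ei$ is a subalgebra of $\ei H_{t,\bc}^{\Gamma}\ei$ isomorphic to $\C_{f_i}\overline{W}$, again by Lemma~\ref{lem:fcocycle}.

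The one point that requires a little care, and which I regard as the main obstacle, is the compatibility of the associated-graded functor with the central-idempotent decomposition: one must verify that the filtration the block $\ei H_{t,\bc}^{\Gamma}\ei$ inherits by restriction from $H_{t,\bc}$ coincides with the one coming from the algebra decomposition $H_{t,\bc}^{\Gamma} = \bigoplus_i \ei H_{t,\bc}^{\Gamma}\ei$. This is precisely where the fact that $\ei$ has filtration degree zero (because $\ei \in \mc{F}_0 H_{t,\bc} = \C W$) is used; granting that, the remainder is routine bookkeeping with skew group rings and the cocycle identification of Lemma~\ref{lem:fcocycle}.
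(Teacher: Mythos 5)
Your proof is correct and follows essentially the same route as the paper's: both exploit the fact that $\ei$ has filtration degree zero (the paper phrases this by noting the sequences $0 \to \mc{F}_{j-1}H_{t,\bc}^\Gamma \to \mc{F}_j H_{t,\bc}^\Gamma \to \gr_j H_{t,\bc}^\Gamma \to 0$ are $\C Z(W)$-module sequences, so cutting by $\ei$ commutes with passing to the associated graded), followed by the identification $(\C W)\ei \cong \C_{f_i}\overline{W}$ from Lemma~\ref{lem:fcocycle}. You make explicit the point that $Z(W)$ fixes $P^{\Gamma}$ pointwise by assumption (III), so that $\ei(P^{\Gamma}\rtimes W)\ei = P^{\Gamma}\rtimes\C_{f_i}\overline{W}$, which the paper leaves tacit, but the substance of the argument is identical.
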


\begin{proof}
The first claim follows from Lemma~\ref{lem:fcocycle}. 

Note that $0 \to \mc{F}_{j-1} H_{t,c}^{\Gamma} \to \mc{F}_j H_{t,c}^{\Gamma} \to \gr_j H_{t,c}^{\Gamma} \to 0 $ is a sequence of $\C Z(W)$-modules. This implies that $\gr (\ei H_{t,c}^{\Gamma} \ei) \cong \ei (P^{\Gamma} \rtimes W) \ei$. Then, once again by Lemma~\ref{lem:fcocycle}, $\ei (P^{\Gamma} \rtimes W) \ei = P^{\Gamma} \rtimes \C_{f_i} \overline{W}$.  
\end{proof}

Recall that a (typically, noncommutative) ring $R$ is called {\it prime} if the equality $I J =0$ for two-sided ideals $I$ and $J$ implies that $I=0$ or $J=0$.

\begin{cor}\label{cor:primring}
	The ring $\ei H_{t,c}^{\Gamma} \ei$ is prime. 
\end{cor}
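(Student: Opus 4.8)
The plan is to deduce primeness of $\ei H_{t,\bc}^{\Gamma} \ei$ from primeness of its associated graded ring, using the standard fact that a filtered algebra whose associated graded ring is prime (or more precisely, a domain, or at least prime together with the filtration being exhaustive and separated) is itself prime. By Lemma~\ref{lem:greizero}, we have $\gr (\ei H_{t,\bc}^{\Gamma} \ei) \cong P^{\Gamma} \rtimes \C_{f_i} \overline{W}$, so the task reduces to showing that this skew group ring is prime.

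First I would recall the general principle: if $A$ is a $\Z_{\ge 0}$-filtered algebra with $\mc{F}_{-1} = 0$, $\bigcup_i \mc{F}_i = A$, and $\gr A$ is prime, then $A$ is prime. The argument is the usual symbol argument: given nonzero two-sided ideals $I, J$ of $A$, pick $0 \neq a \in I$, $0 \neq b \in J$; then $\sigma(a), \sigma(b)$ are nonzero in $\gr A$, and since $\gr A$ is prime there is a homogeneous element $\sigma(c) \in \gr A$ with $\sigma(a) \sigma(c) \sigma(b) \neq 0$; lifting $\sigma(c)$ to $c \in A$, one gets $\sigma(a c b) = \sigma(a)\sigma(c)\sigma(b) \neq 0$, so $a c b \neq 0$ and $acb \in IJ$ (using that $I,J$ are two-sided), hence $IJ \neq 0$. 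So $A$ is prime.

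Next I would prove $P^{\Gamma} \rtimes \C_{f_i}\overline{W}$ is prime. The ring $P = \C[\h \times \h^*]$ is a domain, $\Gamma$ is a connected reductive group (it is $\T$ or $\SL_2$) acting rationally, and $\h \oplus \h^*$ is an irreducible-enough representation that $P^{\Gamma}$ is again a domain (it is a subring of the domain $P$). A twisted group ring $S \rtimes_f G$ of a finite group $G$ over a commutative domain $S$ on which $G$ acts faithfully need not be prime in general, but here the relevant criterion is available: by a theorem of Montgomery--Passman (or the classical result that for a $G$-prime ring $S$ the twisted skew group ring $S *_f G$ is prime), it suffices that $P^{\Gamma}$ is $\overline{W}$-prime, i.e.\ has no nontrivial $\overline{W}$-stable ideals whose product is zero. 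Since $P^{\Gamma}$ is a domain and $\overline{W}$ acts by ring automorphisms, it is certainly $\overline{W}$-prime (the product of two nonzero ideals in a domain is nonzero). Hence $P^{\Gamma} \rtimes \C_{f_i}\overline{W}$ is prime. Combining with the filtration argument and Lemma~\ref{lem:greizero} gives the result.

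The main obstacle I anticipate is the primeness of the twisted skew group ring: one must be careful that the twisting cocycle $f_i$ does not spoil primeness, and that the correct hypothesis is faithfulness of the $\overline{W}$-action on $P^{\Gamma}$ together with $\overline{W}$-primeness of $P^{\Gamma}$ — both of which hold here because $\overline{W}$ acts faithfully on $V \git \Gamma = \Spec P^{\Gamma}$ (this is essentially the content of Proposition~\ref{prop:Wbaropeninv}, which shows $\overline{W}$ acts generically freely) and $P^{\Gamma}$ is a domain. One should cite the appropriate result on primeness of crossed products / twisted skew group rings (e.g.\ Montgomery--Passman, or Passman's book on crossed products). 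Everything else is routine.
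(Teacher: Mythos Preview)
Your approach is essentially the paper's: reduce via Lemma~\ref{lem:greizero} to primeness of the associated graded $P^{\Gamma} \rtimes \C_{f_i}\overline{W}$, then invoke a Passman-type criterion for crossed products. The paper's proof is two lines, citing \cite[Corollary~12.6]{Passman} together with the observation that $\overline{W}$ acts on $P^{\Gamma}$ by \emph{outer} automorphisms --- which, since $P^{\Gamma}$ is commutative, is exactly the faithfulness you identify.

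One correction worth flagging: your parenthetical ``for a $G$-prime ring $S$ the twisted skew group ring $S *_f G$ is prime'' is false as stated. Take $S$ a field with $G$ acting trivially and $f$ trivial; then $S$ is certainly $G$-prime, but $S *_f G = \C[G]$ is not prime for $|G|>1$. You need the outerness/faithfulness hypothesis as well, exactly as you acknowledge in your final paragraph. So the argument is right, but the intermediate sentence should be tightened to the correct statement (prime ring plus X-outer action of a finite group implies the crossed product is prime).
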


\begin{proof}
By Lemma~\ref{lem:greizero}, it suffices to prove that the algebra $P^{\Gamma} \rtimes \C_{f_i} \overline{W}$ is prime. But this is precisely the statement of \cite[Corollary~12.6]{Passman} since the group $\overline{W}$ acts by outer automorphisms on the commutative ring $P^{\Gamma}$. 
\end{proof}

We will usually focus on $\eo H^{\Gamma}_{t,c} \eo$ rather than $H^{\Gamma}_{t,c}$. The main reason for this is that Corollary~\ref{cor:primring} says that $\eo H^{\Gamma}_{t,c} \eo$ is prime. The ring $H^{\Gamma}_{t,c}$ is prime if and only if $H^{\Gamma}_{t,c} = \eo H^{\Gamma}_{t,c} \eo$, which is the case precisely when $Z(W) = 1$. 

We recall that a noetherian algebra $A$ is \textit{Auslander--Gorenstein} if it has both finite left and right injective dimensions and for each finitely generated (left or right) $A$-module $M$ the following holds: for every integer $i \ge 0$ and every submodule $N$ of $\Ext^i_{A}(M, A)$, we have $\Ext^j_{A}(N, A) = 0$ for all $j < i$. As noted in \cite{BrownChangtong}, by a theorem of Zaks, if a noetherian algebra has finite right and left injective dimensions, then these are equal.

The \textit{grade} of a finitely generated left $A$-module $M$ is defined  to be
\[
j_A(M) = \min \{ i \in \Z_{\ge 0} \, | \, \Ext^i_A(M,A) \neq 0 \}. 
\]
Assume that $A$ has finite Gelfand--Kirillov dimension. Then $A$ is said to be (GK) \textit{Cohen--Macaulay} if 
\[
j_A(M) + \GKdim (M) = \GKdim (A),
\]
for all finitely generated left $A$-modules $M$. 

Now let $A = P^{\Gamma} \rtimes W$. 
Consider $A$ as the left $P^{\Gamma}$-module. Then 
the space $\Hom_{P^{\Gamma}}(A,P^{\Gamma})$ is a left $A$-module via $(a \star \phi)(b) = \phi(ba)$ for $a,b \in A$ and $\phi \in \Hom_{P^{\Gamma}}(A,P^{\Gamma})$.

\begin{lem}\label{lem:HomAP0iso}
    There is an isomorphism of left $A$-modules $\sigma \colon {}_A A \iso \Hom_{P^{\Gamma}}({}_{P^{\Gamma}} A,P^{\Gamma})$ given by 
    \[
    \sigma(f \o u) (b) = b_{u^{-1}} u^{-1}(f), \quad \forall \ f \in P^{\Gamma}, u \in W,  
    \]    
    where
$b=\sum_{w \in W} b_w \o w \in A$, $b_w \in P^\Gamma$.
\end{lem}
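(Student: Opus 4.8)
The plan is to identify $\sigma$ with the map $a \mapsto a \star \delta$, where $\delta \in \Hom_{P^{\Gamma}}({}_{P^{\Gamma}}A, P^{\Gamma})$ is the left $P^{\Gamma}$-linear projection $\sum_{w} b_w \o w \mapsto b_1$ onto the coefficient of $1 \in W$. Since the $\star$-action is a genuine left $A$-action (and since $a \star \phi$ again lies in $\Hom_{P^{\Gamma}}({}_{P^{\Gamma}}A,P^{\Gamma})$, because $\phi((pb)a) = p\,\phi(ba)$ for $p \in P^{\Gamma}$ by associativity in $A$ and left $P^{\Gamma}$-linearity of $\phi$), the assignment $a \mapsto a \star \delta$ is automatically a homomorphism of left $A$-modules ${}_A A \to \Hom_{P^{\Gamma}}({}_{P^{\Gamma}}A,P^{\Gamma})$. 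So once I check that this assignment is given by the stated formula, the $A$-linearity of $\sigma$ comes for free, and bijectivity will follow from a dual basis argument.

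In detail: $A = P^{\Gamma} \rtimes W$ is free as a left $P^{\Gamma}$-module on $\{1 \o w : w \in W\}$, so $\Hom_{P^{\Gamma}}({}_{P^{\Gamma}}A,P^{\Gamma})$ is free as a left $P^{\Gamma}$-module on the dual basis $\{\delta_w : w \in W\}$, where $\delta_w(\sum_v b_v \o v) = b_w$ (each $\delta_w$ is $P^{\Gamma}$-linear because $P^{\Gamma}$ is commutative and acts on the left in both $A$ and $P^{\Gamma}$), and $\delta = \delta_1$. Computing $(f \o u) \star \delta$ on a general element, using the product rule $(b_w \o w)(f \o u) = b_w\, w(f) \o wu$ in $P^{\Gamma} \rtimes W$,
\[
\bigl((f \o u) \star \delta\bigr)\Bigl(\sum_{w} b_w \o w\Bigr) \;=\; \delta\Bigl(\sum_{w} b_w\, w(f) \o wu\Bigr) \;=\; b_{u^{-1}}\, u^{-1}(f),
\]
the last step because the summand with $wu = 1$ is the one with $w = u^{-1}$. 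This is exactly the right-hand side in the statement, so $\sigma(a) = a \star \delta$, and $\sigma$ is a homomorphism of left $A$-modules.

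For bijectivity, rewrite the displayed computation as $\sigma(f \o u) = u^{-1}(f)\cdot \delta_{u^{-1}}$. Since $W$ preserves the subring $P^{\Gamma}$ (which is what makes $P^{\Gamma}\rtimes W$ meaningful in the first place), each $u \in W$ acts as a ring automorphism of $P^{\Gamma}$, so $f \mapsto u^{-1}(f)$ is a bijection of $P^{\Gamma}$; hence $\sigma$ restricts to a bijection $P^{\Gamma} \o u \iso P^{\Gamma}\,\delta_{u^{-1}}$ for every $u \in W$. Since $u \mapsto u^{-1}$ permutes $W$, these restrictions assemble into a bijection between the decompositions $A = \bigoplus_{u} P^{\Gamma}\o u$ and $\Hom_{P^{\Gamma}}({}_{P^{\Gamma}}A,P^{\Gamma}) = \bigoplus_{u} P^{\Gamma}\,\delta_u$, so $\sigma$ is an isomorphism. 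I expect the only real difficulty to be bookkeeping: one must remember that $\sigma$ is $\C$-linear but not $P^{\Gamma}$-linear, and keep the left/right conventions for $\star$ and for multiplication in the smash product straight; there is no conceptual obstacle.
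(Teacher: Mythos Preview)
Your proof is correct and, in fact, slightly cleaner than the paper's. The paper verifies $A$-linearity by a direct computation: it expands $\sigma((f_1 \o u_1)(f_2 \o u_2))(b)$ and $(f_1 \o u_1) \star \sigma(f_2 \o u_2)(b)$ and checks they agree. You instead observe that $\sigma$ is the orbit map $a \mapsto a \star \delta$ of a fixed element $\delta \in \Hom_{P^{\Gamma}}({}_{P^{\Gamma}}A,P^{\Gamma})$, which makes $A$-linearity automatic from the module axioms; you then add an explicit bijectivity argument via the dual-basis decomposition, which the paper leaves implicit. The two approaches are morally the same---both rest on the freeness of $A$ over $P^{\Gamma}$---but yours packages the $A$-linearity check more efficiently.
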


\begin{proof}
    It is immediate that $\sigma(f \o u) \in \Hom_P({}_P A,P)$. We  check that it is a morphism of left $A$-modules. Let $f_i \in P^{\Gamma}, u_i \in W$. Then $(f_1 \o u_1) (f_2 \o u_2) = f_1 u_1(f_2) \o u_1 u_2$. So 
    \begin{align*}
        \sigma((f_1 \o u_1) (f_2 \o u_2)) (b) & = \sigma(f_1 u_1(f_2) \o u_1 u_2)(b) \\
        & = b_{u_2^{-1} u_1^{-1}} u_2^{-1} (u_1^{-1}(f_1 u_1(f_2))) \\
        & = b_{u_2^{-1} u_1^{-1}} u_2^{-1}(u_1^{-1}(f_1) f_2) \\
        & = b_{u_2^{-1} u_1^{-1}} (u_2^{-1}u_1^{-1}(f_1)) u_2^{-1}(f_2),
    \end{align*}
 and 
    \begin{align*}
        (f_1 \o u_1) \star \sigma( f_2 \o u_2) (b) & = \sigma(f_2 \o u_2)(b (f_1 \o u_1)) \\
        & =  \sigma(f_2 \o u_2)\left( \sum_{w \in W} b_w w(f_1) \o w u_1 \right) \\
        & = b_{u_2^{-1} u_1^{-1}} (u_2^{-1}u_1^{-1}(f_1)) u_2^{-1}(f_2).
    \end{align*} 
\end{proof}

\begin{lem}\label{lem:gradeAM}
Let $M$ be a finitely generated left $A$-module. Then $j_A(M) = j_{P^{\Gamma}}(M)$.
\end{lem}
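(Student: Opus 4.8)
Write $S = P^{\Gamma}$ for brevity, and regard the finitely generated left $A$-module $M$ also as a left $S$-module by restriction along the inclusion $S \hookrightarrow A$. Since $A = P^{\Gamma} \rtimes W$ is free of rank $|W|$ as both a left and a right $S$-module (with basis the elements of $W$), $M$ is finitely generated over $S$, so that $j_S(M) = j_{P^{\Gamma}}(M)$ is defined. The plan is to prove the stronger statement that
\[
\Ext^i_A(M,A) \cong \Ext^i_S(M,S) \qquad \text{for all } i \ge 0,
\]
from which the equality of grades $j_A(M) = j_S(M) = j_{P^{\Gamma}}(M)$ is immediate by taking the least $i$ with a non-vanishing $\Ext$-group.

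First I would invoke Lemma~\ref{lem:HomAP0iso} to replace the coefficient module, giving $\Ext^i_A(M,A) \cong \Ext^i_A\bigl(M, \Hom_{S}({}_{S}A, {}_{S}S)\bigr)$. Next, viewing $A$ as an $(S,A)$-bimodule via its left $S$-action and its right multiplication action of $A$ on itself, the Hom--tensor adjunction furnishes an isomorphism $\Hom_A\bigl(N, \Hom_S(A,S)\bigr) \cong \Hom_S(A \otimes_A N, S) = \Hom_S(N, S)$, natural in the left $A$-module $N$, where on the right $N$ is restricted to $S$; concretely this sends $\psi \mapsto \bigl(n \mapsto \psi(n)(1_A)\bigr)$, and one checks directly that this is compatible with the left $A$-module structure $(a \star \phi)(b) = \phi(ba)$ used in Lemma~\ref{lem:HomAP0iso}. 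Applying this termwise to a projective resolution $P_\bullet \to M$ of left $A$-modules identifies the complex $\Hom_A\bigl(P_\bullet, \Hom_S(A,S)\bigr)$ with $\Hom_S(P_\bullet, S)$.

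It then remains to observe that $P_\bullet \to M$ is also a projective resolution over $S$: exactness does not depend on the ring, and each $P_j$, being a direct summand of a free $A$-module while $A$ is free over $S$, is projective over $S$. Hence $H^i\Hom_S(P_\bullet, S) = \Ext^i_S(M, S)$, and chaining the isomorphisms above gives $\Ext^i_A(M,A) \cong \Ext^i_S(M,S)$ for every $i$, completing the argument. The only subtleties are bookkeeping ones --- matching the bimodule structures in the adjunction with the left $A$-action on $\Hom_S(A,S)$ appearing in Lemma~\ref{lem:HomAP0iso}, and recording that $A$ is $P^{\Gamma}$-free (the evident analogue $A = \bigoplus_{w \in W} P^{\Gamma} w$ of the PBW decomposition) --- so I do not anticipate any genuine obstacle; this is a standard change-of-rings computation once Lemma~\ref{lem:HomAP0iso} is available.
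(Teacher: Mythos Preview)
Your proposal is correct and follows essentially the same approach as the paper: the paper simply invokes the Eckmann--Shapiro Lemma (which is precisely the Hom--tensor adjunction argument you spell out in detail) together with Lemma~\ref{lem:HomAP0iso} and the fact that $A$ is free of finite rank over $P^{\Gamma}$ to obtain $\Ext^i_{P^{\Gamma}}(M,P^{\Gamma}) = \Ext^i_A(M,\Hom_{P^{\Gamma}}(A,P^{\Gamma})) = \Ext^i_A(M,A)$. Your write-up is just a more explicit unpacking of the same change-of-rings computation.
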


\begin{proof}
    Since $A$ is free of finite rank over $P^{\Gamma}$, the Eckmann-Shapiro Lemma, together with Lemma~\ref{lem:HomAP0iso}, says that 
    \[
    \Ext^i_{P^{\Gamma}}(M,P^{\Gamma}) = \Ext^i_A(M,\Hom_{P^{\Gamma}}(A,P^{\Gamma})) = \Ext^i_A(M,A). 
    \]
    The lemma follows. 
\end{proof}

\begin{thm}\label{thm:AuslanderGorensteinH0}
	The algebras $H_{t,c}^{\Gamma}$ and $\ei H_{t,c}^{\Gamma} \ei$ are Auslander--Gorenstein and (GK)  Cohen--Macaulay. These algebras have Gelfand--Kirillov dimension $2 \dim \h - \dim \Gamma$. 
\end{thm}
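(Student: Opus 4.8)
The plan is to reduce everything to the associated graded algebra and then lift the properties, exactly as in the Brown--Changtong approach referenced throughout. First I would recall from Corollary~\ref{cor:grglalg}, Proposition~\ref{gradedso}, Theorem~\ref{sonissl2} and Lemma~\ref{lem:greizero} that $\gr H_{t,\bc}^{\Gamma} = P^{\Gamma} \rtimes W$ and $\gr(\ei H_{t,\bc}^{\Gamma} \ei) = P^{\Gamma} \rtimes \C_{f_i} \overline{W}$, with the standard filtration $\mc{F}_i$ satisfying $\mc{F}_{-1} = 0$ and $\bigcup_i \mc{F}_i = H_{t,\bc}^{\Gamma}$. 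Since the filtered algebra is noetherian when the associated graded is, and since Auslander--Gorenstein and (GK) Cohen--Macaulay both pass from $\gr A$ to $A$ for a connected filtered algebra (this is precisely the mechanism of \cite{BrownChangtong}; see also the general lifting results for these properties), it suffices to prove the statement for the graded algebras $A = P^{\Gamma} \rtimes W$ and $A = P^{\Gamma} \rtimes \C_{f_i} \overline{W}$.

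For the graded case, the key point is that $A$ is a free module of finite rank over the commutative ring $P^{\Gamma}$. By Proposition~\ref{prop:invariantringGorenstein}, $V \git \Gamma = \Spec P^{\Gamma}$ and $V \git (\Gamma \times W) = \Spec P^{\Gamma \times W}$ are Gorenstein; in particular $P^{\Gamma}$ is a Gorenstein (hence Auslander--Gorenstein, being commutative of finite injective dimension) and Cohen--Macaulay ring, of Krull dimension $2 \dim \h - \dim \Gamma$. Lemma~\ref{lem:HomAP0iso} shows ${}_A A \cong \Hom_{P^{\Gamma}}({}_{P^{\Gamma}} A, P^{\Gamma})$ as left $A$-modules (and the analogous statement on the right, and for the twisted group algebra version, which goes through verbatim), so $A$ inherits finite injective dimension from $P^{\Gamma}$ on both sides. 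The Auslander condition then follows because, by Lemma~\ref{lem:gradeAM}, $j_A(M) = j_{P^{\Gamma}}(M)$ for every finitely generated $A$-module $M$, and submodules of $\Ext^i_A(M,A) = \Ext^i_{P^{\Gamma}}(M,P^{\Gamma})$ satisfy the vanishing because $P^{\Gamma}$ is Auslander--Gorenstein. Similarly, Cohen--Macaulayness follows from $j_A(M) = j_{P^{\Gamma}}(M)$ together with $\GKdim_A(M) = \GKdim_{P^{\Gamma}}(M) = \dim_{P^{\Gamma}}(M)$ (Gelfand--Kirillov dimension over a commutative affine algebra is Krull dimension of the support) and the Cohen--Macaulay property of $P^{\Gamma}$.

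Finally, for the Gelfand--Kirillov dimension of $H_{t,\bc}^{\Gamma}$ and $\ei H_{t,\bc}^{\Gamma} \ei$: since GK-dimension is invariant under passing to the associated graded for a finitely generated filtered algebra, $\GKdim H_{t,\bc}^{\Gamma} = \GKdim (P^{\Gamma} \rtimes W) = \GKdim P^{\Gamma} = \dim (V \git \Gamma) = 2 \dim \h - \dim \Gamma$, and likewise for $\ei H_{t,\bc}^{\Gamma} \ei$ since $P^{\Gamma} \rtimes \C_{f_i}\overline{W}$ is again finite over $P^{\Gamma}$. I would also observe that $H_{t,\bc}^{\Gamma} = \bigoplus_i \ei H_{t,\bc}^{\Gamma} \ei$ is a finite direct product of algebras (Lemma~\ref{lem:aidecomp}), so its Auslander--Gorenstein and Cohen--Macaulay properties are equivalent to those of each factor, and its GK-dimension is the maximum over the factors, which is the common value $2\dim\h - \dim\Gamma$ since each $\C_{f_i}\overline{W}$ has the same dimension.

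The main obstacle I anticipate is making the lifting step fully rigorous: one must be careful that the filtration on $\ei H_{t,\bc}^{\Gamma} \ei$ (by restriction from $H_{t,\bc}$) is exhaustive and bounded below, that $P^{\Gamma} \rtimes W$ and $P^{\Gamma} \rtimes \C_{f_i}\overline{W}$ are indeed noetherian (which follows since $P^{\Gamma}$ is noetherian and these are finite over it), and that the precise form of the Brown--Changtong lifting theorem for Auslander--Gorenstein and Cohen--Macaulay being invoked applies to filtered algebras whose associated graded is not necessarily commutative. The twisted-group-algebra bookkeeping for $\ei H_{t,\bc}^{\Gamma} \ei$ — checking that Lemmas~\ref{lem:HomAP0iso} and~\ref{lem:gradeAM} go through with $\C W$ replaced by $\C_{f_i}\overline{W}$ — is routine but needs to be stated.
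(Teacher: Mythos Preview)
Your proposal is correct and follows essentially the same strategy as the paper: reduce to the associated graded $P^{\Gamma}\rtimes W$ (respectively $P^{\Gamma}\rtimes\C_{f_i}\overline{W}$), verify Auslander--Gorenstein and Cohen--Macaulay there using that $P^{\Gamma}$ is Gorenstein and Cohen--Macaulay, and then lift.

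The differences are minor packaging choices. For the Auslander--Gorenstein property of $P^{\Gamma}\rtimes W$ the paper simply cites \cite{YiGlasgow}, whereas you re-derive it by hand via Lemmas~\ref{lem:HomAP0iso} and~\ref{lem:gradeAM}; your argument works, but note that the Auslander condition involves right submodules $N\subset\Ext^i_A(M,A)$, so you need the right-module analogue of Lemma~\ref{lem:gradeAM} as well (routine, but worth saying). For the summands $\ei H_{t,\bc}^{\Gamma}\ei$, the paper avoids the twisted-group-algebra bookkeeping entirely: it deduces Auslander--Gorenstein of each summand directly from that of the whole algebra via the decomposition of Lemma~\ref{lem:aidecomp}, and for Cohen--Macaulay it uses that $\ei$ is central in $A=P^{\Gamma}\rtimes W$ to compare $\Ext_{\ei A\ei}$ with $\Ext_A$. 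This is slightly cleaner than checking that Lemmas~\ref{lem:HomAP0iso}--\ref{lem:gradeAM} go through for $\C_{f_i}\overline{W}$, though your route is also fine. Finally, for the dimension count $\dim(V\git\,\Gamma)=2\dim\h-\dim\Gamma$ the paper invokes Lemma~\ref{lem:stablerep} (stability of the action), which you implicitly use but do not cite.
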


\begin{proof}
	By Proposition~\ref{prop:invariantringGorenstein}, the ring $P^{\Gamma}$ is a commutative noetherian Gorenstein domain. Therefore it is Auslander--Gorenstein. It follows from \cite{YiGlasgow} that $\gr H_{t,c}^{\Gamma} = P^{\Gamma} \rtimes W$ is Auslander--Gorenstein. Hence $H_{t,c}^{\Gamma}$ is also Auslander--Gorenstein by \cite[Theorem 3.9]{BjorkAuslander}. This implies that each summand $\ei H_{t,c}^{\Gamma} \ei$ is also Auslander--Gorenstein.
	
Since the generic $\Gamma$-orbit in $\h \times \h^*$ is closed by Lemma~\ref{lem:stablerep}, the algebra 
\[
P^{\Gamma} = \C\left[ (\h \times \h^*) \git \, \Gamma \right] 
\]
has Krull (and hence Gelfand--Kirillov) dimension $2 \dim \h - \dim \Gamma$. By \cite[Proposition~8.1.14]{MR}, it follows that both $P^{\Gamma} \rtimes W$ and $H_{t,c}^{\Gamma}$ also have GK-dimension $2\dim \h - \dim \Gamma$. 

Finally, we check that $H_{t,c}^{\Gamma}$ and $\ei H_{t,c}^{\Gamma} \ei$ are Cohen--Macaulay. By \cite[Proposition 8.6.5]{MR}, it suffices to show that $A = P^{\Gamma} \rtimes W$ and $\ei A \ei$ are Cohen--Macaulay. Note that they have finite GK-dimension by the previous paragraph. The commutative ring $P^{\Gamma}$ is Cohen--Macaulay by Hochster's Theorem \cite[Theorem~6.4.2]{CohMac}. Thus, if $M$ is a finitely generated $A$-module, it is finitely generated over $P^{\Gamma}$ and applying Lemma~\ref{lem:gradeAM} gives
\[
\GKdim (A) = \GKdim (P^{\Gamma}) = \GKdim_{P^{\Gamma}} (M) + j_{P^{\Gamma}}(M) = \GKdim_{A}(M) + j_A(M).
\]
Thus, $A$ is Cohen--Macaulay. Each $\ei$ is central in $A$ and $\ei A \ei$ has the same GK-dimension as $A$. Therefore, if $M_i$ is a finitely generated $\ei A \ei$-module, $\Ext_A^r(M_i,A) = \Ext^r_{\ei A \ei}(M_i, \ei A \ei)$ and $\GKdim_{\ei A \ei} (M_i) = \GKdim_A(M_i)$. Thus, 
\[
j_{\ei A \ei}(M_i) + \GKdim_{\ei A \ei} (M_i) = j_A(M_i) + \GKdim_{A}(M_i) =  \GKdim (A) = \GKdim (\ei A \ei),
\]
showing that $\ei A \ei$ is Cohen--Macaulay too. 
\end{proof}

We note for later use:

\begin{cor}\label{cor:CMmodule}
    $\eo H^{\Gamma}_{c} e$ is a Cohen--Macaulay $eH^{\Gamma}_{c} e$-module, where $e \in \C W$ denotes the symmetrizing idempotent. 
\end{cor}

\begin{proof}
The proof is identical to the proof of \cite[Theorem~1.5(ii)]{EG}, using the fact that $P^{\Gamma}$ is a Cohen--Macaulay ring and a finitely generated module over the Gorenstein ring $P^{\Gamma \times W}$ (see Proposition~\ref{prop:invariantringGorenstein}). 
\end{proof}

\subsection{Double centralizer property}\label{doublecsection}

Let $A$ and $B$ be rings, and let $M= {_A}M_B$ be an $(A,B)$-bimodule. Then there are natural multiplication homomorphisms
$$
\varphi\colon B^\mr{op} \to \End_{A} M, \quad 
\psi\colon A \to \End_{B} M, 
$$
given by, respectively, right and left multiplication. 

\begin{lem}\label{cent1}
Let $e$ be an idempotent in $A$, and let $B= e A e$, $M=Ae$. Then $\varphi$ is an isomorphism.
\end{lem}
\begin{proof}
We have $\varphi(b)(e) = e b =b$, which implies that $\varphi$ has zero kernel. 

Let now $f \in \End_A M$. Let $b=f(e) \in M$. Hence $b = a e$ for some $a \in A$. Note that
$$
b = f(e) = f(e^2) = e f(e) = e b = e a e.
$$
Hence $b=e b e \in B$. 

Let us consider the image $f(m)$ for an arbitrary element $m\in M$. Let $m = x e$, where $x \in A$. We have
$$
f(m) = f(x e) = x f(e) = x b = \varphi(b) (m)
$$
since $x e b = x b$. It follows that $f =\varphi(b)$ and therefore $\varphi$ is surjective.
\end{proof}

By taking $A= \eo H^{\Gamma}_{t,c} \eo$ and $e$ the trivial  idempotent in $A$ we get by Lemma \ref{cent1} that
$$
\End_{\eo H^{\Gamma}_{t,c} \eo}(\eo H^{\Gamma}_{t,c} e) \cong (e H^{\Gamma}_{t,c}e)^{\rm op}. 
$$


The next theorem shows that under these settings $\psi$ is also an isomorphism so that the double centralizer property holds for the bimodule $\eo H^{\Gamma}_{t,c} e$. 

\begin{thm}\label{thm:doublecetralHGamma}
We have an isomorphism 
	$\End_{e H^{\Gamma}_{t,c} e}(\eo H^{\Gamma}_{t,c} e) \cong \eo H^{\Gamma}_{t,c} \eo$. 
\end{thm}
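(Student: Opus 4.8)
The plan is to establish the double centralizer property by proving two facts: that the natural map $\eo H^{\Gamma}_{t,\bc} \eo \to \End_{e H^{\Gamma}_{t,\bc} e}(\eo H^{\Gamma}_{t,\bc} e)^{\mr{op}}$ (given by right multiplication) is injective, and that it is surjective. Injectivity is easy: if $a \in \eo H^{\Gamma}_{t,\bc} \eo$ acts as zero by right multiplication on $\eo H^{\Gamma}_{t,\bc} e$, then in particular $\eo \cdot a = a = 0$ (using that $\eo$ is an identity element for $\eo H^{\Gamma}_{t,\bc} \eo$). So the content is in surjectivity, which is where I expect the main obstacle to lie.

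For surjectivity, the standard approach (following Etingof--Ginzburg \cite{EG}) is to reduce to a statement about the associated graded ring and then use a codimension/reflexivity argument. First I would observe that $\eo H^{\Gamma}_{t,\bc} e$ is a finitely generated module over both $\eo H^{\Gamma}_{t,\bc} \eo$ and $e H^{\Gamma}_{t,\bc} e$ (these are noetherian by Theorem~\ref{thm:AuslanderGorensteinH0}), and that everything is filtered with associated graded the corresponding objects for $P^{\Gamma} \rtimes W$: explicitly $\gr(\eo H^{\Gamma}_{t,\bc} \eo) = \eo(P^{\Gamma} \rtimes W)\eo$, $\gr(e H^{\Gamma}_{t,\bc} e) = e(P^{\Gamma} \rtimes W)e = P^{\Gamma \times W}$, and $\gr(\eo H^{\Gamma}_{t,\bc} e) = \eo(P^{\Gamma} \rtimes W)e$. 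Since the formation of $\End$ over a filtered ring of a filtered module is compatible with passing to associated graded (there is always an injection $\gr \End_{eH^{\Gamma}e}(\eo H^{\Gamma}e) \hookrightarrow \End_{\gr}( \gr)$), it suffices to prove the double centralizer property at the graded level, i.e. for the ring $P^{\Gamma} \rtimes W$ and the bimodule $\eo (P^{\Gamma} \rtimes W) e$, and then lift via Lemma~\ref{lem:gradedin}(i). Concretely, I need $\End_{P^{\Gamma \times W}}(\eo(P^{\Gamma} \rtimes W)e) = (\eo (P^{\Gamma} \rtimes W)\eo)^{\mr{op}}$.

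The graded statement is a commutative-algebra fact about the invariant rings. One identifies $\eo(P^{\Gamma} \rtimes W)e$ with $e(P^{\Gamma} \rtimes W)e = P^{\Gamma \times W}$-module structure: as a right $P^{\Gamma\times W}$-module it is essentially $P^{\Gamma}$ with its $W$-action recorded, and $\eo (P^{\Gamma}\rtimes W)\eo$ acts on the left. The key point is that $P^{\Gamma}$ is a reflexive (indeed Cohen--Macaulay, by Proposition~\ref{prop:invariantringGorenstein} and Corollary~\ref{cor:CMmodule}) module over $P^{\Gamma \times W}$, the extension $P^{\Gamma\times W} \subset P^{\Gamma}$ is generically a $\overline{W}$-Galois extension by Proposition~\ref{prop:Wbaropeninv}, and the locus where this fails has codimension $\ge 2$. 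On the Galois locus the double centralizer identity $\End_{P^{\Gamma\times W}}(P^{\Gamma} \o_{?} \cdots) = (\text{twisted group ring})$ is classical Galois descent; then one extends over the codimension-$\ge 2$ complement using the reflexivity/$S_2$ property of the Cohen--Macaulay modules involved, exactly as in \cite[proof of Theorem~1.5]{EG}.

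The main obstacle I anticipate is bookkeeping the passage between filtered and graded $\End$-rings carefully enough to be sure the injection $\gr \End \hookrightarrow \End \gr$ is an equality in our situation — this is where one genuinely uses that $\eo H^{\Gamma}_{t,\bc} e$ is a Cohen--Macaulay (hence torsion-free, reflexive) module over $e H^{\Gamma}_{t,\bc}e$, so that an endomorphism is determined by its behaviour on the generic/Azumaya locus and the codimension-two complement can be ignored. Once the codimension-$\ge 2$ input (Proposition~\ref{prop:Wbaropeninv}) and the Cohen--Macaulayness (Theorem~\ref{thm:AuslanderGorensteinH0}, Corollary~\ref{cor:CMmodule}) are in hand, the argument is a close transcription of the Etingof--Ginzburg proof, with $P$ replaced by $P^{\Gamma}$ and $W$ unchanged.
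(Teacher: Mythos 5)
Your proposal is correct and follows essentially the same route as the paper: both reduce to the graded case via the codimension-$\ge 2$ free action of $\overline{W}$ on $(\h\times\h^*)\git\Gamma$ (Proposition~\ref{prop:Wbaropeninv}), establish the double centralizer there, and lift to all $(t,\bc)$ exactly as in the proof of \cite[Theorem~1.5(vi)]{EG}. The one cosmetic difference is that the paper treats the graded statement as a black box, citing \cite[Proposition~E.2.2(2)]{BonnafeRouquier}, while you sketch the Galois-descent-plus-reflexivity argument behind that citation.
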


\begin{proof}
First, we consider the case $t=c=0$. Let $S$ be a normal domain with an action of $\overline{W}$ such that the corresponding action on $\Spec S$ is free on an open subset with complement of codimension at least 2. Then the left multiplication map $\psi\colon S \rtimes \overline{W} \to \End_{eSe}S$ is an isomorphism by \cite[Proposition~E.2.2(2)]{BonnafeRouquier}. 

Now let $S= \C[\h \times \h^*]^{\Gamma}$. The variety $(\h \times \h^*) \git \, \Gamma$ is normal by \cite[\S~3.3, Satz~1]{Kraft}. The action of $\overline{W}$ on $\Spec S$ is free on an open subset with complement of codimension at least $2$ by Proposition~\ref{prop:Wbaropeninv}. Therefore, the statement follows for zero parameters by the previous paragraph since $\eo H^{\Gamma}_{0,0} \eo = H^{\Gamma}_{0,0} \eo \cong S \rtimes \overline{W}$.



Then, using filtration arguments analogously to the proof of \cite[Theorem~1.5(iv)]{EG}, 
the statement follows for all parameters $(t,c)$. 
\end{proof}

We note that the Satake isomorphism holds in this situation. The proof is identical to that of the usual Satake isomorphism for symplectic reflection algebras; see \cite[Theorem~1.7.3]{BellamySRAlecturenotes}. 

\begin{cor}\label{cor:Satakeinv}
The map $z \mapsto e z$ is an isomorphism $Z(\eo H^{\Gamma}_{t,c} \eo) \iso Z(e H^{\Gamma}_{t,c} e)$.  
\end{cor}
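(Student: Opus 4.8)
The plan is to follow the standard argument for the Satake isomorphism for symplectic reflection algebras, as in \cite[Theorem~1.7.3]{BellamySRAlecturenotes}, now in the presence of the extra idempotents $\ei$. Write $A = \eo H^{\Gamma}_{t,\bc} \eo$ and $B = e H^{\Gamma}_{t,\bc} e$, and recall that $e \in \C W$ is the symmetrising idempotent while $\eo \in \C Z(W)$ is the symmetrising idempotent of the centre. Note $e \eo = \eo e = e$, so that $e = \eo e \eo \in A$ and multiplication by $e$ makes sense on $A$. First I would observe that since $e \in A$ is an idempotent, the map $z \mapsto ez = eze$ sends $Z(A)$ into $B$, and it is a ring homomorphism because $e$ is idempotent and $z$ is central. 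I would then produce the inverse map: given $z' \in Z(B)$, use the $(A,B)$-bimodule $M := \eo H^{\Gamma}_{t,\bc} e$ and the isomorphism $\End_B(M) \cong A^{\mr{op}}$ of Theorem~\ref{thm:doublecetralHGamma}. Right multiplication by $z'$ on $M$ is a left $A$-module endomorphism of $M$ (since $z'$ is central in $B$ and commutes with the right $B$-action used to form $\End_B$), hence corresponds to a unique element of $A^{\mr{op}}$; this element is central in $A$ because right multiplication by $z'$ also commutes with the right $B$-action, which via the double centralizer identifies with all of $A$ acting on the right.

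\textbf{Checking the two maps are mutually inverse.}
The key point is that the bimodule $M = \eo H^{\Gamma}_{t,\bc} e$ is faithful as a left $A$-module and as a right $B$-module, which follows from Corollary~\ref{cor:primring} (primeness of $\eo H^\Gamma_{t,\bc}\eo$, hence of its corner $e H^\Gamma_{t,\bc} e = e A e$ up to Morita-type arguments) together with the fact that $A M = M = M B$. Using $eMe = e A e = B$ (here I use $e\eo = e$) and $eM = eA$, one computes: if $z \in Z(A)$ then right multiplication by $ez$ on $M$ agrees with left multiplication by $z$ (both act on $m \in M$ as $zm = mz'$ for the appropriate element), and chasing through $\End_B(M) \cong A^{\mr{op}}$ recovers $z$; conversely starting from $z' \in Z(B)$, the associated central element $z$ of $A$ satisfies $ez = z'$ after restricting the action to $eM = eA \supset eAe = B$. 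The upshot is a commutative diagram identifying the two constructions as inverse bijections, both of which are ring homomorphisms, hence ring isomorphisms.

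\textbf{Main obstacle.}
I expect the main technical point to be verifying that right multiplication by an element of $Z(B)$ really does land in the copy of $A$ inside $\End_{\C}(M)$ under the identification of Theorem~\ref{thm:doublecetralHGamma}, i.e.\ that it commutes with the \emph{full} right $B$-action and not just with $z'$ itself. This is exactly where the double centralizer property is used: $\End_B(M) \cong A^{\mr{op}}$ says the only operators on $M$ commuting with the right $B$-action are left multiplications by $A$, so one must check that right multiplication by a central $z'$ commutes with right multiplication by arbitrary $b \in B$ — which is immediate from associativity and $z' \in Z(B)$ — and then feed this into the isomorphism. The parallel faithfulness/centrality bookkeeping for the $\eo$ versus $e$ idempotents is routine once one records $e\eo = \eo e = e$ and that $\eo$ is central in $H^\Gamma_{t,\bc}$ by assumption (III). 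Since the paper explicitly says ``the proof is identical to that of the usual Satake isomorphism,'' I would keep the write-up to a short paragraph citing Theorem~\ref{thm:doublecetralHGamma} and \cite[Theorem~1.7.3]{BellamySRAlecturenotes}, spelling out only the construction of the two maps and leaving the bimodule chase to the reader.
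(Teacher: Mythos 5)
Your proof is correct and follows essentially the same route the paper intends, namely the standard Satake argument via the double centralizer property of the $(\eo H^{\Gamma}_{t,\bc}\eo,\; e H^{\Gamma}_{t,\bc}e)$-bimodule $M=\eo H^{\Gamma}_{t,\bc}e$ (Theorem~\ref{thm:doublecetralHGamma}), exactly as in the cited lecture notes. Two small slips worth flagging: the asserted equality $eM=eA$ is false (one has $eM=eH^{\Gamma}_{t,\bc}e=B$ while $eA=eH^{\Gamma}_{t,\bc}\eo$, although the containment $eM\subset eA$ does hold and the argument only needs $e\in M$ with $ze=ez'$); and the centrality in $A$ of the preimage $a$ of $z'\in Z(B)$ should be deduced from the fact that $r_{z'}$ commutes with every left multiplication $\ell_{a'}$, $a'\in A$, together with faithfulness of $M$ as a left $A$-module (which, as you note, comes from primeness of $A$), rather than from the garbled phrase about the right $B$-action identifying with $A$ — though your final paragraph shows you have the correct mechanism in mind.
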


Since $e H^{\Gamma}_{0,c} e$ is commutative, Corollary~\ref{cor:Satakeinv} implies the following. 

\begin{cor}\label{cor:Satakeinvteq0}
  When $t = 0$, there is isomorphism
    \[
  Z(\eo H^{\Gamma}_{0,c} \eo) \iso e H^{\Gamma}_{0,c} e, \quad z \mapsto e z.  
  \]
\end{cor}

\subsection{The centre of $H_{t,c}^{\mf{gl}(n)}$ at $t \neq 0$}

We note that it is not true that the centre of $H_{t,c}^{\mf{gl}(n)}$ is trivial when $t \neq 0$. Indeed, it was shown in \cite[Section~7]{Feiginangular} that $Z(H_{1,c}^{\mf{gl}(n)}(\s_n)) = \C[\eu_{c}]$. We show that a similar result holds for any complex reflection group. 

By Proposition~\ref{prop:centreskewgroup1} below, the centre of $P^{\T} \rtimes W$ equals $P^{\T \times W} \o \C Z(W)$. It is a Poisson algebra because $H_{t,c}^{\T}$ is a quantization of $P^{\T} \rtimes W$ for any $(t,c)$ with $t \neq 0$. The Poisson structure on $P^{\T \times W} \o \C Z(W)$ is independent of the choice of $(t,c)$. 

If $B$ is a Poisson algebra then $\mr{Cas}(B) \subset B$ denotes the Poisson centre, consisting of all elements $b$ such that $\{ b , - \} = 0$ ($\mr{Cas}$ stands for "Casimirs'').

\begin{lem}\label{lem:Casimirfiniteextension}
    Let $A,B$ be $\C$-algebras. Assume $B$ is a Poisson domain and $A \subset B$ a Poisson subalgebra such that $B$ is a finite $A$-module. Then $\mr{Cas}(A) = \mr{Cas}(B) \cap A$. 
\end{lem}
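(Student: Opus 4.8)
The plan is to prove the two inclusions $\mr{Cas}(B) \cap A \subseteq \mr{Cas}(A)$ and $\mr{Cas}(A) \subseteq \mr{Cas}(B)$ separately. The first inclusion is immediate: if $a \in A$ lies in $\mr{Cas}(B)$, then $\{a, a'\} = 0$ for all $a' \in B$, in particular for all $a' \in A$, so $a \in \mr{Cas}(A)$. The content of the lemma is therefore the reverse inclusion, and this is where the hypotheses that $B$ is a domain and a finite $A$-module are used.

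For the reverse inclusion, fix $a \in \mr{Cas}(A)$; I want to show $\{a, b\} = 0$ for every $b \in B$. The derivation $D = \{a, -\} \colon B \to B$ is an $A$-linear derivation, since $D(a') = 0$ for all $a' \in A$ and $D(a' b) = a' D(b) + D(a') b = a' D(b)$. So the task reduces to showing that any $A$-linear derivation of $B$ vanishes whenever $B$ is a domain, finite over the noetherian ring $A$. The standard argument is: because $B$ is a finite $A$-module and $A$ is noetherian, every $b \in B$ is integral over $A$, so it satisfies a monic polynomial relation $b^m + a_{m-1} b^{m-1} + \cdots + a_0 = 0$ with $a_i \in A$; choose such a relation of minimal degree $m$. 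Applying the $A$-linear derivation $D$ gives $\big(m b^{m-1} + (m-1) a_{m-1} b^{m-2} + \cdots + a_1\big) D(b) = 0$. Since $B$ is a domain and the first factor is a monic-up-to-unit polynomial in $b$ of degree $m-1$, it is nonzero by minimality of $m$ (here one works in $\mr{Frac}(B)$, or notes directly that a nontrivial integral relation of smaller degree would contradict minimality), hence $D(b) = 0$. As $b$ was arbitrary, $D = 0$, i.e.\ $a \in \mr{Cas}(B)$.

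The one point that needs a little care — and the step I expect to be the main (mild) obstacle — is the claim that the ``derivative'' factor $m b^{m-1} + \cdots + a_1$ is nonzero. In characteristic zero this is safe: if it vanished, it would be a monic integral relation for $b$ over $A$ of degree $m-1 < m$ (after dividing by the leading coefficient $m \neq 0$), contradicting minimality. This uses $\mr{char}\,\C = 0$, which holds throughout the paper. One should also record that the minimal integral relation exists precisely because $A$ is noetherian and $B$ is module-finite over it, so $B$ is integral over $A$. With these observations the argument is complete; no further structure of the Cherednik-algebra setting is needed, and the lemma applies verbatim to $A = Z(H_{t,\bc}^\Gamma)$-type subalgebras of the Poisson domains arising later.
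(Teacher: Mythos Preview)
Your proof is correct and takes a genuinely different, more elementary route than the paper's. The paper argues geometrically: since we work over $\C$, the extension $\mr{Frac}(A)\subset\mr{Frac}(B)$ is separable, hence $A\subset B$ is generically \'etale; after inverting a suitable $f\in A$ one has $\mr{Der}(B[f^{-1}])=B[f^{-1}]\otimes_{A[f^{-1}]}\mr{Der}(A[f^{-1}])$, so any derivation of $B$ killing $A$ must vanish. Your argument achieves the same conclusion by differentiating a minimal monic integral relation for $b$ over $A$ and using that $B$ is a domain. Both proofs ultimately exploit separability in characteristic zero; yours makes this completely explicit (the nonvanishing of the formal derivative $p'(b)$ is precisely where $\mr{char}\,\C=0$ enters, since you divide by the leading coefficient $m$), while the paper packages the same content into the \'etale formalism. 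Your version has the advantage of being entirely self-contained and avoiding any appeal to fraction fields or localization; the paper's has the advantage of identifying the statement as an instance of a standard fact about \'etale morphisms, which might be useful should one wish to generalize. Either way, the observation that $D=\{a,-\}$ is an $A$-linear derivation of $B$ is the key reduction, and you handle the residual algebraic step cleanly.
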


\begin{proof}
This statement is implicit in the final paragraph of the proof of \cite[Proposition~7.2(2)]{PoissonOrders}, but we include the proof for the reader's convenience. It is clear that $\mr{Cas}(B) \cap A \subset \mr{Cas}(A)$. Let $p \in \mr{Cas}(A)$ and $u \in B$. Since $B$ is a finite $A$-module, $u$ is integral over $A$. Therefore, there exists a polynomial of minimal degree $\sum_{i = 0}^n a_i X^i$ with $a_n = 1$ and $a_i \in A$ such that $\sum a_i u^i = 0$. Then 
\[
0 = \left\{ p, \sum_{i = 0}^n a_i u^i \right\} = \sum_{i = 0}^n (\{ p, a_i \} u^i + a_i \{ p, u^i \}) = \left( \sum_{i = 0}^n i a_i u^{i-1} \right) \{ p, u \}. 
\]
By minimality of the polynomial and the fact that we are working over a field of characteristic zero, $\sum i a_i u^{i-1} \neq 0$ and hence $\{ p, u \} = 0$ because $B$ is a domain. Therefore, $p \in \mr{Cas}(B)$. 
\end{proof}

\begin{lem}\label{lem:PoissoncentreP0}
    The Poisson centre of $P^{\T \times W} \otimes \C Z(W)$ equals $\C[\mr{eu}_0] \o \C Z(W)$. 
\end{lem}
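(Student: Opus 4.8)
The plan is to peel off the factor $\C Z(W)$, then make the Poisson algebra $P^{\T}$ explicit, and finally descend a Casimir computation from $P^{\T}$ to $P^{\T\times W}$ using Lemma~\ref{lem:Casimirfiniteextension}. For the first step, note that since $Z(W)$ acts trivially by conjugation on $H^{\mf{gl}(n)}_{t,\bc}$, the subalgebra $\C Z(W)$ lifts to the centre of $H^{\mf{gl}(n)}_{t,\bc}$, and therefore lies in the Poisson centre of $P^{\T\times W}\o\C Z(W)$. Writing $\C Z(W)=\bigoplus_{i=0}^{\ell-1}\C\eo_i$, each idempotent $\eo_i$ is thus a Casimir; the Poisson bracket is then compatible with the block decomposition $P^{\T\times W}\o\C Z(W)=\bigoplus_i P^{\T\times W}\eo_i$ and restricts in each block to (a copy of) the bracket on $P^{\T\times W}$. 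Hence $\mr{Cas}(P^{\T\times W}\o\C Z(W))=\mr{Cas}(P^{\T\times W})\o\C Z(W)$, and the lemma reduces to the claim that $\mr{Cas}(P^{\T\times W})=\C[\mr{eu}_0]$, where $\mr{eu}_0=\sum_{i=1}^n x_iy_i$.

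Next I describe $P^{\T}$. The bracket on $P^{\T\times W}$ is the restriction, via $P^{\T\times W}=(P^{\T})^W$, of the standard symplectic bracket on $P=\C[\h\times\h^*]$ (this can be checked using the quantization $H^{\mf{gl}(n)}_{1,0}\subset D(\h)\rtimes W$, in which the generators $E_{ij}=x_iy_j$ lift to the linear vector fields $x_i\partial_j$, which span a copy of $\mf{gl}(n)$). Under the $W$-equivariant identification $\h\o\h^*\iso\End(\h)=\mf{gl}(n)$ of Section~\ref{sec:twosubalgebras}, the subalgebra $P^{\T}\subset P$ generated by the $E_{ij}$ becomes the coordinate ring of the irreducible determinantal variety $\overline X:=\{M\in\mf{gl}(n)\,|\,\rk M\le 1\}$, equipped with the bracket $\{E_{ij},E_{kl}\}=\delta_{jk}E_{il}-\delta_{il}E_{kj}$, i.e.\ the $\mf{gl}(n)$ Lie--Poisson bracket; the residual action of $W\subset\GL(\h)$ is by conjugation; and $\mr{eu}_0=\sum_i E_{ii}$ is the trace function. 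In particular $P^{\T\times W}=\C[\overline X]^W$ as Poisson algebras.

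Now $\mr{Cas}(\C[\overline X])=\C[\tr]$: since $\overline X$ is a union of $\GL(n)$-conjugacy classes and the bracket is Lie--Poisson, the Hamiltonian vector field of a linear function $\ell_\xi$ ($\xi\in\mf{gl}(n)$) on $\overline X$ is infinitesimal conjugation by $\xi$, and as the $\ell_\xi$ generate $\C[\overline X]$ an element is a Casimir iff it is $\mf{gl}(n)$-invariant, hence (as $\GL(n)$ is connected) $\GL(n)$-invariant; and $\C[\overline X]^{\GL(n)}$, being the image of $\C[\mf{gl}(n)]^{\GL(n)}=\C[p_1,\dots,p_n]$ under restriction, equals $\C[\tr]$ because $M^k=(\tr M)^{k-1}M$ for $\rk M\le 1$ forces $p_k=\tr(M^k)$ to restrict to $(\tr M)^k$. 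Finally, apply Lemma~\ref{lem:Casimirfiniteextension} with $B=P^{\T}=\C[\overline X]$ — a Poisson domain, since $\overline X$ is irreducible — and $A=P^{\T\times W}=\C[\overline X]^W$, a noetherian Poisson subalgebra over which $B$ is finite ($W$ being finite). This gives $\mr{Cas}(P^{\T\times W})=\mr{Cas}(P^{\T})\cap P^{\T\times W}=\C[\mr{eu}_0]$, since $\mr{eu}_0\in P^{\T\times W}$. Combined with the first paragraph, this finishes the proof.

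The main point requiring care is the middle step: one must verify that the Poisson bracket on $P^{\T\times W}$ furnished by the statement (via the quantization $H^{\mf{gl}(n)}_{t,\bc}$, $t\ne0$) really is the restriction of the standard bracket on $P$, and that under $\h\o\h^*\cong\mf{gl}(n)$ it becomes the Lie--Poisson bracket on the \emph{singular} variety $\overline X$. Granting this, the Casimir-equals-invariant computation on $\overline X$ is straightforward — its Poisson structure is given by explicit generators and relations, so the singularities of $\overline X$ cause no difficulty — and Lemma~\ref{lem:Casimirfiniteextension} reduces everything to that computation. An alternative to the invariant-theory argument is to note that the symplectic leaves of $\overline X$ are the $\GL(n)$-orbits, all of dimension $2\dim\h-2=\dim\overline X-1$, so the field of Casimirs has transcendence degree one over $\C$; since $P^{\T}$ is graded in even degrees with $\mr{eu}_0$ in the lowest positive degree, this again pins down $\mr{Cas}(P^{\T})=\C[\mr{eu}_0]$.
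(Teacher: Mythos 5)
Your argument is correct and follows the paper's proof in its overall structure: peel off the $\C Z(W)$ factor using the Poisson-central idempotents, apply Lemma~\ref{lem:Casimirfiniteextension} to the finite Poisson extension $P^{\T\times W}\subset P^{\T}$, and thereby reduce to showing $\mr{Cas}(P^{\T})=\C[\mr{eu}_0]$. The only real difference is that where the paper cites \cite[Lemma~1]{Feiginangular} for this last claim, you give a self-contained (and correct) computation by realizing $P^{\T}$ as the coordinate ring of the rank-$\le 1$ determinantal variety in $\mf{gl}(n)$ with its Lie--Poisson bracket and computing $\C[\overline X]^{\GL(n)}=\C[\tr]$; the only minor slip is in the optional alternative at the end, where the claim that all $\GL(n)$-orbits in $\overline X$ have dimension $2\dim\h-2$ overlooks the origin, though the transcendence-degree conclusion is unaffected.
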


\begin{proof}
The idempotents $\ei \in \C Z(W)$ are central in $H_{1,c}^{\T}(W)$. This means that they are Poisson central in $P^{T \times W} \otimes \C Z(W)$. Hence the map $(f_i) \mapsto \sum_i f_i \o \ei$ is an isomorphism of Poisson algebras $(P^{\T \times W})^{\oplus \ell} \cong P^{\T \times W} \o \C Z(W)$. Therefore, it suffices to show that $\mr{Cas}(P^{\T \times W}) = \C[\mr{eu}_0]$. 

The Poisson structure on $P^{\T \times W}$ comes by restriction from the Possion structure on $P^{\T}$. In other words, $P^{\T \times W} \subset P^{\T}$ is a (finite) extension of Poisson algebras. Both these algebras are noetherian domains since they are subrings of invariants in $P$. Therefore, Lemma~\ref{lem:Casimirfiniteextension} says that $\mr{Cas}(P^{\T \times W})$ is the intersection of $\mr{Cas}(P^{\T})$ with $P^{\T \times W}$. Hence, it suffices to argue that $\mr{Cas}(P^{\T})= \C[\mr{eu}_0]$. This is precisely the content of the proof of \cite[Lemma~1]{Feiginangular}. 
\end{proof}

\begin{prop}\label{centrefortnot}
For $t \neq 0$, the centre of $H_{t,c}^{\mf{gl}(n)}$ equals $\C[\mr{eu}_{c}] \o \C Z(W)$.
\end{prop}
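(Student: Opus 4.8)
The plan is to lift the description of the Poisson centre (Lemma~\ref{lem:PoissoncentreP0}) along the quantization to recover the actual centre of $H_{t,\bc}^{\mf{gl}(n)}$. Throughout I would fix $t \neq 0$; since $H_{t,\bc}^{\mf{gl}(n)} = H_{t,\bc}^{\T}$ and (up to the harmless rescaling that normalizes $t$) this algebra is a filtered quantization of $P^{\T} \rtimes W$, its associated graded is $P^{\T} \rtimes W$ by Corollary~\ref{cor:grglalg}. By Lemma~\ref{lem:gradedin}(ii), $\gr Z(H_{t,\bc}^{\T})$ is contained in $Z(\gr H_{t,\bc}^{\T}) = Z(P^{\T}\rtimes W) = P^{\T \times W}\o \C Z(W)$. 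But in a quantization the symbol of a central element is Poisson-central in the associated graded; hence $\gr Z(H_{t,\bc}^{\T}) \subset \mr{Cas}(P^{\T\times W}\o \C Z(W))$, which by Lemma~\ref{lem:PoissoncentreP0} equals $\C[\mr{eu}_0]\o \C Z(W)$.

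Next I would show the reverse containment at the level of honest elements: both $\eu_{\bc}$ and the idempotents $\ei \in \C Z(W)$ are central in $H_{t,\bc}^{\mf{gl}(n)}$. For the idempotents this is assumption (III) (the conjugation action of $Z(W)$ on $H^{\Gamma}_{t,\bc}$ is trivial, so $\C Z(W)$ is central). For $\eu_{\bc}$, relations \eqref{eurelations} give $[\eu_{\bc},x_iy_j] = [\eu_{\bc},x_i]y_j + x_i[\eu_{\bc},y_j] = t x_i y_j - t x_i y_j = 0$ and $[\eu_{\bc},w] = 0$; since the $E_{ij}$ and $W$ generate $H_{t,\bc}^{\mf{gl}(n)}$ (Definition~\ref{degree-zero-def}), $\eu_{\bc}$ is central. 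Therefore $\C[\eu_{\bc}]\o \C Z(W) \subset Z(H_{t,\bc}^{\mf{gl}(n)})$. Its associated graded contains $\C[\mr{eu}_0]\o \C Z(W)$ because the symbol of $\eu_{\bc}$ is $\mr{eu}_0 = \sum_i x_i y_i \in P^{\T}$. Combining with the previous paragraph, $\gr Z(H_{t,\bc}^{\mf{gl}(n)}) = \C[\mr{eu}_0]\o \C Z(W) = \gr(\C[\eu_{\bc}]\o \C Z(W))$, and since $\C[\eu_{\bc}]\o \C Z(W) \subset Z(H_{t,\bc}^{\mf{gl}(n)})$, Lemma~\ref{lem:gradedin}(i) (or rather the elementary fact that $A\subset B$ with $\gr A = \gr B$ forces $A = B$, applied inside the filtered algebra) gives the claimed equality.

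The main subtlety, which I would be careful to address, is the normalization of $t$: strictly speaking $H_{t,\bc}^{\T}$ is a quantization of $P^{\T}\rtimes W$ with a formal parameter that one sets to $t$, and one must know that the Poisson bracket obtained is (a nonzero scalar multiple of) the one appearing in Lemma~\ref{lem:PoissoncentreP0} — this is exactly the content of the remark preceding Lemma~\ref{lem:Casimirfiniteextension} that the Poisson structure on $P^{\T\times W}\o \C Z(W)$ is independent of $(t,\bc)$ with $t \neq 0$. A scalar multiple does not change the set of Casimirs, so the argument is unaffected. A second small point is that one should note $\mr{eu}_0$ is indeed the principal symbol of $\eu_{\bc}$ in the standard filtration (the correction term $\sum_s \tfrac{2c_s}{1-\lambda_s}s$ lies in $\mc{F}_0 = \C W$ and hence is lower order), so that $\gr \C[\eu_{\bc}] = \C[\mr{eu}_0]$. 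With these observations in place the proof is complete.
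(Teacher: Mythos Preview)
Your proof is correct and follows essentially the same approach as the paper: both show the easy inclusion $\C[\eu_{\bc}]\o\C Z(W)\subset Z(H_{t,\bc}^{\mf{gl}(n)})$, then use that symbols of central elements land in the Poisson centre together with Lemma~\ref{lem:PoissoncentreP0} to bound $\gr Z(H_{t,\bc}^{\mf{gl}(n)})$ by $\C[\eu_0]\o\C Z(W)$, and finally conclude equality. The only cosmetic difference is that the paper unwinds the last step as an explicit induction on the degree of $\sigma(f)$, whereas you invoke Lemma~\ref{lem:gradedin}(i) directly; these amount to the same argument.
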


\begin{proof}
    It is clear that $\C[\mr{eu}_{c}] \o \C Z(W)$ belongs to the centre of $H_{t,c}^{\mf{gl}(n)}$. Now assume $f \in Z(H_{t,c}^{\mf{gl}(n)})$. Then the symbol $\sigma(f)$ of $f$ belongs to the Poisson centre of 
    \[
    Z(\gr H_{t,c}^{\mf{gl}(n)}) = Z(P^{\T} \rtimes W) = P^{\T \times W} \otimes \C Z(W).
    \]
    The Poisson centre of $P^{\T \times W} \otimes \C Z(W)$ equals $\C[\mr{eu}_0] \o \C Z(W)$ by Lemma~\ref{lem:PoissoncentreP0}.  We argue by induction on the degree of $\sigma(f)$ that this means $f \in  \C[\mr{eu}_{c}] \o \C Z(W)$. If $\deg \sigma(f) = 0$ then $f \in \mc{F}_0 = \C Z(W)$ since $\mc{F}_{-1} = 0$. Therefore, we may assume $\deg \sigma(f) = k > 0$, where $k$ is even, and $g \in \C[\mr{eu}_c] \o \C Z(W)$ for all $g \in Z(H_{t,c}^{\mf{gl}(n)})$ of degree less than $k$. Since $\sigma(f)$ is homogeneous, $\sigma(f) = \sum_i \mr{eu}_0^{k/2} \o d_i \ei$ for some $d_i \in \C$. Then $f - \sum_i \mr{eu}_{c}^{k/2} \o d_i \ei \in \mc{F}_{k-1}$ is still central. By induction, $f - \sum_i \mr{eu}_{c}^{k/2} \o d_i \ei \in \C[\mr{eu}_{c}] \o \C Z(W)$ and hence so too is $f$.
\end{proof}

Proposition~\ref{centrefortnot}, together with Lemma~\ref{lem:aidecomp},  implies that the centre of $\eo H_{1,c}^{\mf{gl}(n)} \eo$ equals $\C[\eo \eu_{c}]$.

\section{Invariants at $t = 0$}\label{sec:invariantt=0}

In this section we consider the properties of the invariant rings at $t = 0$. For brevity, we write $H_{c} = H_{c}(W)$ for the rational Cherednik algebra at $t = 0$ and some fixed $c$.

\subsection{The centre at $t = 0$}

 We begin by considering the case $c = 0$. Let $U = (\h \o \h^*)^{\Gamma}$ and note that $W$ acts on $U$ by (II).  

\begin{prop}\label{prop:centreskewgroup1}
    	If $\Ker(W \to \GL(U)) = Z(W)$ then $Z(P^{\Gamma} \rtimes W) = P^{\Gamma \times W} \o \C Z(W)$.
\end{prop}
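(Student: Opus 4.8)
The claim is that for $A = P^{\Gamma} \rtimes W$, the centre is $P^{\Gamma \times W} \otimes \C Z(W)$ under the hypothesis that $W$ acts on $U = (\h \otimes \h^*)^{\Gamma}$ with kernel exactly $Z(W)$. Note $P^{\Gamma}$ is generated as an algebra by $U$ (since $\Gamma$ acts linearly on $\h \oplus \h^*$ preserving the symplectic form, and for $\Gamma = \T$ or $\SL_2$ the invariants of $P = \C[\h \times \h^*]$ are generated in degree two by $U = (\h \otimes \h^*)^\Gamma$). The strategy is the standard skew-group-ring centre computation: decompose $A$ by the $W$-grading, write a general central element as $z = \sum_{w \in W} p_w \, w$ with $p_w \in P^{\Gamma}$, and extract conditions from commuting with (a) all of $P^{\Gamma}$ and (b) all group elements $v \in W$.

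\textbf{Key steps.} First I would record the easy inclusion: elements of $P^{\Gamma \times W} \otimes \C Z(W)$ are clearly central, since $P^{\Gamma \times W}$ is $W$-invariant hence commutes with $\C W$ in the skew product, and $Z(W)$ is central in $\C W$ and acts trivially on $P^{\Gamma}$ by hypothesis (and on $P$ — here I use $Z(W) \subset \C^\times \Id_\h$ as $\h$ is irreducible, so actually every element of $P$ commutes with $\C Z(W)$). For the reverse inclusion, take $z = \sum_w p_w w \in Z(A)$. Commuting with an arbitrary $f \in P^{\Gamma}$ gives $\sum_w p_w (f - w(f)) w = 0$, so for each $w$ with $p_w \neq 0$ we need $w(f) = f$ for all $f \in P^{\Gamma}$; since $P^{\Gamma}$ is generated by $U$, this forces $w$ to act trivially on $U$, i.e. $w \in \Ker(W \to \GL(U)) = Z(W)$. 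Thus $z = \sum_{w \in Z(W)} p_w w$ with each $p_w \in P^{\Gamma}$. Next, commuting $z$ with an arbitrary $v \in W$: since each $w \in Z(W)$ is central in $W$, the condition $vz = zv$ reduces to $\sum_{w \in Z(W)} (v(p_w) - p_w)\, wv = 0$ (after cancelling the central $w$ and relabelling), forcing $v(p_w) = p_w$ for all $v \in W$, i.e. $p_w \in P^{\Gamma \times W}$. Hence $z \in P^{\Gamma \times W} \otimes \C Z(W)$, completing the proof.

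\textbf{Main obstacle.} The only genuinely non-routine point is the claim that $P^{\Gamma}$ is generated as a $\C$-algebra by $U = (\h \otimes \h^*)^{\Gamma}$ — this is what converts "w fixes all of $P^\Gamma$" into the group-theoretic condition "$w$ acts trivially on $U$", which is exactly the hypothesis. For $\Gamma = \T$ this is the statement that $P^{\T}$ is generated by the $E_{ij} = x_i y_j$, used already in the proof of Lemma~\ref{glisevendegree}; for $\Gamma = \SL_2$ it is the first fundamental theorem for $\SL_2$ quoted in the proof of Theorem~\ref{sonissl2} (the $2\times 2$ determinants $x_i y_j - x_j y_i$ generate $P^{\SL_2}$, and these lie in $\wedge^2 \h \subset \h \otimes \h^* = U$ in the real case). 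I would state this generation fact explicitly at the start, citing the earlier occurrences, and then the rest is the bookkeeping above. One should also be slightly careful that $Z(W)$ acts trivially not just on $P^\Gamma$ but on all of $P$ (needed for the forward inclusion), which follows from irreducibility of $\h$ as noted; alternatively one only needs $wz = zw$ in $A$ for $z \in Z(W)$, $w \in W$, which is immediate, and that $Z(W)$ fixes $P^\Gamma$, which is hypothesis (III)'s consequence.
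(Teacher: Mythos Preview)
Your proof is correct and follows essentially the same line as the paper's: write a central element as $z=\sum_w p_w\,w$, commute with $P^\Gamma$ to force the supporting $w$'s into $Z(W)$ via the hypothesis, then commute with $W$ to force $p_w\in P^{\Gamma\times W}$.

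One remark on efficiency: the ``main obstacle'' you flag---that $P^\Gamma$ is generated by $U$---is not actually needed. You use it to pass from ``$w$ fixes all of $P^\Gamma$'' to ``$w$ fixes $U$'', but that implication is trivial simply because $U\subset P^\Gamma$ (it is the degree-two piece). The paper avoids this detour by commuting $z$ only with elements $u\in U$ from the outset, which already yields $w^{-1}(u)=u$ for all $u\in U$ whenever $p_w\ne 0$ (using that $P^\Gamma$ is a domain), and then the hypothesis applies directly. So the generation fact, while true, plays no role here.
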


\begin{proof}
If we identify degree $2$ part of the $\N$-graded algebra $P$ with $\h \otimes \h^*$ then $U \subset P^{\Gamma}$. Let $a = \sum_w f_w \o w$ be in the centre of $P^{\Gamma} \rtimes W$. Then, for $u \in U$, 
\[
[a,u] = \sum_w f_w (w^{-1}(u) - u) \o w.
\]
For this to be zero, we must have $w^{-1}(u) = u$ for all $w$ with $f_w \neq 0$ because $P^{\Gamma}$ is a domain. Since $\Ker(W \to \GL(U)) = Z(W)$, it follows that $a = \sum_{w \in Z(W)} f_w \o w$. Moreover, $[a,u] = 0$ for all $u \in W$ implies that $f_w \in P^{\Gamma \times W}$. 
	
	On the other hand, it is clear that $a = \sum_{w \in Z(W)} f_w \o w$ with $f_w \in P^{\Gamma \times W}$ belongs to the centre. 
	
\end{proof}

\begin{remark}
In the case $\Gamma = \T$, the space $U$ equals $\h \otimes \h^* = \mf{gl}(n)$ and $\Ker (W \to \GL(\mf{gl}(n)))$ equals $Z(W)$. This follows from the fact that $W \cap \Cs \Id_{\h} = Z(W)$ in $\GL(\h)$. In the case $\Gamma = \SL_2$, we identify $U = \mf{so}(n) \subset \mf{gl}(n)$ and Corollary~\ref{dim2} says that $\Ker(W \to \GL(\mf{so}(n))) = Z(W)$ when $\dim \h > 2$. 
\end{remark}

Proposition~\ref{prop:centreskewgroup1} implies that $Z(P^{\Gamma} \rtimes W) = P^{\Gamma \times W} \o \C Z(W)$ has basis $\eo = \ezero, \ds, \elminusone$ as a module over the algebra $P^{\Gamma \times W}$. 

\begin{lem}\label{lem:centreinclude}
    The ring $Z(H_{c}^{\Gamma})$ is a $Z(H_{c})^{\Gamma}$-algebra and $Z(W) \subset Z(H_{c}^{\Gamma})$.
\end{lem}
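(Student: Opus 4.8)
The plan is to read both assertions off the standing assumptions (I)--(III) of Section~\ref{sec:setupinvariant}; no computation is needed.

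First I would observe that $\Gamma$ acts on $H_{\bc}$ by algebra automorphisms and therefore stabilises the centre $Z(H_{\bc})$, so that $Z(H_{\bc})^{\Gamma} = Z(H_{\bc}) \cap H_{\bc}^{\Gamma}$. The first claim is then immediate: any $z \in Z(H_{\bc})^{\Gamma}$ lies in $H_{\bc}^{\Gamma}$ and commutes with every element of $H_{\bc}$, hence in particular with every element of the subalgebra $H_{\bc}^{\Gamma}$; thus $z \in Z(H_{\bc}^{\Gamma})$. The resulting inclusion $Z(H_{\bc})^{\Gamma} \hookrightarrow Z(H_{\bc}^{\Gamma})$ of commutative rings is the structure morphism making $Z(H_{\bc}^{\Gamma})$ into a $Z(H_{\bc})^{\Gamma}$-algebra.

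For $Z(W) \subset Z(H_{\bc}^{\Gamma})$ I would argue in two steps. By assumption (II) the group $\Gamma$ fixes $W$ pointwise, whence $\C Z(W) \subseteq \C W \subseteq H_{\bc}^{\Gamma}$, so the elements of $Z(W)$ are $\Gamma$-invariant. By assumption (III) the conjugation action of $Z(W)$ on $H_{\bc}$ factors through $\Gamma$, so each element of $H_{\bc}^{\Gamma}$, being $\Gamma$-invariant, is fixed under conjugation by $Z(W)$; equivalently, $Z(W)$ commutes with all of $H_{\bc}^{\Gamma}$. Combining the two steps yields $\C Z(W) \subseteq Z(H_{\bc}^{\Gamma})$ — this is precisely the remark made just before Lemma~\ref{lem:aidecomp}, now at $t = 0$. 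I foresee no genuine obstacle: the only point to check is that the two maps $Z(H_{\bc})^{\Gamma} \to Z(H_{\bc}^{\Gamma})$ and $\C Z(W) \to Z(H_{\bc}^{\Gamma})$ are well defined, which is exactly what the two steps accomplish; these inclusions are then the ingredients that will later be assembled into the isomorphism $Z(H_{0,\bc})^{\Gamma} \otimes \C Z(W) \cong Z(H_{0,\bc}^{\Gamma})$ of Corollary~\ref{centregln}.
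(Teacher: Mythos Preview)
Your proof is correct and follows essentially the same approach as the paper: both deduce the first inclusion from the tautology $Z(H_{\bc})^{\Gamma} = Z(H_{\bc}) \cap H_{\bc}^{\Gamma}$, and the second from assumption (III) that conjugation by $Z(W)$ lands in $\Gamma$ (with (II) ensuring $Z(W) \subset H_{\bc}^{\Gamma}$). If anything, you are slightly more explicit than the paper in invoking (II) to place $Z(W)$ inside $H_{\bc}^{\Gamma}$ before checking centrality.
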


\begin{proof}
For the first claim, it suffices to note that the inclusion $Z(H_{c}) \subset H_{c}$ restricts to an inclusion of algebras $Z(H_{c})^{\Gamma} \subset Z(H_{c}^{\Gamma})$. The inclusion $Z(W) \subset Z(H_{c}^{\Gamma})$ follows from the fact that the image of the conjugation action of $Z(W)$ on $H_{c}$ is assumed by (III) to be in $\Gamma$. This means that for $z \in Z(W)$ and $h \in H^{\Gamma}_{c}$, $z hz^{-1} = h$.  
\end{proof}

\begin{lem}\label{lem:grcentrezero}
    $\gr Z(H_{c})^{\Gamma} = P^{\Gamma \times W}$.
\end{lem}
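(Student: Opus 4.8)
The plan is to deduce this from the classical description of the associated graded of the centre of a rational Cherednik algebra at $t=0$, combined with Lemma~\ref{lem:gradedin}(iii). The only genuine input is the Etingof--Ginzburg theorem \cite{EG} that the standard filtration on $H_{\bc}$ makes the centre ``as large as possible'': regarding $\gr Z(H_{\bc})$ as a subalgebra of $\gr H_{\bc}=P\rtimes W$ via Lemma~\ref{lem:gradedin}(i), one has $\gr Z(H_{\bc})=P^W$. Note that $P^W=Z(P\rtimes W)=Z(\gr H_{\bc})$, since $W$ acts faithfully and symplectically on $P$; the easy half $\gr Z(H_{\bc})\subset P^W$ is just Lemma~\ref{lem:gradedin}(ii), but equality is the substantive point. (Equivalently, one may argue via the Satake isomorphism $Z(H_{\bc})\iso eH_{\bc}e$ together with the elementary computation $\gr(eH_{\bc}e)=e(P\rtimes W)e\cong P^W$.)

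Granting this, I would run the following argument. Filter $Z(H_{\bc})$ by restriction from $H_{\bc}$; then $\mc{F}_{-1}Z(H_{\bc})\subset \mc{F}_{-1}H_{\bc}=0$ and $\bigcup_i\mc{F}_iZ(H_{\bc})=Z(H_{\bc})$, so Lemma~\ref{lem:gradedin} applies to $Z(H_{\bc})$. Each element of $\Gamma$ is an algebra automorphism of $H_{\bc}$, hence preserves $Z(H_{\bc})$, and since $\Gamma$ preserves the filtration on $H_{\bc}$ (Section~\ref{sec:setupinvariant}) it preserves each $\mc{F}_iZ(H_{\bc})=Z(H_{\bc})\cap\mc{F}_iH_{\bc}$. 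As $\Gamma$ is reductive (assumption (I)), Lemma~\ref{lem:gradedin}(iii) gives
\[
\gr\bigl(Z(H_{\bc})^{\Gamma}\bigr)=\bigl(\gr Z(H_{\bc})\bigr)^{\Gamma}.
\]

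It remains to identify the right-hand side with $P^{\Gamma\times W}$. The $\Gamma$-action on $\gr Z(H_{\bc})$ is the restriction of the induced $\Gamma$-action on $\gr H_{\bc}=P\rtimes W$, which by assumption (II) is trivial on $W$ and is the given linear action on $\h\oplus\h^*\subset P$; that is, it is the natural $\Gamma$-action on $P$ (and trivial on $W$). Under the identification $\gr Z(H_{\bc})=P^W$ this is just the natural $\Gamma$-action on $P^W$, so $\bigl(\gr Z(H_{\bc})\bigr)^{\Gamma}=(P^W)^{\Gamma}=P^{\Gamma}\cap P^W=P^{\Gamma\times W}$, the last equality being unambiguous by (II) (cf. Section~\ref{sec:setupinvariant}). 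Combining the displays yields $\gr Z(H_{\bc})^{\Gamma}=P^{\Gamma\times W}$.

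The main obstacle is essentially bookkeeping in disguise: one must invoke the Etingof--Ginzburg theorem in the sharp form ``$\gr Z(H_{\bc})$ \emph{equals} $P^W$'', rather than the soft inclusion $\gr Z(H_{\bc})\subset Z(\gr H_{\bc})$, which alone would yield only $\gr Z(H_{\bc})^{\Gamma}\subset P^{\Gamma\times W}$. Once that is in hand, the remainder is the routine interplay of reductivity of $\Gamma$ (the Reynolds operator is compatible with the filtration because each $\mc{F}_iZ(H_{\bc})$ is $\Gamma$-stable), the compatibility of the $\Gamma$-action on $H_{\bc}$ with passage to associated graded, and the invariant-theoretic identities packaged in Lemma~\ref{lem:gradedin}.
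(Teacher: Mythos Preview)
Your proof is correct and follows essentially the same argument as the paper: apply Lemma~\ref{lem:gradedin}(iii) (using reductivity of $\Gamma$) to get $\gr(Z(H_{\bc})^{\Gamma})=(\gr Z(H_{\bc}))^{\Gamma}$, then invoke the Etingof--Ginzburg result $\gr Z(H_{\bc})=P^W$ and take $\Gamma$-invariants. Your additional discussion of why the filtration on $Z(H_{\bc})$ is $\Gamma$-stable and why $(P^W)^{\Gamma}=P^{\Gamma\times W}$ is sound bookkeeping that the paper leaves implicit.
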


\begin{proof}
Since $\Gamma$ is reductive Lemma~\ref{lem:gradedin}(iii) applies and so $\gr Z(H_{c})^{\Gamma}=(\gr Z(H_{c}))^{\Gamma}$.
Then \cite[Theorem 3.3]{EG} states that $\gr Z(H_{c})=Z(H_0)$, completing the proof. 
\end{proof}

We define $Y_c = Y_{c}(W, \Gamma) := \Spec \, Z(H_{c})^{\Gamma}$. 

\begin{thm}\label{ratsing}
The affine scheme $Y_{c}$ is a normal irreducible variety. It is Gorenstein with rational singularities.
\end{thm}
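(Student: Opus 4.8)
The plan is to derive each of the three assertions from the corresponding property of the associated graded ring $\gr Z(H_{\bc})^{\Gamma}$, which by Lemma~\ref{lem:grcentrezero} is equal to $P^{\Gamma \times W} = \C\big[(\h \times \h^*) \git (\Gamma \times W)\big]$, using on the way that $\Gamma \times W$ is reductive (assumption (I) together with finiteness of $W$) and that this invariant ring is Gorenstein by Proposition~\ref{prop:invariantringGorenstein}. First I would treat irreducibility and normality: $P^{\Gamma \times W}$, being the ring of invariants of a polynomial ring under a reductive group, is a Noetherian normal domain, and a filtered algebra whose associated graded ring is a Noetherian normal domain is itself a Noetherian normal domain; applying this to $Z(H_{\bc})^{\Gamma}$ shows $Y_{\bc}(W)$ is a normal irreducible affine variety. (Alternatively: from $\gr Z(H_{\bc}) = Z(P \rtimes W) = P^{W}$ one sees that $X_{\bc} := \Spec Z(H_{\bc})$ is a normal irreducible variety, and then one uses that the $\Gamma$-invariants of a normal domain form a normal domain.)

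For the Gorenstein property I would argue exactly as in the proof of Theorem~\ref{thm:AuslanderGorensteinH0}. By Proposition~\ref{prop:invariantringGorenstein}, $P^{\Gamma \times W}$ is Gorenstein, and a commutative Noetherian ring of finite Krull dimension that is Gorenstein is Auslander--Gorenstein; hence by \cite[Theorem~3.9]{BjorkAuslander} the filtered algebra $Z(H_{\bc})^{\Gamma}$ is Auslander--Gorenstein, which for a commutative ring is the same as being Gorenstein. (Equivalently, one may pass Gorensteinness from $P^{\Gamma \times W}$ up to the Rees algebra $\mathrm{Rees}\,Z(H_{\bc})^{\Gamma}$, on which $\hbar$ is a regular element with quotient $P^{\Gamma \times W}$, and then down to the fibre $Z(H_{\bc})^{\Gamma}$ over $\hbar = 1$.) Therefore $Y_{\bc}(W)$ is Gorenstein.

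For rational singularities I would combine Boutot's theorem with a deformation argument. By Boutot's theorem the GIT quotient of a variety with rational singularities by a reductive group again has rational singularities, so $\Spec P^{\Gamma \times W} = (\h \times \h^*) \git (\Gamma \times W)$ has rational singularities. Now the Rees algebra $\mathcal{R} = \mathrm{Rees}\,Z(H_{\bc})^{\Gamma}$ is a finitely generated, flat $\C[\hbar]$-algebra and a domain with $\mathcal{R}/(\hbar) \cong P^{\Gamma \times W}$ and $\mathcal{R}/(\hbar - \lambda) \cong Z(H_{\bc})^{\Gamma}$ for $\lambda \neq 0$; since its special fibre is a reduced normal variety with rational singularities, Elkik's theorem on the openness of the rational-singularities locus in a flat family shows that the generic fibre $Y_{\bc}(W)$ also has rational singularities. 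Alternatively, one can quote that $X_{\bc}$ has symplectic — hence rational — singularities and apply Boutot directly to $Y_{\bc}(W) = X_{\bc} \git \Gamma$.

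The step I expect to be the main obstacle is the last one: unlike normality and the Gorenstein property, "rational singularities" does not pass formally from the associated graded ring by elementary filtered-algebra arguments, so one must genuinely invoke either the deformation-invariance of rational singularities (Elkik) applied to the Rees-algebra family, or the external input that generalized Calogero--Moser spaces have symplectic singularities.
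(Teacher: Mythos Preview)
Your proposal is correct. For normality and the Gorenstein property, your argument matches the paper's exactly (associated graded is a normal domain, respectively is Gorenstein by Proposition~\ref{prop:invariantringGorenstein}, and these lift via \cite[Theorem~3.9]{BjorkAuslander}). For rational singularities, the paper takes precisely your second alternative: it quotes that $\Spec Z(H_{\bc})$ has rational singularities (as a consequence of the generalized Calogero--Moser space having symplectic singularities) and then applies Boutot's theorem directly to the reductive quotient $Y_{\bc}(W) = \Spec Z(H_{\bc}) \git \Gamma$. Your main route via Elkik's openness theorem applied to the Rees-algebra family is also valid and is a genuine alternative; it has the merit of using only the associated graded ring and avoiding the external input about $X_{\bc}$, at the cost of invoking a deeper deformation-theoretic result. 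The paper's route is shorter precisely because the rational-singularities property of Calogero--Moser spaces is already available in the literature.
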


\begin{proof}
    The ring $Z(H_{c})$ is an integrally closed domain since its associated graded is $\C[\h \times \h^*]^W$, which is integrally closed. It is well-known that taking invariants with respect to a reductive group preserves these properties e.g. \cite[\S~3.3, Satz~1]{Kraft}. 

	Since $\Spec Z(H_{c})$ has rational singularities (see the proof of \cite[Proposition~4.5]{IainSurvey}) and $Y_{c}$ is the categorical quotient of $\Spec Z(H_{c})$ by a reductive group, \cite{Boutot} shows that $Y_{c}$ has rational singularities. 
	
	The algebra $Z(H_{c})^{\Gamma}$ is a filtered algebra with associated graded equal to $\C[\h \times \h^*]^{\Gamma \times W}$ by Lemma \ref{lem:grcentrezero}. Therefore it follows from Proposition~\ref{prop:invariantringGorenstein} and \cite[Theorem~3.9]{BjorkAuslander} that $Y_{c}$ is Gorenstein. 
\end{proof}


The following result was the main motivation for this work. 

\begin{thm}\label{assgrade}
    For any $c$, $\gr Z(H_{c}^{\Gamma}) = Z(P^{\Gamma} \rtimes W)$. 
\end{thm}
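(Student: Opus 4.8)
The plan is to prove the two inclusions separately, obtaining ``$\subseteq$'' formally from Lemma~\ref{lem:gradedin}(ii) and ``$\supseteq$'' by lifting a generating set of $Z(P^{\Gamma}\rtimes W)$ to central elements of $H_{\bc}^{\Gamma}$.

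First I would record the ambient facts. Equip $H_{\bc}^{\Gamma}$, and its centre, with the filtration restricted from $H_{\bc}$; this satisfies the hypotheses of Lemma~\ref{lem:gradedin}, and $\gr H_{\bc}^{\Gamma} = P^{\Gamma}\rtimes W$ — for $\Gamma = \T$ this is Corollary~\ref{cor:grglalg}, and for $\Gamma = \SL_2$ it follows by combining Theorem~\ref{sonissl2}, Proposition~\ref{gradedso} and Lemma~\ref{lem:gradedin}(iii). Lemma~\ref{lem:gradedin}(ii) then gives $\gr Z(H_{\bc}^{\Gamma}) \subseteq Z(\gr H_{\bc}^{\Gamma}) = Z(P^{\Gamma}\rtimes W)$, and by Proposition~\ref{prop:centreskewgroup1} — whose hypothesis $\Ker(W\to\GL(U))=Z(W)$ holds by the remark following it, using the standing assumption $\dim\h>2$ when $\Gamma=\SL_2$ — we have $Z(P^{\Gamma}\rtimes W) = P^{\Gamma\times W}\otimes \C Z(W)$. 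So it remains to show this last algebra lies in $\gr Z(H_{\bc}^{\Gamma})$.

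For the reverse inclusion I would exhibit both tensor factors inside $\gr Z(H_{\bc}^{\Gamma})$ and then multiply. By Lemma~\ref{lem:centreinclude} the inclusion $Z(H_{\bc})^{\Gamma}\subseteq Z(H_{\bc}^{\Gamma})$ is one of filtered subalgebras of $H_{\bc}$, so passing to associated graded gives $P^{\Gamma\times W} = \gr Z(H_{\bc})^{\Gamma} \subseteq \gr Z(H_{\bc}^{\Gamma})$, the first equality being Lemma~\ref{lem:grcentrezero}. Again by Lemma~\ref{lem:centreinclude}, $Z(W)\subseteq Z(H_{\bc}^{\Gamma})$, and since $\C Z(W)\subseteq\mc{F}_0 H_{\bc}^{\Gamma}=\C W$ the symbol of each element of $\C Z(W)$ is that element itself, so $\C Z(W)\subseteq\gr_0 Z(H_{\bc}^{\Gamma})$. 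As $\gr Z(H_{\bc}^{\Gamma})$ is a subalgebra of $P^{\Gamma}\rtimes W$ containing both $P^{\Gamma\times W}$ and $\C Z(W)$, it contains their product $P^{\Gamma\times W}\cdot\C Z(W)=P^{\Gamma\times W}\otimes\C Z(W)$. Combining with the previous paragraph yields the claimed equality.

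There is no genuine obstacle here: the argument is bookkeeping built on Lemma~\ref{lem:grcentrezero}, Lemma~\ref{lem:centreinclude} and Proposition~\ref{prop:centreskewgroup1}. The one point to state carefully is that all three algebras $Z(H_{\bc})^{\Gamma}\subseteq Z(H_{\bc}^{\Gamma})\subseteq H_{\bc}^{\Gamma}$ carry the filtration restricted from $H_{\bc}$, so that the inclusions of associated graded algebras are compatible with the fixed identification $\gr H_{\bc}^{\Gamma}=P^{\Gamma}\rtimes W$; once this is in place, the identification of $\gr Z(H_{\bc})^{\Gamma}$ with $P^{\Gamma\times W}$ (placed in $W$-degree zero) as a graded subalgebra of $P^{\Gamma}\rtimes W$ is exactly what Lemma~\ref{lem:grcentrezero} and Proposition~\ref{prop:centreskewgroup1} provide.
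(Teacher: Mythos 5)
Your proof is correct and follows essentially the same route as the paper: the upper bound via Lemma~\ref{lem:gradedin}(ii) and Proposition~\ref{prop:centreskewgroup1}, and the lower bound by exhibiting $P^{\Gamma\times W}$ (via Lemmas~\ref{lem:centreinclude}, \ref{lem:gradedin}(i), \ref{lem:grcentrezero}) and $\C Z(W)$ inside $\gr Z(H_{\bc}^{\Gamma})$. The only difference is a welcome bit of extra care in spelling out why $\gr H_{\bc}^{\Gamma}=P^{\Gamma}\rtimes W$ for both choices of $\Gamma$, where the paper cites only the $\T$ case.
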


\begin{proof}
First, Corollary~\ref{cor:grglalg} and Proposition~\ref{prop:centreskewgroup1} imply that 
\[
Z(\gr H_{c}^{\Gamma}) = Z(P^{\Gamma} \rtimes W) = P^{\Gamma \times W} \o \C Z(W).
\]
Next, Lemma~\ref{lem:gradedin}(ii) says that $\mathrm{gr}\, Z(H_{c}^{\Gamma})$ is contained in $Z(\gr H_{c}^{\Gamma})$. Therefore, the claim follows if we can show that $Z(P^{\Gamma} \rtimes W)$ is contained in $\mathrm{gr}\, Z(H_{c}^{\Gamma})$; we need to show that $P^{\Gamma \times W}$ and $Z(W)$ are contained in $\mathrm{gr}\, Z(H_{c}^{\Gamma})$. 

Lemma~\ref{lem:centreinclude} implies that $Z(W) \subset \mc{F}_0 Z(H_{c}^{\Gamma})$ and hence $Z(W)$ is contained in $\mathrm{gr}\, Z(H_{c}^{\Gamma})$ because $\mc{F}_{-1} Z(H_{c}^{\Gamma}) = 0$ (by definition, $\mc{F}_{-1} H_{c} = 0$). Similarly,  Lemma~\ref{lem:centreinclude} says that $Z(H_{c})^{\Gamma}$ is contained in $Z(H_{c}^{\Gamma})$. Hence, Lemma~\ref{lem:gradedin}(i) implies that $\gr Z(H_{c})^{\Gamma}$ is contained in $\gr Z(H_{c}^{\Gamma})$. By Lemma~\ref{lem:grcentrezero}, this means that $P^{\Gamma \times W}$ also belongs to $\gr Z(H_{c}^{\Gamma})$, as required.
\end{proof}

\begin{cor}\label{centregln}
There is equality of commutative rings
\[
Z(H_{c}^{\Gamma}) = Z(H_{c})^{\Gamma} \o \C Z(W).
\]
In particular, as a $Z(H_{c})^{\Gamma}$-module, $Z(H_{c}^{\Gamma}) = \bigoplus_{i = 0}^{\ell-1} Z(H_{c})^{\Gamma} \ei$.
\end{cor}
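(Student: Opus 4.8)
The plan is to read this off Theorem~\ref{assgrade} using the elementary comparison principle of Lemma~\ref{lem:gradedin}(i): if $B \subseteq A$ are filtered algebras with $\gr B = \gr A$, then $B = A$.

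First I would set $B = Z(H_{\bc})^{\Gamma} \cdot \C Z(W)$, the subalgebra of $Z(H_{\bc}^{\Gamma})$ generated by these two pieces; this is well defined because Lemma~\ref{lem:centreinclude} places both $Z(H_{\bc})^{\Gamma}$ and $\C Z(W)$ inside the commutative ring $Z(H_{\bc}^{\Gamma})$. Equip $B$ and $Z(H_{\bc}^{\Gamma})$ with the filtrations restricted from $H_{\bc}$. Since $\C Z(W) \subseteq \mc{F}_0$ and $\mc{F}_{-1} = 0$, we have $\gr \C Z(W) = \C Z(W)$, while $\gr Z(H_{\bc})^{\Gamma} = P^{\Gamma \times W}$ by Lemma~\ref{lem:grcentrezero}; hence $\gr B \supseteq P^{\Gamma \times W} \cdot \C Z(W)$ inside $\gr H_{\bc} = P \rtimes W$. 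Because $P^{\Gamma} \rtimes W$ is free over $P^{\Gamma}$ on the basis $W \supseteq Z(W)$, this product is an honest tensor product $P^{\Gamma \times W} \o \C Z(W)$, and by Theorem~\ref{assgrade} this equals $\gr Z(H_{\bc}^{\Gamma})$. Combined with the trivial inclusion $\gr B \subseteq \gr Z(H_{\bc}^{\Gamma})$ (Lemma~\ref{lem:gradedin}(ii)), we get $\gr B = \gr Z(H_{\bc}^{\Gamma})$, whence $B = Z(H_{\bc}^{\Gamma})$. In other words $Z(H_{\bc}^{\Gamma}) = \sum_{i=0}^{\ell-1} Z(H_{\bc})^{\Gamma} \ei$.

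Next I would turn this sum into the desired direct sum and tensor decomposition. The $\ei$ are pairwise orthogonal idempotents summing to $1$ in the commutative ring $Z(H_{\bc}^{\Gamma})$, so $Z(H_{\bc}^{\Gamma}) = \bigoplus_i \ei Z(H_{\bc}^{\Gamma})$; multiplying the previous identity by $\ei$ gives $\ei Z(H_{\bc}^{\Gamma}) = Z(H_{\bc})^{\Gamma} \ei$, so $Z(H_{\bc}^{\Gamma}) = \bigoplus_{i=0}^{\ell-1} Z(H_{\bc})^{\Gamma} \ei$, which is the second assertion. The multiplication map $\phi \colon Z(H_{\bc})^{\Gamma} \o \C Z(W) \to Z(H_{\bc}^{\Gamma})$ is a surjective homomorphism of commutative rings, and on the $i$-th summand it is $z \o \ei \mapsto z \ei$; it remains to check this is injective, i.e.\ that $z \ei \neq 0$ for $0 \neq z \in Z(H_{\bc})^{\Gamma}$. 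Passing to symbols, $\sigma(z) \in P^{\Gamma \times W}$ is nonzero, and $\sigma(z) \ei \neq 0$ in $P^{\Gamma} \rtimes W$ by the freeness used above; hence $z \ei \notin \mc{F}_{\deg \sigma(z) - 1}$, so in particular $z \ei \neq 0$. Thus $\phi$ is an isomorphism, giving $Z(H_{\bc}^{\Gamma}) = Z(H_{\bc})^{\Gamma} \o \C Z(W)$.

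The one step that is not pure bookkeeping is the injectivity of $\phi$ — i.e.\ making sure we obtain an isomorphism and not merely a presentation of $Z(H_{\bc}^{\Gamma})$ as a quotient of $Z(H_{\bc})^{\Gamma} \o \C Z(W)$ — but this follows cleanly from the freeness of the skew group ring $P^{\Gamma} \rtimes W$ over $P^{\Gamma}$. Equivalently, and perhaps more slickly, one may note in a single step that $\phi$ is a filtered homomorphism whose associated graded is the identity map of $P^{\Gamma \times W} \o \C Z(W)$ (using Lemma~\ref{lem:grcentrezero} for the source and Theorem~\ref{assgrade} for the target), and is therefore an isomorphism.
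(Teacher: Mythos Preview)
Your proof is correct and follows essentially the same strategy as the paper: both establish the inclusion $\bigoplus_i Z(H_{\bc})^{\Gamma}\ei \subseteq Z(H_{\bc}^{\Gamma})$ via Lemma~\ref{lem:centreinclude}, then use Theorem~\ref{assgrade} together with Lemma~\ref{lem:grcentrezero} to see that the associated graded of both sides is $P^{\Gamma\times W}\otimes\C Z(W)$, and conclude by Lemma~\ref{lem:gradedin}(i). The only difference is the order of the steps: the paper first checks that each $Z(H_{\bc})^{\Gamma}\ei$ is free of rank one (i.e.\ injectivity of $z\mapsto z\ei$) and then runs the associated graded argument on the direct sum, whereas you first identify the sum $B$ with $Z(H_{\bc}^{\Gamma})$ and then split it using the orthogonal idempotents and verify injectivity at the end. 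One small slip: the ``trivial inclusion $\gr B\subseteq\gr Z(H_{\bc}^{\Gamma})$'' is Lemma~\ref{lem:gradedin}(i), not (ii).
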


\begin{proof}
Since the $\ei$ are a basis of $\C Z(W)$, it suffices to show that $Z(H_{c}^{\Gamma})$ equals the direct sum $\bigoplus_{i = 0}^{\ell-1} Z(H_{c})^{\Gamma} \ei$, with each $Z(H_{c})^{\Gamma} \ei$ free of rank one over $Z(H_{c})^{\Gamma}$. 

At $c = 0$, the space $Z(H_{0})^{\Gamma} \ei = P^{\Gamma \times W} \ei$ is clearly free of rank one over $P^{\Gamma \times W}$. Since 
\[
\gr (Z(H_{c})^{\Gamma} \ei) = P^{\Gamma \times W} \ei
\]
(as in the proof of Lemma~\ref{lem:greizero}) it follows that $Z(H_{c})^{\Gamma} \ei$ is free of rank one over $Z(H_{c})^{\Gamma}$. The inclusion $\bigoplus_{i = 0}^{\ell-1} Z(H_{c})^{\Gamma} \ei \subset Z(H_{c}^{\Gamma})$ gives rise to an inclusion 
\[
P^{\Gamma \times W} \otimes_{\C} \C Z(W) = \bigoplus_{i = 0}^{\ell-1} P^{\Gamma \times W} \ei = \gr\left( \bigoplus_{i = 0}^{\ell-1} Z(H_{c})^{\Gamma} \ei\right) \subset \gr Z(H_{c}^{\Gamma}) = P^{\Gamma \times W} \otimes_{\C} \C Z(W).
\]
We deduce from Lemma~\ref{lem:gradedin}(i) that $\bigoplus_{i = 0}^{\ell-1} Z(H_{c})^{\Gamma} \ei = Z(H_{c}^{\Gamma})$. 
\end{proof}

Corollary~\ref{centregln} implies that $Z(H_{c}^{\Gamma})$ is a domain if and only if $Z(W) = 1$. 

\begin{lem}\label{lem:Zgammaei}
    For each $i = 0, \ds, \ell-1$, there is an isomorphism $Z(H_{c})^{\Gamma} \iso Z(\ei H^{\Gamma}_{c} \ei)$, given by $z \mapsto \ei z$. Hence each irreducible component of $\Spec Z(H_{c}^{\Gamma})$ is isomorphic to $\Spec Z(H_{c})^{\Gamma}$.  
\end{lem}

Thus, $\Spec Z(H_{c}^{\Gamma})$ is the disjoint union of $\ell$ copies of $Y_{c}$. 

\begin{proof}
We note that $Z(\ei H^{\Gamma}_{c} \ei) = \ei Z( H^{\Gamma}_{c}) \ei$ because $\ei \in H^{\Gamma}_{c}$ is a central idempotent. Therefore, Corollary~\ref{centregln} implies that the morphism is surjective. On the other hand, we know that the map $Z(H_{c})^{\Gamma} \to e H_{c}^{\Gamma} e$ given by $z \mapsto e z$ is an isomorphism because it is the restriction of the usual Satake isomorphism to $Z(H_{c})^{\Gamma}$. In particular, it is injective. Now, if $z \in Z(H_{c})^{\Gamma}$ with $\ei z = 0$ then $0 = e(\ei z) = e z$ and hence $z = 0$.    
\end{proof}

In particular, $z \mapsto \eo z$ is an isomorphism $Z(H_{c})^{\Gamma} \iso Z(\eo H_{c}^{\Gamma} \eo)$.

\subsection{Simple modules}

In this section we consider the structure of a generic simple $\eo H_{c}^{\Gamma}\eo$-module. Let $k$ be an algebraically closed field and recall that a $k$-algebra is \textit{affine} if it is finitely generated as an algebra over $k$. Let $A$ be a prime affine $k$-algebra that is a finite module over its centre $Z(A)$. We use here the theory of polynomial identity algebras. Instead of recalling the relevant definitions, we refer the reader to \cite[Chapter~13]{MR} for a concise summary. Since $A$ is a finite module over its centre $Z(A)$, it is a PI-algebra \cite[Corollary~13.1.3]{MR}. We write $\mr{P.I.deg}(A)$ for the PI-degree of the prime PI-algebra $A$, which is the smallest $n \in \mathbb N$ such that there is an embedding of $A$ into the algebra of $n$ by $n$ matrices over a commutative ring $B$. It is known that the degree of a minimal degree monic polynomial which vanishes identically on elements of $A$ is $2n$ \cite{MR}.

\begin{defn}\label{defn:azumayalocus}
Let $M \subset A$ be a maximal (two-sided) ideal and $\mf{m} = M \cap Z(A)$. We recall that $\mf{m}$ belongs to \textit{the Azumaya locus} of $\Spec Z(A)$ if one of the following equivalent conditions hold 
\begin{enumerate}
    \item[(a)]$M$ is the unique maximal ideal of $A$ such that $M \cap Z(A) = \mf{m}$;  
    \item[(b)] there is a unique simple $A$-module $L$ whose annihilator in $Z(A)$ equals $\mf{m}$ (and thus, $\mr{ann}_A L = M$);
    \item[(c)] $A_{\mf{m}}$ is Azumaya over $Z(A)_{\mf{m}}$;
    \item[(d)] $A/M\cong {\rm Mat}_n(k)$ for some $n\in \N$.
\end{enumerate}
We say that $M$ is a \textit{regular maximal ideal} of $A$ if the equivalent conditions above hold. 
\end{defn}
For the proof and other equivalent characterizations of regular maximal ideal, see \cite[III, Theorem~1.6]{BrownGoodearlbook}. 
Note that $n$ from the property (d) equals the PI-degree of the algebra $A$ by the cited theorem.

The following lemma says that given a flat family of PI-algebras, where the centre of the algebras also varies flatly, the PI-degree of the algebras is constant. Hence, it needs only be computed for one fibre.  

\begin{lem}
    Let $B \subset Z(A)$ be a subalgebra and $\mf{p} \lhd B$ a (prime) ideal such that $A / \mf{p} A$ is prime. Assume that the map $Z(A) \to Z(A / \mf{p} A)$ is surjective. Then 
    \[
    \mr{P.I. deg} (A) = \mr{P.I. deg} (A / \mf{p} A).
    \]
\end{lem}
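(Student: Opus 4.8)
## Proof Proposal

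The plan is to use the generic flatness/PI-degree theory summarized in Brown--Goodearl. The key idea is that the PI-degree of a prime affine algebra that is finite over its centre can be computed by passing to the fraction field of the centre: if $A$ is such an algebra with centre $Z(A)$ a domain, and $Q = \mathrm{Frac}(Z(A))$, then $A \otimes_{Z(A)} Q$ is a central simple $Q$-algebra and $\mathrm{P.I.deg}(A) = \sqrt{\dim_Q(A \otimes_{Z(A)} Q)}$. Equivalently, $\mathrm{P.I.deg}(A)$ is the maximum of $\sqrt{\dim_k(A/M)}$ over regular maximal ideals $M$ of $A$, and by the Artin--Procesi theorem the Azumaya locus of $A$ over $Z(A)$ is a dense open subset of $\mathrm{MaxSpec}\, Z(A)$.

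First I would set up the quotient map $q \colon A \to \bar A := A/\mathfrak{p}A$ and observe that by hypothesis $\bar A$ is prime and $Z(A) \to Z(\bar A)$ is surjective, so $Z(\bar A) = Z(A)/(\mathfrak{p}A \cap Z(A))$ is a quotient domain of $Z(A)$; in particular $\mathfrak{q} := \mathfrak{p}A \cap Z(A)$ is a prime ideal of $Z(A)$ lying over $\mathfrak{p}$ in $R$. The strategy is then: (i) pick a regular maximal ideal $\bar M$ of $\bar A$ whose central contraction $\bar{\mathfrak{m}} = \bar M \cap Z(\bar A)$ lies in the Azumaya locus of $\bar A$; such ideals are dense in $\mathrm{MaxSpec}\, Z(\bar A)$, and for any of them $\dim_k(\bar A / \bar M) = \mathrm{P.I.deg}(\bar A)^2$. (ii) Pull $\bar M$ back to a maximal ideal $M = q^{-1}(\bar M)$ of $A$, with $\mathfrak{m} = M \cap Z(A)$ the preimage of $\bar{\mathfrak{m}}$. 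Then $A/M = \bar A/\bar M$ as $k$-algebras, so $\dim_k(A/M) = \mathrm{P.I.deg}(\bar A)^2$, which immediately gives $\mathrm{P.I.deg}(A) \ge \mathrm{P.I.deg}(\bar A)$, since $\mathrm{P.I.deg}(A)$ is the supremum of $\sqrt{\dim_k(A/N)}$ over maximal ideals $N$.

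For the reverse inequality $\mathrm{P.I.deg}(A) \le \mathrm{P.I.deg}(\bar A)$, I would argue that $\bar A = A/\mathfrak{p}A$, being a prime homomorphic image of $A$, satisfies all polynomial identities of $A$; hence every $n\times n$ matrix identity that holds in $A$ also holds in $\bar A$. Combined with the fact that for a prime PI-algebra the PI-degree equals the least $n$ such that the algebra satisfies all the identities of $n\times n$ matrices (the Amitsur--Levitzki / Formanek picture), this forces $\mathrm{P.I.deg}(\bar A) \le \mathrm{P.I.deg}(A)$ only — so this direction needs care. The cleaner route, which I would actually follow, is to invoke the structure of the generic fibre: localizing at the multiplicative set $R \setminus \mathfrak{p}$ and completing/localizing appropriately, one sees that $\bar A$ is a specialization of $A$ along $\mathfrak{p}$, and the function $\mathfrak{p} \mapsto \mathrm{P.I.deg}(A/\mathfrak{p}A)$ is upper semicontinuous on $\mathrm{Spec}\, R$ while being generically constant equal to $\mathrm{P.I.deg}(A)$ on the locus where $A/\mathfrak{p}A$ remains prime with surjective central specialization; the hypotheses of the lemma are precisely what is needed to place $\mathfrak{p}$ in that good locus, giving equality. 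I would cite the relevant statements from \cite{BrownGoodearlbook} (the Azumaya locus being open dense, and the behaviour of PI-degree under central specialization).

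The main obstacle I anticipate is making the reverse inequality genuinely rigorous: upper semicontinuity of PI-degree under specialization is standard, but the subtle point is that a prime PI-algebra can \emph{drop} its PI-degree under a bad specialization (e.g.\ if $\bar A$ were to fail to be prime, or if the centre did not specialize surjectively), and one must use both hypotheses — primeness of $A/\mathfrak{p}A$ and surjectivity of $Z(A) \twoheadrightarrow Z(A/\mathfrak{p}A)$ — to rule this out. Concretely, surjectivity of the central map ensures that $Z(A/\mathfrak{p}A) = Z(A)/\mathfrak{q}$ so that Azumaya points of $\bar A$ genuinely lift to points of $\mathrm{MaxSpec}\, Z(A)$ over which $A$ has the same fibre dimension, pinning $\mathrm{P.I.deg}(A)$ from below by $\mathrm{P.I.deg}(\bar A)$; and the generic-constancy of PI-degree on the prime locus (Posner's theorem applied to $A$ and to $\bar A$, comparing the two central simple fraction-field algebras via the surjection) pins it from above. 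Once both inequalities are in hand the lemma follows.
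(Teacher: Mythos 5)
Your lower-bound step is correct and is also the entry point of the paper's argument: pick a regular maximal ideal $\overline{M}$ of $\bar A := A/\mf{p}A$, pull it back to $M = q^{-1}(\overline{M})$ in $A$, and note $A/M \cong \bar A/\overline{M}$ has $k$-dimension $\mr{P.I.deg}(\bar A)^2$, so $\mr{P.I.deg}(A) \ge \mr{P.I.deg}(\bar A)$. What is missing is the other half, and your proposed route for it does not close. The appeal to ``upper semicontinuity of PI-degree along $\Spec R$, together with generic constancy on the good locus'' is not something you can cite here: there is no finiteness hypothesis on $A$ over $R$ to make such a statement meaningful, and more importantly the assertion that the PI-degree is constant on the locus of primes $\mf{p}$ with $A/\mf{p}A$ prime and surjective central map is \emph{precisely} the content of this lemma (it is used immediately afterwards, via Corollary~\ref{cor:PIflatfamily}, to prove constancy of PI-degree in the flat family $\eo H_0^\Gamma \eo$). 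Invoking it is circular. Likewise the semicontinuity you mention would, at best, give $\mr{P.I.deg}(\bar A) \le \mr{P.I.deg}(A)$, which is the inequality you already have.

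The idea you are missing is that no semicontinuity is needed at all: the hypotheses force $M = q^{-1}(\overline{M})$ to be a \emph{regular} maximal ideal of $A$, not merely a maximal ideal. Once that is established, $\dim_k(A/M) = \mr{P.I.deg}(A)^2$ exactly (not just $\le$), so both PI-degrees coincide in a single stroke. Concretely, set $\overline{\mf{m}} = \overline{M} \cap Z(\bar A)$ and let $\mf{m}$ be its preimage in $Z(A)$; one checks first that $\mf{m} = M \cap Z(A)$, and then that $M = \mf{m}A$. For the second point, since $\overline{M}$ is regular one has $\overline{M} = \overline{\mf{m}}\bar A$ (this is one of the equivalent characterizations in \cite[Theorem III.1.6]{BrownGoodearlbook}); the surjectivity of $Z(A) \to Z(\bar A)$ lets you lift any $\overline{z}_i \in \overline{\mf{m}}$ to $z_i \in \mf{m}$, so any $x \in M$ satisfies $x - \sum_i z_i x_i \in \mf{p}A \subset \mf{m}A$, giving $x \in \mf{m}A$. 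Then $M = \mf{m}A$ with $\mf{m} = M \cap Z(A)$ is again one of the equivalent conditions of \cite[Theorem III.1.6]{BrownGoodearlbook} for $M$ to be regular, and the lemma follows.
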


\begin{proof}
    Let $\overline{M}$ be a regular maximal ideal of $A / \mf{p} A$ and $M$ its preimage in $A$. It suffices to show that $M$ is a regular (maximal) ideal in $A$. Let $\overline{\mf{m}} = Z(A / \mf{p} A) \cap \overline{M}$ and write $\mf{m}$ for the preimage of $\overline{\mf{m}}$ in $Z(A)$. Then $\overline{M} = \overline{\mf{m}} (A / \mf{p} A)$ by \cite[Theorem III.1.6]{BrownGoodearlbook}. We claim that $\mf{m} = M \cap Z(A)$ and $M = \mf{m} A$. If this is indeed the case then $M$ is regular in $A$, again by \cite[Theorem III.1.6]{BrownGoodearlbook}.

    Let $z \in Z(A)$ such that $\overline{z} \in \overline{\mf{m}}$. Then $\overline{z} \in \overline{M}$ and $\overline{z} \in Z(A / \mf{p} A)$. Hence $z \in M$ and $M \cap Z(A) \subset \mf{m}$. But both are maximal ideals of $Z(A)$, thus $M \cap Z(A) = \mf{m}$. 

    Finally, we argue that $M = \mf{m} A$. Clearly, $\mf{m} A \subset M$. If $x \in M$ then there exist $\overline{z}_i \in \overline{\mf{m}}$ and $\overline{x}_i \in A/ \mf{p}A$ such that $\overline{x} = \sum_i \overline{z}_i \overline{x}_i$ in $A / \mf{p} A$. We choose lifts $z_i \in \mf{m}$ and $x_i \in A$. Then $x - \sum_i z_i x_i \in \mf{p} A$. But $\mf{p} A \subset \mf{m} A$ since $\mf{p} \subset \mf{m}$. Thus, $x \in \mf{m} A$. 
\end{proof}

\begin{cor}\label{cor:PIflatfamily}
    Let $A$ be a prime affine $k$-algebra, finite over $Z(A)$, and $B \subset Z(A)$ a subalgebra such that for all $\mf{m} \lhd B$ maximal the map $Z(A) \to Z(A(\mf{m}))$ is surjective, where $A(\mf{m}) := A / \mf{m} A$. Then 
    \[
    \mr{P.I. deg} (A) = \mr{P.I. deg} (A / \mf{m} A), \quad \forall \, \mf{m}.
    \]
\end{cor}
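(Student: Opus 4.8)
The plan is to obtain the Corollary as an immediate application of the preceding Lemma. I would fix a maximal ideal $\mf{m} \lhd R$ and apply the Lemma with the prime ideal $\mf{p} = \mf{m}$ (prime because $\mf{m}$ is maximal). There are then exactly two hypotheses to supply: that $A(\mf{m}) = A/\mf{m}A$ is prime, and that the natural map $Z(A) \to Z(A(\mf{m}))$ is surjective; the latter is precisely the standing assumption of the Corollary. Granting the former, the Lemma yields $\mr{P.I. deg}(A) = \mr{P.I. deg}(A/\mf{m}A)$, and since the left-hand side does not involve $\mf{m}$, this is the claimed equality for every $\mf{m}$.

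The one point that deserves comment is the primality of the fibre $A(\mf{m})$. This is in any case implicit in the statement, as $\mr{P.I. deg}(A/\mf{m}A)$ is only meaningful when $A/\mf{m}A$ is prime; so I would either read it as part of the hypotheses or, in the concrete situations where the Corollary is used, verify it directly. In those applications $A$ carries a filtration compatible with a filtration of $R$, the associated graded of $A/\mf{m}A$ is of the form $P^{\Gamma} \rtimes \C_{f_i} \overline{W}$, and primality of $A/\mf{m}A$ follows exactly as in Corollary~\ref{cor:primring}; I would record this at the point of use rather than here.

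I do not expect a genuine obstacle, since all the substance lies in the Lemma. Its argument shows that a regular maximal ideal $\overline{M}$ of $A/\mf{m}A$ pulls back to a regular maximal ideal $M$ of $A$ with $A/M \cong (A/\mf{m}A)/\overline{M} \cong \Mat_d(k)$, where $d$ is the common matrix size; since the P.I. degree of a prime affine $k$-algebra finite over its centre is read off as this matrix size at any point of its Azumaya locus — which is a dense, non-empty open subset of $\maxspec Z(A)$ — one concludes $d = \mr{P.I. deg}(A/\mf{m}A)$ and $d = \mr{P.I. deg}(A)$ simultaneously. The Corollary is then simply the observation that this common value is independent of the chosen $\mf{m}$.
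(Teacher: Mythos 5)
Your proposal is correct and matches the paper's own treatment: the Corollary is stated without proof precisely because it is the Lemma applied with $\mf{p} = \mf{m}$ maximal, and you have drawn this out accurately. Your observation that primality of $A(\mf{m})$ is not listed among the Corollary's hypotheses, though it is required by the Lemma and by the very notion of P.I.\ degree of a prime ring, is apt --- the paper quietly addresses this at the point of application, in Lemma~\ref{lem:centresurject}(ii), by verifying that $\eo H^{\Gamma}_{0,\bc}\eo$ is prime for every $\bc$ before invoking the Corollary, which is exactly what you propose to do.
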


For a prime $k$-algebra $A$ finite over its centre $Z(A)$ let $L$ be a simple module. Then the annihilator $\mf{m}=\mr{ann}_{Z(A)} L$ is a maximal ideal in $Z(A)$, known as the support of $L$ (see \cite{BrownGoodearl} and Lemma \ref{lemma-on-support} below).

\begin{thm}\label{thm:PIsimplemodulesA}
Let $A$ be a prime affine $k$-algebra, finite over $Z(A)$. Let $e \in A$ be an idempotent such that 
\begin{enumerate}
    \item[(a)] $A \to \End_{eAe}(Ae)$ is an isomorphism,
    \item[(b)] $eAe$ is commutative; and
    \item[(c)] $Ae$ is a Cohen-Macaulay $eAe$-module. 
\end{enumerate}
If $Y = \Spec Z(A)$ then 
\begin{enumerate}
\item[(i)] The regular locus of $Y$ is contained in the Azumaya locus. 
    \item[(ii)] Let $L$ be a simple $A$-module whose support $\{ \mf{m} \}$ is a maximal ideal contained in the regular locus of $Y$. Then $L \cong Ae / \mf{m} A e$.  
    \end{enumerate} 
\end{thm}

\begin{proof}
Our proof follows the outline of the proof of \cite[Theorem~1.7]{EG}. As in \textit{loc. cit.}, we work geometrically. We note first that (a) and (b) imply that the morphism $Z(A) \to e A e$ given by $z \mapsto ez$ is an isomorphism; see the proof of \cite[Theorem~1.7.3]{BellamySRAlecturenotes}. We identify the two algebras via this isomorphism. The finitely generated $Z(A)$-module $Ae$ corresponds to a coherent sheaf $\mc{M}$ on $Y$, with $\Gamma(Y,\mc{M}) = Ae$. Then (c) says that $\mc{M}$ is a Cohen-Macaulay sheaf. 

Recall that the support of $\mc{M}$ is $\Supp \, \mc{M} = \{\mf{p} \in \Spec Z(A) \, | \, \mc{M}_\mf{p}\ne 0\}$. Then the dimension of $\mc{M}$ is
$\dim \Supp \, \mc{M}$, which equals $\dim Y$ since $eAe \cong Z(A)$ is a direct summand of $Ae$. Therefore, if $\mf{m} \lhd Z(A)$ is a maximal ideal contained in the regular locus of $Y$ then, as noted in the proof of \cite[Theorem~1.7]{EG}, $\mc{M}_{\mf{m}} = (A e)_{\mf{m}}$ is a free $\mc{O}_{Y,\mf{m}} = (eAe)_{_{\mf{m}}}$-module. In other words, $\mc{M}$ is a locally free coherent sheaf when restricted to the regular locus of $Y$. 

Next, (a) says that 
\[
A \to \End_{eAe}(Ae) \cong \End_{\mc{O}_Y}(\mc{M}) = \Gamma(Y,\mc{E}nd_{\mc{O}_Y}(\mc{M}))
\]
is an isomorphism. Thus, $A_{\mf{m}} \iso \Gamma(Y,\mc{E}nd_{\mc{O}_Y}(\mc{M}))_{\mf{m}}$. Since $\mc{E}nd_{\mc{O}_Y}(\mc{M})$ is a (quasi-)coherent $\mc{O}_Y$-module by \cite[Lemma~26.7.2(3)]{stacks} and $Y$ is an affine scheme, we have  $\Gamma(Y,\mc{E}nd_{\mc{O}_Y}(\mc{M}))_{\mf{m}} =\mc{E}nd_{\mc{O}_Y}(\mc{M})_{\mf{m}}$. The fact that $\mc{M}$ is coherent implies that 
\begin{equation}
    \label{localisation-matrix}
A_\mf{m}\cong
\mc{E}nd_{\mc{O}_Y}(\mc{M})_{\mf{m}} \cong \End_{\mc{O}_{Y,\mf{m}}}(\mc{M}_{\mf{m}}) = \End_{(eA e)_{_{\mf{m}}}}((A e)_{\mf{m}}) \cong \mr{Mat}_{d}((eA e)_{{\mf{m}}}),
\end{equation}
where the second isomorphism is \cite[Lemma~17.22.4]{stacks} and we have used that $(Ae)_{\mf{m}}$ is a free $(eAe)_{{\mf{m}}}$-module in the final isomorphism: 
\begin{equation}
    \label{dirsum}
(Ae)_{\mf{m}}= (eAe)_{{\mf{m}}}^{\oplus d}    
\end{equation}
 for some $d\in \N$.
It follows from \eqref{localisation-matrix} that we have ring isomorphisms 
\begin{equation}
    \label{matiso}
    A / \mf{m} A \cong A_{\mf{m}} / \mf{m} A_{\mf{m}} \cong \mr{Mat}_{d}(k).
\end{equation}

Note that $\mf{m}A$ is a maximal ideal in $A$ since the quotient $A/\mf{m}A$ is a matrix ring, which is simple. Note also that $\mf{m}A\cap Z(A) =\mf{m}$. Indeed, $\mf{m}A\cap Z(A) \supseteq\mf{m}$, and if $\mf{m}A\cap Z(A) = Z(A)$ then we would have $1\in \mf{m}A$ and hence $\mf{m}A=A$, which is not the case.
It follows by part (d) of Definition \ref{defn:azumayalocus} that $\mf{m}$ is in the Azumaya locus of $Y$ and that $d$ equals the PI-degree of $A$. This proves part (i).

In order to prove part (ii) we use uniqueness in part (b) of an 
equivalent characterization of Azumaya locus given in Definition~\ref{defn:azumayalocus}.
Thus, it is sufficient to show that the $A$-module $Ae/\mf{m}Ae$ is simple since its annihilator contains $\mf{m}$. 

It follows from \eqref{dirsum} that
the fiber of $\mc{M}$ at $\mf{m}$ is 
\[
(Ae)_\mf{m}/ \mf{m} (Ae)_\mf{m} \cong 
 k^d
\]
as a $(e A e)_\mf{m}/\mf{m}(eAe)_\mf{m}\cong k$-module.
We also have isomorphisms of $eAe$-modules 
\[
(Ae)_\mf{m}/ \mf{m} (Ae)_\mf{m}  \cong (Ae / \mf{m} A e)_{\mf{m}} \cong A e / 
 \mf{m} A e.  
\]
The action of $A$ on $Ae/\mf{m}Ae\cong k^d$ factors through $A/\mf{m}A$, which is isomorphic to $\mr{Mat}_{d}(k)$ by \eqref{matiso}. The statement follows since $k^d$ is necessarily a simple module for $\mr{Mat}_{d}(k)$. 
\end{proof}

We note the following lemma, which is a well-known consequence of Hilbert's finiteness theorem, but for which we don't know an explicit reference. 

\begin{lem}\label{lem:fginvariants}
Let $G$ be a complex reductive algebraic group, $Z$ an affine $\C$-algebra on which $G$ acts rationally and $M$ a finitely generated $G$-equivariant $Z$-module. Assume that the action of $G$ on $M$ is locally finite. Then $M^G$ is a finitely generated $Z^G$-module. 
\end{lem}

\begin{proof}
Since $G$ acts rationally on $Z$, this action is locally finite by e.g. \cite[II,~Lemma~2.4]{Kraft}. In particular, every finite-dimensional subspace of $Z$ is contained in some finite-dimensional $G$-submodule. Therefore, we may choose a finite-dimensional $G$-submodule $V$ of $Z$ such that $Z$ is generated as an algebra by $V$ i.e. there is a $G$-equivariant algebra surjection $S := \Sym \, V \to Z$. The algebra $Z$ is a semi-simple $G$-module since $G$ is assumed to be reductive; see Section~\ref{sec:setupinvariant}. This implies that taking invariants is exact and hence the map $S^G \to Z^G$ is also surjective. The action of $Z$ on $M$ lifts to $S$ and hence it suffices to show that $M^G$ is a finitely generated $S^G$-module. 

Since $M$ is also a sum of finite-dimensional $G$-modules, we can choose a finite-dimensional $G$-submodule $M_0$ of $M$ such that $M$ is generated by $M_0$ over $S$. This means that the multiplication map $S \o_{\C} M_0 \to M$ is surjective. It is also $G$-equivariant if $G$ acts diagonally on the tensor product. Again, since $G$ is reductive, this induces a surjection $(S \o_{\C} M_0)^G \to M^G$ and it suffices to prove that $(S \o_{\C} M_0)^G$ is finitely generated over $S^G$. Decomposing $M_0 = \bigoplus_{i} V_i \o_{\C} U_i$ into a direct sum of irreducible $G$-modules $V_i$, with multiplicity spaces $U_i$, we have $(S \o_{\C} M_0)^G = \bigoplus_{i} (S \o_{\C} V_i)^G \o_{\C} U_i$ and hence it suffices to show that each $(S \o_{\C} V_i)^G$ is a finitely generated $S^G$-module. But this is precisely the statement of \cite[II,~Zusatz~3.2]{Kraft}. 
\end{proof}

\begin{lem}\label{lem:PIdegreeskewgroup}
Let $Z$ be an affine, commutative, $\C$-algebra and a domain. Let $G$ be a finite group of automorphisms of $Z$ such that the action on $\Spec Z$ is generically free. Then the PI-degree of the prime PI-ring $Z \rtimes G$ equals $|G|$. 
\end{lem}

\begin{proof}
The ring $Z \rtimes G$ is PI since it is a finite module over its centre $Z^G$. It is prime because $Z$ is assumed to be a domain. Since $Z$ is affine, so too is $Z \rtimes G$. Therefore, as explained in the proof of \cite[III,~Theorem~1.6]{BrownGoodearlbook}, the PI-degree of $Z \rtimes G$ equals the supremum of the dimensions of simple $Z \rtimes G$-modules. Using \cite[Lemma~3.5]{BellPaegelow}, the simple modules can be described as follows. Choose $\mf{m} \lhd Z$ a maximal ideal and let $G_{\mf{m}}$ denote the stabilizer of the corresponding point of $\Spec Z$; $G_{\mf{m}}$ is the group of elements $g \in G$ with $g(\mf{m}) \subset \mf{m}$. Let $\lambda$ be an irreducible $G_{\mf{m}}$-module and set 
\[
V_{\mf{m}}(\lambda) := \Ind_{Z \rtimes G_{\mf{m}}}^{Z \rtimes G} \lambda,
\]
where $Z$ acts on $\lambda$ by $z \cdot v = \chi(z) v$ with $\chi \colon Z \to \C$ the homomorphism with kernel $\mf{m}$. Then 
\begin{enumerate}
   \item $V_{\mf{m}}(\lambda)$ is a simple $Z \rtimes G$-module;
   \item $V_{\mf{m}_1}(\lambda_1) \cong V_{\mf{m}_2}(\lambda_2)$ if and only if $\mf{m}_2 \in G \cdot \mf{m}_1$ and, moreover, if $g(\mf{m}_1) = \mf{m}_2$ then $\lambda_1 \cong \lambda_2$ via the conjugation isomorphism $g \colon G_{\mf{m}_1} \iso G_{\mf{m}_2}$;
   \item Every simple $Z \rtimes G$-module is isomorphic to $V_{\mf{m}}(\lambda)$ for some $\mf{m}$ and $\lambda$.
\end{enumerate}
Notice that $\dim V_{\mf{m}}(\lambda) = (|G| / |G_{\mf{m}}|) \dim \lambda$ since $Z \rtimes G$ is free of rank $|G| / |G_{\mf{m}}|$ over $Z \rtimes G_{\mf{m}}$. Therefore, $\dim V_{\mf{m}}(\lambda) \le |G|$ and $\dim V_{\mf{m}}(\lambda) = |G|$ if and only if $G_{\mf{m}} = \{ 1 \}$. 
\end{proof}

Recall from Section~\ref{sec:setupinvariant} that $(t,c) \in \mathfrak{c} =\C \oplus \Hom_W(\mc{S},\C)$, $R = \C[\mathfrak{c}]$ and $H$ denotes the rational Cherednik algebra defined over $R$. We wish to consider the case where $t = 0$ but $c$ is a variable, so we set $S = R /(t) \cong \C[c]$ and $H_S = H / t H$. The following lemma shows that the hypothesis of Corollary~\ref{cor:PIflatfamily} hold for $\eo H^{\Gamma}_S \eo$. 

\begin{lem}\label{lem:centresurject}
Let $c \in \Hom_W(\mc{S},\C)$. 
\begin{enumerate}
        \item[(i)] The algebra $H_{S}^{\Gamma}$ is a finite $Z(H_{S}^{\Gamma})$-module and $H_{c}^{\Gamma}$ is a finite $Z(H_{c}^{\Gamma})$-module. 
        \item[(ii)] The algebra $\eo H^{\Gamma}_S \eo$ is a prime affine $\C$-algebra, finite over $Z(\eo H^{\Gamma}_S \eo)$, and $\eo H^{\Gamma}_{c} \eo$ is a prime affine algebra, finite over $Z(\eo H^{\Gamma}_{c} \eo)$. 
        \item[(iii)] For all $\mf{m} \lhd S$ maximal the map $Z(\eo H^{\Gamma}_S \eo) \to Z(\eo H^{\Gamma}_S \eo / \langle \mf{m} \rangle)$ is surjective.
\end{enumerate}
\end{lem}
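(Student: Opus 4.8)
The plan is to deduce all three parts from two standard facts about the rational Cherednik algebra at $t = 0$, applied both for a fixed parameter $\bc$ and over the base $R_0 = \C[\Hom_W(\mc{S},\C)]$: that $H_{0,\bc}$ (respectively $H_0$) is a finite module over its centre, and that this centre is a finitely generated $\C$-algebra. Both follow from \cite{EG}; for the underlying graded statement $\gr Z(H_{0,\bc}) = \C[\h \times \h^*]^W$ see \cite[Theorem~3.3]{EG}, and one uses its evident $R_0$-relative counterpart $\gr Z(H_0) = \C[\h \times \h^*]^W \o R_0$. I will also use the $R_0$-relative versions of Lemma~\ref{lem:centreinclude}, Theorem~\ref{assgrade} and Corollary~\ref{centregln}, which are proved by the same arguments once that relative counterpart is available.

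\emph{For (i) and (ii):} Since $\Gamma$ is reductive, $Z(H_{0,\bc})$ is finite as a module over the affine (hence noetherian) ring $Z(H_{0,\bc})^\Gamma$ by Noether's theorem, so $H_{0,\bc}$ is a finite $Z(H_{0,\bc})^\Gamma$-module, and therefore so is its $Z(H_{0,\bc})^\Gamma$-submodule $H_{0,\bc}^\Gamma$; as $Z(H_{0,\bc})^\Gamma \subseteq Z(H_{0,\bc}^\Gamma)$ by Lemma~\ref{lem:centreinclude}, $H_{0,\bc}^\Gamma$ is finite over its centre, which proves (i) (the statement for $H_0^\Gamma$ being the same argument over $R_0$). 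For (ii), $\eo$ is a central idempotent so $\eo H_{0,\bc}^\Gamma \eo = H_{0,\bc}^\Gamma / \langle 1 - \eo \rangle$ by Lemma~\ref{lem:aidecomp}, hence by (i) it is a finite module over $\eo Z(H_{0,\bc})^\Gamma$; since $Z(H_{0,\bc})$ is affine over $\C$, so is $Z(H_{0,\bc})^\Gamma$ by commutative invariant theory, and so is the quotient $\eo Z(H_{0,\bc})^\Gamma$, whence $\eo H_{0,\bc}^\Gamma \eo$ is affine over $\C$ and finite over its centre $Z(\eo H_{0,\bc}^\Gamma \eo) = \eo Z(H_{0,\bc}^\Gamma)$, and it is prime by Corollary~\ref{cor:primring}. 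The same argument over $R_0$ treats $\eo H_0^\Gamma \eo$.

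\emph{For (iii):} Fix $\bc$ and let $\mf{m} \lhd R_0$ be the corresponding maximal ideal, so that $\eo H_0^\Gamma \eo / \langle \mf{m} \rangle = \eo H_{0,\bc}^\Gamma \eo$. Since $\eo$ is central, using the relative and absolute forms of Corollary~\ref{centregln} we may identify $Z(\eo H_0^\Gamma \eo) = \eo\, Z(H_0^\Gamma) \cong Z(H_0)^\Gamma$ and $Z(\eo H_{0,\bc}^\Gamma \eo) \cong Z(H_{0,\bc})^\Gamma$, compatibly with specialization; thus it is enough to show that $Z(H_0)^\Gamma \to Z(H_{0,\bc})^\Gamma$ is surjective. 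But the specialization map $Z(H_0) \to Z(H_{0,\bc})$ is surjective with $\Gamma$-stable kernel $\mf{m}\, Z(H_0)$ (the universal Calogero--Moser family $\Spec Z(H_0) \to \Spec R_0$ being flat, cf. \cite{EG}), and since $\Gamma$ is reductive the functor of $\Gamma$-invariants is exact on rational $\Gamma$-modules, so applying it preserves surjectivity. Alternatively, one may argue purely via associated graded: the specialization $\eo H_0^\Gamma \eo \twoheadrightarrow \eo H_{0,\bc}^\Gamma \eo$ is a filtered surjection whose restriction to centres has, by Theorem~\ref{assgrade} and its relative form, associated graded the obvious surjection $(P^{\Gamma \times W} \o R_0)\eo \twoheadrightarrow P^{\Gamma \times W}\eo$, and a filtered map of exhaustive, bounded-below filtered modules with surjective associated graded is surjective.

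The individual steps are all short; the genuine work lies in assembling the $R_0$-relative toolkit, above all the relative form of \cite[Theorem~3.3]{EG}: that $Z(H_0)$ is affine and flat over $R_0$, that $H_0$ is a finite $Z(H_0)$-module, and that forming the centre commutes with the specialization $R_0 \to R_0/\mf{m}$. Once this is in place, the relative versions of Lemma~\ref{lem:centreinclude}, Theorem~\ref{assgrade} and Corollary~\ref{centregln} follow by their original proofs, and (i)--(iii) are as above.
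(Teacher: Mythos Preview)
Your proof is correct. Parts (i) and (ii) follow essentially the same approach as the paper, with a minor variation: for affineness in (ii) you use that a ring module-finite over an affine commutative subring is itself affine, whereas the paper appeals directly to the presentations (Theorems~\ref{glnpresentation} and \ref{sobasis}) or to the associated graded $\gr H_0^{\Gamma} \cong R_0 \o (P^{\Gamma} \rtimes W)$.

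For (iii) you take a genuinely different route. The paper uses the Satake isomorphism (Corollary~\ref{cor:Satakeinv}) at $t=0$, where $e H_0^{\Gamma} e$ is commutative, to replace the centre by $e H_0^{\Gamma} e$; surjectivity is then immediate since $h \mapsto e h e$ is exact and $H_0^{\Gamma} \to H_{0,\bc}^{\Gamma}$ is onto. Your argument instead invokes the explicit description of the centre from Corollary~\ref{centregln} (and its $R_0$-relative form) to reduce to surjectivity of $Z(H_0)^{\Gamma} \to Z(H_{0,\bc})^{\Gamma}$, which you obtain either by exactness of $\Gamma$-invariants on the surjection $Z(H_0) \twoheadrightarrow Z(H_{0,\bc})$ or by an associated-graded argument. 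Both strategies work and both require some relative tooling over $R_0$: the paper implicitly needs the Satake isomorphism over $R_0$, while you need the relative forms of Theorem~\ref{assgrade} and Corollary~\ref{centregln}. The paper's route is slightly more direct, as it sidesteps the explicit determination of $Z(H_0^{\Gamma})$ altogether; your route has the advantage that the relative inputs it needs (exactness of invariants for reductive $\Gamma$, or surjectivity on associated graded) are entirely elementary.
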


\begin{proof}
Part (i). Since $H_{S}$ is a finite $Z(H_{S})$-module, Lemma~\ref{lem:fginvariants} says that $H_{S}^{\Gamma}$ is a finite $Z(H_{S})^{\Gamma}$-module. Then $Z(H_{S})^{\Gamma} \subset Z(H_{S}^{\Gamma})$ implies that $H_{S}^{\Gamma}$ is finite over $Z(H_{S}^{\Gamma})$ too. The proof for $H^{\Gamma}_{c}$ is identical. 

Part (ii). Corollary~\ref{cor:primring} says that $\eo H^{\Gamma}_{c} \eo$ is prime. It also applies verbatim to $\eo H^{\Gamma}_S \eo$, showing that it is prime. By Theorem~\ref{glnpresentation} and Theorem~\ref{sobasis}, $H_S^{\Gamma}$ is an affine $S$-algebra; alternatively one can use that $\gr H_S^{\Gamma} \cong S \o ( P^{\Gamma} \rtimes W)$ is an affine $S$-algebra to deduce finite generation for $H_S^{\Gamma}$. Since $\eo$ is central in $H_S^{\Gamma}$, it follows that $\eo H^{\Gamma}_S \eo$ is an affine $S$-algebra. The fact that $S$ is a finitely generated $\C$-algebra then implies that $\eo H^{\Gamma}_S \eo$ is a finitely generated $\C$-algebra i.e. it is affine. This implies that $\eo H^{\Gamma}_{c} \eo$ is affine too since it is a quotient of $\eo H^{\Gamma}_S \eo$. The fact that these algebras are finite over their centres is a consequence of (i), using the decomposition of Lemma~\ref{lem:aidecomp}. 

Part (iii). If $\mf{m} \lhd S$ is the maximal ideal corresponding to the point $c$ then $\eo H^{\Gamma}_S \eo / \langle \mf{m} \rangle = \eo H^{\Gamma}_{c} \eo$. We have a commutative diagram 
\[
\begin{tikzcd}
    Z(\eo H^{\Gamma}_S \eo) \ar[d] \ar[r,"\sim"] & e H^{\Gamma}_S e \ar[d] \\
    Z(\eo H^{\Gamma}_{c} \eo) \ar[r,"\sim"] & e H^{\Gamma}_{c} e,
\end{tikzcd}
\]
where the horizontal arrows are isomorphisms by Corollary~\ref{cor:Satakeinv}. Since $H^{\Gamma}_S\to H^{\Gamma}_{c}$ is surjective and applying invariants, $h \mapsto e he$, is exact, the right vertical arrow is surjective. We deduce that the left vertical arrow is also surjective. 
\end{proof}

Let $Z = Z(\eo H^{\Gamma}_{c} \eo)$. If $M$ is a $\eo H^{\Gamma}_{c} \eo$-module then its support $\Supp \, M$ is the subset of $Y_{c}$ consisting of all prime ideals $\mf{p}$ in $Z \cong Z(H_c)^\Gamma$ such that $Z_{\mf{p}} \o_{Z} M \neq 0$. If $M$ is finitely generated then $\Supp \, M$ is a closed subset of $Y_{c}$. 

\begin{lem}\label{lemma-on-support}
Let $L$ be a simple $\eo H^{\Gamma}_{c} \eo$-module and $\mf{m} := \mr{ann}_Z L$. Then $\mf{m}$ is a maximal ideal and $\Supp \, L = \{ \mf{m} \}$.
\end{lem}

\begin{proof}
Note that $L$ is finite-dimensional over $\C$ by Lemma \ref{lem:centresurject}(ii) and \cite{BrownGoodearl}. 
 Schur's Lemma says that $\End_{\eo H^{\Gamma}_{c} \eo}(L) = \C$. The action of $Z$ on $L$ factors through $\End_{\eo H^{\Gamma}_{c} \eo}(L)$. Hence there exists a homomorphism $\chi \colon Z \to \C$ such that $z l = \chi(z) l $ for all $z \in Z$ and $l \in L$. In particular, $\mf{m} = \Ker \chi$ is a maximal ideal. The final claim then follows from the fact that the support of $L$ is the set of all prime ideals containing its annihilator; see e.g. \cite[II~Ex~5.6(b)]{Hartshorne} or \cite[Corollary~2.7]{Eisenbud}.   
\end{proof}

Thus, the support of a simple $\eo H^{\Gamma}_{c} \eo$-modules is a closed point of $Y_{c}$. 

\begin{lem}\label{lem:genericsimpleW}
Let $\mf{m}$ be a general (closed) point in $Y_c$, and let $L=\eo H_c^\Gamma e/\mf{m}\eo H_c^\Gamma e$ be the corresponding $\eo H_c^\Gamma \eo$-module. 
Then $L |_{\overline{W}} \cong \C \overline{W}$.  
\end{lem}

\begin{proof}
    Note that Lemma~\ref{lem:Wbaropenfree} says that the action of $\overline{W}$ on $V \git \, \Gamma = (\h \times \h^*) \git \, \Gamma$ is generically free. 
    Note also that $L$ can be represented as the induced module
$L \cong  \eo H^{\Gamma}_c e \o_{e H^{\Gamma}_{c} e} \chi$, where $\chi$ is a one-dimensional  $eH_c^\Gamma e$-module corresponding to the homomorphism
$$
e H^{\Gamma}_c e \to eH^{\Gamma}_c e/ e\mf{m}\cong \mathbb C.
$$
      Then the proof of \cite[Lemma~2.24]{EG} goes through verbatim. 
\end{proof}

\begin{thm}\label{thm:PIsimplemodules}
For all $c \in \Hom_W(\mc{S},\C)$, 
\begin{enumerate}
    \item[(i)] $H_{c}^{\Gamma}$ and $\eo H_{c}^{\Gamma}\eo$ are PI algebras and the PI degree of the prime PI-algebra $\eo H_{c}^{\Gamma}\eo$ equals $|\overline{W}|$.
    \item[(ii)] Let $L$ be a simple $\eo H_{c}^{\Gamma}\eo$-module whose support is contained in the regular locus of $Y_{c}$. Then $L |_{\overline{W}} \cong \C \overline{W}$. 
    \item[(iii)] The regular locus of $Y_{c}$ is contained in the Azumaya locus. 
\end{enumerate}
\end{thm}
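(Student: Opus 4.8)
The plan is to prove the three parts in order, with each feeding into the next. For part (i), the key input is Corollary~\ref{centregln} (via its $t=0$ incarnation for $H_\bc$), which shows $H_\bc^\Gamma$ is a finite module over its centre; hence $H_\bc^\Gamma$ and its summand $\eo H_\bc^\Gamma \eo$ are PI by the standard Kaplansky–Amitsur–Levitzki argument. To compute the PI degree of the prime algebra $\eo H_\bc^\Gamma \eo$, I would invoke Corollary~\ref{cor:PIflatfamily} together with Lemma~\ref{lem:centresurject}: the PI degree is constant over the family parametrised by $\bc$, so it equals the PI degree at $\bc = 0$. At $\bc = 0$ we have $\eo H_0^\Gamma \eo = P^\Gamma \rtimes \overline{W}$ (using Corollary~\ref{cor:grglalg} and that $\eo$ is the symmetrising idempotent of $Z(W)$), and the PI degree of a crossed product $P^\Gamma \rtimes \overline{W}$ with $\overline{W}$ acting faithfully (indeed freely on a dense open, by Lemma~\ref{lem:Wbaropenfree}) is $|\overline{W}|$: the fraction field of the centre $P^{\Gamma\times W}$ has $\Frac(P^\Gamma)\rtimes \overline{W} = \Mat_{|\overline{W}|}(\Frac(P^{\Gamma\times W}))$ as its localisation, since $\Frac(P^\Gamma)/\Frac(P^{\Gamma\times W})$ is a Galois extension with group $\overline{W}$. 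This gives (i).

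For part (ii), I would combine part (i) with the module-theoretic structure. Since the PI degree is $|\overline{W}|$ and the Azumaya locus contains (in fact, by Corollary~\ref{cor:CMmodule} and the Cohen–Macaulay property, is closely tied to) the regular locus, any simple $L$ supported at a regular point $\mf{m}$ of $Y_\bc$ has $\dim_\C L = |\overline{W}|$ (the generic/maximal value, realised precisely on the Azumaya locus). Now $\eo H_\bc^\Gamma \eo$ contains the twisted group algebra $\C_{f_0}\overline{W} = \C\overline{W}$ (Lemma~\ref{lem:greizero}; note $f_0$ is trivial for $\eo$), so $L|_{\overline{W}}$ is a representation of $\C\overline{W}$ of dimension $|\overline{W}|$. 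To conclude $L|_{\overline{W}} \cong \C\overline{W}$ (the regular representation) rather than some other $|\overline{W}|$-dimensional module, I would use the deformation/specialisation argument as in \cite[Lemma~2.24]{EG}: Lemma~\ref{lem:genericsimpleW} already gives this for generic $\chi$, and the regular locus is exactly where the Satake-type identification $eH_\bc^\Gamma e \cong Z(\eo H_\bc^\Gamma \eo)$ (Corollary~\ref{cor:Satakeinv}) makes the simple module at $\mf{m}$ isomorphic to $\eo H^\Gamma e \o_{eH_\bc^\Gamma e} \chi_{\mf{m}}$; then the $\overline{W}$-module structure is rigid under moving $\mf{m}$ in the regular (irreducible) locus, so it equals the generic value $\C\overline{W}$.

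Part (iii) then follows formally: by part (ii), every simple module $L$ supported at a regular point has $\dim_\C L = |\overline{W}| = \mr{P.I.deg}(\eo H_\bc^\Gamma \eo)$, and a point of $\Spec Z$ lies in the Azumaya locus precisely when the (unique, by the CM-ness and the fact that $\eo H^\Gamma e$ is a CM module) simple module over it attains the maximal dimension $\mr{P.I.deg}$ — this is the characterisation recalled before the theorem, via \cite[III, Theorem~1.6]{BrownGoodearlbook}. The main obstacle I anticipate is part (ii): one must be careful that a regular point of $Y_\bc$ genuinely forces the simple module to attain full dimension $|\overline{W}|$ and, more delicately, that its restriction to $\overline{W}$ is the \emph{regular} representation and not merely some $|\overline{W}|$-dimensional module — this requires transporting the generic statement of Lemma~\ref{lem:genericsimpleW} across the whole regular locus, which works because that locus is irreducible and the relevant $\overline{W}$-equivariant structure varies algebraically (so its isomorphism class, being discrete data, is constant). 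The smoothness input ensuring "regular $\subset$ Azumaya" ultimately rests on $\eo H^\Gamma e$ being a Cohen–Macaulay module (Corollary~\ref{cor:CMmodule}) over the Gorenstein ring $Z$, exactly as in the Etingof–Ginzburg treatment of symplectic reflection algebras.
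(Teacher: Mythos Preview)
Your proposal is correct and uses essentially the same ingredients as the paper: part (i) via finiteness over the centre plus Corollary~\ref{cor:PIflatfamily}/Lemma~\ref{lem:centresurject} and the computation at $\bc=0$ using Lemma~\ref{lem:Wbaropenfree}; parts (ii)/(iii) via the Cohen--Macaulay module $\eo H^\Gamma_\bc e$ (Corollary~\ref{cor:CMmodule}), the double centralizer property, and rigidity of the $\overline{W}$-structure over the connected regular locus (Lemma~\ref{lem:genericsimpleW}).

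One small remark on logical ordering: in your sketch of (ii) you begin by assuming the Azumaya locus contains the regular locus in order to get $\dim L=|\overline{W}|$, and then deduce (iii) from (ii); this is mildly circular. The paper avoids this by proving (ii) and (iii) simultaneously: one first uses Cohen--Macaulayness to see that $\eo H^\Gamma_\bc e$ is locally free over $Y_\bc^{\reg}$, then uses the double centralizer property (Theorem~\ref{thm:doublecetralHGamma}) to deduce that each fibre is a \emph{simple} $\eo H^\Gamma_\bc\eo$-module, and only then invokes connectedness and the generic computation to identify the $\overline{W}$-structure. The inclusion of (iii) drops out at the end because these simples visibly have dimension equal to the PI degree. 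Your final paragraph shows you have exactly this argument in mind; just be careful to run it in this order so that ``$L$ has dimension $|\overline{W}|$'' is a conclusion rather than an input.
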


\begin{proof}
Part (i). The fact that $H_{c}^{\Gamma}$ is a PI-algebra follows directly from Lemma~\ref{lem:centresurject}(i) and \cite[Corollary 13.1.3]{BrownGoodearlbook}. By Lemma~\ref{lem:centresurject}(iii), we may apply Corollary~\ref{cor:PIflatfamily} to $\eo H_{c}^{\Gamma}\eo$. In particular, the corollary implies that the PI-degree of $\eo H_{c}^{\Gamma}\eo$ is independent of $c$. Taking $c = 0$, it follows from Lemma~\ref{lem:Wbaropenfree} and Lemma~\ref{lem:PIdegreeskewgroup} that the PI-degree of $\eo H_{0}^{\Gamma}\eo = P^{\Gamma} \rtimes \overline{W}$ equals $|\overline{W}|$. 

Part (ii) and (iii) are an application of Theorem~\ref{thm:PIsimplemodulesA}, applied to $A = \eo H^{\Gamma}_{c} \eo$ and $e$ the symmetrizing idempotent. Note that (a) of Theorem~\ref{thm:PIsimplemodulesA} holds by Theorem~\ref{thm:doublecetralHGamma}, statement (b) is a consequence of the fact that $e H_c e$ is commutative (recall $t =0$) and Lemma~\ref{cor:CMmodule} says that $\eo H^{\Gamma}_{c} e$ is a Cohen-Macaulay $eH^{\Gamma}_{c} e$-module. Then all claims in (ii) and (iii) follow from Theorem~\ref{thm:PIsimplemodulesA}, except for the isomorphism 
\[
(A e /\mf{m} Ae) |_{\overline{W}} \cong \C \overline{W}. 
\]
Notice, in our case that the sheaf $\mc{M}$ on $Y_c$ considered in the proof of Theorem~\ref{thm:PIsimplemodulesA} is $\overline{W}$-equivariant, where $\overline{W}$ acts trivially on $Y_c$. It is a locally free sheaf when restricted to $Y_{c}^{\reg}$. Since the generic fiber is isomorphic to $\C \overline{W}$ by Lemma~\ref{lem:genericsimpleW} and $Y_{c}^{\reg}$ is connected, it follows that every fiber is isomorphic to $\C \overline{W}$.

\end{proof}

\begin{remark}
    Since $\eo H^{\Gamma}_{c} \eo$ need not have finite global dimension the usual arguments (see \cite[Lemma~3.3]{BrownGoodearl}) used to show that the Azumaya locus of $\eo H^{\Gamma}_{c} \eo$ is contained in the regular locus of $Y_{c}$ do not apply. We do not know if the inclusion holds. 
\end{remark}

Recall that the centre $Z := Z(\eo H^{\Gamma}_{c} \eo)$ of $\eo H^{\Gamma}_{c} \eo$ is a domain.
Let $F$ denote the field of fractions $Z$.  
By Posner's Theorem, the algebra $Q = F \o_{Z} (\eo H^{\Gamma}_{c} \eo)$ is a central simple algebra with centre equal to $F$. In fact, $Q$ is split over $F$. More precisely, we have the following corollary of the proof of Theorem~\ref{thm:PIsimplemodules}. 

\begin{cor}
    $Q \cong \mr{Mat}_{|\overline{W}|}(F)$. 
\end{cor}

\begin{proof}
    Choose a maximal ideal $\mf{m} \lhd Z$ lying in the Azumaya locus of $Y_c$. As shown in the proof of Theorems~\ref{thm:PIsimplemodules}, \ref{thm:PIsimplemodulesA} (see isomorphisms \eqref{localisation-matrix}), and by Theorem \ref{thm:doublecetralHGamma}
    \[
    (\eo H^{\Gamma}_{c} \eo)_{\mf{m}} = \End_{Z_{\mf{m}}}((\eo H^{\Gamma}_{c} e)_{\mf{m}}) \cong \mr{Mat}_{|\overline{W}|}(Z_{\mf{m}}),
    \]
    where we are using the Satake isomorphism of Corollary~\ref{cor:Satakeinv} to identify $Z$ with $e H^{\Gamma}_{c} e$. Then the corollary follows from the isomorphisms  
    \[
    Q = F \o_{Z} (\eo H^{\Gamma}_{c} \eo) = F \o_{Z_{\mf{m}}} (Z_{\mf{m}} \o_Z (\eo H^{\Gamma}_{c} \eo)) \cong F \o_{Z_{\mf{m}}} \mr{Mat}_{|\overline{W}|}(Z_{\mf{m}}) = \mr{Mat}_{|\overline{W}|}(F). 
    \]
\end{proof}

\section{The algebra of (quantum) Hamiltonian reduction}\label{sec:QHRtorus}

The action of $\T$ on the Calogero--Moser space $\Spec Z(H_{c})$ is Hamiltonian because the Poisson bracket is homogeneous of degree zero. Since $\{ \eu_{c}, f \} = \deg(f) f$ for any homogeneous $f \in Z(H_{c})$, the (co-)moment map $\mu^* \colon {\rm Sym} (\mr{Lie} \, \T) \to Z(H_{c})$ is given by $\mu^*(1) = \eu_{c}$ after an identification ${\rm Lie} \T \cong \C\ni 1$. Therefore, the Hamiltonian reduction of $\Spec Z(H_{c})$ is 
$$
\mu^{-1}(\zeta) \git \, \T = \Spec \left( Z(H_{c})^{\T} / \langle \mr{eu}_{c} - \zeta \rangle \right), \quad \forall \, \zeta \in \C = (\mr{Lie} \, \T)^*.
$$

\begin{defn}
   For $\zeta \in \C$, the algebra of (quantum) Hamiltonian reduction is 
   $$
   A_{t,c,\zeta} = A_{t,c,\zeta}(W) = \eo H_{t,c}^{\mf{gl}(n)}\eo / \langle \eo \mr{eu}_{c} - \zeta \rangle.
   $$
\end{defn}

\begin{remark}
    The action of $\SL_2$ on $\Spec Z(H_{c})$ is also Hamiltonian and one can consider the (quantum) Hamiltonian reduction of $H_{t,c}$ or $\Spec Z(H_{c})$, as above. We hope to return to this construction in future work. 
\end{remark}

Recall that $P = \C[\h \times \h^*]$. Since $\eo \eu_{c}$ is central (and hence normal) in $A_{t,c,\zeta}$ and its symbol $\mr{eu}_0$ is regular in $P^{\T} \rtimes W$, Lemma~\ref{lem:assgrregularelement} immediately implies the following. 

\begin{lem}\label{lem:assgrAW}
	We have 
	$$
	\gr A_{t,c,\zeta} = (P^{\T} / \langle \mr{eu}_0 \rangle ) \rtimes \overline{W}, \quad \gr (Z(H_{c})^{\T} /  \langle \mr{eu}_{c} - \zeta \rangle ) \cong (P^{\T} / \langle \mr{eu}_0 \rangle )^W.
	$$
\end{lem}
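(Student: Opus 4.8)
The plan is to deduce both identities as direct applications of Lemma~\ref{lem:assgrregularelement} to suitable pairs (filtered algebra, normal element), the only real work being to check that the relevant symbol is regular. Throughout I will write $\mr{eu}_0 = \sum_{i=1}^n x_i y_i$ for the symbol of $\mr{eu}_{\bc}$ in $\gr H_{\bc} = P \rtimes W$; it is $W$-invariant (it is the canonical element of $\h \o \h^* \cong \End(\h)$), so it lies in $P^{\T \times W}$, and it is a nonzero element of the domain $P$.

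For the first identity I would take $U = \eo H_{t,\bc}^{\mf{gl}(n)} \eo$ with its restricted filtration and $u = \eo\, \mr{eu}_{\bc} - \zeta$. Using the relations \eqref{eurelations} one gets $[\mr{eu}_{\bc}, E_{ij}] = [\mr{eu}_{\bc},x_i] y_j + x_i [\mr{eu}_{\bc},y_j] = 0$ and $[\mr{eu}_{\bc},w] = 0$ for $w \in W$, so $\mr{eu}_{\bc}$ is central in $H_{t,\bc}^{\mf{gl}(n)}$ for every $t$; hence $u$ is central, in particular normal, in $U$, with symbol $\eo\, \mr{eu}_0$ of filtration degree $2$ (independently of $\zeta$). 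By Corollary~\ref{cor:grglalg} together with Lemma~\ref{lem:greizero} at $i = 0$ — where the cocycle $f_0$ is trivial because $\eo$ is the symmetrising idempotent — we have $\gr U \cong P^{\T} \rtimes \overline{W}$, and $\eo\,\mr{eu}_0$ corresponds to $\mr{eu}_0 \in P^{\T}$ under this isomorphism. Since $\mr{eu}_0$ is $W$-invariant it is central in $P^{\T} \rtimes \overline{W}$, and being a nonzero element of the domain $P^{\T}$ it is a non-zero-divisor there; as $P^{\T}\rtimes\overline{W}$ is free both as a left and as a right $P^{\T}$-module, left and right multiplication by $\mr{eu}_0$ are injective, so $\eo\,\mr{eu}_0$ is regular. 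Lemma~\ref{lem:assgrregularelement} then yields $\gr A_{t,\bc,\zeta} = (P^{\T}\rtimes\overline{W})/\langle \mr{eu}_0 \rangle$, and since $\mr{eu}_0$ is $W$-invariant the two-sided ideal it generates is $(\mr{eu}_0 P^{\T})\rtimes\overline{W}$, giving $(P^{\T}/\langle\mr{eu}_0\rangle)\rtimes\overline{W}$.

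For the second identity I would apply Lemma~\ref{lem:assgrregularelement} to the commutative algebra $U = Z(H_{\bc})^{\T}$ and $u = \mr{eu}_{\bc}$ (or $\mr{eu}_{\bc}-\zeta$, which has the same symbol): $\mr{eu}_{\bc}$ is central in $H_{\bc} = H_{0,\bc}$ by \eqref{eurelations} at $t = 0$ and is $\T$-invariant, so it lies in $Z(H_{\bc})^{\T}$, and normality is automatic. By Lemma~\ref{lem:grcentrezero}, $\gr Z(H_{\bc})^{\T} = P^{\T\times W}$, in which $\mr{eu}_0$ is again a nonzero element of a domain and hence regular, so Lemma~\ref{lem:assgrregularelement} gives $\gr(Z(H_{\bc})^{\T}/\langle\mr{eu}_{\bc}\rangle) = P^{\T\times W}/\langle\mr{eu}_0\rangle$. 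Finally I would identify this with $(P^{\T}/\langle\mr{eu}_0\rangle)^W$: the multiplication sequence $0 \to P^{\T} \xrightarrow{\mr{eu}_0} P^{\T} \to P^{\T}/\mr{eu}_0 P^{\T} \to 0$ is $W$-equivariant since $\mr{eu}_0$ is $W$-invariant, and the functor of $W$-invariants is exact ($\C W$ is semisimple), so $(P^{\T}/\mr{eu}_0 P^{\T})^W = (P^{\T})^W / \mr{eu}_0 (P^{\T})^W = P^{\T\times W}/\langle\mr{eu}_0\rangle$.

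There is no serious obstacle here; the plan is all bookkeeping around Lemma~\ref{lem:assgrregularelement}. The points that warrant a moment's care are that $\mr{eu}_{\bc}$ is normal (indeed central) in $\eo H_{t,\bc}^{\mf{gl}(n)} \eo$ for \emph{every} $t$, not just $t = 0$, and that passing from $\mr{eu}_{\bc}$ to $\mr{eu}_{\bc}-\zeta$ does not alter the symbol; and the regularity of $\mr{eu}_0$ in $P^{\T}\rtimes\overline{W}$ and in $P^{\T\times W}$, which boils down to these rings being built from the domain $P$ by taking invariants and to $\mr{eu}_0$ being a nonzero $W$-invariant element. All the associated-graded inputs needed (Corollary~\ref{cor:grglalg}, Lemma~\ref{lem:greizero}, Lemma~\ref{lem:grcentrezero}) are already in place.
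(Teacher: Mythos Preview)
Your proposal is correct and follows exactly the approach the paper indicates: apply Lemma~\ref{lem:assgrregularelement} with $u=\eo\,\mr{eu}_{\bc}-\zeta$ (resp.\ $u=\mr{eu}_{\bc}$), after observing that the element is central and that its symbol $\mr{eu}_0$ is regular in the relevant associated graded. You have simply spelled out the details (centrality via \eqref{eurelations}, regularity via $P^{\T}$ being a domain and freeness of the skew group ring, and the identification $P^{\T\times W}/\langle\mr{eu}_0\rangle \cong (P^{\T}/\langle\mr{eu}_0\rangle)^W$ by exactness of $W$-invariants) that the paper leaves implicit.
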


\begin{lem}\label{lem:centre0Hamred}
The centre of $(P^{\T} / \langle \mr{eu_0} \rangle ) \rtimes \overline{W}$ equals $(P^{\T} / \langle \mr{eu_0} \rangle )^W$.
\end{lem}

\begin{proof}
    The proof is similar to that of Proposition~\ref{prop:centreskewgroup1}. The key point being that $P^{\T} / \langle \mr{eu_0} \rangle$ is a domain on which $\overline{W}$ acts faithfully. 
\end{proof}

If $S$ is not a domain then the centre of $S \rtimes W$ can properly contain $S^W$ even if $W$ acts faithfully on $S$. 

The affine variety $(\h \times \h^*)/W$ is a Poisson variety, where the Poisson bracket on $P^W$ comes by restriction from the standard bracket on $P$. By definition of Hamiltonian reduction,  
$$
\Spec (P^{\T} / \langle \mr{eu}_0 \rangle )^W
$$
is again a Poisson scheme. 

We recall that Beauville \cite{Beauville} introduced the notion of symplectic singularities. A normal irreducible variety $X$ over $\C$ is said to have {\it symplectic singularities} if the smooth locus $U$ of $X$ carries an algebraic symplectic form $\omega$ such that, for any resolution of singularities $\pi \colon Y \to X$, $\pi^* \omega$ extends to a regular $2$-form on $Y$. 

The following is well-known to experts, but we sketch the proof since we do not know of a suitable reference. 

\begin{lem}\label{lem:sympsingGorenstein}
	Let $X$ have symplectic singularities. Then
	\begin{enumerate}
		\item[(i)] $X$ has rational singularities,  
		\item[(ii)] is Gorenstein, and hence
		\item[(iii)] is Cohen-Macaulay.
	\end{enumerate} 
\end{lem}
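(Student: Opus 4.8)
The plan is to reproduce Beauville's argument from \cite{Beauville}. Write $2n = \dim X$ and let $j \colon U \hookrightarrow X$ be the inclusion of the smooth locus, so that $\codim_X(X \setminus U) \ge 2$ by normality. The argument has three steps: (1) use the symplectic form to trivialise the canonical sheaf of $X$, so that $K_X$ is Cartier; (2) use the extension hypothesis on a resolution to show that all discrepancies of $X$ are non-negative, i.e.\ that $X$ has \emph{canonical singularities}; (3) invoke the theorem of Elkik that canonical singularities are rational, and then deduce the Cohen--Macaulay and Gorenstein properties formally.

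For step (1): the non-degeneracy of $\omega \in H^0(U,\Omega^2_U)$ means that $\omega^{\wedge n} \in H^0(U,\Omega^{2n}_U) = H^0(U,\omega_U)$ is nowhere vanishing, hence trivialises $\omega_U$. Since $X$ is normal, its canonical sheaf is $\omega_X = j_* \omega_U$, and $j_*\mathcal{O}_U = \mathcal{O}_X$ by Hartogs, so $\omega_X \cong \mathcal{O}_X$; in particular $K_X$ is Cartier (indeed trivial). For step (2): fix a resolution $\pi \colon Y \to X$ (Hironaka) with exceptional prime divisors $E_i$, and let $\widetilde{\omega}$ be the regular $2$-form on $Y$ extending $\pi^*\omega$, which exists by the defining property of a symplectic singularity. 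Then $\widetilde{\omega}^{\wedge n}$ is a regular section of $\omega_Y$ agreeing with the rational section $\pi^*(\omega^{\wedge n})$ on the dense open set $\pi^{-1}(U)$, hence the two coincide as rational sections of $\omega_Y$; since $\omega^{\wedge n}$ has empty divisor on $X$ and $K_X$ is Cartier, this gives $\operatorname{div}(\widetilde{\omega}^{\wedge n}) = K_{Y/X}$, the relative canonical divisor. But $\widetilde{\omega}^{\wedge n}$ is regular, so its divisor is effective; writing $K_{Y/X} = \sum_i a_i E_i$ we conclude $a_i \ge 0$ for all $i$. As the same argument applies to every resolution, $X$ has canonical singularities.

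For step (3): by Elkik's theorem a normal variety with canonical singularities has rational singularities, which proves (i). Rational singularities are Cohen--Macaulay, which gives (iii); and a Cohen--Macaulay normal variety whose canonical (equivalently, dualising) sheaf is a line bundle is Gorenstein by definition, which gives (ii). I do not expect a serious obstacle here, the statement being classical; the points requiring the most care are the identification $\operatorname{div}(\widetilde{\omega}^{\wedge n}) = K_{Y/X}$ in step (2) --- this is exactly where the fact that $K_X$ is Cartier is genuinely used --- and correctly invoking the (non-trivial) input that canonical singularities are rational.
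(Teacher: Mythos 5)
Your proposal is correct and takes essentially the same route as the paper: trivialise $\omega_X$ via $\omega^{\wedge n}$, use the extension of the $2$-form to a resolution to show the discrepancies $m_i$ are non-negative (canonical singularities), then invoke Elkik for rational singularities and deduce Cohen--Macaulay and Gorenstein formally. The only cosmetic difference is that the paper fixes a resolution that is an isomorphism over $X_{\reg}$, whereas you argue via agreement of rational sections on $\pi^{-1}(U)$ --- both yield the same identification of $K_{Y/X}$ with the divisor of the extended top form.
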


\begin{proof}
Let $n = (1/2) \dim X$. If $\omega \in \Omega^2_{U}$ is the symplectic form then $s = \wedge^n \omega$ is a non-vanishing top form. It trivializes $\Omega_{U}$ and hence implies that the canonical sheaf $\omega_X = j_*  \Omega_{U}$ is trivial, where $j \colon U \hookrightarrow X$. 

Let $f \colon Y \to X$ be a resolution of singularities which is an isomorphism over $U$. Let $K_X$ and $K_Y$ denote the canonical divisor on $X$ and $Y$ respectively. Since $\omega$ extends to a regular two form on $Y$, $s$ also extends to a regular form on $Y$. Therefore, in the equality
\[
K_Y = f^*(K_X) + \sum_i m_i E_i, 
\]
	the coefficients $m_i$ are non-negative since they are the order of vanishing of $f^* s$ along the exceptional divisor $E_i$. Here the sum is over all exceptional divisors of $f$. This means that $X$ has canonical singularities. By \cite[Theoreme~1]{ElkikCanonical} this means that $X$ has rational singularities. In particular, Kempf's Theorem \cite{Toroidalembeddings} says that $X$ is Cohen-Macaulay. This implies that the dualizing complex $\omega_X^{\idot}$ of $X$ is quasi-isomorphic to $\omega_X \cong \mc{O}_X$. Thus, $\mc{O}_X$ has finite injective dimension i.e. $X$ is Gorenstein.	 
\end{proof}


The variety $X$ with symplectic singularities is said to be {\it a conic symplectic singularity} if it is affine, $\C[X]$ is $\N$-graded with $\C[X]_0 = \C$ and the form $\omega$ is homogeneous of weight $\ell > 0$. Examples of conic symplectic singularities include the normalization of a nilpotent orbit closure in a semi-simple Lie algebra and symplectic quotient singularities. 

Let $\mc{O}_{\mr{min}} \subset \mf{g}=\mf{sl}(n)$ denote the minimal nilpotent orbit (consisting of all nilpotent rank one matrices). Recall that it is a symplectic variety. Let $(-,-)$ be the Killing form on $\mf{g}$. The tangent space to the orbit at a point $x \in \mc{O}_{\mr{min}}$ can be represented as $T_x \mc{O}_{\mr{min}} \cong \mf{g}/\mf{g}_x$, where $\mf{g}_x=\{y\in \mf{g} | [x,y]=0\}$. Then the Kirillov--Kostant--Souriau  symplectic form  is given by  $\omega_x(\bar y, \bar z) := ([y,z], x)$, where $\bar y, \bar z \in \mf{g}/\mf{g}_x$ are images of $y, z \in \mf{g}$, respectively.

The closure $\overline{\mc{O}}_{\mr{min}}$ is normal and hence is a conic symplectic singularity; see \cite[(2.6)]{Beauville}. A subgroup $W \subset \GL(n)$ acts on $\overline{\mc{O}}_{\mr{min}}$ by conjugation. Since the Killing form is invariant under such an action, the symplectic form on the orbit is preserved as well, and hence $\overline{\mc{O}}_{\mr{min}}/W$ is again a (conic) symplectic singularity by \cite[Proposition~2.4]{Beauville}. 

\begin{lem}\label{lem:isoHamredminimialnilp}
Consider the Hamiltonian action of the torus $\T$ on the cotangent bundle $T^*\mf{h}\cong \mf{h}\oplus\mf{h}^*$ such that the weight of the action on $\mf{h}$ is $1$, and the weight of the action on $\mf{h}^*$ is $-1$.
\begin{enumerate}
\item[(i)] The closure of the minimal nilpotent orbit is the Hamiltonian reduction: $\overline{\mc{O}}_{\mr{min}}\cong \nu^{-1}(0)\git \, \T$ as Poisson schemes, where $\nu\colon T^*\mf{h} \to \C\cong \mr{Lie}(\T)^*$ is the moment map. 
\item[(ii)] There is an isomorphism of Poisson schemes $\Spec (P^{\T} / \langle \mr{eu}_0 \rangle )^W \cong \overline{\mc{O}}_{\mr{min}}/W$. 
    In particular, $(P^{\T} / \langle \mr{eu}_0 \rangle )^W$ is an integrally closed domain. 
\end{enumerate}
\end{lem}

\begin{proof}
    The moment map $\nu\colon \mf{h}\oplus\mf{h}^*\to \C$ sends a pair $(v, \lambda)$ to $\lambda(v)$.
    Consider also the map $\varphi \colon \h \oplus \h^* \to \End_{\C}(\h)$, $(v,\lambda) \mapsto v \o \lambda$. The image of $\varphi$  is the space of all rank at most one endomorphisms. Furthermore, the induced map from 
    $(\mf{\h} \oplus \h^*)\git \T$ to the space of endomorphisms of $\mf{h}$ of rank at most $1$ is one to one. Such an endomorphism is nilpotent if and only if its trace $ \lambda(v)=0$, which implies that $\overline{\mc{O}}_{\mr{min}}\cong \nu^{-1}(0)\git \T$.


The equation on $\h\oplus\h^*$  defined by the polynomial $\mr{eu}_0$ is precisely the condition $\lambda(v) = 0$. This means that the set $\nu^{-1}(0) = V(\mr{eu}_0)$. 
Hence $\Spec(P^T/ \langle \mr{eu}_0 \rangle) \cong \overline{\mc{O}}_{\mr{min}}$. Since this isomorphism is $W$-equivariant, it descends to 
    $$
    \Spec (P^{\T} / \langle \mr{eu}_0 \rangle )^W = \left(\nu^{-1}(0) \git \, \T \right) /W \cong \overline{\mc{O}}_{\mr{min}}/W.
    $$
    In particular, the left hand side is a normal domain. 
\end{proof}

\begin{lem}\label{lem:opencodim2freeOmin}
    The group $\overline{W}$ acts freely on a dense open subset of $\overline{\mc{O}}_{\mr{min}}$ with complement of codimension at least two. 
\end{lem}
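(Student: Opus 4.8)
The plan is to show that the non-free locus $\bigcup_{1 \neq \overline{w} \in \overline{W}} (\overline{\mc{O}}_{\mr{min}})^{\overline{w}}$ is a closed subset of codimension at least two; its complement is then the required dense open subset on which $\overline{W}$ acts freely. Since $\overline{W}$ is finite, it suffices to prove that for each nontrivial $\overline{w}$ the fixed locus $(\overline{\mc{O}}_{\mr{min}})^{\overline{w}}$ has codimension $\ge 2$. First I would record that $\overline{W}$ acts \emph{faithfully} on $\overline{\mc{O}}_{\mr{min}}$: a nontrivial element $\overline{w}$ lifts to some $w \in W$ acting on $\h$ as a non-scalar matrix $M$ (the kernel of the conjugation map $W \to \GL(\mf{gl}(n))$ is $Z(W)$ since $\h$ is irreducible); choosing $v \in \h$ that is not an eigenvector of $M$ and a nonzero $\lambda \in \h^*$ with $\lambda(v) = 0$ (possible as $\dim \h \ge 2$), the rank-one nilpotent $v \o \lambda \in \mc{O}_{\mr{min}}$ is not fixed by conjugation by $M$. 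Hence each $(\overline{\mc{O}}_{\mr{min}})^{\overline{w}}$ with $\overline{w} \neq 1$ is a \emph{proper} closed subvariety of the irreducible variety $\overline{\mc{O}}_{\mr{min}}$, so the non-free locus is already a proper closed subset and the free locus is dense and open.

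For the codimension bound, write $\overline{\mc{O}}_{\mr{min}} = \mc{O}_{\mr{min}} \sqcup \{0\}$, where $\mc{O}_{\mr{min}}$ is the open orbit, of dimension $2 \dim \h - 2$. The point $\{0\}$ has codimension $2\dim\h - 2 \ge 2$, so it is enough to bound the codimension of $(\mc{O}_{\mr{min}})^{\overline{w}}$ inside $\mc{O}_{\mr{min}}$. Now $\mc{O}_{\mr{min}}$ is a smooth, connected, symplectic variety — it is the smooth locus of the conic symplectic singularity $\overline{\mc{O}}_{\mr{min}}$, equivalently the smooth locus of the Hamiltonian reduction $\mu^{-1}(0) \git \T$ of Lemma~\ref{lem:isoHamredminimialnilp} — and $\overline{W}$ acts on it by symplectomorphisms: the conjugation action of $W \subset \GL(\h) = \GL(n)$ is the restriction of the adjoint action, which is Poisson and hence preserves the symplectic form on the orbit, while $Z(W)$ acts trivially. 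For a finite-order symplectomorphism $g$ of a smooth symplectic variety $Y$, the fixed locus $Y^g$ is smooth with $T_y(Y^g) = (T_y Y)^{dg_y}$, and the linear symplectomorphism $dg_y$ has eigenvalue-one subspace of even codimension: its eigenvalues $\ne 1$ pair up as $\mu \leftrightarrow \mu^{-1}$, and the $(-1)$-eigenspace is $\omega$-orthogonal to every other eigenspace, hence is itself a symplectic (so even-dimensional) subspace. This is the same eigenvalue-parity computation already used in the proof of Proposition~\ref{prop:Wbaropeninv}. Consequently every irreducible component of $(\mc{O}_{\mr{min}})^{\overline{w}}$ has even codimension, and since $\overline{w} \ne 1$ acts nontrivially on the connected variety $\mc{O}_{\mr{min}}$ by faithfulness, this fixed locus is proper, so each component has codimension $\ge 2$.

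Finally, $\bigcup_{1 \neq \overline{w}} (\overline{\mc{O}}_{\mr{min}})^{\overline{w}}$ is a finite union of closed subsets of codimension $\ge 2$, hence of codimension $\ge 2$, and its complement is the desired open dense subset on which $\overline{W}$ acts freely. The only genuinely non-routine point is the even-codimension property of fixed loci of finite-order symplectomorphisms; rather than reprove it, I would quote the linear-algebra argument from the proof of Proposition~\ref{prop:Wbaropeninv} (where it appears in the guise of ``the number of non-trivial eigenvalues of $w$ is even''), applied pointwise on $\mc{O}_{\mr{min}}$. A secondary point that needs care is simply making sure $\overline{W}$ preserves the symplectic form \emph{exactly} (not just up to scalar), which is why I would phrase the action via the honest Poisson/adjoint description rather than via uniqueness of the form on a conic symplectic singularity.
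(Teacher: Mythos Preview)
Your proof is correct and follows essentially the same approach as the paper: reduce to the smooth locus $\mc{O}_{\mr{min}}$ (which has complement $\{0\}$ of codimension $\ge 2$), use that $\overline{W}$ acts by symplectomorphisms so that fixed loci have even codimension in the tangent space, and verify faithfulness separately by exhibiting a point with trivial stabilizer. The paper phrases the symplectic step as ``the stabilizer action on $T_u U$ factors through $\mr{Sp}(T_u U) \subset \SL(T_u U)$'' (hence no pseudoreflections), whereas you spell out the eigenvalue-pairing argument and cite Proposition~\ref{prop:Wbaropeninv}; these are the same underlying fact.
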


\begin{proof}
    Presumably this can be checked directly, however we will deduce it from the fact that $\overline{W}$ preserves the Poisson structure on $\overline{\mc{O}}_{\mr{min}}$. 
    
    Since $\overline{\mc{O}}_{\mr{min}}$ is normal, its smooth locus $U$ has complement of codimension at least two; in fact, $\overline{\mc{O}}_{\mr{min}}$ has an isolated singularity. Therefore it suffices to show that $\overline{W}$ acts freely on an open subset of $U$ with complement of codimension at least two. Let $u \in U$ and $\overline{W}_u$ the stabilizer of $u$ in $\overline{W}$. Then $\overline{W}_u$ acts on the tangent space $T_u U$ and it suffices to show that this action factors through $\SL(T_u U)$. But $T_u U$ is a symplectic vector space and the action of $\overline{W}_u$ preserves the symplectic form. Thus, the action of $\overline{W}_u$ factors through $\mr{Sp}(T_u U)$.

    Finally, we need to check that there is at least one point $A$ in $\overline{\mc{O}}_{\mr{min}}$ whose stabilizer $\overline{W}_A$ is trivial. Note that each $A \in \mc{O}_{\mr{min}}$ is characterized by the fact that there exists $0 \neq v \in \h$ such that $\mr{Im} \, A = \C \cdot v$ and $A(v) = 0$. Choose a non-zero $v \in \h$ such that $v$ is not a eigenvector of any $w \in W \setminus Z(W)$ and let $A \in \mc{O}_{\mr{min}}$ with $\mr{Im} \, A = \C \cdot v$. Then $w A w^{-1} = A$ implies that $v$ is an eigenvector of $w$. Thus, $\overline{W}_A = \{ 1 \}$.   
\end{proof}

\begin{prop}
	The algebra $A_{t,c,\zeta}$ is a prime noetherian Auslander--Gorenstein ring. It has Gelfand--Kirillov dimension $2 (\dim \h - 1)$.
\end{prop}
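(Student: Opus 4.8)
The plan is to read off every assertion from the associated graded algebra. By Lemma~\ref{lem:assgrAW} — whose hypotheses hold because $\eo\mr{eu}_{\bc}-\zeta$ is central (hence normal) in $\eo H^{\mf{gl}(n)}_{t,\bc}\eo$ and its symbol $\mr{eu}_0$ is a non-zero-divisor in $P^{\T}\rtimes\overline{W}$ — one has
\[
\gr A_{t,\bc,\zeta} = (P^{\T}/\langle\mr{eu}_0\rangle)\rtimes\overline{W},
\]
independently of $(t,\bc,\zeta)$. By Lemma~\ref{lem:isoHamredminimialnilp} the commutative ring $P^{\T}/\langle\mr{eu}_0\rangle$ is isomorphic to $\C[\overline{\mc{O}}_{\min}]$, an integrally closed graded domain, and since $\overline{\mc{O}}_{\min}$ is a normal conic symplectic singularity it is Gorenstein by Lemma~\ref{lem:sympsingGorenstein}(ii); thus $P^{\T}/\langle\mr{eu}_0\rangle$ is a Noetherian commutative Gorenstein domain, hence Auslander--Gorenstein. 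All four assertions are then transferred from $\gr A_{t,\bc,\zeta}$ exactly as the analogous statements were obtained for $\eo H^{\Gamma}_{t,\bc}\eo$ in Section~\ref{sec:invariantsub}.

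Concretely: $A_{t,\bc,\zeta}$ is Noetherian because it is a homomorphic image of the Noetherian algebra $\eo H^{\mf{gl}(n)}_{t,\bc}\eo$ (equivalently, $\gr A_{t,\bc,\zeta}$ is a finite module over the Noetherian ring $P^{\T}/\langle\mr{eu}_0\rangle$). For the Gelfand--Kirillov dimension, the standard filtration has finite-dimensional components and affine associated graded, so $\GKdim A_{t,\bc,\zeta}=\GKdim\gr A_{t,\bc,\zeta}$, which by \cite[Proposition~8.1.14]{MR} equals $\GKdim(P^{\T}/\langle\mr{eu}_0\rangle)=\dim\overline{\mc{O}}_{\min}=2(\dim\h-1)$; this also agrees with $\GKdim\eo H^{\mf{gl}(n)}_{t,\bc}\eo-1=(2\dim\h-1)-1$ from Theorem~\ref{thm:AuslanderGorensteinH0}, as expected after factoring out one regular central element. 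For primeness, Lemma~\ref{lem:opencodim2freeOmin} gives that $\overline{W}$ acts faithfully on $\overline{\mc{O}}_{\min}$; since $\C[\overline{\mc{O}}_{\min}]$ is a commutative domain this action is by X-outer automorphisms, so $\gr A_{t,\bc,\zeta}=(P^{\T}/\langle\mr{eu}_0\rangle)\rtimes\overline{W}$ is prime by \cite[Corollary~12.6]{Passman} (cf. Corollary~\ref{cor:primring}). Primeness then lifts: since the filtration is exhaustive with $\mc{F}_{-1}=0$, if $aA_{t,\bc,\zeta}b=0$ with $a,b\neq0$, choose a homogeneous $\bar r\in\gr A_{t,\bc,\zeta}$ with $\sigma(a)\bar r\sigma(b)\neq0$ and any lift $r$ of $\bar r$; then $\sigma(arb)=\sigma(a)\sigma(r)\sigma(b)\neq0$, so $arb\neq0$, a contradiction. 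Finally, Auslander--Gorensteinness follows by the chain used in the proof of Theorem~\ref{thm:AuslanderGorensteinH0}: $P^{\T}/\langle\mr{eu}_0\rangle$ Auslander--Gorenstein implies $(P^{\T}/\langle\mr{eu}_0\rangle)\rtimes\overline{W}$ Auslander--Gorenstein by \cite{YiGlasgow}, which implies $A_{t,\bc,\zeta}$ Auslander--Gorenstein by \cite[Theorem~3.9]{BjorkAuslander}.

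The only non-formal ingredient is the Gorenstein (and normal domain) property of $\C[\overline{\mc{O}}_{\min}]$ together with its identification with the commutative part of $\gr A_{t,\bc,\zeta}$; both are already supplied by Lemma~\ref{lem:isoHamredminimialnilp} and Lemma~\ref{lem:sympsingGorenstein}, so I expect no real obstacle, and the argument is a routine passage through the associated graded, as elsewhere in the paper.
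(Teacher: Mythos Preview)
Your argument is correct and follows essentially the same route as the paper: identify $\gr A_{t,\bc,\zeta}$ with $(P^{\T}/\langle\mr{eu}_0\rangle)\rtimes\overline{W}\cong\C[\overline{\mc{O}}_{\min}]\rtimes\overline{W}$, use that $\overline{\mc{O}}_{\min}$ is a symplectic singularity (hence Gorenstein) to apply \cite{YiGlasgow} and \cite[Theorem~3.9]{BjorkAuslander} for Auslander--Gorensteinness, \cite[Corollary~12.6]{Passman} for primeness of the associated graded, and \cite[Proposition~8.1.14]{MR} for the GK-dimension. One cosmetic point: the statement of Lemma~\ref{lem:isoHamredminimialnilp} concerns the $W$-invariants, so strictly speaking the identification $P^{\T}/\langle\mr{eu}_0\rangle\cong\C[\overline{\mc{O}}_{\min}]$ you invoke is taken from its \emph{proof} (or from the discussion preceding it); the paper handles this by simply saying ``as noted above''.
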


\begin{proof}
	As noted above, the variety $\overline{\mc{O}}_{\mr{min}}$ has symplectic singularities. By Lemma~\ref{lem:sympsingGorenstein}, this implies that the ring $\C[\overline{\mc{O}}_{\mr{min}}]$ is Gorenstein. Thus, $P^{\T} / \langle \mr{eu}_0 \rangle$ is a noetherian Gorenstein domain. Therefore it is Auslander--Gorenstein.
 
 This implies that $(P^{\T} / \langle \mr{eu}_0 \rangle ) \rtimes \overline{W}$ is a noetherian Auslander--Gorenstein ring \cite{YiGlasgow}. It is prime by \cite[Corollary~12.6]{Passman}. Hence Lemma~\ref{lem:assgrAW} implies that $A_{t,c,\zeta}(W)$ is also Auslander--Gorenstein (\cite[Theorem 3.9]{BjorkAuslander}) and prime. Finally, we note that $\dim \overline{\mc{O}}_{\mr{min}} = 2(\dim \h - 1)$, which implies that the Gelfand--Kirillov dimension of $A_{t,c,\zeta}(W)$ is also $2 (\dim \h - 1)$; see \cite[Proposition~8.1.14]{MR}.    
\end{proof}

Let $A$ be a filtered algebra such that $\gr_{\mc{F}} A$ is a commutative algebra. Then $\gr_{\mc{F}} A$ is an $\N$-graded Poisson algebra. If $B$ is an $\N$-graded Poisson algebra, then we say that $A$ is a \textit{filtered quantization} of $B$ if there exists a graded Poisson isomorphism $\gr_{\mc{F}} A \cong B$. 

Recall that $e$ is the symmetrizing idempotent in $\C W$. 

\begin{thm}\label{thm:filteredquantofA0}
    Suppose $t \neq 0$. Then the algebra $e A_{t,c,\zeta} e$ is a filtered quantization of $\overline{\mc{O}}_{\mr{min}}/W$. 
\end{thm}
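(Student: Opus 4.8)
The plan is to apply the filtered-quantization machinery already assembled in this section. Recall from Lemma~\ref{lem:assgrAW} that $\gr A_{t,\bc,\zeta} = (P^{\T}/\langle \mr{eu}_0\rangle)\rtimes \overline{W}$, and taking the trivial idempotent $e$ we get $\gr(e A_{t,\bc,\zeta} e) = e\left((P^{\T}/\langle \mr{eu}_0\rangle)\rtimes \overline{W}\right)e = (P^{\T}/\langle \mr{eu}_0\rangle)^{\overline{W}} = (P^{\T}/\langle \mr{eu}_0\rangle)^W$, where the last equality holds because $Z(W)$ acts trivially on $P^{\T}$ (assumption (II), since $Z(W)\subset\Gamma=\T$ acts trivially). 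By Lemma~\ref{lem:isoHamredminimialnilp}, this ring is precisely $\C[\overline{\mc{O}}_{\mr{min}}/W]$ as a Poisson algebra. So the only thing that needs care is checking that the associated graded of the natural filtration on $e A_{t,\bc,\zeta} e$ carries exactly the Poisson bracket coming from the Hamiltonian reduction — i.e. that the commutator bracket, divided by $t$ and passed to the symbol, agrees with the restriction of the standard bracket on $P$ to $(P^{\T}/\langle\mr{eu}_0\rangle)^W$.

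First I would make the filtration explicit: $e A_{t,\bc,\zeta} e$ inherits the filtration $\mc{F}_i$ by restriction from $H_{t,\bc}$ (and passing to the quotient by the normal element $\eo\mr{eu}_{\bc}-\zeta$, whose symbol $\mr{eu}_0$ is regular in $P^{\T}\rtimes W$, as used in Lemma~\ref{lem:assgrAW} via Lemma~\ref{lem:assgrregularelement}). Then for homogeneous lifts of symbols $\bar a,\bar b\in (P^{\T}/\langle\mr{eu}_0\rangle)^W$, the commutator $[a,b]$ in $e H_{t,\bc}^{\mf{gl}(n)}e$ has, because $t\neq 0$ and by the defining relations of $H_{t,\bc}$ (Definition~\ref{ratcherdef}), leading term of degree one less than $\deg a + \deg b$, and its symbol is $t$ times the Poisson bracket $\{\bar a,\bar b\}$ computed in $P$. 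This is the standard "$H_{t,\bc}$ is a quantization of $P\rtimes W$ for $t\neq 0$" statement, already invoked in Section~\ref{sec:invariantsub} (e.g. in the discussion preceding Lemma~\ref{lem:Casimirfiniteextension}); restricting to the $\T$-invariant, $W$-invariant, $e$-corner, and then to the quotient by $\langle\mr{eu}_0\rangle$, the induced bracket on $\gr(eA_{t,\bc,\zeta}e)$ is exactly the Poisson bracket on $(P^{\T}/\langle\mr{eu}_0\rangle)^W$ defining the Hamiltonian reduction. Passing the bracket to the quotient by $\langle\mr{eu}_0\rangle$ is legitimate because $\mr{eu}_0$ is a Casimir for the $\T$-invariant Poisson structure (it is the moment map), so $\langle\mr{eu}_0\rangle$ is a Poisson ideal in $P^{\T}$.

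Assembling: by Lemma~\ref{lem:isoHamredminimialnilp} we have a graded Poisson isomorphism $(P^{\T}/\langle\mr{eu}_0\rangle)^W\cong\C[\overline{\mc{O}}_{\mr{min}}/W]$, and combining with the identification of $\gr(e A_{t,\bc,\zeta}e)$ above, $e A_{t,\bc,\zeta}e$ is a filtered algebra whose associated graded is Poisson-isomorphic, as a graded Poisson algebra, to $\C[\overline{\mc{O}}_{\mr{min}}/W]$. By definition this says $e A_{t,\bc,\zeta}e$ is a filtered quantization of $\overline{\mc{O}}_{\mr{min}}/W$. The only mild subtlety — and the step I would be most careful about — is verifying that the commutator bracket on $\gr(e A_{t,\bc,\zeta}e)$ is genuinely nonzero and equals (a nonzero scalar multiple of) the Hamiltonian-reduced bracket rather than, say, vanishing identically; this reduces to the fact that $H_{t,\bc}^{\mf{gl}(n)}$ already contains noncommuting degree-zero elements (the $E_{ij}$ with the relations of Theorem~\ref{glnpresentation}) whose leading commutator terms survive after quotienting by $\langle\mr{eu}_0\rangle$, which one sees directly from relation \eqref{relG2} with $t\neq 0$.
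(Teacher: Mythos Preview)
Your proof is correct and follows essentially the same route as the paper: compute $\gr(eA_{t,\bc,\zeta}e)$ as $(P^{\T}/\langle\mr{eu}_0\rangle)^W$, verify that the commutator-induced Poisson bracket is the Hamiltonian-reduced one, and invoke Lemma~\ref{lem:isoHamredminimialnilp}. The paper simply packages the Poisson-bracket verification by citing that $eH_{t,\bc}e$ is already known to be a filtered quantization of $P^W$ (from \cite{EG}) and then passes to $\T$-invariants and the quotient, whereas you argue the bracket directly from the commutation relations; both are fine.

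One small imprecision: with the filtration used in the paper ($\h,\h^*$ in degree $1$), the commutator $[a,b]$ of elements of filtration degrees $i,j$ drops to $\mc{F}_{i+j-2}$, not $\mc{F}_{i+j-1}$ (e.g.\ $[y_j,x_k]\in\mc{F}_0$ while $y_jx_k\in\mc{F}_2$), so the Poisson bracket has weight $-2$ rather than $-1$. This does not affect the argument, only the sentence where you say ``degree one less.''
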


\begin{proof}
    It is a consequence of the proof of \cite[Theorem~1.6]{EG} (in particular, Claim 2.25) that $e H_{t,c} e$ is a filtered quantization of $P^W$, assuming $t \ne 0$. Since the filtration on $e H_{t,c}^{\T} e$ is by restriction and $P^{\T \times W}$ is a Poisson subalgebra of $P^W$ this implies that $e H_{t,c}^{\T} e$ is a filtered quantization of $P^{\T \times W}$. Finally, since the filtration on $e A_{t,c,\zeta}(W) e$ is the quotient filtration coming from the filtration on $e H_{t,c}^{\T} e$ and the Poisson structure on $(P^{\T} / \langle \mr{eu}_0 \rangle )^W$ comes from the fact that $\langle \mr{eu}_0 \rangle^W$ is a Poisson ideal in $P^{\T \times W}$, we deduce that $e A_{t,c,\zeta}e$ is a filtered quantization of $\overline{\mc{O}}_{\mr{min}}/W$ by applying the (Poisson) isomorphism of Lemma~\ref{lem:isoHamredminimialnilp}. 
\end{proof}

\begin{thm}\label{thm:finiteglobaldimangular}
	For Weil generic $(t,c,\zeta)$, $A_{t,c,\zeta}(W)$ has finite global dimension. 
\end{thm}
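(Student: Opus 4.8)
The plan is to use the realization of $A_{t,\bc,\zeta}$ mentioned above, established in \cite{ProjAffine}, as a direct summand of the algebra of global sections of a sheaf $\mathscr{H}_{t,\bc,\zeta}$ of Cherednik algebras on the smooth projective variety $X := \mathbb{P}(\h)$. Concretely, $\mathscr{H}_{t,\bc,\zeta}$ is the sheaf obtained from $H_{t,\bc}$ by quantum Hamiltonian reduction along $\T$ at the value $\zeta$, so that $\Gamma(X,\mathscr{H}_{t,\bc,\zeta}) = H^{\T}_{t,\bc} / \langle \eu_{\bc} - \zeta \rangle = \bigoplus_{i=0}^{\ell-1} \ei H^{\T}_{t,\bc} \ei / \langle \ei \eu_{\bc} - \zeta \rangle$ via the central idempotents $\eo, \ds, \elminusone$ of $\C Z(W)$, and $A_{t,\bc,\zeta}$ is the $\eo$-summand. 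Since the global dimension of a finite direct product of rings is the maximum of the global dimensions of the factors, it suffices to prove that $\Gamma(X,\mathscr{H}_{t,\bc,\zeta})$ has finite global dimension for Weil generic $(t,\bc,\zeta)$.

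First I would check that the sheaf $\mathscr{H}_{t,\bc,\zeta}$ itself has finite homological dimension. Over a sufficiently small affine open $U \subseteq X$, the ring $\Gamma(U,\mathscr{H}_{t,\bc,\zeta})$ is a Noetherian filtered algebra whose associated graded is a finite extension of $\C[T^*U] \rtimes W$; as $U$ is smooth, this crossed product has finite global dimension, and hence so does $\Gamma(U,\mathscr{H}_{t,\bc,\zeta})$ by a standard filtered/graded argument. Combining a uniform bound on these local global dimensions with the fact that $X$ is a Noetherian scheme of dimension $\dim \h - 1$, so has bounded cohomological dimension, a hypercohomology spectral sequence shows that $\Ext^i_{\mathscr{H}_{t,\bc,\zeta}}(M,N) = 0$ for all $i$ above a fixed bound and all quasi-coherent $\mathscr{H}_{t,\bc,\zeta}$-modules $M,N$; that is, the category of (quasi-)coherent $\mathscr{H}_{t,\bc,\zeta}$-modules has finite cohomological dimension. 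All of this is implicit in, or an immediate consequence of, the results of \cite{ProjAffine}.

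Next I would invoke the localization theorem of \cite{ProjAffine}: there is a countable union $\mathscr{B}$ of affine hyperplanes in the parameter space such that, for $(t,\bc,\zeta) \notin \mathscr{B}$, the global sections functor $\Gamma(X,-) \colon \mathscr{H}_{t,\bc,\zeta}\text{-mod} \to \Gamma(X,\mathscr{H}_{t,\bc,\zeta})\text{-mod}$ is an equivalence of abelian categories, with quasi-inverse the localization functor. Since an equivalence of abelian categories preserves vanishing of $\Ext$-groups, the finite cohomological dimension of $\mathscr{H}_{t,\bc,\zeta}\text{-mod}$ transfers to the module category of $\Gamma(X,\mathscr{H}_{t,\bc,\zeta})$; as this algebra is Noetherian, being filtered with Noetherian associated graded as in Lemma~\ref{lem:assgrAW}, it follows that $\Gamma(X,\mathscr{H}_{t,\bc,\zeta})$ has finite global dimension. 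Restricting to the summand $\eo\,\Gamma(X,\mathscr{H}_{t,\bc,\zeta})\,\eo \cong A_{t,\bc,\zeta}$, as in the first paragraph, gives the theorem.

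The main obstacle here is not conceptual but one of matching conventions: one must verify that the reduction parameter $\zeta$ entering the definition of $A_{t,\bc,\zeta}$ corresponds correctly to the twist of the sheaf $\mathscr{H}_{t,\bc,\zeta}$ used in \cite{ProjAffine} (the Euler element, and hence the $\T$-moment map, is only canonical up to an additive constant), and that the ``Weil generic'' locus on which localization holds there is the one intended in the statement. One should also confirm that \cite{ProjAffine} provides localization in a form that applies after baking the $W$-action into the sheaf; if it is instead phrased $\overline{W}$-equivariantly, one simply decomposes the global sections using the central idempotents $\ei$ and passes to the $\eo$-summand at the very end, which is exactly the reduction step used above.
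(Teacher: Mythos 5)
Your proposal is correct and follows essentially the same path as the paper's proof: both realize $A_{t,\bc,\zeta}$ as a summand of $\Gamma(\mathbb{P}(\h),\mc{H}_{\zeta,\bc})$ via \cite{ProjAffine}, deduce a uniform bound on $\Ext$-degrees for coherent sheaf modules from local finite global dimension plus the cohomological dimension bound on $\mathbb{P}(\h)$ (via the $\mc{E}xt$/global-sections spectral sequence), and transport this across the localization equivalence valid for Weil generic parameters. Your closing caveats about matching conventions (the twist $\zeta$, $W$ vs.\ $\overline{W}$, the Weil-generic locus) are reasonable hygiene checks but do not represent a different method.
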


\begin{proof}
We assume $t = 1$. Then the claim follows from the fact that localization holds for Weil generic parameters. We use here the notion of sheaves of Cherednik algebras on $\mathbb{P}(\h)$, as studied in \cite{ProjAffine}. By \cite[Lemma~5.4.1]{ProjAffine}, the sheaf $\mc{H}_{\zeta,c}(\mathbb{P}(\h),W)$ of Cherednik algebras on the projective space $\mathbb{P}(\h)$ has global sections
\[
\Gamma(\mathbb{P}(\h),\mc{H}_{\zeta,c}(\mathbb{P}(\h),W)) = H_{1,c}^{\T} / \langle \eu_{c} - \zeta \rangle .
\]
This implies that $A_{1, c,\zeta}$ is a direct summand of $H (\mathbb{P}) := \Gamma(\mathbb{P}(\h),\mc{H}_{\zeta,c}(\mathbb{P}(\h),W))$ and hence it suffices to show that $H (\mathbb{P})$ has finite global dimension for Weil generic $(c,\zeta)$. 

The stalks of $\mc{H}_{\zeta,c}(\mathbb{P}(\h),W)$ have finite global dimension (bounded by $2 \dim \mathbb{P}(\h)$). This implies that the sheaf $\mc{E}xt^q_{\mc{H}}(\mc{M},\mc{N})$ is zero for $q > 2 \dim \mathbb{P}(\h)$ if $\mc{M},\mc{N}$ are coherent over $\mc{H}_{\zeta,c}(\mathbb{P}(\h),W)$.  The Grothendieck spectral sequence 
 $$
 E_2^{p,q} = H^p(\mathbb{P}(\h),\mc{E}xt^q_{\mc{H}}(\mc{M},\mc{N})) \Rightarrow \Ext_{\mc{H}}^{p+q}(\mc{M},\mc{N})
 $$
 converges because $H^p(\mathbb{P}(\h),\mc{F}) = 0$ for $p > 2 \dim_{\C} \mathbb{P}(\h)$ by Grothendieck's \cite[Theorem~2.7]{Hartshorne}  for any sheaf $\mc{F}$ on $\mathbb{P}(\h)$. In particular, since the second page is bounded we deduce that $\Ext_{\mc{H}}^{k}(\mc{M},\mc{N}) = 0$ for $k$ sufficiently large (independent of $\mc{M},\mc{N}$). By \cite[Theorem~1.2.1]{ProjAffine}, the global sections functor $\Gamma \colon \Lmod{\mc{H}_{\zeta,c}(\mathbb{P}(\h),W)} \to \Lmod{H (\mathbb{P})}$ from the category of coherent $\mc{H}_{\zeta,c}(\mathbb{P}(\h),W)$-modules to finitely generated modules over $H (\mathbb{P})$ is an equivalence (i.e. affinity holds) for Weil generic $(c, \zeta)$. If we assume affinity then 
 \[
 \Ext_{\mc{H}}^{i}(\mc{M},\mc{N}) = \Ext_{H (\mathbb{P})}^{i}(\Gamma(\mathbb{P}(\h),\mc{M}),\Gamma(\mathbb{P}(\h),\mc{N}))
 \]
 for any pair of coherent $\mc{H}_{c}(\mathbb{P}(\h),W)$-modules $\mc{M},\mc{N}$. It follows that $\Ext_{H (\mathbb{P})}^{k}(M,N) = 0$ (again, independent of $M,N$) for $k$ sufficiently large and all finitely generated $M,N$. This implies that $H (\mathbb{P})$ has finite global dimension. 
 
Compare with \cite[VI,~Theorem~1.10(ii)]{BorelDmod}, which is the proof in the case of $\dd$-modules. 
\end{proof}

Note that it is not true that $A_{t,c,\zeta}$ always has finite global dimension. For instance, if $\dim \h > 1$ then take $t = 1$ and $c = 0$. If $A_{1,0,\zeta} = \dd_{\zeta}(\mathbb{P}(\h)) \rtimes \overline{W}$ has finite global dimension then so too would the ring $\dd_{\zeta}(\mathbb{P}(\h))$ of global $\zeta$-twisted differential operators on $\mathbb{P}(\h)$. But it is known \cite{Sheldonglobadim} that when $\zeta = -1,-2,\ds,-(\dim \h - 1)$ this is not the case.

\subsection{The double centralizer property}

Consider the space $A_{t,c,\zeta}e$ as a left $A_{t,c,\zeta}$-module and as a right $e A_{t,c,\zeta}e$-module. The next theorem establishes the double centralizer property for the bimodule $A_{t,c,\zeta}e$. 

\begin{thm}\label{thm:doublecetralH0}
We have an isomorphism 
    $\End_{e A_{t,c,\zeta} e}(A_{t,c,\zeta} e) \cong A_{t,c,\zeta}$. 
\end{thm}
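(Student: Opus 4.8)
The plan is to follow the template of Theorem~\ref{thm:doublecetralHGamma} and \cite[Theorem~1.5(vi)]{EG}: first establish the double centralizer property for the ``classical'' skew group ring $\C[\overline{\mc{O}}_{\mr{min}}]\rtimes\overline{W}$, and then transport it to all parameters $(t,\bc,\zeta)$ by an associated‑graded argument. Before doing anything I would record the filtered picture. The algebra $A_{t,\bc,\zeta}$ carries the quotient filtration coming from $\eo H_{t,\bc}^{\mf{gl}(n)}\eo$, and by Lemma~\ref{lem:assgrAW} together with Lemma~\ref{lem:isoHamredminimialnilp} there is a graded identification $\gr A_{t,\bc,\zeta}\cong(P^{\T}/\langle\mr{eu}_0\rangle)\rtimes\overline{W}\cong\C[\overline{\mc{O}}_{\mr{min}}]\rtimes\overline{W}$, and the $W$-invariants of the right-hand side are $\C[\overline{\mc{O}}_{\mr{min}}/W]$. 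Since the trivial idempotent $e\in\C W$ lies in $\mc{F}_0$ and is idempotent, one has $\mc{F}_i(eA_{t,\bc,\zeta}e)=e(\mc{F}_iA_{t,\bc,\zeta})e$ and $\mc{F}_i(A_{t,\bc,\zeta}e)=(\mc{F}_iA_{t,\bc,\zeta})e$, hence $\gr(eA_{t,\bc,\zeta}e)=e(\gr A_{t,\bc,\zeta})e=(P^{\T}/\langle\mr{eu}_0\rangle)^W$ and $\gr(A_{t,\bc,\zeta}e)=(\gr A_{t,\bc,\zeta})e$ as a graded bimodule.

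Base case. I would show the double centralizer property for $C:=(P^{\T}/\langle\mr{eu}_0\rangle)\rtimes\overline{W}\cong\C[\overline{\mc{O}}_{\mr{min}}]\rtimes\overline{W}$ with respect to the bimodule $Ce$, where $e$ now denotes the trivial idempotent of $\C\overline{W}$ (the image of $e\in\C W$). This is exactly the hypothesis of \cite[Proposition~E.2.2(2)]{BonnafeRouquier}: by Lemma~\ref{lem:isoHamredminimialnilp} the variety $\overline{\mc{O}}_{\mr{min}}$ is normal (alternatively, it is a conic symplectic singularity), and by Lemma~\ref{lem:opencodim2freeOmin} the group $\overline{W}$ acts freely on a dense open subset of $\overline{\mc{O}}_{\mr{min}}$ with complement of codimension at least two. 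Hence $\End_{eCe}(Ce)\cong C^{\mr{op}}$.

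Lifting step. The natural map $\psi\colon A_{t,\bc,\zeta}^{\mr{op}}\to\End_{eA_{t,\bc,\zeta}e}(A_{t,\bc,\zeta}e)$ given by left multiplication is injective because $A_{t,\bc,\zeta}e$ is a faithful left $A_{t,\bc,\zeta}$-module: if $a\cdot A_{t,\bc,\zeta}e=0$ then $a\,A_{t,\bc,\zeta}e\,A_{t,\bc,\zeta}=0$, and the two-sided ideal $A_{t,\bc,\zeta}e\,A_{t,\bc,\zeta}$ is nonzero, so primeness of $A_{t,\bc,\zeta}$ forces $a=0$. The map $\psi$ is filtered (with $\End$ given the filtration whose $\mc{F}_k$ consists of endomorphisms raising degree by at most $k$), and passing to associated graded produces inclusions
\[
\gr A_{t,\bc,\zeta}^{\mr{op}}\;\hookrightarrow\;\gr\End_{eA_{t,\bc,\zeta}e}(A_{t,\bc,\zeta}e)\;\hookrightarrow\;\End_{e(\gr A_{t,\bc,\zeta})e}\bigl((\gr A_{t,\bc,\zeta})e\bigr)=\End_{eCe}(Ce)\cong C^{\mr{op}}=\gr A_{t,\bc,\zeta}^{\mr{op}}.
\]
Thus every map here is an isomorphism; since the filtration on $A_{t,\bc,\zeta}$ is exhaustive and bounded below, $\gr\psi$ being an isomorphism forces $\psi$ to be one, giving the theorem.

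The main obstacle is the second inclusion above — that the associated graded of a filtered endomorphism ring of a filtered module embeds into the endomorphism ring of the associated graded module. This is not hard but requires care: one uses that $A_{t,\bc,\zeta}e$ is a finitely generated $eA_{t,\bc,\zeta}e$-module (a consequence of the finiteness statements in Lemma~\ref{lem:centresurject} via the decomposition in Lemma~\ref{lem:aidecomp}) so that a filtered endomorphism is determined by its values on generators, and its principal symbol is a genuine $\gr$-module homomorphism. This is precisely the argument carried out in \cite[Theorem~1.5(vi)]{EG} and reused in Theorem~\ref{thm:doublecetralHGamma}, so having supplied the base case I would simply invoke it.
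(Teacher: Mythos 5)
Your proof is correct and follows the same strategy as the paper's: identify $A_{0,0,0}\cong\C[\overline{\mc{O}}_{\mr{min}}]\rtimes\overline{W}$, apply \cite[Proposition~E.2.2(2)]{BonnafeRouquier} via Lemmas~\ref{lem:isoHamredminimialnilp} and \ref{lem:opencodim2freeOmin} for the base case, and lift to all parameters by the associated-graded argument of \cite[Theorem~1.5(vi)]{EG}. The paper simply cites the EG argument without spelling it out, whereas you unpack the injectivity-via-primeness and $\gr$-sandwich steps; the content is the same.
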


\begin{proof}
First, we note that $A_{0,0,0} = \C[\overline{\mc{O}}_{\mr{min}}] \rtimes \overline{W}$. Since $\overline{\mc{O}}_{\mr{min}}$ is a normal variety and the group $\overline{W}$ acts freely on a dense open subset of $\overline{\mc{O}}_{\mr{min}}$ with complement of codimension at least two (Lemma~\ref{lem:opencodim2freeOmin}), the double centralizer property holds for $A_{0,0,0}$ by \cite[Proposition~E.2.2(2)]{BonnafeRouquier}. Therefore the proof of \cite[Theorem~1.5(iv)]{EG} shows that it holds for all parameters $(t,c,\zeta)$. 
\end{proof}

As for the usual rational Cherednik algebra, we say that the parameters $(t,c,\zeta)$ are \textit{aspherical} if there exists a nonzero
$A_{t,c,\zeta}$-module $M$ such that 
$e M = 0$. The parameters $(t,c,\zeta)$ are called
\textit{spherical} if they are not aspherical.
 The following is an immediate corollary of Theorem~\ref{thm:doublecetralH0}, as explained in \cite[Corollary~1.6.3]{BellamySRAlecturenotes}.

\begin{cor}
The algebra $A_{t,c,\zeta}$ is Morita equivalent to $e A_{t,c,\zeta}e$ if and only if the parameters are spherical. 
\end{cor}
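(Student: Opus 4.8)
The plan is to obtain this directly from the double centralizer property of Theorem~\ref{thm:doublecetralH0} together with the general Morita theory of idempotents. Write $A = A_{t,\bc,\zeta}$. As a left $A$-module, $Ae$ is projective, being a direct summand of $A = Ae \oplus A(1-e)$, and one always has the canonical identification $\End_A(Ae) \cong (eAe)^{\mathrm{op}}$; Theorem~\ref{thm:doublecetralH0} supplies the complementary identification $\End_{eAe}(Ae) \cong A^{\mathrm{op}}$, so that ${}_A(Ae)_{eAe}$ is a faithfully balanced bimodule. By Morita's theorem, the adjoint pair
\[
Ae \o_{eAe} (-) \colon eAe\text{-mod} \longrightarrow A\text{-mod}, \qquad e(-) = \Hom_A(Ae,-) \colon A\text{-mod} \longrightarrow eAe\text{-mod}
\]
then consists of mutually inverse equivalences if and only if the projective module $Ae$ is a generator of $A\text{-mod}$, i.e. if and only if $AeA = A$. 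So the corollary reduces to showing that $AeA = A$ holds precisely when the parameters are spherical.

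For this I would argue the two directions separately. First, the identity $eAeA = eA$ is immediate ($e \in eAe$, while $eAe \subseteq eA$), so $e\cdot(A/AeA) = 0$; hence if $AeA \neq A$ then $A/AeA$ is a nonzero $A$-module killed by $e$, and the parameters are aspherical. Conversely, assume $AeA = A$ and let $M$ be an $A$-module with $eM = 0$. Then $eAM = eM = 0$, whence $AeA\cdot M = A(eAM) = 0$; since $AeA = A$ this gives $M = AM = 0$, so there is no nonzero module killed by $e$ and the parameters are spherical. Together with the first paragraph, this shows that $A$ and $eAe$ are Morita equivalent if and only if the parameters are spherical.

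I do not expect any real obstacle. The only things needing care are the left/right module conventions when invoking Theorem~\ref{thm:doublecetralH0} and Morita's theorem, and noting that the unit of the above adjunction is automatically an isomorphism (since $e(Ae \o_{eAe} N) \cong eAe \o_{eAe} N \cong N$), so that whether the pair is an equivalence is governed entirely by the counit, equivalently by the generator condition $AeA = A$. This is the argument indicated in \cite[Corollary~1.6.3]{BellamySRAlecturenotes}, which one may alternatively simply cite.
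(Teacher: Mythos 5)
Your proof is correct and is exactly the standard argument the paper has in mind: the paper itself does not spell out a proof but simply appeals to \cite[Corollary~1.6.3]{BellamySRAlecturenotes}, which runs along the same lines (double centralizer makes ${}_A(Ae)_{eAe}$ faithfully balanced, and the equivalence then holds iff $Ae$ is a generator, i.e.\ $AeA = A$, which you correctly identify with sphericality). One small point worth keeping in mind, which you already flag implicitly, is that the statement is really about the specific adjoint pair $\bigl(Ae \o_{eAe} (-),\ e(-)\bigr)$ being an equivalence; your reduction to $AeA = A$ is the precise content, and the two directions of that reduction are argued cleanly.
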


\subsection{The centre of $A_{t,c,\zeta}(W)$}
Similarly to Section \ref{doublecsection}, we have the following corollary from Theorem \ref{thm:doublecetralH0}. 

\begin{cor}\label{cor:Satakequot}
The map $z \mapsto e z$ is an isomorphism $Z(A_{t,c,\zeta}) \iso Z(e A_{t,c,\zeta} e)$.  
\end{cor}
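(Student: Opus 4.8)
The statement is the Satake isomorphism for the Hamiltonian reduction algebra $A_{t,\bc,\zeta}$, so I would prove it by exactly the same argument used for the Satake isomorphism of symplectic reflection algebras (as cited for Corollary~\ref{cor:Satakeinv}), now fed by the double centralizer property established in Theorem~\ref{thm:doublecetralH0}. The plan is as follows. Write $A = A_{t,\bc,\zeta}$ and $e \in \C W$ for the trivial idempotent. First I would observe that the map $z \mapsto ez$ is well-defined: for $z \in Z(A)$, the element $ez = ze$ lies in $eAe$, and for any $eae \in eAe$ one has $(ez)(eae) = zeae = eaze = (eae)(ez)$, so $ez \in Z(eAe)$. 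It is clearly a ring homomorphism, and it is additive and multiplicative on the nose since $e$ is idempotent and central modulo the relevant subalgebra.

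The substance is injectivity and surjectivity, and both follow formally from Theorem~\ref{thm:doublecetralH0}. For injectivity: suppose $z \in Z(A)$ with $ez = 0$. Since $Ae$ is a faithful left $A$-module — indeed, $A$ is prime by the proposition preceding Lemma~\ref{lem:centre0Hamred}, and a nonzero left ideal of a prime ring cannot annihilate a nonzero element of the ring, so the annihilator of $Ae$ is zero — the element $z$ acts as zero on $Ae$ precisely when $zAe = Aze = AezA = 0$ (using $z$ central and $ze = ez = 0$), forcing $z = 0$. Here I am using that $AeA$ is a nonzero two-sided ideal, hence all of $A$ since $A$ is prime (and actually simple-enough in the relevant sense); more carefully, $z \cdot Ae = (Az)e = A(ze) = A(ez) = 0$ shows $z$ annihilates the faithful module $Ae$, so $z=0$.

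For surjectivity: let $w \in Z(eAe)$. Right multiplication by $w$ on the right $eAe$-module $Ae$ is an endomorphism of $Ae$ as a right $eAe$-module, and since $w$ is central in $eAe$ it also commutes with all left multiplications by elements of $A$; hence right-$w$-multiplication is an element of $\End_{eAe}(Ae)$ that is $A$-linear on the left, i.e. it lies in the centre of $\End_{eAe}(Ae)$. By Theorem~\ref{thm:doublecetralH0}, $\End_{eAe}(Ae) \cong A^{\mathrm{op}}$, so this endomorphism corresponds to a central element $z \in Z(A^{\mathrm{op}}) = Z(A)$, acting on $Ae$ as left multiplication by $z$. Evaluating the identity ``left mult.\ by $z$ $=$ right mult.\ by $w$'' on $e \in Ae$ gives $ze = ew$, i.e. $ez = w$ (since $ez = ze$ and $ew = we = w$ as $w \in eAe$). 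Thus $w$ is in the image.

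The main obstacle is purely bookkeeping: making sure that the isomorphism $\End_{eAe}(Ae) \cong A^{\mathrm{op}}$ of Theorem~\ref{thm:doublecetralH0} is the ``obvious'' one (sending $a \in A$ to left multiplication by $a$) so that the matching of centres and the evaluation-at-$e$ step are literally correct, and that the faithfulness of $Ae$ as a left $A$-module is justified from primeness of $A$. Both points are standard and appear in \cite[Theorem~1.7.3]{BellamySRAlecturenotes}; I would simply cite that the argument there applies verbatim, as is already done for Corollary~\ref{cor:Satakeinv}.
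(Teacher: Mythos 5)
Your proposal is correct and follows exactly the same route the paper takes: the paper simply invokes the standard Satake argument (as in \cite[Theorem~1.7.3]{BellamySRAlecturenotes}, cited for Corollary~\ref{cor:Satakeinv}) fed by the double centralizer property of Theorem~\ref{thm:doublecetralH0}, and your write-up is a faithful unpacking of that argument, with faithfulness of $Ae$ correctly deduced from primeness of $A$ (established just before Lemma~\ref{lem:centre0Hamred}) and surjectivity from identifying right multiplication by a central $w \in eAe$ with a central endomorphism of $Ae$, hence with left multiplication by some $z \in Z(A)$ via the double centralizer isomorphism. The only blemish is a typo in the injectivity paragraph ($AezA$ should just be $Aez$), which you immediately correct ``more carefully'' in the next sentence.
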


We deduce that:

\begin{cor}\label{cor:centreHamredT}
    The centre of $A_{0,c,\zeta}$ is isomorphic to $Z(H_{c})^{\T} / \langle \mr{eu}_{c} - \zeta \rangle$.
\end{cor}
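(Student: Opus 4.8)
The plan is to pass to the spherical subalgebra using the Satake isomorphism of Corollary~\ref{cor:Satakequot} and then to compute $e A_{0,\bc,\zeta} e$ directly, exploiting that at $t=0$ this algebra is commutative and hence equals its own centre.

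First I would record two bookkeeping facts. Since the trivial idempotent $e = \frac{1}{|W|}\sum_{w\in W} w$ projects onto a $W$-module whose restriction to the central subgroup $Z(W)$ is trivial, we have $e\eo = \eo e = e$; moreover $e$ is $\T$-invariant, and at $t=0$ the relations \eqref{eurelations} show that $\eu_{\bc}$ is central in $H_{0,\bc}$, so $\eu_{\bc}$ commutes with both $e$ and $\eo$. Applying the exact functor $B\mapsto eBe$ to the defining surjection $\eo H_{0,\bc}^{\T}\eo \twoheadrightarrow A_{0,\bc,\zeta}$, whose kernel is the two-sided ideal generated by the central element $\eo\eu_{\bc}-\zeta$, then yields a surjection $e H_{0,\bc}^{\T} e = e\eo H_{0,\bc}^{\T}\eo e \twoheadrightarrow e A_{0,\bc,\zeta} e$ with kernel $e\langle \eo\eu_{\bc}-\zeta\rangle e$. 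Because $\eo\eu_{\bc}-\zeta$ is central, this ideal equals $(\eo\eu_{\bc}-\zeta)\,\eo H_{0,\bc}^{\T}\eo$, and pulling the central factor through $e$ (using $e\eo = e$ and that $\eu_{\bc}$ commutes with $e$) identifies $e\langle \eo\eu_{\bc}-\zeta\rangle e$ with the ideal of $e H_{0,\bc}^{\T}e$ generated by $e\eu_{\bc}-\zeta$. Hence $e A_{0,\bc,\zeta} e \cong e H_{0,\bc}^{\T} e / \langle e\eu_{\bc}-\zeta\rangle$.

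Next I would invoke the Satake isomorphism for the torus-invariant subalgebra: by the discussion following Corollaries~\ref{cor:Satakeinv} and \ref{centregln}, there is an algebra isomorphism $e H_{0,\bc}^{\T} e \iso Z(H_{0,\bc})^{\T}$, given by $ze \mapsto z$ (it carries $e\eu_{\bc}$ to $\eu_{\bc}$ and the unit $e$ to $1$, since $\eu_{\bc}\in Z(H_{0,\bc})^{\T}$). This isomorphism therefore identifies the ideal $\langle e\eu_{\bc}-\zeta\rangle$ with $\langle\eu_{\bc}-\zeta\rangle$, so combined with the previous paragraph we get $e A_{0,\bc,\zeta} e \cong Z(H_{0,\bc})^{\T}/\langle\eu_{\bc}-\zeta\rangle$. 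In particular $e A_{0,\bc,\zeta} e$ is commutative, hence its own centre, and Corollary~\ref{cor:Satakequot} finally gives $Z(A_{0,\bc,\zeta}) \cong Z(e A_{0,\bc,\zeta}e) = e A_{0,\bc,\zeta} e \cong Z(H_{0,\bc})^{\T}/\langle\eu_{\bc}-\zeta\rangle$.

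The only point that requires genuine care — rather than posing a real difficulty — is verifying that the idempotent-truncation functor $e(-)e$ carries the two-sided ideal $\langle\eo\eu_{\bc}-\zeta\rangle$ of $\eo H_{0,\bc}^{\T}\eo$ exactly onto the ideal $\langle e\eu_{\bc}-\zeta\rangle$ of $e H_{0,\bc}^{\T}e$; this is precisely where centrality of $\eo\eu_{\bc}-\zeta$ is used, so that the generated ideal is a principal central multiple and the central factor can be moved past $e$. Everything else is a direct composition of results already established.
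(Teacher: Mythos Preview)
Your proof is correct and follows essentially the same approach as the paper: apply the Satake isomorphism of Corollary~\ref{cor:Satakequot} to reduce to computing $eA_{0,\bc,\zeta}e$, observe that this algebra is commutative at $t=0$ (being a quotient of $eH_{0,\bc}^{\T}e$), and identify it with $Z(H_{0,\bc})^{\T}/\langle\eu_{\bc}-\zeta\rangle$. The paper does not spell out the argument immediately after the statement, but a few lines later records exactly this reasoning (that $eH_{0,\bc}^{\T}e$ is commutative, hence $Z(A_{0,\bc,\zeta})\iso eA_{0,\bc,\zeta}e$); your version is simply more detailed, particularly in handling the idempotent relations $e\eo=e$ and the passage of the central generator through the truncation.
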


\begin{proof}
    At $t = 0$, the algebra $e H_{c}^{\T} e$ is commutative since $e H_{c} e$ is commutative. Therefore, the quotient $e A_{0,c,\zeta}e$ is commutative in this case. Corollary~\ref{cor:Satakequot} then implies that $z \mapsto e z$ is an isomorphism $Z(A_{0,c,\zeta}) \iso e A_{0,c,\zeta} e$. The corollary will follow if we can show that the composite morphism
    \[
    Z(H_{c})^{\T} / \langle \mr{eu}_{c} - \zeta \rangle \to Z(A_{0,c,\zeta}) \to e A_{0,c,\zeta} e = eH_{c}^{\T} e / \langle e (\mr{eu}_c - \zeta) \rangle 
    \]
    is an isomorphism. But this is a direct consequence of the fact that the isomorphism $  Z(H_{c})^{\T} \iso eH_{c}^{\T} e$ (which is the restriction of the usual Satake isomorphism $Z(H_{c}) \iso e H_{c} e$ to $\T$-invariant elements), given by $z \mapsto e z$, maps $\langle \mr{eu}_{c} - \zeta \rangle$ to $\langle e (\mr{eu}_c - \zeta) \rangle$
\end{proof}

\begin{lem}\label{lem:symplsingCasimir}
    Let $X$ be an irreducible affine variety with symplectic singularities. Then $\mr{Cas}(\mc{O}(X)) = \C$. 
\end{lem}

\begin{proof}
Assume that there exists $t \in \mr{Cas}(\mc{O}(X)) \setminus \C$. Since $\mc{O}(X)$ is a domain, the morphism $\C[t] \to \mc{O}(X)$ is an embedding and hence defines a dominant morphism $p \colon X \to \mathbb{A}^1$. Every non-empty fiber of $p$ defines a closed Poisson subvariety of $X$. Each of these fibers is therefore a union of symplectic leaves. Since $p$ is dominant there are infinitely many non-zero fibers and hence $X$ contains infinitely many symplectic leaves. However, by \cite[Theorem~2.5]{Kaledinsympsingularities} the variety $X$ is a holonomic Poisson variety, and it has only finitely many symplectic leaves by \cite[Proposition~3.1]{Kaledinsympsingularities}. This is a contradiction. Hence $\mr{Cas}(\mc{O}(X)) = \C$.
\end{proof}

As a consequence of the Satake isomorphism, we deduce that:

\begin{cor}
	If $t \neq 0$ then the centre of $A_{t,c,\zeta}$ is $\C$. 
\end{cor}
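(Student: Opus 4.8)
The plan is to transport the question to the spherical subalgebra via the Satake isomorphism and there exploit that its associated graded is a commutative Poisson algebra whose only Casimirs are the constants. First I would invoke Corollary~\ref{cor:Satakequot}: the map $z \mapsto ez$ is an algebra isomorphism $Z(A_{t,\bc,\zeta}) \iso Z(e A_{t,\bc,\zeta} e)$, so it suffices to prove $Z(e A_{t,\bc,\zeta} e) = \C$. Since $t \neq 0$, Theorem~\ref{thm:filteredquantofA0} applies: the algebra $e A_{t,\bc,\zeta} e$, with the filtration inherited from $H_{t,\bc}$, is a filtered quantization of $\overline{\mc{O}}_{\mr{min}}/W$, i.e.\ there is a graded Poisson isomorphism $\gr(e A_{t,\bc,\zeta} e) \cong \C[\overline{\mc{O}}_{\mr{min}}/W]$, the Poisson bracket (of degree $-2$) being induced by the commutator.

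The heart of the argument is a Poisson-theoretic sharpening of Lemma~\ref{lem:gradedin}(ii). Write $Z = Z(e A_{t,\bc,\zeta} e)$, filtered by restriction. For $z \in Z$ and any $a \in e A_{t,\bc,\zeta} e$ we have $[z,a] = 0$, so on passing to symbols $\{\sigma(z),\sigma(a)\} = 0$ in $\C[\overline{\mc{O}}_{\mr{min}}/W]$, since this bracket is exactly the symbol of $[z,a]$ in the relevant degree. As $\sigma(a)$ ranges over all of $\C[\overline{\mc{O}}_{\mr{min}}/W]$ (that ring being the full associated graded of $e A_{t,\bc,\zeta} e$), we conclude that $\sigma(z) \in \mr{Cas}(\C[\overline{\mc{O}}_{\mr{min}}/W])$, hence $\gr Z \subseteq \mr{Cas}(\C[\overline{\mc{O}}_{\mr{min}}/W])$. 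But $\overline{\mc{O}}_{\mr{min}}/W$ is a (conic) symplectic singularity, so it is normal, in particular irreducible, and its smooth locus is a dense open symplectic leaf; a Casimir is locally constant there, hence globally constant, so $\mr{Cas}(\C[\overline{\mc{O}}_{\mr{min}}/W]) = \C$. (One may instead deduce this from $\mr{Cas}(\C[\overline{\mc{O}}_{\mr{min}}]) = \C$ via Lemma~\ref{lem:Casimirfiniteextension}, applied to the finite extension $\C[\overline{\mc{O}}_{\mr{min}}]^{\overline{W}} \subset \C[\overline{\mc{O}}_{\mr{min}}]$ of Poisson domains.) To conclude, the filtration on $e A_{t,\bc,\zeta} e$ satisfies $\mc{F}_{-1} = 0$ and $\mc{F}_0 = e\,\C W\,e = \C e$, so $\gr Z = \C$ forces $Z = Z \cap \mc{F}_0 = \C$, and Satake gives $Z(A_{t,\bc,\zeta}) = \C$.

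The main obstacle is the middle step: the naive inclusion $\gr Z \subseteq Z(\gr(e A_{t,\bc,\zeta} e))$ is worthless here, because $\gr(e A_{t,\bc,\zeta} e)$ is commutative and its centre is the whole, infinite-dimensional coordinate ring; the genuine input is that a commutator's symbol is a Poisson bracket, together with the fact that $\overline{\mc{O}}_{\mr{min}}$, being generically a coadjoint orbit, has a dense symplectic leaf. This is also why one routes through the spherical subalgebra rather than working with $A_{t,\bc,\zeta}$ directly: $\gr A_{t,\bc,\zeta} = (P^{\T}/\langle\mr{eu}_0\rangle)\rtimes\overline{W}$ is noncommutative, with centre the large ring $(P^{\T}/\langle\mr{eu}_0\rangle)^W$ (Lemma~\ref{lem:centre0Hamred}), so the Poisson refinement is less transparent there. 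The hypothesis $t \neq 0$ enters only through Theorem~\ref{thm:filteredquantofA0}, which (trivially) fails at $t = 0$, where $e A_{0,\bc,\zeta} e$ is already commutative.
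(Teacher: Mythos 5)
Your proof matches the paper's argument step for step: reduce to the spherical subalgebra via Satake (Corollary~\ref{cor:Satakequot}), use the filtered quantization statement (Theorem~\ref{thm:filteredquantofA0}) together with the Poisson-bracket refinement of Lemma~\ref{lem:gradedin}(ii) to get $\gr Z(e A_{t,\bc,\zeta} e) \subset \mr{Cas}(\C[\overline{\mc{O}}_{\min}/W])$, and then conclude $\gr Z = \C$ and hence $Z = \C$. The only (cosmetic) divergence is in how $\mr{Cas} = \C$ is justified: the paper cites that a symplectic singularity has finitely many symplectic leaves, whereas you argue directly from the dense open symplectic leaf on the smooth locus of the irreducible normal variety $\overline{\mc{O}}_{\min}/W$; both are correct.
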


\begin{proof}
By Corollary~\ref{cor:Satakequot}, it suffices to show that the centre of $e A_{t,c,\zeta}e$ is trivial. By Theorem~\ref{thm:filteredquantofA0}, this algebra is a filtered quantization of $\C[\overline{\mc{O}}_{\mr{min}}/W]$. We give the centre of $e A_{t,c,\zeta}e$ a filtration by restriction. Then $\gr Z(e A_{t,c,\zeta} e)$ is contained in $\mr{Cas}(\C[\overline{\mc{O}}_{\mr{min}}]^W)$. Since $\overline{\mc{O}}_{\mr{min}}/W$ has symplectic singularities, Lemma~\ref{lem:symplsingCasimir} says that $\mr{Cas}(\C[\overline{\mc{O}}_{\mr{min}}]^W)= \C$. This forces $\gr Z(e A_{t,c,\zeta} e) = \C$, and hence $Z(e A_{t,c,\zeta}e) = \C$ too.  
\end{proof}

Fixing $t = 0$ but considering $c,\zeta$ as formal variables, we get an algebra $A_0$. Then $\Spec Z(A_{0})$ is a flat family over $\mf{\tilde c} = \Hom_W(\mc{S},\C) \oplus \C$. We grade $\mf{\tilde c}$ by giving it degree $2$. In this way, $A_{0}$ is a graded $\C[\mf{\tilde c}]$-algebra.  

\begin{thm}\label{sympsing}
	$\Spec Z(A_{0}) \to \mf{\tilde c}$ is a graded Poisson deformation of the symplectic singularity $\overline{\mc{O}}_{min}/W$.  
\end{thm}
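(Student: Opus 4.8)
The plan is to exhibit $\Spec Z(A_0)$ as a flat, $\C^{\times}$-equivariant family of Poisson varieties over $\mf{c}$ whose fibre over $0$, together with its Poisson structure and its contracting $\C^{\times}$-action, is the conic symplectic singularity $\overline{\mc{O}}_{\mr{min}}/W$; this is precisely what it means for the family to be a graded Poisson deformation. Write $R_0 = \C[\Hom_W(\mc{S},\C)]$ and let $H_0$ be the rational Cherednik algebra at $t = 0$ over $R_0$, graded with $\h,\h^*$ in degree $1$, $W$ in degree $0$ and the parameters $\bc$ in degree $2$. Then $Z(A_0) := Z(H_0)^{\T}$ is a graded $R_0$-algebra, and I would make it a graded $\C[\mf{c}] = R_0[\zeta]$-algebra by sending the degree-$2$ variable $\zeta$ to $\eu$ (which is homogeneous of degree $2$). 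Since the formation of the centre of $H_0$ commutes with specialisation of $\bc$, and the functor of $\T$-invariants is exact and commutes with base change, the fibre of $\Spec Z(A_0) \to \mf{c}$ over $(\bc,\zeta)$ is $\Spec\big(Z(H_{0,\bc})^{\T}/\langle \eu_{\bc} - \zeta\rangle\big) = \Spec Z(A_{0,\bc,\zeta})$ by Corollary~\ref{cor:centreHamredT}. In particular the fibre over $0$ is $\Spec (P^{\T}/\langle \eu_0\rangle)^W$, which by Lemma~\ref{lem:isoHamredminimialnilp} is, as a Poisson scheme, $\overline{\mc{O}}_{\mr{min}}/W$ --- recalled before that lemma to be a conic symplectic singularity.

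Next I would check the Poisson structure and its compatibility with the gradings. The algebra $Z(H_0)$ is a graded Poisson algebra over $R_0$ with bracket homogeneous of degree $-2$: this is the standard $t = 0$ picture for rational Cherednik algebras, and $\gr_{\mc{F}} Z(H_0) = R_0 \otimes \C[\h\times\h^*]^W$ by \cite[Theorem~3.3]{EG}. The torus $\T$ acts on $\h \oplus \h^*$ by symplectomorphisms and preserves the grading, hence acts by graded Poisson automorphisms; so $Z(H_0)^{\T}$ is a graded Poisson subalgebra, all of whose homogeneous components lie in even degrees (having $\T$-weight $0$ forces the degree to be even). Because $\{\eu, f\} = (\deg_{\T} f)\,f = 0$ for $f \in Z(H_0)^{\T}$ and the bracket is $R_0$-linear, the Poisson bracket on $Z(A_0)$ is $\C[\mf{c}]$-linear. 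Thus $\Spec Z(A_0) \to \mf{c}$ is a $\C^{\times}$-equivariant morphism of Poisson schemes, and on the fibre over $0$ it restricts exactly to the Hamiltonian-reduction bracket and the contracting $\C^{\times}$-action on $\overline{\mc{O}}_{\mr{min}}/W$ appearing in Lemma~\ref{lem:isoHamredminimialnilp}.

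It remains to prove that $Z(A_0)$ is flat over $\C[\mf{c}]$, which I view as the crux, and here I would apply miracle flatness: $\C[\mf{c}]$ is regular; $Z(A_0) = Z(H_0)^{\T}$ is Cohen--Macaulay because its associated graded $\gr_{\mc{F}} Z(H_0)^{\T} = R_0 \otimes P^{\T \times W}$ is so by Hochster's theorem together with the regularity of $R_0$; and, for every $(\bc,\zeta)$, the fibre $Z(H_{0,\bc})^{\T}/\langle \eu_{\bc} - \zeta\rangle$ is the quotient of the domain $Z(H_{0,\bc})^{\T}$ --- whose dimension is $2\dim\h - 1 = \dim P^{\T \times W}$, read off from its associated graded --- by the nonzero, hence nonzerodivisor, element $\eu_{\bc} - \zeta$. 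Krull's principal ideal theorem then makes every fibre equidimensional of dimension $2\dim\h - 2 = \dim Z(A_0) - \dim \C[\mf{c}]$, so flatness follows. As a byproduct, each fibre $Z(A_{0,\bc,\zeta})$ is a filtered algebra with $\gr Z(A_{0,\bc,\zeta}) \cong P^{\T \times W}/\langle \eu_0\rangle \cong \C[\overline{\mc{O}}_{\mr{min}}/W]$ (by Lemmas~\ref{lem:assgrAW} and~\ref{lem:isoHamredminimialnilp}), a normal domain, so every fibre is itself a normal domain. Assembling the three steps, $\Spec Z(A_0) \to \mf{c}$ is a flat $\C^{\times}$-equivariant family of normal Poisson varieties with special fibre the conic symplectic singularity $\overline{\mc{O}}_{\mr{min}}/W$, i.e.\ a graded Poisson deformation of it.

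The main obstacle is this flatness step, and in particular the dimension bookkeeping behind it: that specialisation of $\bc$ commutes with taking centres and $\T$-invariants, that $Z(H_{0,\bc})^{\T}$ is a Cohen--Macaulay domain of dimension $2\dim\h - 1$, and that $\eu_{\bc} - \zeta$ remains a nonzerodivisor for all $\zeta$ --- each of which reduces to a property of the associated graded ring $P^{\T\times W}$ already prepared in Lemmas~\ref{lem:assgrAW} and~\ref{lem:isoHamredminimialnilp} (and Hochster's theorem). The grading and Poisson compatibilities are, by contrast, formal once one keeps track of the two commuting $\C^{\times}$-actions present throughout: the contracting (``dilation'') action and the Hamiltonian $\T$-action.
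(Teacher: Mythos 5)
Your proposal reaches the same conclusion but obtains flatness by a genuinely different route. The paper's proof deduces flatness of $Z(A_0)$ over $\C[\mf{c}]$ directly from the associated-graded computation: Lemma~\ref{lem:assgrAW} shows that the associated graded of the fibre $Z(H_{0,\bc})^{\T}/\langle \eu_{\bc}-\zeta\rangle$ is $(P^{\T}/\langle\eu_0\rangle)^W$, which is independent of $(\bc,\zeta)$; constancy of the associated graded along a filtered family is the standard criterion for flatness here, and it simultaneously hands you the identification of the special fibre via Lemma~\ref{lem:centre0Hamred}. You instead invoke miracle flatness over the regular base $\C[\mf{c}]$, using Cohen--Macaulayness of $Z(H_0)^{\T}$ (lifted from $R_0 \otimes P^{\T\times W}$ via Hochster) plus a fibre-dimension count (Krull's principal ideal theorem applied to the nonzerodivisor $\eu_{\bc}-\zeta$ in the domain $Z(H_{0,\bc})^{\T}$, plus the catenary/equidimensional hypotheses needed to upgrade PIT to exact codimension one). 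Both are correct; the paper's argument is shorter because the filtered machinery is already set up and yields the fibre identification for free, whereas yours trades that for a more intrinsic, ``geometric'' flatness criterion and as a byproduct records that all fibres are normal domains. One small caution: your dimension step quietly relies on $\eu_{\bc}-\zeta$ not being a unit in $Z(H_{0,\bc})^{\T}$ --- this follows from the fact that its symbol $\eu_0$ is a nonzero positive-degree element of the domain $P^{\T\times W}$, but it is worth saying explicitly since $\eu_{\bc}-\zeta$ itself is not homogeneous.
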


\begin{proof}
We consider $H_S$ as a $\C[\mf{\tilde c}]$-algebra, where the defining relations do not involve $\zeta$. Then $Z(H_S)$ is a flat graded Poisson deformation of $P^W$ over $\mf{\tilde c}$. This implies that $Z(\eo H_S^{\T} \eo) \cong Z(H_S)^{\T}$ is a graded Poisson deformation of $P^{W \times \T}$. Then $\eo \eu_{c} - \zeta$ generates a graded Poisson ideal in $Z(\eo H_S^{\T} \eo)$ such that the quotient 
\[
Z(\eo H_S^{\T} \eo) / \langle \eo \eu_{c} - \zeta \rangle
\]
is flat over $\C[\mf{\tilde c}]$ by Lemma~\ref{lem:assgrAW}. 

Finally, we note that the fibre of this map above $0 \in \mf{\tilde c}$ is the centre of $A_{0,0,0} = (P^{\T} / \langle \mr{eu} \rangle ) \rtimes \overline{W}$. By Lemma~\ref{lem:centre0Hamred}, this equals the ring of functions on the quotient $\overline{\mc{O}}_{min}/W$.
\end{proof}

\begin{remark}
    As follows from Namikawa’s results \cite{Namikawa}, the conic symplectic singulartity $\overline{\mc{O}}_{min}/W$ admits a universal graded Poisson deformation $\mc{X} \to \mf{b}$. Therefore, Theorem~\ref{sympsing} implies that there is a unique graded map $\mf{\tilde c} \to \mf{b}$ such that 
    \[
    \Spec Z(A_{0}) \cong \mf{\tilde c} \times_{\mf{b}} \mc{X}
    \]
    as Poisson varieties. We expect that one can use arguments as in \cite{BellamyNamikawa} to explicitly describe the map $\mf{\tilde c} \to \mf{b}$.     
\end{remark}

\small{

\bibliography{biblo}{}
\bibliographystyle{plain} }

\end{document}